\theoremstyle{definition}
\newtheorem{assumption}{Assumption}[section]
\newtheorem{example}{Example}[section]
\DeclareMathOperator{\var}{var}
\renewcommand{\P}{{\mathsf P}}
\newcommand{\by}{{\bf y}}
\newcommand{\cV}{{\mathcal  V}}
\newcommand{\cD}{{\mathcal  D}}
\newcommand{\bx}{{\bf x}}
\newcommand{\bX}{{\bf X}}
\newcommand{\bY}{{\bf Y}}
\newcommand{\E}{{\mathsf E}\hspace{0.1mm}}
\newcommand{\bR}{\mathbb R}
\DeclareFontFamily{OT1}{pzc}{}
\DeclareFontShape{OT1}{pzc}{m}{it}{<-> s * [1.10] pzcmi7t}{}
\DeclareMathAlphabet{\mathpzc}{OT1}{pzc}{m}{it}
\renewcommand{\P}{P}
\newtheoremstyle{exampstyle}
{4pt} 
{4pt} 
{\itshape} 
{} 
{\bfseries} 
{.} 
{.75em} 
{} 
\theoremstyle{exampstyle}
\numberwithin{table}{section}
\numberwithin{figure}{section}
\newtheorem{theorem}{Theorem}[section]
\newtheorem{lemma}{Lemma}[section]
\newcommand{\cq}{{\mathcal q}}
\def\beq{\begin{equation}}
\def\eeq{\end{equation}}
\def\bals{\begin{align*}}
\def\eals{\end{align*}}
\def\bal{\begin{align}}
\def\eal{\end{align}}
\numberwithin{equation}{section}
\numberwithin{theorem}{section}
\numberwithin{corollary}{section}
\begin{document}

\begin{adjustwidth}{-0.5in}{-0.5in}
\title[Changepoint detection with \textit{U}-statistics]{Sequential
monitoring for distributional changepoints using degenerate \textit{U}%
-statistics}
\author{B.\ Cooper Boniece$^{1*}$}
\email{cooper.boniece@drexel.edu}
\author{Lajos Horv\'ath$^2$}
\email{horvath@math.utah.edu}
\author{Lorenzo Trapani$^3$}
\email{lorenzo.trapani@unipv.it}
\address{$^{1}$Department of Mathematics, Drexel University, Philadelphia,
PA 19104 USA }
\address{$^2$Department of Mathematics, University of Utah, Salt Lake City,
UT 84112--0090 USA }
\address{$^3$Department of Economics and Management, University of Pavia,
Pavia, Italy; Department of Economics, Finance and Accounting; School of
Business and Economics, University of Leicester, Leicester, U.K.}
\address{$^*$Research supported in part under NSF grant DMS-2413558.}
\subjclass[2020]{Primary 62L10; Secondary 62G10}
\keywords{}

\begin{abstract}

We investigate the online detection of changepoints in the distribution of a sequence of observations using a class of degenerate \textit{U}-statistic-type processes. We consider an ordinary (Chu--Stinchcombe--White-type) detector and a Page-type detector under open- and closed-ended monitoring, and introduce an expanding-baseline Page-type procedure that incorporates sufficiently old monitoring observations into the baseline sample. Under the null, we derive weak limits for all three procedures and justify a Monte Carlo approximation to their critical values. For the ordinary and Page-type detectors, we also establish consistency and limiting distributions for detection delays under both early and late changes. The theory requires only square summability of the eigenvalues associated with the degenerate kernel operator, rather than the stronger absolute-summability condition often imposed in related work. Simulations show competitive performance relative to recent mean-, covariance-, and empirical-CDF-based monitors, and an application to multivariate compressor-sensor data from a metro train illustrates the methodology.

\end{abstract}

\maketitle
\end{adjustwidth}

\doublespacing

\section{Introduction\label{intro}}

\noindent We consider the online detection of distributional changepoints in a sequence ${\mathbf X_i,i\geq1}$. Suppose that an initial historical sample ${\mathbf X_i,1\leq i\leq m}$ has common distribution $F$. As new observations arrive, we sequentially test
\begin{equation}
H_0:\mathbf X_{m+k}\sim F,\qquad k\geq1,
\label{null}
\end{equation}
against the alternative that the distribution changes at some monitoring time $k_\ast$.

Detecting structural instability is
arguably of great importance in all applied sciences. Examples include
economics and finance, where instability has direct implications for
forecasting and decision-making (see e.g. \citealp{smith2021break});
engineering, where the safety and serviceability of engineering structures
requires continuous monitoring (see e.g. \citealp{sun2020review}, and %
\citealp{malekloo2022machine}); and the analysis of biomedical time series
data (\citealp{fiecas2024generalised}). In many such applications, interest lies in changes in the entire distribution rather than specific moments such as the mean or the variance, whence the importance of testing for \textit{%
distributional} changes: \citet{fu2023multiple}, \textit{inter alia},
discuss several examples in economics and finance, including density
forecast and the detection of changes in the tail risk of financial
variables. Although the changepoint literature is quite extensive (see \citet{aue2024state} and \citet{chgreg}
for recent selective reviews), online detection of general distributional changes remains comparatively underexplored. Some recent contributions include papers on retrospective, offline detection
by \citet{inoue2001testing}, who uses the empirical distribution function,
and \citet{huvskova2006change} and \citet{boniece2025changepoint} who, 
\textit{inter alia}, use the empirical characteristic function; see also \citet{horvath2021monitoring}, for a recent online contribution.

\smallskip

\noindent Motivated by this gap, we develop an online framework based on a class of two-sample \textit{U}-statistics.  The use of \textit{U}%
-statistics-type processes in the context of changepoint detection goes back to at least \citet{csorgHo1989invariance}, and subsequently studied
in several contributions - examples include \citet{matteson:james:2014}, %
\citet{biau:bleakley:mason:2016} and \citet{dehling2022change} for
retrospective changepoint detection, and \citet{kirch2022sequential} for online
detection. In related work, \citet{boniece2025changepoint} study retrospective distributional
changepoint detection for functional data using a special case of the (generalised) energy distance (%
\citealp{szekely:rizzo:2005} and \citealp{szekely2013energy}). 
By developing online monitoring procedures for a broad class of degenerate two-sample \textit{U}-statistics, we can accommodate a wide range of distributional discrepancies, including rotation-invariant distances such as the energy distance that are naturally suited to multivariate data - as opposed to the use of statistics based on e.g.
Cramer's distance, such as the ones employed in \citet{inoue2001testing}. In our setting, at each monitoring time, the two-sample \textit{U}-statistics compare the historical sample with the monitoring observations available thus far. Under the null, the resulting processes fluctuate around zero, whereas after a distributional change they develop a systematic drift away from zero. In the retrospective setting, for a particular form of generalized energy distance, \citet{boniece2025changepoint} show that the corresponding process behaves, up to an asymptotically negligible remainder, like the square of a CUSUM process. We establish an analogous representation in the online setting for a broader class of degenerate two-sample \textit{U}-statistics.

\smallskip

\textit{Main contributions of this paper}

\smallskip

\noindent We make several contributions to the extant literature. First, we propose three distributional monitoring procedures based on degenerate two-sample \textit{U}-statistics: an ordinary (Chu--Stinchcombe--White-type) monitor, a Page-type monitor, and an expanding-baseline Page-type variant. The last incorporates sufficiently old monitoring observations into the baseline sample while restricting the comparison to a window of recently observed data, with the aim of reducing the dilution of later changes by unaffected monitoring observations. Second, we derive weak limits for all three procedures under the null in open- and closed-ended monitoring settings. The limits are expressed as suprema of weighted infinite sums of centered squared Wiener processes, with weights determined by the eigenvalues of the operator associated with the degenerate part of the kernel. This representation also yields a practical Monte Carlo method for obtaining critical values. For the ordinary and Page-type procedures, we establish consistency and derive detection-delay limits under both early and late changes.\\
\noindent A principal technical contribution is that the asymptotic theory requires only square summability of the kernel eigenvalues, rather than the stronger absolute-summability condition imposed in related work. Since square summability follows directly from the assumed second moment of the kernel, it can be verified without explicit knowledge of the kernel eigenvalues; by contrast, absolute summability is a stronger spectral condition that is generally difficult to check for an arbitrary kernel, since they depend on the (unknown) distribution $F$.  This allows the framework to accommodate a broad class of distance- and kernel-based discrepancies. 
 We also develop a retrospective procedure for assessing stability of the historical sample, discuss the construction of distribution-distinguishing kernels, and establish the validity of the proposed eigenvalue-based calibration. Simulations compare the procedures with recent mean-, covariance-, and empirical CDF monitors, and their use is further illuminated by two data illustrations.

\smallskip

\noindent This paper is organised as follows. Section \ref{fmwk} introduces the monitoring schemes, Sections \ref{main},  and \ref{s:refinements} develop the asymptotic theory, extensions, kernel choices, and calibration, and Section \ref{s:simulations} presents simulations and applications. Section \ref{conclusion} concludes, and all proofs and additional numerical results are provided in the Supplement.

\smallskip

\noindent NOTATION. Throughout, for positive sequences $a_{m},b_{m}$, we
write $a_{m}\sim b_{m}$ if $a_{m}/b_{m}\rightarrow 1$ as $m\rightarrow
\infty $. We denote $a_{m}\ll b_{m}$ to mean $a_{m}=o(b_{m})$ and similarly $%
a_{m}\gg b_{m}$ means $b_{m}=o(a_{m})$ as $m\rightarrow \infty $.
Convergence in distribution is denoted as $\overset{\mathcal{D}}{\rightarrow 
}$. We denote binomial coefficients as $\binom{p}{q}$.  We often write $a\vee b=\max \left\{ a,b\right\} $ and $a\wedge b=\min \left\{
a,b\right\} $.  Other relevant
notation is introduced later on in the paper.

\section{Assumptions and monitoring schemes\label{fmwk}}

\noindent Let $\mathbf{X}_{1}$, $\mathbf{X}_{2}$, $\ldots $ be a sequence of
random elements taking values in a separable metric space $(\mathcal{X},\rho
)$. We assume that there exists a historical training (or baseline) period $\left\{ 
\mathbf{X}_{i},1\leq i\leq m\right\} $ during which no change took place.
Letting $F$ denote the distribution of $\mathbf{X}_{1}$, we make the
following

\begin{assumption}
\label{a:historical_stability}It holds that $\mathbf{X}_{i}\sim F$ for all $%
i=1,\ldots ,m$.
\end{assumption}

\noindent Assumption \ref{a:historical_stability} is typical in this
literature, where it is also known as the \textit{noncontamination assumption%
} (\citealp{chu1996monitoring}). In the spirit of making assumptions that
are testable, as mentioned in the introduction, in Section \ref{sectrain} we
construct a test (based on the same approach as discussed herein for online
monitoring) to check retrospectively for no changepoint in the distribution
of $\left\{ \mathbf{X}_{i},1\leq i\leq m\right\} $. 

\smallskip

\noindent After the training period, incoming observations $\mathbf{X}_{m+k}$
are monitored, where $k\geq 1$ denotes the ``current'' monitoring time; we test for the null hypothesis
of distributional stability versus the alternative hypothesis that a change
occurs in the distribution at some point in time $k_{\ast }$:%
\begin{equation}
H_{A}:\mathbf{X}_{m+k}\sim 
\begin{cases}
F & k=1,2,\ldots ,k_{\ast }, \\ 
F_{\ast } & k=k_{\ast }+1,k_{\ast }+2,\ldots%
\end{cases}
\label{alternative}
\end{equation}%
where $k_{\ast }\geq 1$, and $F_{\ast }\neq F$ is an unspecified
distribution on $\mathcal{X}$. %

\smallskip

\noindent Throughout this work we use the following assumption.

\begin{assumption}
\label{e:assumption_ind}It holds that $\left\{ \mathbf{X}_{i},~i\geq1\right\} $ is an independent sequence.%
\end{assumption}

\noindent We now present the monitoring schemes, starting with a preview of
how they work. At each point during the monitoring horizon, $k$, we
construct a ``detector'' ${\mathcal{D}}%
_{m}(k)$, based on comparing the observations in the historical training
sample $\left\{ \mathbf{X}_{i},1\leq i\leq m\right\} $ against the
observations available in the monitoring sample up until $k$ $\left\{ 
\mathbf{X}_{m+i},1\leq i\leq k\right\} $. As mentioned in the introduction,
such a detector (heuristically) is constructed as a partial sum process of
quantities which, under the null of no break, have mean zero; consequently,
as $k$ increases, under the null ${\mathcal{D}}_{m}(k)$ should range within
a ``boundary (function)'' which evolves
with $k$, say $g_{m}(k)$. As soon as such boundary is crossed, the null is
rejected and a changepoint is marked; formally, $H_{0}$ is rejected as soon
as %
\begin{equation}
{\mathcal{D}}_{m}(k)>\mathcal{c}g_{m}(k),
\end{equation}%
where the constant $\mathcal{c}>0$ is a critical value chosen
in conjunction with the historical sample to control the asymptotic false alarm rate.\newline
We now introduce our detectors. Following \citet{matteson:james:2014}, %
\citet{biau:bleakley:mason:2016} and \citet{dehling2022change}, our
detectors ${\mathcal{D}}_{m}(k)$ are based on a subclass of \textit{degenerate} \textit{U}%
-statistics (see e.g. \citealp{vandervaart:1998}, for a general treatment).
Let $h:\mathcal{X}\times \mathcal{X}\rightarrow \mathbb{R}$ be any function
satisfying

\begin{assumption}
\label{a:assumption_on_h}It holds that $h(\mathbf{x},\mathbf{y})=h(\mathbf{y}%
,\mathbf{x})$; for \textit{i.i.d.}~elements $\mathbf{X},\mathbf{Y}\sim F$,
it holds that%
\begin{equation}
{\mathsf{E}}\hspace{0.1mm}h^{2}(\mathbf{X},\mathbf{Y})=\iint h^{2}(\mathbf{x}%
,\mathbf{y})dF(\mathbf{x})dF(\mathbf{y})<\infty .
\end{equation}
\end{assumption}

\noindent Assumption \ref{a:assumption_on_h} requires the second moment of $%
h(\mathbf{X},\mathbf{Y})$ to be finite. Heuristically, our statistics are
based on sums of $h\left( \mathbf{X}_{i},\mathbf{X}_{j}\right) $, and
therefore assuming that the second moment thereof is a natural requirement
to derive the asymptotics. As mentioned in the introduction, this part of
the assumption is testable: given a (user-chosen) kernel $h\left( \cdot
,\cdot \right) $, it can be checked whether its second moment is finite or
not based e.g. on the procedures discussed in \citet{trapani2016testing} and %
\citet{degiannakis2023superkurtosis}. Indeed, the assumption is
``constructive'': after determining how
many moments are admitted by the data, a $h\left( \cdot ,\cdot \right) $ can
be chosen, by the applied user, so as to satisfy the assumption.\newline
\noindent Note, importantly, that the assumption on the finiteness of the
second moment is for the kernel $h(\mathbf{X},\mathbf{Y})$, and \textit{not
for the data} $\mathbf{X}$: hence, $\mathbf{X}$ need not even admit any
finite polynomial moment \textit{per se}, as long as an appropriate kernel
is chosen.  

\smallskip

\noindent Given a kernel $h(\mathbf{x},\mathbf{y})$ satisfying Assumption %
\ref{a:assumption_on_h}, for each $m$ and $k\geq 2$, let 
\begin{align}
U_{m}(h;k)& =\frac{2}{km}\sum_{i=1}^{m}\sum_{j=m+1}^{m+k}h(\mathbf{X}_{i},%
\mathbf{X}_{j})-\binom{m}{2}^{-1}\sum_{1\leq i<j\leq m}h(\mathbf{X}_{i},%
\mathbf{X}_{j})  \label{e:def_Um(h;k)} \\
& -\binom{k}{2}^{-1}\sum_{m<i<j\leq m+k}h(\mathbf{X}_{i},\mathbf{X}_{j}). 
\notag
\end{align}%
Hence, given  $h$, we then define the ordinary detector (c.f.~\cite{chu1996monitoring})
\begin{equation}
\mathcal{D}_{m}^{(1)}(k)=m^{-1}k^{2}\left\vert U_{m}(h;k)\right\vert ,
\label{cusum}
\end{equation}%
and its Page-type counterpart (see \citealp{page1954continuous}; %
\citealp{fremdt2015page}; and \citealp{aue2024state}) 
\begin{equation}
\mathcal{D}_{m}^{(2)}(k)=m^{-1}\max_{0\leq r<k}(k-r)^{2}\left\vert
U_{m}(h;r,k)\right\vert ,  \label{page}
\end{equation}%
where for each $m,k\geq 2$, $0\leq r<k-1$, 
\begin{align}
U_{m}(h;r,k)& =\frac{2}{(k-r)m}\sum_{i=1}^{m}\sum_{j=m+r+1}^{m+k}h(\mathbf{X}%
_{i},\mathbf{X}_{j})  \label{e:def_page} \\
& -\binom{m}{2}^{-1}\sum_{1\leq i<j\leq m}h(\mathbf{X}_{i},\mathbf{X}_{j})-%
\binom{k-r}{2}^{-1}\sum_{m+r<i<j\leq m+k}h(\mathbf{X}_{i},\mathbf{X}_{j}). 
\notag
\end{align}%
\noindent We use the following family of \textit{weighted} boundary
functions 
\begin{equation}
g_{m}(k)=\left( \frac{k/m}{1+k/m}\right) ^{\beta }\left( 1+\frac{k}{m}%
\right) ^{2}=g\left( \frac{k}{m}\right) .  \label{e:boundaryfxn}
\end{equation}%
As is typical in this literature, the boundary functions defined in (\ref%
{e:boundaryfxn}) depend on a user-chosen weight $0\leq \beta <1$, which
determines the weights assigned to the fluctuations of $U_{m}(h;r,k)$: as $%
\beta $ increases, the weight also increases, and therefore higher
power/faster detection under the alternative may be expected. \citet{lajos04}%
, \citet{lajos07} and \citet{ghezzi2024fast} study online changepoint
detection based on the CUSUM process with various values of $\beta $; %
\citet{horvath2023real} and \citet{horvath2025sequential} study a weighted
version of the Page-CUSUM process.%

\noindent Although the specified kernel $h$ need not be degenerate,\footnote{Here, $h$ degenerate means  $\E h(\bx,\bX)\equiv 0$; i.e., its first-order Hoeffding projection is identically zero. } the
two-sample $U$-statistic \eqref{e:def_Um(h;k)} depends only on its degenerate part. Indeed,
if
\begin{equation}
\overline{h}(\mathbf{x},\mathbf{y})=h(\mathbf{x},\mathbf{y})-{\mathsf{E}}%
\hspace{0.1mm}h(\mathbf{X},\mathbf{y})-{\mathsf{E}}\hspace{0.1mm}h(\mathbf{x}%
,\mathbf{Y})+{\mathsf{E}}\hspace{0.1mm}h(\mathbf{X},\mathbf{Y}).
\label{e:degenerate_version_of_h}
\end{equation}%
where $\bX,\bY\stackrel{i.i.d.}{\sim}F$, then a simple algebraic cancellation shows
$$
U_m(h;r,k)=U_m(\overline h;r,k).
$$
Thus, while $h$ itself is arbitrary
subject to Assumption~\ref{a:assumption_on_h}, the effective kernel entering the asymptotic
theory is the degenerate kernel $\overline h$. Equivalently, $U_m(h;r,k)$ can be represented as a two-sample $U$-statistic whose first-order Hoeffding projections vanish under $H_0$, and is therefore a degenerate $U$-statistic.

\noindent For a chosen detector ${\mathcal{D}}_{m}(k)$, we consider two
separate types of monitoring schemes. First, an ``
open-ended'' (or indefinite monitoring horizon) scheme,
based on the stopping rule 
\begin{equation}
\mathcal{\tau }_{m}=\mathcal{\tau }_{m}(\mathcal{c})=%
\begin{cases}
\min \{k\geq 2:{\mathcal{D}}_{m}(k)>\mathcal{c}g_{m}(k)\} \\ 
\infty ,\quad \text{if ~}~{\mathcal{D}}_{m}(k)\leq \mathcal{c}g_{m}(k)~\text{%
~for all~}~k\geq 2.%
\end{cases}
\label{stop-infty}
\end{equation}%
The procedure goes on forever, until it rejects $H_{0}$ - corresponding to
having $\mathcal{\tau }_{m}<\infty $.

\noindent However, by definition, monitoring based on $\tau _{m}$ may 
\textit{never} terminate, which may not be suitable in some applications.
Thus, we also consider finite horizon (or ``closed'') monitoring schemes, which are based on the
stopping rule 
\begin{equation}
\mathcal{\tau }_{m,M}=\mathcal{\tau }_{m,M}(\mathcal{c})=%
\begin{cases}
\min \{2\leq k\leq M-1:{\mathcal{D}}_{m}(k)>\mathcal{c}g_{m}(k)\} \\ 
M,\quad \text{if ~}~{\mathcal{D}}_{m}(k)\leq \mathcal{c}g_{m}(k)~\text{~for
all~}~2\leq k<M,%
\end{cases}
\label{stop-M}
\end{equation}%
where $M\geq 2$ is a user-specified monitoring horizon.\footnote{%
Formally, monitoring based on $\tau _{m,M}$ rejects $H_{0}$ if $\mathcal{%
\tau }_{m,M}<M$}

\section{Main results\label{main}}

\noindent We report results under the null and under the alternatives for
the monitoring schemes based
on the detectors $\mathcal{D}_{m}^{(1)}(k)$\ and $\mathcal{D}_{m}^{(2)}(k)$;
a novel scheme is introduced in Section \ref{repurpose}. From here on, we
assume that Assumptions \ref{a:historical_stability}-\ref{a:assumption_on_h}
are in force, and thus we omit them from the statements of our results.

\noindent Let $\mathbf{X},\mathbf{Y}\overset{iid}{\sim }F$. For a given $h$ satisfying Assumption \ref{a:assumption_on_h}, let $\overline h$ be as in \eqref{e:degenerate_version_of_h}. 
To the function $h$, we associate the integral operator $A:\mathcal{L}%
^{2}(F)\rightarrow \mathcal{L}^{2}(F)$, defined by $Ag(\mathbf{x})={\mathsf{E%
}}\hspace{0.1mm}\overline h(\mathbf{x},\mathbf{Y})g(\mathbf{Y})=\int \overline{h}(%
\mathbf{x},\mathbf{y})g(\mathbf{y})dF(\mathbf{y})$. Under Assumption %
\eqref{a:assumption_on_h}, the spectral theorem (e.g. %
\citealp{riesz2012functional}) yields that there exists an orthonormal basis 
$\{\phi _{k}\}_{k\geq 1}$ of $\mathcal{L}^{2}(F)$ such that $A\phi _{\ell
}=\lambda _{\ell }\phi _{\ell }$, $\ell \geq 1$, where $\lambda _{\ell }\in 
\mathbb{R}$ for all $\ell \geq 1$,\footnote{%
With no loss of generality, we assume they are ordered as $\left\vert
\lambda _{1}\right\vert \geq \left\vert \lambda _{2}\right\vert \geq \ldots $%
} such that 
\begin{equation}
\sum_{\ell =1}\lambda _{\ell }^{2}<\infty .  \label{e:eigs_square_summable}
\end{equation}%
\noindent Let $\{W_{1}(u),u\geq 0\}$, $\{W_{2}(u),u\geq 0\}$, \ldots\ be
independent Wiener processes, and define 
\begin{equation}
\Gamma (u)=\sum_{\ell =1}^{\infty }\lambda _{\ell }\left( W_{\ell
}^{2}(u)-u\right) ;  \label{e:def_gamma}
\end{equation}%
the process defined in (\ref{e:def_gamma}) is typically found when studying
the limiting distribution of degenerate \textit{U}-statistics (e.g. %
\citealp{boos1980note}).

\subsection{Monitoring under $H_{0}$\label{monitor-H0}}

\noindent Based on the stopping rules defined in (\ref{stop-infty}) and (\ref%
{stop-M}), the case of no detection taking place corresponds to the events $%
\left\{ \tau _{m}=\infty \right\} $ and $\left\{ \tau _{m,M}=M \right\} 
$\ respectively. In this section, we study the probability of such events
under the null hypothesis $H_{0}$\ - and, therefore, the asymptotic
distribution of our statistics.\newline
We begin by presenting the limiting behaviour of the detector $\mathcal{D}_{m}^{(1)}(k)$\ defined in (\ref{cusum}). 

\begin{theorem}
\label{t:theorem_under_H0} Assume $H_{0}$ holds, and consider the detector $%
\mathcal{D}_{m}(k)=\mathcal{D}_{m}^{(1)}(k)$. Let $g_{m}$ be as in %
\eqref{e:boundaryfxn}. 
(i) As $m\rightarrow \infty $, 
\begin{equation}
\P \left\{ \tau _{m}<\infty \right\} \rightarrow \P \Big\{%
\sup_{0<u<1}u^{-\beta }|\Gamma (u)|>\mathcal{c}\Big\}.
\end{equation}%
(ii) Suppose $M=M_{m}\rightarrow \infty $ such that $M/m\rightarrow a_{0}\in
(0,\infty ]$, and let $u_{0}=a_{0}/(1+a_{0})$. Then
\begin{equation}
\P \{\tau _{m,M}<M \}\rightarrow \P \Big\{\sup_{0<u<u_{0}}u^{-\beta
}|\Gamma (u)|>\mathcal{c}\Big\}.
\end{equation}%
(iii) Suppose $M=M_{m}\rightarrow \infty $ such that $M/m\rightarrow 0$, and let
the boundary function $g_{m}$ be given by $g_{m}(k)=(M/m)\left( k/M\right)
^{\beta }$. Then
\begin{equation}
\P \{\tau _{m,M}<M \}\rightarrow \P \Big\{\sup_{0<u<1}u^{-\beta
}|\Gamma (u)|>\mathcal{c}\Big\}.
\end{equation}%
\end{theorem}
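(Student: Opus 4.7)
The plan is to prove each of the three claims by showing weak convergence of the rescaled process $\mathcal{D}_m^{(1)}(k)/g_m(k)$ in an appropriate Skorokhod space and then applying the continuous mapping theorem to the supremum. The first step, common to all three cases, is to reduce to the degenerate kernel $\overline h$ via the identity $U_m(h;k) = U_m(\overline h;k)$, which follows from a direct calculation showing that the projection contributions from $g(\mathbf x) + g(\mathbf y) - \E h(\mathbf X,\mathbf Y)$ (with $g(\mathbf x) = \E h(\mathbf x, \mathbf Y)$) cancel between the three sums in \eqref{e:def_Um(h;k)}. Then I would insert the spectral expansion $\overline h(\mathbf x,\mathbf y) = \sum_\ell \lambda_\ell \phi_\ell(\mathbf x)\phi_\ell(\mathbf y)$ to obtain
$$
U_m(\overline h;k) = \sum_\ell \lambda_\ell \left[ \frac{2 S_m^{(\ell)} T_k^{(\ell)}}{km} - \frac{(S_m^{(\ell)})^2 - D_m^{(\ell)}}{m(m-1)} - \frac{(T_k^{(\ell)})^2 - D_k^{(\ell)}}{k(k-1)}\right],
$$
where $S_m^{(\ell)}, T_k^{(\ell)}$ are partial sums of $\phi_\ell(X_i)$ over the training and monitoring samples, and $D_m^{(\ell)}, D_k^{(\ell)}$ are the analogous partial sums of $\phi_\ell^2$.

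For the open-ended and long-horizon closed-ended schemes I would change variables $u = k/m$ and verify
$$
u^2 m\, U_m(\overline h;k) \;\inD\; -\sum_\ell \lambda_\ell\left[\bigl(W^{(b)}_\ell(u) - u Z_\ell\bigr)^2 - u(1+u)\right],
$$
where $Z_\ell \isd W_\ell^{(a)}(1)$ and $W_\ell^{(b)}$ are independent standard Brownian motions (arising as the scaling limits of $S_m^{(\ell)}/\sqrt m$ and $T_{\lfloor um\rfloor}^{(\ell)}/\sqrt m$, respectively, with the $D$-terms supplying the $u$ and $u\cdot u$ centrings by the LLN). The key algebraic identity, to be verified by a direct covariance computation, is that $(u Z_\ell - W_\ell^{(b)}(u))/\sqrt{u(1+u)}$ coincides in law as a process with $W_\ell(\phi(u))/\sqrt{\phi(u)}$ for $\phi(u) = u/(1+u)$, which in turn yields $(u Z_\ell - W_\ell^{(b)}(u))^2 - u(1+u) \isd (1+u)^2 [W_\ell^2(v) - v]$ with $v = \phi(u)$. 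Combined with the factorisation $g_m(k) = u^\beta(1+u)^{2-\beta}$ and the cancellation $(1+u)^\beta u^{-\beta} = v^{-\beta}$, this gives $\mathcal{D}_m^{(1)}(k)/g_m(k) \inD v^{-\beta}|\Gamma(v)|$. The supremum over $k\ge 2$ translates to the supremum over $v\in(0,1)$ in the open-ended case, and to the supremum over $v\in(0,u_0)$ (with $u_0 = a_0/(1+a_0)$) under the constraint $k<M$ for the long-horizon closed case.

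For the short-horizon closed scheme I would instead use $s=k/M$: because $M/m\to 0$, a direct order-of-magnitude check shows that the cross-product term and the training-sample term in $M\,U_m(\overline h;k)$ are $\Op{\sqrt{M/m}}$ and $\Op{M/m}$ respectively, hence asymptotically negligible, while the monitoring-sample term yields $M\,U_m(\overline h;k) \inD -\Gamma(s)/s^2$. Coupling this with $g_m(k) = (M/m)(k/M)^\beta$ then gives $\mathcal{D}_m^{(1)}(k)/g_m(k) \inD s^{-\beta}|\Gamma(s)|$ over $s\in(0,1)$.

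The main obstacle will be the technical handling of the infinite sum over $\ell$ under only the square-summability hypothesis $\sum \lambda_\ell^2 < \infty$. Under absolute summability of $\{\lambda_\ell\}$ one could bound each term separately, but under square summability the centred quantities must be kept together: splitting $\sum_\ell \lambda_\ell[(Y_\ell^{(u)})^2 - u(1+u)]$ into $\sum_\ell \lambda_\ell (Y_\ell^{(u)})^2$ and $u(1+u)\sum_\ell \lambda_\ell$ is not permitted since the latter need not converge. The functional convergence of the centred series must therefore be established in $L^2$ uniformly over compact sub-intervals of $u$, via the second-moment identity $\E\{[W_\ell^2(u) - u]^2\} = 2u^2$ together with a truncation argument, and tightness in the appropriate Skorokhod space must then be proved. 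At the singular endpoint $v\to 0$, the bound $|\Gamma(v)| = \Op{v}$ combined with $\beta<1$ ensures $v^{-\beta}|\Gamma(v)|\to 0$; near $v = 1$, tightness follows from the finiteness of $\E\Gamma^2(1) = 2\sum_\ell \lambda_\ell^2$. With these ingredients in place, the continuous-mapping theorem delivers the stated distributional limits for the suprema, and hence for the non-rejection probabilities via the equivalence $\{\tau_m=\infty\} = \{\sup_k \mathcal{D}_m^{(1)}(k)/g_m(k)\le \mathcal c\}$ (and analogously for $\tau_{m,M}$).
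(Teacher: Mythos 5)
Your proposal follows essentially the same route as the paper: reduction to the degenerate kernel $\overline h$, spectral expansion, truncation to $L$ eigenvalues with the tail $\sum_{\ell>L}$ controlled via square summability and a martingale maximal inequality, the CUSUM-type representation in the partial sums of $\phi_\ell$, the distributional identity $(W_2(t)-tW_1(1))/(1+t)\overset{\mathcal D}{=}W(t/(1+t))$, and the change of variables yielding $\Gamma(u)$ on $(0,1)$ (resp.\ $(0,u_0)$, resp.\ the $k/M$ rescaling for part (iii)). The one point that needs tightening is the handling near $v=1$ (i.e.\ $k\to\infty$ in the open-ended scheme): finiteness of $\E\Gamma^2(1)$ alone does not give a.s.\ finiteness of the supremum; the paper shows via Rosenthal moment bounds and Kolmogorov's continuity criterion that the limit extends continuously to $[0,1]$, and separately that $\limsup_m\P\{\sup_{k\geq mT}\mathcal D_m^{(1)}(k)/g_m(k)>x\}=O(1/T)$, so that the $\mathbf D[0,T]$ convergence can be lifted to convergence of the global supremum.
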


\noindent Theorem \ref{t:theorem_under_H0} contains the limiting
distribution of the test statistics in various cases. Part \textit{(i)} of
the theorem refers to an open-ended, indefinite-horizon monitoring scheme;
asymptotic control of the Type I error rate under the null is guaranteed by
choosing $\mathcal{c}=\mathcal{c}_{\alpha }$ such that $\P \Big\{%
\sup_{0<u<1}u^{-\beta }|\Gamma (u)|>\mathcal{c}_{\alpha }\Big\}=\alpha$.
Parts \textit{(ii)} and \textit{(iii)} provide analogous statements in the
finite-horizon monitoring setting. In particular, part \textit{(ii)}
corresponds to a ``long-horizon''
monitoring, in the sense the monitoring horizon $M$ is either comparable or
much larger than the length of the historical sample $m$. The limiting
distribution in both cases is given by the supremum of the weighted version
of $|\Gamma (u)|$; the only difference is in the interval over which the
supremum is taken. From a practical point of view, the relevant case is
always \textit{(ii)} - that is, critical values should be always computed
from the supremum taken over the interval $\left( 0,u_{0}\right) $, and case 
\textit{(i)} can be viewed as an always more conservative asymptotic
approximation.\\
\noindent Finally, part \textit{(iii)} corresponds to ``
short-horizon'' monitoring, where the length of the
monitoring horizon is effectively negligible compared to the length of the training period. In all cases, the critical values $\mathcal{c}%
_{\alpha }$ can be derived by simulations, based on the definition of $%
\Gamma (u)$ in (\ref{e:def_gamma}) - see Section \ref{implementation}.

\smallskip

\noindent We now study the limiting behavior of Page-type
monitoring scheme, based on $\mathcal{D}_{m}^{(2)}(k)$ defined in (\ref{page}%
). Define
the two parameter processes
\begin{align}
Z_\ell(u,v) &= W_{\ell }\left(
u\right) -\frac{1-u}{1-v}W_{\ell }\left( v\right)\\
G(u,v) &=\sum_{\ell =1}^{\infty }\lambda _{\ell }\bigg[Z^2_\ell(u,v)-\left(
u-v\left( \frac{1-u}{1-v}\right) \right) \left( 1-v\left( \frac{1-u}{1-v}
\right) \right) \bigg],  \label{e:G(u,v)}
\end{align}%
for each $0\leq u,v\leq 1$, with $\{W_{\ell }(u),u\geq 0\}$ as in %
\eqref{e:def_gamma}, and let 
\begin{equation}
\overline{\Gamma }(u)=\sup_{0<v< u}|G(u,v)|,\quad 0\leq v\leq 1.
\label{gamma-bar}
\end{equation}

\begin{theorem}
\label{t:page_under_H0} Assume $H_{0}$ holds, and consider the Page-type
detector $\mathcal{D}_{m}(k)=\mathcal{D}_{m}^{(2)}(k)$. Then the statements
of Theorem \ref{t:theorem_under_H0} hold with $\overline{\Gamma }$ replacing 
$\Gamma $. 
\end{theorem}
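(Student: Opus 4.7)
The plan is to follow the same architecture as the proof of Theorem 3.1, lifting the one-parameter functional convergence to a two-parameter version indexed by $(k,r)$, and then collapsing back to a one-parameter process via the inner supremum over $r$. The first step is to show that
\begin{equation*}
\frac{(k-r)^{2}}{m}\,U_{m}(h;r,k) \;\approx\; \sum_{\ell=1}^{\infty} \lambda_{\ell}\bigg[ \Bigl(\tfrac{1}{\sqrt{m}}\sum_{j=m+r+1}^{m+k}\phi_{\ell}(\mathbf{X}_{j}) - \tfrac{k-r}{m\sqrt{m}}\sum_{i=1}^{m}\phi_{\ell}(\mathbf{X}_{i})\Bigr)^{2} - \text{centering} \bigg],
\end{equation*}
by applying the Hoeffding decomposition of the degenerate kernel $\overline{h}$ together with its spectral representation $\overline{h}(\mathbf{x},\mathbf{y})=\sum_{\ell}\lambda_{\ell}\phi_{\ell}(\mathbf{x})\phi_{\ell}(\mathbf{y})$. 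Under $H_{0}$ the square summability $\sum_{\ell}\lambda_{\ell}^{2}<\infty$ from \eqref{e:eigs_square_summable} allows truncation at a finite level $L$ with an $L^{2}$ remainder bound uniform in $(k,r)$, essentially reusing the variance estimates obtained in Theorem \ref{t:theorem_under_H0}.

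Second, I would establish a joint functional central limit theorem: setting $k=\lfloor mu/(1-u)\rfloor$ and $r=\lfloor mv/(1-v)\rfloor$ (so $k/m \to u/(1-u)$ and similarly for $v$), the vector of rescaled partial sums $(m^{-1/2}\sum_{i\le m}\phi_{\ell}(\mathbf{X}_{i}),\, m^{-1/2}\sum_{j\le m+\lfloor \cdot \rfloor}\phi_{\ell}(\mathbf{X}_{j}))_{\ell \le L}$ converges in $\mathbf{D}([0,1]^{2})^{2L}$ to independent Wiener processes (using Donsker together with orthonormality of the $\phi_{\ell}$). Applying the continuous mapping theorem to the quadratic functional then yields
\begin{equation*}
\frac{1}{m}\,\bigl(k-r\bigr)^{2}\,U_{m}(h;r,k) \;\stackrel{\mathcal{D}}{\longrightarrow}\; G(u,v),
\end{equation*}
jointly over $(u,v)$ in a suitable two-parameter Skorokhod space, where $G$ is the process defined in \eqref{e:G(u,v)}. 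Because of the boundary function $g_{m}(k)=(k/m)^{\beta}(1+k/m)^{2-\beta}$, after the change of variables $u=k/(m+k)$ the weight is a continuous function of $u$ bounded away from $0$ on any compact subinterval of $(0,1)$, so the continuous mapping theorem delivers the weighted supremum.

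Third, I would take the inner supremum over $r$ (equivalently, over $v\in(0,u]$) to obtain $\overline{\Gamma}(u)=\sup_{0<v\le u}|G(u,v)|$, and then the outer supremum over $k$ (equivalently over $u$) in each of the three regimes exactly as in Theorem \ref{t:theorem_under_H0}: open-ended gives $\sup_{0<u<1}$, long-horizon with $M/m\to a_{0}$ gives $\sup_{0<u<u_{0}}$, and short-horizon $M/m\to 0$ requires the rescaling $k=\lfloor Mu\rfloor$ combined with the boundary $g_{m}(k)=(M/m)(k/M)^{\beta}$, after which the limit is again $\sup_{0<u<1}u^{-\beta}|\overline{\Gamma}(u)|$ by a locally uniform Taylor expansion.

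The main obstacle will be establishing tightness, in both parameters $(u,v)$ simultaneously, of the weighted Page-type process near the left boundary $u\downarrow 0$, where the weight $u^{-\beta}$ diverges and the inner supremum over $v$ compounds the difficulty. I anticipate handling this by a truncation argument: split at a small $\eta>0$, control the piece on $u\in[\eta,1)$ by uniform convergence above, and bound the piece on $u\in(0,\eta]$ by a maximal inequality for weighted partial-sum processes (in the spirit of the Hájek--Rényi inequality applied termwise in $\ell$), together with the fact that $\sum_{\ell}\lambda_{\ell}^{2}<\infty$ makes the tail of the eigenvalue sum small uniformly in $(u,v)$. All remaining steps are routine analogues of those already worked out for $\mathcal{D}_{m}^{(1)}$.
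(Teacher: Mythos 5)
Your proposal mirrors the paper's own proof of Theorem \ref{t:page_under_H0}: the paper likewise lifts the argument to a two-parameter process $\mathcal{U}_m(s,t)$ (equation \eqref{e:Um(t)} in Section D), applies the inner supremum via the operator $\Psi u(t)=\sup_{0\le s\le t}|u(s,t)|$, controls the eigenvalue tail uniformly in $(r,k)$ through Lemma \ref{l:remainder_neglig_0}, uses Lemma \ref{l:trunctail_of_limit} for continuity and boundedness of the limit, and handles the boundary $u\downarrow 0$ and $u\uparrow 1$ via \eqref{e:uniform_limit_in_delta}, \eqref{e:uniform_limit_in_T}, \eqref{e:V_a_to_V_page} and \eqref{e:uniform_limit_in_a_page}, exactly the truncation-plus-maximal-inequality strategy you outline. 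The only cosmetic differences are that the paper works in the $(s,t)\in[0,\infty)^2$ parametrization and performs the change of variables $u=t/(1+t)$, $v=s/(1+s)$ at the end (equation \eqref{e:change_of_vari_to_Gamma(t)_page}), and it constructs the coupling via the Dudley--Wichura--Skorokhod representation rather than a bare Donsker plus continuous mapping step.
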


\subsection{Monitoring under the alternative\label{monitor-HA}}

\noindent Consider the following notation. Let $F_{\ast }=\theta G+(1-\theta
)F$, where $0<\theta <1$, and $G(\mathbf{x})$ is a distribution function
which, under the alternative, ``
contaminates'' $F$. Define\newline
\begin{align}
h_{1}(\mathbf{x})&=\int h(\mathbf{x},\mathbf{y})dF(\mathbf{y}),\quad h_{2}(%
\mathbf{x})=\int h(\mathbf{x},\mathbf{y})dF_{\ast }(\mathbf{y}),
\label{e:def_v(x)-2} \\
v(\mathbf{x})&=\int h(\mathbf{x},\mathbf{y})d(F(\mathbf{y})-G(\mathbf{y}%
))=\theta ^{-1}\left( h_{1}(\mathbf{x})-h_{2}(\mathbf{x})\right) ,
\label{e:def_v(x)} \\
\nu _{1}&=\int v(\mathbf{x})dF(\mathbf{x}),\quad \nu _{2}=\int v(\mathbf{x}%
)dF_{\ast }(\mathbf{x}).  \label{e:def_nu}
\end{align}

\begin{assumption}
\label{a:basic_HA} As $m\rightarrow \infty $, $m\theta ^{2}|\mathfrak{D}%
_{h}(F,G)|\rightarrow \infty$, where $\mathfrak{D}_{h}(F,G)$ is defined in %
\eqref{e:h_divergence}.
\end{assumption}
\noindent Assumption \ref{a:basic_HA} states that the change
can be ``small'', but not ``
too small'', in order for it to be detected. In particular,
whenever $|\mathfrak{D}_{h}(F,G)|\neq 0$, the ``degree of
contamination'' $\theta $ is required to be larger than $%
O\left( m^{-1/2}\right) $, but it can drift to zero, corresponding to the
case of a ``vanishing break''. By (\ref{e:def_nu}), 
$\theta ^{-1}(\nu _{1}-\nu _{2})$ $=$ $\int h(\mathbf{x},\mathbf{y}%
)d(F-G)^{2}(\mathbf{x},\mathbf{y})$ $=$ $\mathfrak{D}_{h}(F,G)$; hence
Assumption \ref{a:basic_HA} can be equivalently written as $m\theta |\nu
_{1}-\nu _{2}|\rightarrow \infty $, which is used in the proofs.

\noindent We also make the following regularity assumption regarding local alternatives:

\begin{assumption}
\label{a:2ndHa_cond} For $\mathbf{X}\sim F$ and $%
\mathbf{X}^{\ast }\sim F_{\ast }$,  $\sigma ^{2}=\var(v(\mathbf{X}))$,
and $\sigma _{\ast }^{2}=\var(v(\mathbf{X}^{\ast }))$, it holds that $\sigma |\mathfrak{D}
_{h}(F,G)|^{-1/2}\rightarrow \zeta$, and $\sigma _{\ast }|\mathfrak{D}%
_{h}(F,G)|^{-1/2}\rightarrow \zeta _{\ast }$, for some $\zeta,\zeta_*>0$
\end{assumption}

Note under Assumption %
\ref{a:basic_HA}, $\sigma _{\ast }$ and $\sigma $ may also drift to zero. 
\begin{theorem}
\label{alt:consistency}Under Assumptions \ref{a:basic_HA} and \ref{a:2ndHa_cond}, when either $%
\mathcal{D}_{m}(k)=\mathcal{D}_{m}^{(1)}(k)$ or $\mathcal{D}_{m}^{(2)}(k)$,
it holds that $\lim_{m\rightarrow \infty }P\left( \tau _{m}<\infty \right) $ 
$=$ $1$.
\end{theorem}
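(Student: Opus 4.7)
My plan is to exhibit, for each $m$, a deterministic monitoring time $k_m$ such that, with probability tending to one, $\mathcal{D}_m(k_m) > \mathcal{c}\,g_m(k_m)$. Because $\{\tau_m \le k_m\} \supseteq \{\mathcal{D}_m(k_m) > \mathcal{c}\,g_m(k_m)\}$, this immediately yields $P(\tau_m < \infty) \to 1$. A convenient choice is $k_m = 2(m \vee k_*)$, which keeps $k_m/m$ bounded above and $(k_m - k_*)/k_m \ge 1/2$ bounded away from zero. With this choice $g_m(k_m) = O(1)$ and $k_m^2/m \asymp m$, so the goal reduces to showing $m|U_m(h;k_m)| \to \infty$ in probability.

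The first step is the signal calculation. Writing $a = \iint h\,dF\,dF$, $b = \iint h\,dF\,dF_*$, and $c_0 = \iint h\,dF_*\,dF_*$, a direct expectation in \eqref{e:def_Um(h;k)} — separately counting pre- and post-change pairs in each of the three sums — gives, for $k > k_*$,
\begin{equation*}
E[U_m(h;k)] = -\Bigl(\tfrac{k-k_*}{k}\Bigr)^{\!2}(a - 2b + c_0) + O(1/k).
\end{equation*}
Since $F - F_* = \theta(F - G)$, one checks $a - 2b + c_0 = \theta^2 \iint h(x,y)\,d(F-G)(x)\,d(F-G)(y) = \theta^2 \mathfrak{D}_h(F,G)$. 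Thus $|E[U_m(h;k_m)]| \ge C\,\theta^2|\mathfrak{D}_h(F,G)|$ for some $C > 0$, and Assumption \ref{a:basic_HA} ensures $m\cdot |E[U_m(h;k_m)]| \to \infty$.

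The second step is to control the stochastic remainder $R_m(k) = U_m(h;k) - E[U_m(h;k)]$. A useful observation is that the particular linear combination in \eqref{e:def_Um(h;k)} cancels all constant and linear Hoeffding terms of $h$ in the null case. Under $H_A$, a residual linear contribution involving $v(\mathbf{x})$ (see \eqref{e:def_v(x)}) survives but is of order $O_P(\theta/\sqrt{m})$, while the fully degenerate quadratic remainder is $O_P((m+k)/(mk))$ and is controlled via the $L^2$-expansion of $\overline{h}$; both terms are dominated by the signal under Assumption \ref{a:basic_HA} (equivalently $m\theta|\nu_1-\nu_2|\to \infty$) for our choice of $k_m$. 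Hence $U_m(h;k_m) = E[U_m(h;k_m)](1 + o_P(1))$ and $\mathcal{D}_m^{(1)}(k_m)/g_m(k_m) \asymp m\theta^2|\mathfrak{D}_h(F,G)| \to \infty$ in probability, settling the CUSUM case. For the Page-type detector, the pointwise bound
\begin{equation*}
\mathcal{D}_m^{(2)}(k) = m^{-1}\max_{0 \le r<k}(k-r)^2 |U_m(h;r,k)| \;\ge\; m^{-1}k^2|U_m(h;0,k)| = \mathcal{D}_m^{(1)}(k)
\end{equation*}
transfers the conclusion immediately (taking $r = 0$ in the maximum defining $\mathcal{D}_m^{(2)}$).

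The main technical obstacle is the sharp control of the quadratic part of $R_m(k)$ under only the square-summability \eqref{e:eigs_square_summable} — the central innovation of the paper — rather than the absolute summability usually assumed in the degenerate $U$-statistic literature. Concretely, variance bounds must be obtained directly through the Mercer expansion $\overline{h}(x,y) = \sum_\ell \lambda_\ell \phi_\ell(x)\phi_\ell(y)$ so that all stochastic estimates collapse to the finite constant $\sum_\ell \lambda_\ell^2$. A minor additional care is needed when $k_*$ is allowed to grow with $m$: the choice $k_m = 2(m\vee k_*)$ keeps the signal factor $((k_m - k_*)/k_m)^2 \ge 1/4$ uniformly bounded below, so all of the preceding estimates remain valid uniformly in $k_*$.
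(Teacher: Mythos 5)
Your proof follows essentially the same route as the paper's: evaluate the detector at a single, deterministically chosen late time, separate drift from noise, show the drift dominates under Assumption \ref{a:basic_HA}, and transfer the CUSUM conclusion to the Page detector by taking $r=0$ in the maximum. The paper instead works directly with the algebraic decomposition $k^2U_m = q_1 + q_2 + q_3$ and the approximation lemmas (Lemmas \ref{l:replace_with_q1_q2} and \ref{l:size_of_q3}), which give uniform control over $k_* < k \le y$; this machinery is overkill for consistency alone (it is really built for the delay theorems) whereas your direct expectation computation at the single time $k_m$ is cleaner and self-contained for this statement.

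One small slip worth noting. You assert that $k_m = 2(m\vee k_*)$ keeps $k_m/m$ bounded above, and deduce $g_m(k_m) = O(1)$ and $k_m^2/m \asymp m$ separately. These intermediate estimates fail when $k_*/m\to\infty$: in that regime $g_m(k_m)\asymp (k_*/m)^2\to\infty$ and $k_m^2/m \asymp k_*^2/m$, and neither is $O(m)$. However, the only quantity you actually use is the ratio $k_m^2/(m\,g_m(k_m))$, and this \emph{is} of exact order $m$ in both regimes (when $k_m\gg m$, the $(1+k_m/m)^2$ in $g_m$ cancels the large $k_m^2/m$). So your reduction to showing $m\,|U_m(h;k_m)|\to\infty$ is correct as stated, but the justification should be phrased in terms of the ratio $k_m^2/(m\,g_m(k_m))\asymp m$ rather than the two factors separately. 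This is precisely the distinction the paper makes by treating the cases $k_* = O(m)$ and $m = o(k_*)$ separately, whereas your choice of $k_m$ handles both at once provided the ratio is what you track.

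Two smaller remarks. First, your noise estimate $O_P(\theta/\sqrt m)$ for the linear Hoeffding contribution to $U_m(h;k_m)$ agrees with the paper's bound for $\mathcal{q}_2$; for the comparison $m\cdot O_P(\theta/\sqrt m) = O_P(\theta\sqrt m) = o_P\!\left(m\theta^2|\mathfrak{D}_h(F,G)|\right)$ to go through, one uses that $\mathfrak{D}_h(F,G)$ is a fixed nonzero constant in the paper's setup, so Assumption \ref{a:basic_HA} is equivalent to $m\theta^2\to\infty$, which gives exactly what you need. Second, the degenerate quadratic remainder $O_P((m+k)/(mk))$ and the square-summability point are correctly identified, though for this theorem the relevant fact is simply ${\mathsf E}\,\overline h^2<\infty$ (Assumption \ref{a:assumption_on_h}), which makes the remainder analysis routine; the square-versus-absolute summability distinction is genuinely needed in the limit-distribution theorems rather than here.
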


\noindent Whenever $(\mathcal{X},\rho )$ has strong negative type (see
Example \ref{ex:negtype} below), then under the choice $h(\mathbf{x},\mathbf{%
y})=\rho (\mathbf{x},\mathbf{y})$, Theorem \ref{alt:consistency} states that
our procedure is consistent against \textit{all} distributional change
alternatives, as long as $m\theta ^{2}\rightarrow \infty $.\\

\noindent The results derived thus far refer to power.  We now report several results, under $H_{A}$,
concerning  
\textit{detection delay} associated with our procedures $\kappa _{m}-k_{\ast
}$, where 
\begin{equation}
\kappa _{m}=%
\begin{cases}
\min \{k>k_{\ast }:{\mathcal{D}}_{m}(k)>\mathcal{c}g_{m}(k)\} \\ 
\infty ,\quad \text{if ~}~{\mathcal{D}}_{m}(k)\leq \mathcal{c}g_{m}(k)~\text{%
~for all~}~k>k_{\ast }.%
\end{cases}
\label{e:delaytime_def}
\end{equation}%
We focus on two distinct settings: an ``early
change'', when $k_{\ast }\leq C$ for some unknown fixed
constant $C>0$, and a ``late change'',
wherein $k_{\ast }=\lfloor c_{\ast }m\rfloor $ for some $c_{\ast }>0$.

\noindent The next two theorems provide the limiting distribution of $\kappa
_{m}-k_{\ast }$ when both $\mathcal{D}_{m}(k)=\mathcal{D}_{m}^{(1)}(k)$ and $%
\mathcal{D}_{m}^{(2)}(k)$. Let 
\begin{align}
& \rho =\frac{1-\beta }{2-\beta },\qquad w=\left( \frac{\mathcal{c}}{\theta
|\nu _{1}-\nu _{2}|}\right) ^{1/(2-\beta )},\qquad v_{m}=\frac{2\sigma
_{\ast }}{(2-\beta )|\nu _{1}-\nu _{2}|}(wm^{\rho })^{1/2},
\label{e:def_eta_etc} \\
& v_{m}^{\prime }=\frac{m^{1/2}}{\theta \left\vert \mathfrak{D}_{h}\left(
F,G\right) \right\vert ^{1/2}}.  \label{vmdash}
\end{align}

\begin{theorem}
\label{alt:nonpage} Assume $H_{A}$ holds. Let $\kappa _{m}$ be as in %
\eqref{e:delaytime_def} with $\mathcal{D}_{m}(k)=\mathcal{D}_{m}^{(1)}(k)$
and $g_{m}$ as in \eqref{e:boundaryfxn}. If $k_{\ast }\leq C$ with some $C>0$%
, and Assumptions \ref{a:basic_HA} and \ref{a:2ndHa_cond} hold, then%
\begin{equation}
\frac{\kappa _{m}-k_{\ast }-wm^{\rho }}{v_{m}}\overset{\mathcal{D}}{%
\rightarrow }\mathcal{N}(0,1).  \label{e:normalimit}
\end{equation}%
If $k_{\ast }=c_{\ast }m$ for some $c_{\ast }>0$, and Assumptions \ref%
{a:basic_HA} and \ref{a:2ndHa_cond} hold, then 
\begin{equation}
\frac{\kappa _{m}-k_{\ast }}{v_{m}^{\prime }}\overset{\mathcal{D}}{%
\rightarrow }\mathcal{H}_{c_{\ast }}(\mathcal{c}),  \label{e:limit_cH(c)}
\end{equation}%
where $\mathcal{H}_{c_{\ast }}$ is defined in \eqref{e:def_cH(u)} in the
Supplement.%
\end{theorem}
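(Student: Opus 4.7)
The strategy is to decompose $U_m(h;k)$ under $H_A$ into a deterministic ``signal'' created by the change at $k_\ast$ plus a dominant linear-projection fluctuation coming from post-change observations, and then to solve the boundary-crossing equation $\mathcal{D}_m^{(1)}(\kappa_m)\approx \mathcal{c}\,g_m(\kappa_m)$. Substituting the Hoeffding decomposition of $h$ with respect to $F$---with $\overline{h}$ as in \eqref{e:degenerate_version_of_h}---into the three pieces of \eqref{e:def_Um(h;k)}, the constant and linear $h_1$-projection contributions cancel algebraically, so $U_m(h;k)$ reduces to a combination of $\overline{h}$-U-statistics over the three index-pair types (historical, pre-change monitoring, post-change monitoring). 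By $F$-degeneracy of $\overline{h}$, only the post-change pairs $m+k_\ast< i< j\le m+k$ carry nonzero mean; using the identity $\iint\overline{h}(\mathbf{x},\mathbf{y})\,dF_\ast(\mathbf{x})\,dF_\ast(\mathbf{y})=\theta(\nu_1-\nu_2)=\theta^2\mathfrak{D}_h(F,G)$, a direct computation yields
\[
\E U_m(h;k)\;\sim\;-\frac{(k-k_\ast)^2}{k^2}\,\theta^2\,\mathfrak{D}_h(F,G),
\]
so the deterministic signal in the detector is $\mathcal{D}_m^{(1)}(k)\sim m^{-1}(k-k_\ast)^2\theta^2|\mathfrak{D}_h(F,G)|$.

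A second Hoeffding decomposition of $\overline{h}$ with respect to $F_\ast$ identifies the $F_\ast$-linear projection as $\theta(\nu_2-v(\mathbf{x}))$, producing the leading random fluctuation in $U_m(h;k)$,
\[
F_m(k)\;=\;-\,\frac{2\theta(k-k_\ast)}{k^2}\sum_{\ell=m+k_\ast+1}^{m+k}\bigl(\nu_2-v(\mathbf{X}_\ell)\bigr),
\]
of variance $\sim 4\theta^2(k-k_\ast)^3\sigma_\ast^2/k^4$. The doubly-degenerate $\overline{h}$-remainder, the cross-type (one $F$, one $F_\ast$) pairs, and the historical-sample degenerate U-statistic are all shown to be $\op{1}$ relative to $F_m(k)$ at the relevant scales, via $L^2$ bounds that rely only on \eqref{e:eigs_square_summable}. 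For the early-change case $k_\ast\le C$, set $\phi(k)=\mathcal{D}_m^{(1)}(k)-\mathcal{c}\,g_m(k)$. Using $g_m(k)\sim(k/m)^\beta$ for $k\ll m$ and the definition of $w$ in \eqref{e:def_eta_etc}, the deterministic version of $\phi$ vanishes at $k_0:=wm^\rho$; the signal derivative $+2w\theta^2|\mathfrak{D}_h(F,G)|m^{\rho-1}$ and the subtracted boundary derivative $-\beta w\theta^2|\mathfrak{D}_h(F,G)|m^{\rho-1}$ combine to $\phi'(k_0)\sim(2-\beta)w\theta^2|\mathfrak{D}_h(F,G)|m^{\rho-1}$. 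Writing $\kappa_m=k_\ast+k_0+\xi_m$ and linearizing yields $\xi_m\approx m^{-1}k_0^2\,F_m(k_0)/\phi'(k_0)$; applying the CLT to $F_m(k_0)$---a centered sum of $\sim k_0=wm^\rho$ i.i.d.\ terms under $F_\ast$ with variance $\sigma_\ast^2$---and collecting the constants gives $\xi_m/v_m\convd\mathcal{N}(0,1)$, which is \eqref{e:normalimit}.

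For the late-change case $k_\ast=c_\ast m$, the delay $\kappa_m-k_\ast$ is of order $v_m'=m^{1/2}/(\theta|\mathfrak{D}_h(F,G)|^{1/2})$, which is $o(m)$ by Assumption \ref{a:basic_HA}, so $k/m\to c_\ast$ throughout the relevant range of $k$. Rescaling $k=k_\ast+sv_m'$ for $s$ in a compact set, the boundary converges pointwise, $g_m(k)\to g(c_\ast):=(c_\ast/(1+c_\ast))^\beta(1+c_\ast)^2$, while the deterministic signal $m^{-1}(k-k_\ast)^2\theta^2|\mathfrak{D}_h(F,G)|$ converges to $s^2$. A functional CLT on the jointly-normalized fluctuation process---indexed by $s$ and assembling the post-change linear projection $F_m(k)$ together with the historical-sample and pre-change monitoring-sample degenerate $\overline{h}$-U-statistic fluctuations---yields a Gaussian limit process $\mathcal{G}$, so that the first-passage time $\inf\{s>0:s^2+\mathcal{G}(s)>\mathcal{c}\,g(c_\ast)\}$ is, up to a reparametrization, the law $\mathcal{H}_{c_\ast}(\mathcal{c})$ defined in the Supplement.

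The hardest step is the late-change case. For $k\asymp m$, several fluctuation sources in $\mathcal{D}_m^{(1)}(k)=m^{-1}k^2|U_m(h;k)|$ contribute at order $O(1)$ and must be tracked jointly rather than discarded as in the early-change regime: for example, the historical-sample $\overline{h}$-U-statistic has standard deviation $O(1/m)$, which after multiplication by $m^{-1}k^2\sim c_\ast^2 m$ is of order $1$, and a similar statement applies to the pre-change monitoring sample. Establishing the joint FCLT for these components under only the square-summability \eqref{e:eigs_square_summable}---rather than absolute summability of the eigenvalues---requires Hilbert-space moment bounds analogous to those used in the proof of Theorem \ref{t:theorem_under_H0}, but now evaluated against the non-stationary sampling structure induced by $H_A$.
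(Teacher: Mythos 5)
Your proposal mirrors the paper's decomposition of $(k-r)^2U_m(h;r,k)$ into drift, linear-projection, and degenerate pieces (c.f.\ \eqref{e:U_as_q}), and for the early-change case it arrives at the same signal level $m^{-1}(k-k_\ast)^2\theta^2|\mathfrak{D}_h(F,G)|$, the same post-change fluctuation scale, and the correct $v_m$ after the boundary-crossing balance. Two caveats. First, ``writing $\kappa_m=k_\ast+k_0+\xi_m$ and linearizing'' conceals most of the work: $\kappa_m$ is a first-passage time for a process that is not pathwise monotone, so one must show the normalized detector stays bounded away from the threshold over $(k_\ast, y_m(1-\delta)]$ and that the post-change partial-sum increments over a window of width $\asymp v_m$ around $k_0$ vanish as $\delta\to 0$; this is exactly the content of the arguments between \eqref{e:alt_to_minusinf}, \eqref{e:bycontinuityofW}, and \eqref{e:q21_to_W(1)}. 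Second, replacing $g_m(k)$ by $(k/m)^\beta$ and dropping $k_\ast$ in $(k-k_\ast)$ incur errors of size $b_m\,(y_m/m)$ and $b_m\,(k_\ast/y_m)$ which must be $o(b_m)$; checking $b_m\gg y_m/m$ and $b_m\gg k_\ast/y_m$ is precisely where Assumption \ref{a:basic_HA} enters and is not automatic.

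For the late-change case your sketch contains two genuine errors. The post-change linear projection $F_m(k)$ does \emph{not} contribute to the limit: on the scale $k=k_\ast+s\,v_m'$, its contribution to $\mathcal{D}_m^{(1)}(k)/g_m(k)$ is $O_{\P}\big((m\theta^2|\mathfrak{D}_h(F,G)|)^{-1/4}(\log\log)^{1/2}\big)=o_{\P}(1)$ under Assumption \ref{a:basic_HA}. This is the point of the bound \eqref{e:approx_q23}, whose target $\mathcal{q}_{2,2}$ in \eqref{e:def_q23} deliberately \emph{omits} the $\sum z_i^\ast$ sum. The fluctuations that survive at order one are the $v$-projections from the historical sample and the pre-change monitoring sample ($V_{1,m}$, $V_{2,m}$), together with the two-sample degenerate $\overline{h}$-$U$-statistic $k_\ast^2 U_m(\overline{h};k_\ast)$ over that same index range. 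Consequently the limit process is \emph{not} Gaussian: $Z(t,c_\ast)$ in \eqref{e:def_cH(u)} has a $t$-linear Gaussian part driven by $V_2(c_\ast)-c_\ast V_1(1)$, but its $t$-constant offset $\mathbb{V}(0,c_\ast)=-\sum_{\ell}\lambda_\ell\big[(W_{2,\ell}(c_\ast)-c_\ast W_{1,\ell}(1))^2-c_\ast(1+c_\ast)\big]$ is a weighted centered $\chi^2$ mixture, and the two components are coupled through the cross-moments $\eta_\ell$ of \eqref{e:define_limitprocess_underHA}. Calling this a ``Gaussian limit process $\mathcal{G}$'' and absorbing the rest into ``a reparametrization'' misdescribes $\mathcal{H}_{c_\ast}(\mathcal{c})$.
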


\begin{theorem}
\label{alt:page}Assume $H_{A}$ holds. Let $\kappa _{m}$ be as in %
\eqref{e:delaytime_def} based on the detector $\mathcal{D}_{m}(k)=\mathcal{D}%
_{m}^{(2)}(k)$, with $g_{m}$ as in \eqref{e:boundaryfxn}. If $k_{\ast }\leq
C $ with some $C>0$, and Assumptions \ref{a:basic_HA} and \ref{a:2ndHa_cond}
hold, and further 
\begin{equation}
\sigma _{\ast }\theta (wm^{\rho })^{{3/2}-\beta }\rightarrow \infty ,
\label{e:alt_nondegen2}
\end{equation}%
where $w,\rho $ are given in \eqref{e:def_eta_etc}, then the limit %
\eqref{e:normalimit} holds.\\
If $k_{\ast }=c_{\ast }m$ for some $c_{\ast }>0$, and Assumption \ref%
{a:basic_HA} and \ref{a:2ndHa_cond} hold, then 
\begin{equation}
\frac{\kappa _{m}-k_{\ast }}{v_{m}^{\prime }}\overset{\mathcal{D}}{%
\rightarrow }\widetilde{\mathcal{H}}_{c_{\ast }}(\mathcal{c}),
\label{e:limit_cH(c)-tilde}
\end{equation}%
where $\widetilde{\mathcal{H}}_{c_{\ast }}$ is defined in %
\eqref{e:def_cH(u)2} in the Supplement, and $v_{m}^{\prime }$ is as
in (\ref{vmdash}).
\end{theorem}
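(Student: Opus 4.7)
\textbf{Proof proposal for Theorem \ref{alt:page}.} The plan is to parallel the proof of Theorem \ref{alt:nonpage} for $\mathcal{D}_m^{(1)}$, with the extra work focused on controlling the inner maximum over the shift $r$ in \eqref{e:def_page}. Under $H_A$, I would first perform the standard Hoeffding-type decomposition of $U_m(h;r,k)$ around $r$ and $k$ by writing, for $0\le r<k_\ast<k$ and observations past the changepoint contributing a fraction $(k-k_\ast)/(k-r)$ of $\bX^\ast$-data,
\[
U_m(h;r,k)=\frac{k-k_\ast}{k-r}\,\theta(\nu_1-\nu_2)\Big(1+o(1)\Big)+R_m(h;r,k),
\]
where $R_m(h;r,k)$ is a centered ``noise'' term of order $m^{-1/2}$ uniformly in $r,k$ in the relevant range, controlled by the standard moment bounds for degenerate $U$-statistics combined with Assumption~\ref{a:assumption_on_h}. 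A symmetric formula holds for $r\ge k_\ast$, in which case the signal is $\theta(\nu_1-\nu_2)(1+o(1))$. Multiplying by $(k-r)^2/m$, the drift component of the detector becomes
\[
m^{-1}\max_{0\le r<k}\big[(k-r)(k-k_\ast)\wedge (k-r)^2\big]\,\theta|\nu_1-\nu_2|(1+o(1)),
\]
which, crucially, identifies the optimal shift as $r^\ast\approx k_\ast$ (or $r^\ast=0$ when $k_\ast$ is bounded), so that the Page-type scheme effectively ``restarts'' at the true changepoint.

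For the early-change case $k_\ast\le C$, the preceding observation reduces the problem to a CUSUM-type analysis of $(k-r^\ast)^2 U_m(h;r^\ast,k)/m$ with $r^\ast$ close to $0$, whereupon the boundary-crossing equation $\mathcal{D}_m^{(2)}(\kappa_m)=\mathcal{c} g_m(\kappa_m)$ with $g$ as in \eqref{e:boundaryfxn} has, after Taylor expansion around $k=wm^\rho$, the same first-order structure as in Theorem \ref{alt:nonpage}. The Gaussian limit \eqref{e:normalimit} then follows from a CLT for the dominating linear part of $R_m(h;r^\ast,\cdot)$ applied at the scale $v_m$, exactly as in the proof of \eqref{e:normalimit} for the CUSUM detector. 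The extra condition \eqref{e:alt_nondegen2} enters precisely here: it ensures that the candidate optimizer $r^\ast$ is separated from the endpoints so that the argmax is concentrated near $k_\ast$ with probability tending to one, preventing degenerate contributions from $r$-values where the drift is smaller but the noise happens to fluctuate favorably.

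For the late-change case $k_\ast=c_\ast m$, one rescales time by $k=k_\ast+ v_m' s$, $r=k_\ast+v_m' t$ with $s,t\in\mathbb{R}$, and applies the invariance principle underlying Theorem \ref{t:page_under_H0} to the centered noise $R_m(h;r,k)$. The signal after rescaling is a deterministic quadratic in $(s,t)$, while the noise converges jointly, over $(s,t)$ in compacts, to a Gaussian sheet built from the $\{W_\ell\}$ appearing in $G(u,v)$ of \eqref{e:G(u,v)}. Taking the supremum over $t$ and then the first passage in $s$ gives a functional of this sheet which, by construction, is precisely the limit random variable $\widetilde{\mathcal{H}}_{c_\ast}(\mathcal{c})$ (defined as an analogous first-passage functional of the two-parameter Page-type Gaussian process in the Supplement). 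The continuous-mapping theorem applied to the (almost surely) continuous first-passage map then yields \eqref{e:limit_cH(c)-tilde}.

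The main obstacle, as in the CUSUM case, is obtaining uniform control of the remainder $R_m(h;r,k)$ across the two-dimensional index set $\{(r,k):0\le r<k\}$ rather than on a one-dimensional grid. I would handle this by a chaining argument on Skorokhod-type moduli, together with the bound $\sum_\ell \lambda_\ell^2<\infty$ from \eqref{e:eigs_square_summable}; the latter is what allows all estimates to go through under square summability rather than the stronger absolute summability used elsewhere in the literature. Once this uniform control is in place, both limit statements follow by restricting to a shrinking neighbourhood of the optimal $r^\ast$ and reusing the arguments of Theorem \ref{alt:nonpage}.
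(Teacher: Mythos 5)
Your high-level strategy — mirror the proof of the CUSUM version (Theorem \ref{alt:nonpage}), handle the extra maximum over $r$, rescale time near the changepoint in the late-change regime — is the right one, but there are substantive errors in the details that would derail the argument.

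First, your drift formula for $0\le r<k_\ast<k$ is incorrect: you write $U_m(h;r,k)\approx \frac{k-k_\ast}{k-r}\theta(\nu_1-\nu_2)$, but the correct leading term (Lemma \ref{l:replace_with_q1_q2}) is
\[
U_m(h;r,k)\approx -\Big(\tfrac{k-k_\ast}{k-r}\Big)^{2}\theta(\nu_1-\nu_2),
\]
so that after multiplying by $(k-r)^2/m$ the drift is $(k-k_\ast)^2\theta|\nu_1-\nu_2|/m$, which is \emph{constant} in $r$ over $r\le k_\ast$, not maximized near $r\approx k_\ast$. The missing square changes the conclusion: there is no sharp ``optimal restart'' at $r^\ast\approx k_\ast$; rather, the entire interval $[0,k_\ast]$ of shifts carries the same drift, and the paper exploits only the weaker fact $\max_{0\le r<k}|\mathcal{q}_1(r,k)|=|\mathcal{q}_1(0,k)|$ to reduce to the CUSUM case. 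Consequently, your explanation for the role of condition \eqref{e:alt_nondegen2} — concentrating the argmax near $k_\ast$ — is built on a mistaken picture.

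Second, and more seriously, for the late-change case $k_\ast=c_\ast m$, your rescaling $r=k_\ast+v_m' t$ with $t$ in compacts misses half of the limit. The shifts $r\le k_\ast$ live on the \emph{macroscopic} scale $r=\lfloor ms\rfloor$, $s\in[0,c_\ast]$, not on the microscopic scale $v_m'$. These macroscopic shifts carry a genuine degenerate $U$-process contribution over the pre-change segment, which converges to $\mathbb{V}(s,c_\ast)$ (an infinite weighted sum of squared Gaussian increments, not a Gaussian sheet). The paper's limit random variable $\widetilde{\mathcal{H}}_{c_\ast}(\mathcal{c})$ is the first-passage time of $\sup_t Y(t,c_\ast)$ with
\[
Y(t,c_\ast)=\max\Big\{\sup_{0\le s\le c_\ast}|Y_1(s,t)|,\ \sup_{0\le s\le t}|Y_2(s,t)|\Big\},
\]
where $Y_1$ lives on the macroscopic $s$-scale and carries $\mathbb{V}(s,c_\ast)$, and $Y_2$ lives on the microscopic $(s,t)$-scale and reduces to a deterministic parabola plus a single $\chi^2$-type variable. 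Your single-scale rescaling produces only (a misidentified version of) $Y_2$ and drops $Y_1$ entirely, so the limit you would derive is not $\widetilde{\mathcal{H}}_{c_\ast}(\mathcal{c})$. To repair this you need to treat $r\le k_\ast$ and $r>k_\ast$ separately, with the two different time scales, and establish joint weak convergence of the pair $(Y_{m,1},Y_{m,2})$ over the product of their respective index sets, as in the paper.

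Finally, the claim that the noise remainder $R_m(h;r,k)$ is ``of order $m^{-1/2}$ uniformly'' is too crude: the nondegenerate part $\mathcal{q}_2$, the pre-change degenerate part, and the cross terms have genuinely different orders depending on $r$, and the paper needs the separate Lemmas \ref{l:size_of_q3} and \ref{e:q3_statiticterms_only_lemma} together with the H\'{a}jek-type projection built into $\mathcal{q}_1,\mathcal{q}_2,\mathcal{q}_3$ to sort these out. Your chaining idea is in the right spirit, but without splitting the $U$-statistic into drift, linear, and degenerate pieces aligned with the change-point, the needed uniform bounds over the two-dimensional index $\{0\le r<k\}$ will not close.
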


\noindent Theorems \ref{alt:nonpage} and \ref{alt:page} describe the delay
time under both monitoring schemes (ordinary and Page-type, respectively). The
theorems state that - in the early change regime where $k_{\ast }$
occurs a finite number of periods after the start of the monitoring,  roughly $wm^{\rho }$ observations after the change-point are needed before detection.
Since $\rho $ approaches $0$ as $\beta $ approaches $1$, choosing values of $%
\beta $ close to $1$ can shorten detection times; this is also observed in %
\citet{aue2004delay}.\newline
Considering the late change regime, as mentioned in the theorems, the
(lengthy) definition of the limit variables $\mathcal{H}_{c_{\ast }}$ and $%
\widetilde{\mathcal{H}}_{c_{\ast }}$ is relegated to equations (\ref%
{e:def_cH(u)}) and (\ref{e:def_cH(u)2}) in the Supplement, for ease of
exposition. We remark, however, that both $\mathcal{H}_{c_{\ast }}(\mathcal{c%
})$ and $\widetilde{\mathcal{H}}_{c_{\ast }}(\mathcal{c})$ are non-Gaussian,
strictly positive, and for the same fixed $\mathcal{c}$, the variable $%
\widetilde{\mathcal{H}}_{c_{\ast }}(\mathcal{c})$ can be seen to be
stochastically smaller than $\mathcal{H}_{c_{\ast }}(\mathcal{c})$,
reflecting a well-documented advantage of shorter delay times in Page-type
detection procedures under late changes (c.f. \citealp{fremdt2015page}).
Seeing as both $\mathcal{H}_{c_{\ast }}$ and $\widetilde{\mathcal{H}}%
_{c_{\ast }}$ are well-defined random variables, the theorems entail that,
in the late change regime, the number of observations needed in order to
detect a change is proportional to $v_{m}^{\prime }$. When $0<\theta <1$
(i.e., when the size of the break is fixed), this entails that the detection
delay is proportional to $m^{1/2}$; seeing as the breakdate $k_{\ast }$ is
proportional to $m$, this means that detection is relatively quick. On the
other hand, when $\theta \rightarrow 0$ (corresponding to a break of
vanishing size), this inflates $v_{m}^{\prime }$ and, therefore, the
detection delay.\newline
Finally, as can be expected, in all cases small values of $|\mathfrak{D}%
_{h}(F,G)|$ yield larger delay times.

\section{Complements and extensions\label{s:refinements}}

\subsection{An expanding baseline variant\label{repurpose}}

There are many possible variants of the basic two-sample monitoring setups, depending on how the historical sample and the incoming monitoring observations are used. To illustrate this flexibility, we consider one additional detector, denoted $\mathcal D_m^{(3)}$, based on expanding the baseline sample during the monitoring period. Note that the detector $\mathcal D_m^{(1)}$ and Page-type detector $\mathcal D_m^{(2)}$ keep the historical sample fixed. The variant below modifies the Page detector by allowing the baseline sample to expand: observations that are sufficiently far in the past are incorporated into the baseline sample, while the Page maximization is restricted to only a window comprised of the most recent data.

The motivation is that, over a long monitoring horizon, the earliest monitoring observations that have not led to rejection may provide useful additional information about the pre-change distribution. Hence, incorporating such observations can strengthen the baseline sample and prevent older monitoring data from diluting the effect of a later change.  However, this type of procedure  naturally involves a tradeoff: if a weak change is not detected quickly, some post-change observations may eventually be absorbed into the baseline sample; hence it may gain or lose power compared to the fixed-baseline-type monitoring. Other possibilities include considering all possible adjacent splits available at each time $k$, among others; see, e.g. \cite{gosmann2021new}, and \cite{aue2024state}.  

Fix $c_0>0$
and put
$$
c_m=\lfloor c_0 m\rfloor,\qquad b_k=(k-c_m)_+,\qquad
n_k=m+b_k,
$$
where $(x)_+=\max\{x,0\}$. 
Thus, at time $k$, the first $n_k$ observations are used as baseline data and
the Page maximization is carried out over the most recent $k\wedge c_m$
monitoring observations.  With the convention that $U_n(h;r,j)$ is defined as
in \eqref{e:def_page}, but with historical sample size $n$, define
\begin{equation}
\mathcal D_m^{(3)}(k)
=m^{-1}\max_{b_k\leq r\leq k-2}
(k-r)^2\left|U_{n_k}(h;r-b_k,k-b_k)\right|.\label{e:def_Dm3}
\end{equation}
The corresponding boundary is
\begin{equation}
g_m^{(3)}(k)= g\left(\frac{k}{n_k}\right)\left(1+\frac{b_k}{m}\right)^\gamma,
\label{e:def_gm3}
\end{equation}
where $\gamma>1/2$.  Since the baseline is expanded, the original boundary $g(k/m)$ is replaced with $g(k/n_k)$; the additional factor $\left(1+{b_k}/{m}\right)^\gamma$ is included to ensure that after expanding the baseline, the ratio $\mathcal D_m^{(3)}(k)/ g_m^{(3)}(k)$ remains stochastically bounded over indefinite monitoring horizons; in simulations, we set $\gamma=0.51$. 
Notice that, if $k\leq c_m$, then $b_k=0$, $n_k=m$, so that
$\mathcal D_m^{(3)}(k)=\mathcal D_m^{(2)}(k)$ for all such $k$.

We now describe the limit distribution under $H_0$. For
$0\leq y\leq v<u\leq1$, define
\begin{equation}\label{e:Z_y(u,v)}
Z_{\ell,y}(u,v) = 
W_\ell(u)-W_\ell(y)-\frac{1-u}{1-v}\big(W_\ell(v)-W_\ell(y)\big)
\end{equation}
and
\begin{align}
\label{e:def_Gy}
G_y(u,v)&=
\sum_{\ell=1}^{\infty}\lambda_\ell
\bigg[Z^2_{\ell,y}(u,v)-\bigg((u-v)+\left(\frac{u-v}{1-v}\right)^2(v-y)\bigg)\bigg]. \notag
\end{align}
The case $y=0$ reduces to \eqref{e:G(u,v)}, i.e.
$G_0(u,v)=G(u,v)$.
For the fixed $c_0>0$ above, set
\begin{equation}
u_{c_0}=\frac{c_0}{1+c_0},\qquad y(u)
=\begin{cases}
0&0\leq u\leq u_{c_0}\\
\displaystyle\frac{(1+c_0)u-c_0}{1-c_0+c_0 u},
&u_{c_0}<u<1.
\end{cases}\label{e:def_yu}
\end{equation}
Then define
\begin{equation}
\overline\Gamma^{(3)}(u)=\sup_{y(u)\leq v<u}
\left|G_{y(u)}(u,v)\right|.
\label{e:def_Gamma3}
\end{equation}
\begin{theorem}
\label{t:theorem_D3_under_H0}
Assume $H_0$ holds, and consider the detector
$\mathcal D_m(k)=\mathcal D_m^{(3)}(k)$. Let
$$
d^{(3)}(u) =u^\beta \big(1-y(u)\big)^{\beta-\gamma}  \big(1-uy(u)\big)^{2-\beta}. $$
Then, (i) as $m\to\infty$,
\begin{equation}
\sup_{k\geq 2}
\frac{\mathcal D_m^{(3)}(k)}{g_m^{(3)}(k)}
\Rightarrow
\sup_{0<u<1}
\frac{\overline\Gamma^{(3)}(u)}
{d^{(3)}(u)}
\label{e:limit_Dm3}.
\end{equation}
(ii) Suppose that $M=M_m\to\infty$ and
$M/m\to a_0\in(0,\infty]$, and set $u_0={a_0}/(1+a_0)$.
Then
\begin{equation}
\max_{2\leq k<M}
\frac{\mathcal D_m^{(3)}(k)}
{g_m^{(3)}(k)}
\Rightarrow
\sup_{0<u<u_0}
\frac{\overline\Gamma^{(3)}(u)}
{d^{(3)}(u)}.
\label{e:limit_Dm3_closed}
\end{equation}
\end{theorem}

\subsection{Testing for the stability of the training sample\label{sectrain}}

\noindent Assumption \ref{a:historical_stability} requires that the training/baseline
sample $\mathbf{X}_{1},$ ..., $\mathbf{X}_{m}$ is stable - that is, it
undergoes no breaks. As mentioned above, this is a typical, and testable,
assumption. We now (briefly) discuss a \textit{U}-statistic based approach
to test retrospectively for the null hypothesis of no distributional changes
in the training sample. We use the sequence 
\begin{align*}
{\mathfrak{R}}(k)=&\frac{2}{k(m-k)}\sum_{i=1}^{k}\sum_{j=k+1}^{m}h(\mathbf{X}%
_{i},\mathbf{X}_{j}) -\binom{k}{2}^{-1}\sum_{1\leq i<j\leq k}h(\mathbf{X}%
_{i},\mathbf{X}_{j}) \\
& -\binom{m-k}{2}^{-1}\sum_{k+1\leq i<j\leq m}h(\mathbf{X}_{i},\mathbf{X}%
_{j}),\;\;\;\;
\end{align*}%
for $2\leq k\leq m-2$, and define the corresponding process 
\begin{equation*}
\mathfrak{r}_{m}(t)=\left\{ 
\begin{array}{ll}
0,\;\;\;t\not\in \lbrack 2/m,1-2/m]\vspace{0.3cm} &  \\ 
mt^{2}(1-t)^{2}\mathfrak{R}(\lfloor mt\rfloor ),\;\;\;2/m\leq t\leq 1-2/m. & 
\end{array}%
\right.
\end{equation*}%
As is typical in this literature (\citealp{chgreg}), we consider a \textit{%
weighted} version of $\mathfrak{r}_{m}(t)$, in order to enhance the power of
our test in the presence of changes occurring close to the beginning/end of
the sample; we propose the following family of weight functions 
\begin{equation}
\mathfrak{q}(t)=(t(1-t))^{\zeta },\text{ for some }\zeta <1.  \label{weight}
\end{equation}%

A ``natural'' choice to detect the presence
of a possible change is to use the sup-norm of the weighted version of $%
\mathfrak{r}_{m}(t)$, viz. $\sup_{0<t<1}\left\vert \mathfrak{r}%
_{m}(t)\right\vert /\mathfrak{q}(t)$.

\begin{theorem}
\label{thewi} If Assumptions \ref{a:historical_stability}--\ref%
{a:assumption_on_h} hold, then %
\begin{equation*}
\sup_{0<t<1}\frac{\left\vert \mathfrak{r}_{m}(t)\right\vert }{\mathfrak{q}(t)%
}\overset{\mathcal{D}}{\rightarrow }\sup_{0\leq t\leq 1}\frac{1}{\mathfrak{q}%
(t)}\left\vert \sum_{\ell =1}^{\infty }\lambda _{\ell }\left( B_{\ell
}^{2}(t)-t(1-t)\right) \right\vert ,
\end{equation*}%
where $\{B_{\ell }(t),0\leq t\leq 1\},$ $\ell =1,2,\ldots $ are independent
Brownian bridges.
\end{theorem}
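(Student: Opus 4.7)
My plan is to reduce $\mathfrak{r}_{m}(t)$ to a weighted sum of squared empirical Brownian bridges built from eigenfunction evaluations, then invoke Donsker's theorem together with a truncation argument in the eigen-index.

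First I note that under $H_{0}$ one may replace $h$ in $\mathfrak{R}(k)$ by its degenerate counterpart $\bar h$ from \eqref{e:degenerate_version_of_h} without changing $\mathfrak{R}(k)$: writing $h-\bar h=\phi(\mathbf{x})+\phi(\mathbf{y})-\mathsf{E}\phi(\mathbf{X})$ with $\phi(\mathbf{x})=\mathsf{E} h(\mathbf{x},\mathbf{Y})$, the contributions of these three additive pieces to the three sums defining $\mathfrak{R}(k)$ cancel algebraically (the usual Hoeffding-projection argument). Next, Assumption \ref{a:assumption_on_h} ensures that $A$ is Hilbert--Schmidt, so Mercer's theorem gives $\bar h(\mathbf{x},\mathbf{y})=\sum_{\ell\ge 1}\lambda_{\ell}\phi_{\ell}(\mathbf{x})\phi_{\ell}(\mathbf{y})$ in $L^{2}(F\otimes F)$. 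Setting $S_{\ell}(k)=\sum_{i=1}^{k}\phi_{\ell}(\mathbf{X}_{i})$ and $V_{\ell}(k)=\sum_{i=1}^{k}\phi_{\ell}(\mathbf{X}_{i})^{2}$, the identity $2\sum_{i<j\le k}a_{i}a_{j}=(\sum_{i\le k}a_{i})^{2}-\sum_{i\le k}a_{i}^{2}$ rewrites each of the three sums in $\mathfrak{R}(k)$ as a bilinear expression in the $S_{\ell}$'s together with the diagonal $V_{\ell}$'s.

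Multiplying by $mt^{2}(1-t)^{2}$ with $k=\lfloor mt\rfloor$ and using the SLLN for $V_{\ell}(k)/k$, the three pieces algebraically combine into a single squared empirical bridge plus a deterministic centering, yielding
\begin{equation*}
\mathfrak{r}_{m}(t)=-\sum_{\ell\ge 1}\lambda_{\ell}\bigl[\widetilde B_{\ell}^{(m)}(t)^{2}-t(1-t)\bigr]+R_{m}(t),
\end{equation*}
where $\widetilde B_{\ell}^{(m)}(t)=m^{-1/2}\bigl(S_{\ell}(\lfloor mt\rfloor)-(\lfloor mt\rfloor/m)S_{\ell}(m)\bigr)$ is the empirical Brownian bridge and $R_{m}$ collects LLN residuals and rounding errors of the form $k-1$ vs.\ $k$. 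For each fixed truncation $L$, the vector $(\widetilde B_{1}^{(m)},\ldots,\widetilde B_{L}^{(m)})$ converges weakly in $\mathbf{D}([0,1])^{L}$ to $(B_{1},\ldots,B_{L})$, a vector of independent standard Brownian bridges, by the multivariate Donsker theorem: $\{\phi_{\ell}(\mathbf{X}_{i})\}_{i\ge 1}$ is i.i.d., mean-zero, unit-variance, and uncorrelated across $\ell$ by orthonormality in $L^{2}(F)$. Composing with the weighted-supremum functional (continuous on the subspace of functions with the appropriate boundary vanishing) yields the truncated version of the claimed limit.

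The crux of the argument --- and, I expect, the main obstacle --- lies in controlling the $\ell>L$ tail uniformly in $t$ under the weight $\mathfrak{q}(t)^{-1}=(t(1-t))^{-\zeta}$. Because the paper assumes only $\sum_{\ell}\lambda_{\ell}^{2}<\infty$ rather than the more common $\sum_{\ell}|\lambda_{\ell}|<\infty$, the crude bound $\sum_{\ell>L}|\lambda_{\ell}|\cdot\sup_{t}\widetilde B_{\ell}^{(m)}(t)^{2}/\mathfrak{q}(t)$ is unavailable. Instead one must exploit the $L^{2}(F)$-orthogonality of the $\phi_{\ell}$'s directly at the second-moment level: a covariance computation based on Isserlis-type fourth-moment identities (with the cross-$\ell$ covariances $\mathrm{Cov}(\widetilde B_{\ell}^{(m)}(t)^{2},\widetilde B_{\ell'}^{(m)}(t)^{2})$ vanishing asymptotically by uncorrelatedness) gives a pointwise second-moment bound of order $(t(1-t))^{2}\sum_{\ell>L}\lambda_{\ell}^{2}$. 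Dividing by $\mathfrak{q}(t)^{2}=(t(1-t))^{2\zeta}$ leaves a factor $(t(1-t))^{2-2\zeta}$, integrable in $t$ precisely because $\zeta<1$; a dyadic chaining argument then lifts the pointwise bound to a uniform-in-$t$ bound, and sending $L\to\infty$ closes the tightness. Combining this tail control with the finite-$L$ weak convergence above, via Slutsky and the weighted continuous mapping theorem, delivers the stated limit.
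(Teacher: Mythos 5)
Your high-level plan matches the paper's structure: replace $h$ by its degenerate version, expand in the eigenbasis, convert the three off-diagonal sums into squared CUSUM/bridge terms plus remainders, prove a finite-$L$ Donsker limit, and then control the tail $\ell>L$. Lemma~\ref{prwe4} in the paper is exactly your multivariate Donsker step, and Lemma~\ref{prwe3} is the LLN/rounding cleanup you sketch. The problem is in the tail control, and it is not a presentational issue but a structural one.

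You propose to write $\mathfrak{r}_{m}(t)$ as $-\sum_{\ell\ge 1}\lambda_{\ell}\bigl[\widetilde B_{\ell}^{(m)}(t)^{2}-t(1-t)\bigr]+R_{m}(t)$ and then bound the $\ell>L$ tail of the \emph{squared} bridges. Two things break. First, forming the square for all $\ell$ simultaneously already requires the diagonal $\sum_{\ell}\lambda_{\ell}\phi_{\ell}^{2}(\mathbf{X}_{i})$ to exist (it is the quantity you subtract off to go from $S_{\ell}(k)^{2}$ to the off-diagonal sum actually appearing in $\mathfrak{R}(k)$). Under only $\sum_{\ell}\lambda_{\ell}^{2}<\infty$ this series need not converge, and the ``SLLN for $V_{\ell}(k)/k$'' you invoke is a per-$\ell$ statement whose residuals you are implicitly summing over all $\ell\geq 1$ without any control. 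The paper avoids this entirely: the squared-bridge representation is formed \emph{only after truncating} to $\ell\leq K$ (where the diagonal is a finite sum and Lemma~\ref{prwe3} applies term by term), while the tail $\ell>K$ is kept in its original off-diagonal bilinear form $\sum_{1\leq i\neq j\leq k}\sum_{\ell>K}\lambda_{\ell}\phi_{\ell}(\mathbf{X}_{i})\phi_{\ell}(\mathbf{X}_{j})$ and never squared.

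Second, your second-moment bound for the tail rests on ``Isserlis-type fourth-moment identities'' and the claim that the cross-$\ell$ covariances of $\widetilde B_{\ell}^{(m)}(t)^{2}$ vanish by uncorrelatedness. Isserlis requires Gaussianity, which holds for the \emph{limit} bridges $B_{\ell}$ but not for $\widetilde B_{\ell}^{(m)}$ at finite $m$; the prelimit covariances of squares involve mixed fourth moments such as $\mathsf{E}\phi_{\ell}^{2}(\mathbf{X})\phi_{\ell'}^{2}(\mathbf{X})$, which are neither zero nor controlled by $\sum\lambda_{\ell}^{2}$ under Assumption~\ref{a:assumption_on_h}. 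What actually delivers the constant $\sum_{\ell>K}\lambda_{\ell}^{2}$ without any fourth moments is the off-diagonal/martingale structure: in the paper's Lemma~\ref{prwe1}, writing the tail bilinear form as a sum of martingale increments $Y_{q}=2\sum_{i<q}\sum_{\ell>K}\lambda_{\ell}\phi_{\ell}(\mathbf{X}_{i})\phi_{\ell}(\mathbf{X}_{q})$, the orthonormality $\mathsf{E}\phi_{\ell}\phi_{\ell'}=\delta_{\ell\ell'}$ collapses $\mathsf{E}Y_{q}^{2}$ to $4(q-1)\sum_{\ell>K}\lambda_{\ell}^{2}$ using only second moments, and Doob's maximal inequality then gives the uniform-in-$k$ bound. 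Lemma~\ref{prwe2} carries this through a dyadic peeling in $k$ to absorb the weight $\mathfrak{q}(k/m)^{-1}$ under $\zeta<1$ --- this last part you identify correctly, but it is downstream of the maximal inequality whose derivation your proposal does not reach. In short: keep the tail in bilinear form and use the martingale/orthonormality argument; do not pass to squares before truncation.
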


\noindent Theorem \ref{thewi} contains the limit of the maximally selected
weighted version of $\mathfrak{r}_{m}(t)$. 

Several further results such as power versus the alternative, and a
consistent estimator of the break date, could be readily derived by
extending the theory in \citet{chgreg}. The same result - for the case $%
\mathfrak{q}(t)=1$ - was proven by \citet{biau:bleakley:mason:2016}, under
the more restrictive condition $\sum_{\ell =1}^{\infty }|\lambda _{\ell
}|<\infty $. Hence, similarly to the other results above, Theorem \ref{thewi}
improves on the current literature by requiring the milder condition $%
\sum_{\ell =1}^{\infty }\lambda _{\ell }^{2}<\infty $.

\subsection{Examples of kernel functions\label{kernels}}

\noindent We discuss some examples of possible kernel functions $h\left(
\cdot ,\cdot \right) $, and a methodology (plus an example) to construct
``distribution-distinguishing'' kernels $%
h\left( \cdot ,\cdot \right) $ - that is, functions $h\left( \cdot ,\cdot
\right) $ which can discriminate \textit{any} change in distribution.

\begin{example}
\label{energy} Suppose $\mathcal{X}=\mathbb{R}^{d}$, and let $\eta \in (0,2)$%
. The kernel $h(\mathbf{x},\mathbf{y})=\Vert \mathbf{x}-\mathbf{y}\Vert
^{\eta }$ is connected with the energy distance between two independent
vectors $\mathbf{X},\mathbf{Y}\in \mathbb{R}^{n}$, defined as $\mathscr %
E_{\eta }(\mathbf{X},\mathbf{Y})$ $=$ $2{\mathsf{E}}\hspace{0.1mm}\left\Vert 
\mathbf{X}-\mathbf{Y}\right\Vert ^{\eta }$ $-$ ${\mathsf{E}}\hspace{0.1mm}%
\left\Vert \mathbf{X}-\mathbf{X}^{\prime }\right\Vert ^{\eta }$ $-$ ${%
\mathsf{E}}\hspace{0.1mm}\left\Vert \mathbf{Y}-\mathbf{Y}^{\prime
}\right\Vert ^{\eta }$, where $\mathbf{X}^{\prime },\mathbf{Y}^{\prime }$
are independent copies of $\mathbf{X}$ and $\mathbf{Y}$ respectively. %
\citet{szekely:rizzo:2005} show that $\mathscr E_{\eta }(\mathbf{X},\mathbf{Y%
})\geq 0$, with equality if and only if $\mathbf{X}\overset{{\mathcal{D}}}{=}%
\mathbf{Y}$. As also argued in \citet{biau:bleakley:mason:2016} and %
\citet{boniece2025changepoint}, $U_{m}(h;k)$ in \eqref{e:def_Um(h;k)} is the
empirical counterpart to $\mathscr E_{\eta }$, evaluating the distance
between the distribution of the training sample and that of the monitored
sequence up to time $k$. When trying to detect changepoint in possibly
multivariate time series, the energy distance is particularly advantageous
due to its rotational invariance (\citealp{szekely2013energy}).\footnote{%
As mentioned in the introduction, statistics based on other distances, such
as Cram\'{e}r's distance or the Cram\'{e}r--von Mises--Smirnov distance do
not share this property.} In the case of using $h(\mathbf{x},\mathbf{y}%
)=\Vert \mathbf{x}-\mathbf{y}\Vert ^{\eta }$, it is immediate to see that
Assumption \ref{a:assumption_on_h} holds as long as ${\mathsf{E}}\left\Vert 
\mathbf{X}\right\Vert ^{2\eta }<\infty $. In turn, this suggests that $\eta $
can e.g. be chosen \textit{a posteriori} by the applied user after checking
how many moments the data admit. %
\end{example}

\begin{example}
\label{grothendieck}\citet{chen2025minimax} propose the so-called \textit{%
Grothendieck divergence}, defined as $\mathscr G_{\eta }(\mathbf{X},\mathbf{Y%
})=2{\mathsf{E}}\hspace{0.1mm}\psi \left( \mathbf{X},\mathbf{Y}\right) -{%
\mathsf{E}}\hspace{0.1mm}\psi \left( \mathbf{X},\mathbf{X}^{\prime }\right) -%
{\mathsf{E}}\hspace{0.1mm}\psi \left( \mathbf{Y,Y}^{\prime }\right) $, where 
\begin{equation*}
\psi \left( \mathbf{x},\mathbf{y}\right) =\arccos \left[ \frac{%
1+\left\langle \mathbf{x},\mathbf{y}\right\rangle }{\sqrt{\left(
1+\left\langle \mathbf{x},\mathbf{x}\right\rangle \right) \left(
1+\left\langle \mathbf{y},\mathbf{y}\right\rangle \right) }}\right] ,
\end{equation*}%
satisfying Assumption \ref{a:assumption_on_h}, with no moment requirements
on $\mathbf{X}$ or $\mathbf{Y}$. By Proposition 1 in \citet{chen2025minimax}%
, the Grothendieck divergence is distribution distinguishing - that is, it is
nonzero if and only if the distributions of $\mathbf{X}$ and $\mathbf{Y}$
differ.
\end{example}

\begin{example}
\label{ex:negtype} Consider a separable metric space $\left( \mathcal{X}%
,\rho \right) $ with finite first moment. Then, $\left( \mathcal{X},\rho
\right) $ is said to have \textit{negative type} (\citealp{lyons:2013}), if
it holds that 
\begin{equation}
\mathfrak{D}_{\rho }(G_{1},G_{2})=\int \rho (\mathbf{x},\mathbf{y}%
)d(G_{1}-G_{2})^{2}(\mathbf{x},\mathbf{y})\leq 0.  \label{e:h_divergence}
\end{equation}%
The space $(\mathcal{X},\rho )$ is said to have \textit{strong negative type}
if \eqref{e:h_divergence} is satisfied with the additional property that
equality holds if and only if $G_{1}=G_{2}$. Hence, taking $h(\mathbf{x},%
\mathbf{y})=\rho (\mathbf{x},\mathbf{y})$ when $(\mathcal{X},\rho )$ has
strong negative type yields an \textit{omnibus} test for changes in the
distribution. Examples of spaces with strong negative type include $\mathbb{R%
}^{d}$ (the energy distance in Example \ref{energy} is a special case of %
\eqref{e:h_divergence}), or more generally all separable Hilbert spaces.
Notably, from \citet{lyons:2013}, if $(\mathcal{X},\rho )$ has negative
type, then for any $0<r<1$, $(\mathcal{X},\rho ^{r})$ has \textit{strong}
negative type. In particular, from \citet{meckes:2013}, if $1\leq p\leq 2$
and $\mathcal{X}=\mathcal{L}^{p}[0,1]$ is the space of real-valued $p$%
-integrable functions and $\rho $ its usual metric, then $(\mathcal{X},\rho
^{r})$ has strong negative type for any $0<r<1$. 
\noindent In the case of using the kernel $h(\mathbf{x},\mathbf{y})=\rho (%
\mathbf{x},\mathbf{y})$, it is immediate to see that Assumption \ref%
{a:assumption_on_h} holds as long as ${\mathsf{E}}\left[ \rho ^{2}(\mathbf{x}%
,\mathbf{y})\right] <\infty $. Then, similarly to Example \ref{energy}, the
definition of $\rho (\mathbf{x},\mathbf{y})$ is ``
constructive'', in that either it can be chosen based on
how many moments the data admit (as long as (\ref{e:h_divergence}) holds);
or, given a metric $\rho (\mathbf{x},\mathbf{y})$ and a dataset, it can be
tested whether Assumption \ref{a:assumption_on_h} holds by testing whether ${%
\mathsf{E}}\left[ \rho ^{2}(\mathbf{x},\mathbf{y})\right] <\infty $.
\end{example}

\begin{example}
\label{arlot} \citet{arlot2019kernel} study multiple changepoint detection
(retrospectively) based on positive semidefinite\footnote{%
That is, for each tuple $\left\{ x_{1},...,x_{n}\right\} $, the matrix $%
\left\{ K\left( x_{i},x_{j}\right) \right\} _{1\leq i,j\leq n}$ is
positive semidefinite.} kernels, providing several
examples of possible kernel functions suitable to various data types (e.g.
vector-valued data, multinomial data, text or graph-valued data; see their
Section 3.2); their paper also contains a comprehensive set of references on
the literature on kernel functions. Of particular interest is the family of 
\textit{characteristic kernels\footnote{Note the meaning of ``kernel" in the MMD literature typically assumes positive semidefiniteness; hence characteristic kernels are assumed positive semi-definite.}} (\citealp{fukumizu2008kernel}; %
\citealp{sriperumbudur2010hilbert}; \citealp{sriperumbudur2011universality}%
), which embed probability distributions injectively into a reproducing kernel Hilbert space,\footnote{%
For a positive semidefinite kernel $K$ with associated reproducing kernel Hilbert space $\mathcal H_K$, the kernel mean embedding of the distribution of $\mathbf X_i$ is
$\mu_i=\mathsf EK(\mathbf X_i,\cdot)\in\mathcal H_K,$
provided the expectation exists. 
} and are therefore ``distribution-distinguishing,'' as a change in the distribution always gives a change in the associated mean embedding. %
 A possible example of a characteristic kernel (see %
\citealp{fukumizu2003kernel}) is the Gaussian kernel $h(\mathbf{x},\mathbf{y}%
)=\exp \left( -\left\Vert \mathbf{x}-\mathbf{y}\right\Vert _{2}^{2}/\left(
2a^{2}\right) \right) $, where $a>0$ is a bandwidth parameter. By Corollary
16 in \citet{sejdinovic2013equivalence}, there is a
correspondence between characteristic kernels and (semi)metrics of the
strong negative type (up to a suitable shift equivalence and under mild moment assumptions). 
\end{example}

\smallskip

\noindent Examples \ref{ex:negtype} and \ref{arlot} suggest that it is
possible to choose $h(\mathbf{x},\mathbf{y})$ so as to be ``distribution-distinguishing'' - essentially, producing kernels
by means of kernels. Indeed, consider the user-chosen function $K\left( 
\mathbf{x},\mathbf{y}\right) :\mathcal{X}\times \mathcal{X}\rightarrow \mathbb{R}$,
such that $K\left( \mathbf{x},\mathbf{y}\right) $ is symmetric, positive
semi-definite, and the map $\mathbf{%
x\mapsto }K\left( \mathbf{\cdot },\mathbf{x}\right) $ is injective.\footnote{In the MMD literature, such kernels are called non-degenerate; though this is different from non-degeneracy in the typical $U$-statistic sense of a non-vanishing first-order Hoeffding projection.}
Given such a kernel, define the semimetric $\delta \left( \mathbf{x},\mathbf{%
y}\right) $ $=$ $K\left( \mathbf{x},\mathbf{x}\right) +K\left( \mathbf{y},%
\mathbf{y}\right) -2K\left( \mathbf{x},\mathbf{y}\right) $. %
\citet{sejdinovic2013equivalence} show that $\delta \left( \mathbf{x},%
\mathbf{y}\right) $ is a \textit{semimetric} of negative type on $\mathcal{X}
$. In turn, by Proposition 3 in \citet{sejdinovic2013equivalence}, this
entails that there are a Hilbert space $\mathcal{H}$ and an injective map $%
\varphi \left( \cdot \right) $ such that $\delta \left( \mathbf{x},\mathbf{y}%
\right) =\left\Vert \varphi \left( \mathbf{x}\right) -\varphi \left( \mathbf{%
y}\right) \right\Vert _{\mathcal{H}}^{2}$; therefore, $\delta ^{1/2}\left( 
\mathbf{x},\mathbf{y}\right) $ is a metric of negative type on $\mathcal{X}$%
. Then, based on Remark 3.19 in \citet{lyons:2013}, $\delta ^{s}\left( 
\mathbf{x},\mathbf{y}\right) $ is - for any $s\in \left( 0,1/2\right) $ - a
metric of \textit{strong} negative type. Thus, revisiting Example \ref%
{ex:negtype}, given a kernel $K\left( \mathbf{x},\mathbf{y}%
\right)$ under which $\bx \mapsto K(\cdot,\bx)$ is injective, the family of functions $h\left( \mathbf{x},\mathbf{y}\right) $ $%
= $ $\left[ K\left( \mathbf{x},\mathbf{x}\right) +K\left( \mathbf{y},\mathbf{%
y}\right) -2K\left( \mathbf{x},\mathbf{y}\right) \right] ^{s/2}$ defines a
family of ``distribution-distinguishing''
kernels for any $s\in \left( 0,1/2\right) $. Indeed, in the following
theorem we extend Remark 3.19 in \citet{lyons:2013}, showing that even $%
\delta ^{1/2}\left( \mathbf{x},\mathbf{y}\right) $ is distribution-distinguishing.

\begin{theorem}
\label{metric-neg-type}Let $\mathcal{X}$ be a separable, complete metric
space, and $K\left( \mathbf{x},\mathbf{y}\right) $ be a continuous kernel with $\mathbf x\mapsto K(\cdot,\mathbf x)$ injective, and let $\delta \left( \mathbf{x},\mathbf{%
y}\right) $ $=$ $K\left( \mathbf{x},\mathbf{x}\right) +K\left( \mathbf{y},%
\mathbf{y}\right) -2K\left( \mathbf{x},\mathbf{y}\right) $. Then $\delta ^{1/2}\left( \mathbf{x},\mathbf{y}%
\right) $ is a metric of strong negative type.
\end{theorem}

\noindent  Theorem~\ref{metric-neg-type} thus extends the standard construction from powers $\delta^s$ with $0<s<1/2$ to the endpoint $s=1/2$. To the best of our knowledge, it is new. According to the theorem, the
kernel $h\left( \mathbf{x},\mathbf{y}\right) $ $=$ $\left[ K\left( \mathbf{x}%
,\mathbf{x}\right) +K\left( \mathbf{y},\mathbf{y}\right) -2K\left( \mathbf{x}%
,\mathbf{y}\right) \right] ^{1/2}$, is distribution-distinguishing, and therefore,
considering Example \ref{arlot}, an \textit{omnibus} test for distributional
change can be based on it. 

\noindent It is easily seen that when $K(\bx,\by)$ is strictly positive definite, the map $\mathbf x \mapsto K(\cdot,\mathbf x)$ is injective.  Thus, in order to construct a
distribution distinguishing kernel $h\left( \mathbf{x},\mathbf{y}\right) $, it
suffices to follow the procedure above starting from a strictly positive definite
kernel.\footnote{%
Other sufficient conditions can be found in \citet{sriperumbudur2010hilbert}
and \citet{sriperumbudur2011universality}.} A leading example is based on
the Gaussian kernel, discussed in the next example.

\begin{example}
\label{deriv-gaussian}Consider the Gaussian kernel $K_{g}\left( \mathbf{x},%
\mathbf{y}\right) =\exp \left( -\left\Vert \mathbf{x}-\mathbf{y}\right\Vert
^{2}/\left( 2a^{2}\right) \right) $ for some $a>0$; note $\mathbf x\mapsto K_g(\cdot,\bx)$ is injective (see e.g. \citet{arlot2019kernel}). Then, by the above, it is easy to
see that $\delta ^{1/2}\left( \mathbf{x},\mathbf{y}\right) $ $=$ $\left[
K_{g}\left( \mathbf{x},\mathbf{x}\right) +K_{g}\left( \mathbf{y},\mathbf{y}%
\right) -2K_{g}\left( \mathbf{x},\mathbf{y}\right) \right] ^{1/2}$, is a
metric of negative type; further, by Theorem \ref{metric-neg-type}, it is
also a metric of \textit{strong} negative type.
\end{example}

\subsection{On implementation\label{implementation}}

\noindent The limiting processes of our monitoring schemes under $H_{0}$ all
depend on the (infinite sequence of) eigenvalues $\lambda _{i}$ of the
operator $A$ defined above, which necessitates some approximation when
obtaining critical values. A possible approach is based on estimating the
eigenvalues $\lambda _{i}$ from the historical sample via the $m\times m$
matrix $A_{m}$, where 
\begin{equation}
\left\{ A_{m}\right\} _{i,j}=\frac{1}{m}\Bigg(h(\mathbf{X}_{i},\mathbf{X}%
_{j})-h_{1,i}-h_{1,j}+{\binom{m}{2}}^{-1}\sum_{1\leq i^{\prime }<j^{\prime
}\leq m}h(\mathbf{X}_{i^{\prime }},\mathbf{X}_{j^{\prime }})\Bigg),
\label{e:def_Am}
\end{equation}%
with $h_{1,i}=\sum_{\ell =1}^{m}h(\mathbf{X}_{i},\mathbf{X}_{\ell })\mathbf{1%
}_{\{\ell \neq i\}}/\left( m-1\right) $. Let $\big|\widehat{\lambda }%
_{1,m}\big| \geq \big| \widehat{\lambda }_{2,m}\big| \geq
\ldots \geq \big|\widehat{\lambda }_{m,m}\big| $ denote the
eigenvalues of the matrix $A_{m}$, define the sigma-field $\mathcal{F}%
=\sigma \left\{ \mathbf{X}_{\ell },\ell \geq 1\right\} $, and let $%
\{W_{1}(u),u\geq 0\}$, $\{W_{2}(u),u\geq 0\},\ldots $ be independent Wiener
processes, independent of $\mathcal{F}$. The approximations to the limiting
processes $\Gamma (u)$, $\overline{\Gamma }(u)$ and $\Gamma (u,,b_{w},c_{w})$
under $H_{0}$ are constructed as follows%
\begin{eqnarray}
\widehat{\Gamma }_{m}(u) &=&\sum_{\ell =1}^{m}\widehat{\lambda }_{\ell
,m}\left( W_{\ell }^{2}(u)-u\right) ,  \label{gamma-1mc} \\
\widehat{\overline{\Gamma }}_{m}(u) &=&\sup_{0<v<u}|\widehat{G}%
_{0,m}\left( u,v\right) |,  \label{gamma-2mc} \\
\widehat{\overline\Gamma}^{(3)}_m(u)&=&\sup_{y(u)\leq v<u}|\widehat{G}%
_{y(u),m}\left( u,v\right) |,  \label{gamma-3mc} 
\end{eqnarray}%
where, in \eqref{gamma-2mc} and \eqref{gamma-3mc},
\begin{align*}
\widehat G_{y,m}(u,v)
&=
\sum_{\ell=1}^{m}\widehat \lambda_{\ell,m}
\bigg[ Z_{\ell,y}^2(u,v)  - \bigg((u-v)+\left(\frac{u-v}{1-v}\right)^2(v-y) \bigg)\bigg],%
\end{align*}
with $Z_{\ell,y}(u,v)$ as in \eqref{e:Z_y(u,v)} and $y(u)$  as in \eqref{e:def_yu}.

\noindent This method is proposed in \citet{biau:bleakley:mason:2016};
hereafter, we formalise it, showing that the approximations (\ref{gamma-1mc}%
)-(\ref{gamma-3mc}) converge (a.s. conditionally on the data) to the
limiting processes. Let ``$\Rightarrow _{\mathcal{F}}$%
'' denote the almost sure conditional weak convergence under $\P (\cdot
|\mathcal{F})$.

\begin{theorem}
\label{p:eig_approx} As $m\rightarrow \infty $, it holds that, for all $%
0<u_{0}\leq 1$ and $0\leq \beta <1$,%
\begin{equation}
\begin{aligned} \sup_{0<u<u_0}u^{-\beta}|\widehat \Gamma_m(u)|
&\Rightarrow_{\mathcal F} \sup_{0<u<u_0}u^{-\beta}|\Gamma(u)|,\\
\sup_{0<u<u_0}u^{-\beta}\,\widehat {\overline \Gamma}_m(u)
&\Rightarrow_{\mathcal F} \sup_{0<u<u_0}u^{-\beta}\,\overline \Gamma(u)\\
\sup_{0<u<u_0}\frac{\widehat{\overline{\Gamma}}^{(3)}_{m}(u)}{d^{(3)}(u)}
&\Rightarrow_{\mathcal F} \sup_{0<u<u_0}\frac{\overline{\Gamma}^{(3)}(u)}{d^{(3)}(u)}.
\end{aligned}
\label{e:conditional_weak_convergence}
\end{equation}%
\end{theorem}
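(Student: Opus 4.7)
My plan is to establish the three conditional weak convergences by a common route. The first step is to invoke the eigenvalue consistency result of \citet{koltchinskii:gine:2000}: there exists an $\mathcal{F}$-measurable permutation $\pi_m$ of the positive integers such that, setting $\widehat{\lambda}_{\ell,m}:=0$ for $\ell>m$,
\begin{equation*}
\sum_{\ell\ge 1}(\widehat{\lambda}_{\ell,m}-\lambda_{\pi_m(\ell)})^2\longrightarrow 0\quad\text{a.s.}
\end{equation*}
Because the $W_\ell$ are i.i.d.\ and independent of $\mathcal{F}$, conditionally on $\mathcal{F}$ we may relabel them by $\pi_m^{-1}$ without altering their joint law. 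Consequently, $\widehat{\Gamma}_m$ has the same conditional distribution as $\widetilde{\Gamma}_m(u):=\sum_\ell \widetilde{\lambda}_{\ell,m}(W_\ell^2(u)-u)$, where $\widetilde{\lambda}_{\ell,m}:=\widehat{\lambda}_{\pi_m^{-1}(\ell),m}$ satisfies $\sum_\ell(\widetilde{\lambda}_{\ell,m}-\lambda_\ell)^2\to 0$ a.s. It thus suffices to show that $\sup_{0<u<u_0}u^{-\beta}|\widetilde{\Gamma}_m(u)-\Gamma(u)|\to 0$ in $\P(\cdot\mid\mathcal{F})$-probability a.s., whereupon conditional weak convergence of the weighted sup follows from the continuous mapping theorem.

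The crux is a weighted maximal inequality: for any square-summable sequence $(c_\ell)_{\ell\ge 1}$ and any $0\le\beta<1$,
\begin{equation*}
\E\Bigl[\sup_{0<u<u_0}u^{-2\beta}\Bigl|\sum_\ell c_\ell(W_\ell^2(u)-u)\Bigr|^2\Bigr]\le C(\beta,u_0)\sum_\ell c_\ell^2.
\end{equation*}
This follows from a dyadic decomposition $I_k=(2^{-k-1}u_0,2^{-k}u_0]$ of $(0,u_0]$: since $u\mapsto\sum_\ell c_\ell(W_\ell^2(u)-u)$ is a continuous $L^2$-martingale in $u$ with $\E|\cdot|^2=2u^2\sum c_\ell^2$, Doob's inequality yields $\E[\sup_{u\le 2^{-k}u_0}|\cdot|^2]\le 8(2^{-k}u_0)^2\sum c_\ell^2$; combining with the uniform bound $u^{-2\beta}\le C\,2^{2k\beta}$ on $I_k$ and summing the resulting geometric series in $k$ (which converges precisely when $\beta<1$) gives the inequality. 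Applying it conditionally on $\mathcal{F}$ with $c_\ell=\widetilde{\lambda}_{\ell,m}-\lambda_\ell$ yields $\E[\sup u^{-2\beta}|\widetilde{\Gamma}_m-\Gamma|^2\mid\mathcal{F}]\le C\sum_\ell(\widetilde{\lambda}_{\ell,m}-\lambda_\ell)^2\to 0$ a.s., proving the first convergence.

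For the Page-type process $\widehat{\overline{\Gamma}}_m$, the same scheme applies: $\widehat{G}_m(u,v)-G(u,v)$ is again a sum of independent centered Gaussian chaos terms whose pointwise $L^2$ norm is controlled uniformly in $v\in(0,u]$ by a multiple of $u^2\sum_\ell(\widetilde{\lambda}_{\ell,m}-\lambda_\ell)^2$; the additional supremum over $v$ is absorbed by an extra dyadic layer in $v$ together with Gaussian modulus-of-continuity estimates, and the passage to $\sup_v |\cdot|$ then enters via the continuous mapping theorem as for $\widehat{\Gamma}_m$. For $\widehat{\Gamma}_m(u,b_w,c_w)$, the theorem statement contains no singular weight, and since $u-\mathcal{y}(u)$ is continuous and bounded on $[0,1]$, the maximal inequality reduces to its unweighted version ($\beta=0$); the permutation-plus-truncation reduction then transfers verbatim.

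The main obstacle is the weighted maximal estimate in the singular regime $u\to 0$: the earlier argument of \citet{biau:bleakley:mason:2016} under $\sum|\lambda_\ell|<\infty$ bounds tails term by term via the triangle inequality, which is unavailable under the weaker assumption $\sum\lambda_\ell^2<\infty$. One must instead exploit orthogonality of the Brownian chaos terms via the Doob/dyadic scheme above in order to offset the blow-up of $u^{-\beta}$ near the origin, and verify that the resulting $L^2$ bound is summable in $k$ exactly for the range $0\le\beta<1$ treated in the theorem.
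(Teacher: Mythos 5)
Your argument for the first convergence is correct and takes a genuinely different route from the paper. Where the paper establishes weak convergence of the two-parameter process $\widehat{\mathcal V}_m(s,t)$ by combining finite-dimensional convergence with a conditional tightness argument (Rosenthal's inequality $\Rightarrow$ uniform H\"older continuity $\Rightarrow$ Kolmogorov's criterion), you exploit the fact that, for fixed coefficients, $u\mapsto\sum_\ell c_\ell(W_\ell^2(u)-u)$ is a continuous $L^2$-martingale with $\E|\cdot|^2 = 2u^2\sum_\ell c_\ell^2$, and obtain a clean dyadic Doob bound
\[
\E\Bigl[\sup_{0<u<u_0}u^{-2\beta}\Bigl|\sum_\ell c_\ell(W_\ell^2(u)-u)\Bigr|^2\Bigr]\le C(\beta,u_0)\sum_\ell c_\ell^2,
\]
valid precisely for $\beta<1$. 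Combined with the Koltchinskii--Gin\'e a.s.\ convergence of $\inf_\pi\sum_\ell(\widehat\lambda_{\pi(\ell),m}-\lambda_\ell)^2$, which the paper also invokes, this yields conditional $L^2$ (hence in-probability) convergence of $\sup_u u^{-\beta}|\widetilde\Gamma_m(u)-\Gamma(u)|$, which is stronger than the paper's conclusion of conditional weak convergence. This is a nice, more elementary argument for part (i).

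However, your claims for the second and third statements do not follow from the same machinery, and there is a genuine gap. The crucial ingredient of your approach is the martingale property of $u\mapsto W_\ell^2(u)-u$. For the Page limit, the relevant process for fixed $v$ is $u\mapsto \bigl(W_\ell(u)-\tfrac{1-u}{1-v}W_\ell(v)\bigr)^2 - \text{(its mean)}$; the $u$-dependent coefficient $\tfrac{1-u}{1-v}$ destroys the martingale property in $u$ (a conditioning computation at time $s\in(v,u)$ shows the conditional expectation is not what a martingale requires), and it is not a martingale in $v$ either. Similarly, for $\widehat\Gamma_m(u,b_w,c_w)$, the process $u\mapsto (W_\ell(u)-W_\ell(\mathcal y(u)))^2 - (u-\mathcal y(u))$ has a moving inner argument $\mathcal y(u)$, which again kills the martingale structure, so your claim that the unweighted $\beta=0$ version ``transfers verbatim'' is unsubstantiated. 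Your phrase ``an extra dyadic layer in $v$ together with Gaussian modulus-of-continuity estimates'' gestures at the right idea but does not supply the key estimate; once you begin proving a modulus of continuity uniformly over the two parameters you are essentially re-deriving the paper's argument, which bounds increments of $\widehat{\mathcal V}_m$ in $L^{2r}$ via Rosenthal's inequality (exploiting independence over $\ell$ and the H\"older continuity in $(s,t)$ from Lemma C.4) and then invokes a conditional Kolmogorov tightness criterion. You should either adopt that route for parts (ii) and (iii), or explicitly carry out a two-parameter chaining argument replacing Doob's inequality; as written, the proof of those two statements is incomplete.
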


\noindent The theorem requires that the number
of eigenvalues employed grows with $m$; in (\ref{gamma-1mc})-(\ref{gamma-3mc}%
) all the eigenvalues of $A_{m}$ are used, but employing only a fraction
(e.g., $m/2$) still yields the same result.

\section{Simulations and applications\label{s:simulations}}

\subsection{Simulation study\label{simulation}}

\noindent We report a set of Monte Carlo simulations to investigate the
empirical rejection frequencies and the detection delays under alternatives
of our procedures. We report only a set of simulations based on the case $%
\mathcal{X}=\mathbb{R}^{5}$.\footnote{%
Further simulations, which essentially confirm the results in this section,
are available upon request.} We use the following kernels: $h^{(1)}(\mathbf{x%
},\mathbf{y})$ $=$ $\Vert \mathbf{x}-\mathbf{y}\Vert _{1}^{1/2}$; $h^{(2)}(%
\mathbf{x},\mathbf{y})$ $=$ $\Vert \mathbf{x}-\mathbf{y}\Vert _{2}$; and $%
h^{(3)}(\mathbf{x},\mathbf{y})$ $=$ $\left[ 1-\exp (-\Vert \mathbf{x}-%
\mathbf{y}\Vert _{2}^{2}/(2a^{2}))\right] ^{1/2}$. The kernel $h^{(2)}$
corresponds to the usual energy distance; $h^{(3)}$ is based directly on
Example \ref{deriv-gaussian}, with $a$ set equal to the sample median of $%
\{\Vert \mathbf{X}_{i}-\mathbf{X}_{j}\Vert _{2},1\leq i,j\leq m\}$. Under $H_0$, we consider historical samples of length $m\in \left\{
50,100,200\right\} $, and we report results for each of the detectors ${%
\mathcal{D}}_{m}^{(i)}$, $i=1,2,3$, where for the boundary function %
\eqref{e:boundaryfxn} we set $\beta \in \{0,0.5,0.9\}$.  For $\mathcal D^{(3)}_m$, we set $\gamma=0.51$ and $c_0=1$ throughout.\footnote{%
For reference, recall that: ${\mathcal{D}}_{m}^{(1)}$ is the
``ordinary'' detection scheme defined in (%
\ref{cusum}); ${\mathcal{D}}_{m}^{(2)}$ is the ``Page-type'' scheme defined in (\ref{page}); and ${\mathcal{D%
}}_{m}^{(3)}$ is the expanding baseline
scheme introduced in (\ref{repurpose}).}

\noindent We begin by examining the performance of our procedures under $%
H_{0}$; in all cases, we generate the observations as $\mathbf{X}_{i}\sim
i.i.d.\mathcal{N}\left( 0,\mathbf{I}_{5}\right) $, and we set the monitoring
horizon $M=10m$. Empirical rejection frequencies are reported in Table \ref%
{tab:NullSize}.\footnote{%
Note that, for each empirical rejection frequency, the $95\%$ confidence
interval is $\left[ 0.04,0.06\right] $.}

\begin{table}[t!]
\caption{Empirical rejection probabilities under $H_0$, nominal level $0.05$}
\label{tab:NullSize}\captionsetup{font=scriptsize} \centering
{\tiny
\begin{tabular}{cccccccccccccc}
\hline\hline
&  &  &  &  &  &  &  &  &  &  &  &  &  \\
&  & Kernel & \multicolumn{3}{c}{$h^{\left(1\right)}$} &  & \multicolumn{3}{c}{$h^{\left(2\right)}$} &  & \multicolumn{3}{c}{$h^{\left(3\right)}$} \\
Scheme & $\beta$ &  & $m=50$ & $m=100$ & $m=200$ &  & $m=50$ & $m=100$ & $m=200$ &  & $m=50$ & $m=100$ & $m=200$ \\
&  &  &  &  &  &  &  &  &  &  &  &  &  \\
\hline
&  &  &  &  &  &  &  &  &  &  &  &  &  \\
${\mathcal{D}}_{m}^{(1)}$ & $0$ &  & $0.060$ & $0.048$ & $0.043$ &  & $0.050$ & $0.061$ & $0.059$ &  & $0.052$ & $0.062$ & $0.053$ \\
 & $0.5$ &  & $0.058$ & $0.062$ & $0.060$ &  & $0.060$ & $0.053$ & $0.062$ &  & $0.066$ & $0.051$ & $0.052$ \\
 & $0.9$ &  & $0.051$ & $0.044$ & $0.044$ &  & $0.055$ & $0.051$ & $0.061$ &  & $0.050$ & $0.051$ & $0.053$ \\
&  &  &  &  &  &  &  &  &  &  &  &  &  \\
${\mathcal{D}}_{m}^{(2)}$ & $0$ &  & $0.059$ & $0.049$ & $0.044$ &  & $0.051$ & $0.059$ & $0.056$ &  & $0.053$ & $0.059$ & $0.053$ \\
 & $0.5$ &  & $0.054$ & $0.061$ & $0.059$ &  & $0.059$ & $0.052$ & $0.057$ &  & $0.066$ & $0.050$ & $0.050$ \\
 & $0.9$ &  & $0.043$ & $0.037$ & $0.044$ &  & $0.050$ & $0.050$ & $0.051$ &  & $0.044$ & $0.045$ & $0.051$ \\
&  &  &  &  &  &  &  &  &  &  &  &  &  \\
${\mathcal{D}}_{m}^{(3)}$ & $0$ &  & $0.058$ & $0.051$ & $0.056$ &  & $0.048$ & $0.050$ & $0.048$ &  & $0.051$ & $0.053$ & $0.055$ \\
 & $0.5$ &  & $0.048$ & $0.070$ & $0.051$ &  & $0.053$ & $0.039$ & $0.052$ &  & $0.061$ & $0.050$ & $0.055$ \\
 & $0.9$ &  & $0.058$ & $0.046$ & $0.043$ &  & $0.040$ & $0.051$ & $0.051$ &  & $0.050$ & $0.046$ & $0.050$ \\
&  &  &  &  &  &  &  &  &  &  &  &  &  \\
\hline\hline
\end{tabular}
}
\end{table}

\noindent Broadly speaking, size control is ensured in all cases as $m$
increases. This can be read in conjunction with the online monitoring
literature, where often detection schemes are found to be conservative (we
refer e.g. to the simulations in \citealp{lajos07}, and the comments
therein). When using kernels $h^{(1)}$ and $h^{(2)}$, no oversizement is
observed whenever $m>50$, and our procedures have a (mild) tendency to
over-reject only in very few cases when $m=50$. Conversely, kernel $h^{(3)}$
seems to occasionally over-reject, unless $m \geq 100$; note, however, that
partnering $h^{(3)}$ with $\beta =0.9$ results in no oversizement even for $%
m $ as little as $50$. Hence, the results in the table offer several
guidelines to the applied user as far as the choice of the kernel and of the
weight $\beta $ are concerned.

\smallskip

\noindent We now turn to examining the power of our procedure. We consider
three main alternative hypotheses, where - in all cases - $\mathbf{X}_{i}%
\overset{iid}{\sim }\mathcal{N}(\mathbf{0},\mathbf{I}_{d})$ for $1\leq i\leq
k_{\ast }$ and subsequently changes into:%
\begin{eqnarray}
H_{A,1} &:&\mathbf{X}_{k_{\ast }+1}\overset{iid}{\sim }\mathcal{N}(%
\boldsymbol{\mu },\mathbf{I}_{d}),  \label{location} \\
H_{A,2} &:&\mathbf{X}_{k_{\ast }+1}\overset{iid}{\sim }\mathcal{N}(\mathbf{0}%
,\Sigma ),  \label{scale} \\
H_{A,3} &:&\mathbf{X}_{k_{\ast }+1}=\left( X_{k_{\ast }+1,1},\ldots
X_{k_{\ast }+1,d}\right) ^{\top }\text{ with }X_{k_{\ast }+1,i}\overset{iid}{%
\sim }t_{\nu }/\sqrt{\var(t_{\nu })}.  \label{tail}
\end{eqnarray}%
Equation (\ref{location}) corresponds to a location change; (\ref{scale}) to
a scale change with no change in location; and, finally, (\ref{tail}) is a
tail alternative, where the distribution of the data changes into a
Student's t with $\nu $ degrees of freedom. In all three cases, we consider
both the case of ``strong'' changes and
``moderate'' ones, depending on the size of the
change - ``strong'' changes correspond to $%
\boldsymbol{\mu =}\left( 0.3,...,0.3\right) ^{\intercal }$ in (\ref{location}%
), $\left\{ \Sigma \right\} _{i,j}=\exp \left( -\left\vert i-j\right\vert
/10\right) $ in (\ref{scale}), and $\nu =2.5$\ in (\ref{tail});
``moderate'' changes correspond to $\boldsymbol{%
\mu =}\left( 0.25,...,0.25\right) ^{\intercal }$ in (\ref{location}), $%
\left\{ \Sigma \right\} _{i,j}=\exp \left(- \left\vert i-j\right\vert
/5\right) $ in (\ref{scale}), and $\nu =3$\ in (\ref{tail}). All the powers
reported hereafter are size-adjusted - that is, each procedure has been
calibrated so as to ensure that the empirical rejection frequencies under
the null match the nominal level (set to $0.05$). For all alternative scenarios, we set $m=200$, and the horizon as $M=5m$. 

\noindent In a first set of experiments reported in Tables \ref%
{tab:PowerDelayStrongByAlt} and \ref{tab:PowerDelayWeakByAlt}, we consider the empirical
rejection frequencies and the delays for a randomised choice of $k_{\ast }$,%
\footnote{%
The value of $k_{\ast }$, at each iteration, has been picked from the set $\left\{
10,50,200\right\} $ with equal probability.} in the presence of a strong
change; for succinctness, we report results only for the choice $\beta = 0.5$ in \eqref{e:boundaryfxn}.
As the table shows, the power is satisfactory in all cases; detection based
on the scheme proposed in Section \ref{repurpose}, ${\mathcal{D}}_{m}^{(3)}$%
, seems to offer shorter delays, improving on both ${\mathcal{D}}_{m}^{(1)}$
and ${\mathcal{D}}_{m}^{(2)}$. Interestingly, this seems to be the case for
both strong and moderate changes, across all alternative hypotheses $%
H_{A,1}-H_{A,3}$, and for each choice of kernel $h\left( \cdot ,\cdot
\right) $.

\begin{table}[b!]
\caption{Empirical power and median delay - strong changes ($\beta=0.5$)}
\label{tab:PowerDelayStrongByAlt}\captionsetup{font=scriptsize} \centering
{\tiny
\begin{tabular}{ccccccccccccccc}
\hline\hline
&  &  &  &  &  &  &  &  &  &  &  &  &  &  \\
&  & Alternative & \multicolumn{1}{|c}{}  & \multicolumn{3}{c}{$H_{A,1}$} &  & \multicolumn{3}{c}{$H_{A,2}$} &  & \multicolumn{3}{c}{$H_{A,3}$} \\
&  &  &  &  &  &  &  &  &  &  &  &  &  &  \\
Detector & $\beta$ & Kernel & \multicolumn{1}{|c}{} & $h^{(1)}$ & $h^{(2)}$ & $h^{(3)}$ &  & $h^{(1)}$ & $h^{(2)}$ & $h^{(3)}$ &  & $h^{(1)}$ & $h^{(2)}$ & $h^{(3)}$ \\
&  &  & \multicolumn{1}{|c}{} &  &  &  &  &  &  &  &  &  &  &  \\
\hline
&  &  & \multicolumn{1}{|c}{} &  &  &  &  &  &  &  &  &  &  &  \\
${\mathcal{D}}_{m}^{(1)}$ & $0.5$ & Power & \multicolumn{1}{|c}{} & $0.991$ & $0.997$ & $0.996$ &  & $0.992$ & $0.994$ & $0.993$ &  & $0.994$ & $0.993$ & $0.994$ \\
 &  & Med. Delay & \multicolumn{1}{|c}{} & $87$ & $87$ & $89$ &  & $91$ & $75$ & $51$ &  & $58$ & $124$ & $77$ \\
&  &  & \multicolumn{1}{|c}{} &  &  &  &  &  &  &  &  &  &  &  \\
${\mathcal{D}}_{m}^{(2)}$ & $0.5$ & Power & \multicolumn{1}{|c}{} & $0.991$ & $0.996$ & $0.996$ &  & $0.993$ & $0.995$ & $0.993$ &  & $0.992$ & $0.993$ & $0.993$ \\
 &  & Med. Delay & \multicolumn{1}{|c}{} & $77$ & $78$ & $81$ &  & $86$ & $72$ & $48$ &  & $53$ & $117$ & $72$ \\
&  &  & \multicolumn{1}{|c}{} &  &  &  &  &  &  &  &  &  &  &  \\
${\mathcal{D}}_{m}^{(3)}$ & $0.5$ & Power & \multicolumn{1}{|c}{} & $0.986$ & $0.988$ & $0.990$ &  & $0.987$ & $0.988$ & $0.987$ &  & $0.987$ & $0.985$ & $0.988$ \\
 &  & Med. Delay & \multicolumn{1}{|c}{} & $70$ & $65$ & $71$ &  & $80$ & $62$ & $44$ &  & $49$ & $97$ & $64$ \\
&  &  & \multicolumn{1}{|c}{} &  &  &  &  &  &  &  &  &  &  &  \\
\hline\hline
\end{tabular}
}
\end{table}

\begin{table}[b!]
\caption{Empirical power and median delay - moderate changes ($\beta=0.5$)}
\label{tab:PowerDelayWeakByAlt}\captionsetup{font=scriptsize} \centering
{\tiny
\begin{tabular}{ccccccccccccccc}
\hline\hline
&  &  &  &  &  &  &  &  &  &  &  &  &  &  \\
&  & Alternative & \multicolumn{1}{|c}{}  & \multicolumn{3}{c}{$H_{A,1}$} &  & \multicolumn{3}{c}{$H_{A,2}$} &  & \multicolumn{3}{c}{$H_{A,3}$} \\
&  &  &  &  &  &  &  &  &  &  &  &  &  &  \\
Detector & $\beta$ & Kernel & \multicolumn{1}{|c}{} & $h^{(1)}$ & $h^{(2)}$ & $h^{(3)}$ &  & $h^{(1)}$ & $h^{(2)}$ & $h^{(3)}$ &  & $h^{(1)}$ & $h^{(2)}$ & $h^{(3)}$ \\
&  &  & \multicolumn{1}{|c}{} &  &  &  &  &  &  &  &  &  &  &  \\
\hline
&  &  & \multicolumn{1}{|c}{} &  &  &  &  &  &  &  &  &  &  &  \\
${\mathcal{D}}_{m}^{(1)}$ & $0.5$ & Power & \multicolumn{1}{|c}{} & $0.988$ & $0.993$ & $0.989$ &  & $0.991$ & $0.995$ & $0.993$ &  & $0.996$ & $0.824$ & $0.979$ \\
 &  & Med. Delay & \multicolumn{1}{|c}{} & $117$ & $117$ & $121$ &  & $136$ & $105$ & $67$ &  & $114$ & $323$ & $182$ \\
&  &  & \multicolumn{1}{|c}{} &  &  &  &  &  &  &  &  &  &  &  \\
${\mathcal{D}}_{m}^{(2)}$ & $0.5$ & Power & \multicolumn{1}{|c}{} & $0.991$ & $0.994$ & $0.993$ &  & $0.991$ & $0.995$ & $0.993$ &  & $0.996$ & $0.844$ & $0.988$ \\
 &  & Med. Delay & \multicolumn{1}{|c}{} & $103$ & $105$ & $110$ &  & $129$ & $100$ & $64$ &  & $105$ & $321$ & $171$ \\
&  &  & \multicolumn{1}{|c}{} &  &  &  &  &  &  &  &  &  &  &  \\
${\mathcal{D}}_{m}^{(3)}$ & $0.5$ & Power & \multicolumn{1}{|c}{} & $0.979$ & $0.984$ & $0.982$ &  & $0.986$ & $0.989$ & $0.985$ &  & $0.991$ & $0.446$ & $0.887$ \\
 &  & Med. Delay & \multicolumn{1}{|c}{} & $91$ & $85$ & $95$ &  & $116$ & $84$ & $57$ &  & $95$ & $158$ & $133$ \\
&  &  & \multicolumn{1}{|c}{} &  &  &  &  &  &  &  &  &  &  &  \\
\hline\hline
\end{tabular}
}
\end{table}

\noindent In order to assess more precisely the impact of the changepoint
location, we now report results for the three cases of break location used
above, viz.: a ``very early'' break
corresponding to $k_{\ast }=10$; a medium break distance with $k_{\ast }=50$; and a
``late'' break with $k_{\ast }=200$. We
report the detection delays, under a randomised alternative,\footnote{%
At each iteration, the alternative has been picked from the set $\left\{
H_{A,1},H_{A,2},H_{A,3}\right\} $ with equal probability.} for the case of a
strong change (Table \ref{tab:DelayStrongRandomAlt}) and of a moderate change
(Table \ref{tab:DelayWeakRandomAlt}); in Section \ref{furtherMC} in the
Supplement, we report the power (see Table \ref{tab:PowerStrongRandomAlt} for
strong changes, and Table \ref{tab:PowerWeakRandomAlt} for moderate changes).
Considering the former set of results first, the performance of all
detectors ${\mathcal{D}}_{m}^{(i)}$ is comparable in the presence of an
early change. Results are broadly the same under a medium changepoint
location, $k_{\ast }=50$, although - when using ${\mathcal{D}}_{m}^{(3)}$ - in some instances
the power deteriorates when $\beta =0.9 $. As can be expected, all results on the detection delay worsen
when the change occurs late; this is more pronounced in the case of the
detector ${\mathcal{D}}_{m}^{(1)}$, which is~``dragged
down''~by previous observations, and naturally improves
when past observations are either discarded or ``recycled''; however, ${\mathcal{D}}_{m}^{(2)}$ also worsens. Note that, at these signal strengths, the detector ${\mathcal{D}}%
_{m}^{(3)}$ offers a broadly comparable power, and better detection
delays. Similar
results are found in the case of a moderate change; though $\mathcal D_m^{(3)}$ sometimes incurs a loss in power due to the restricted window length $c_m$.

\begin{table}[bhpt!]
\caption{Median detection delay - strong changes (randomised alternative $H_{A,i}$)}
\label{tab:DelayStrongRandomAlt}\captionsetup{font=scriptsize} \centering
{\tiny
\begin{tabular}{ccccccccccccccc}
\hline\hline
&  &  &  &  &  &  &  &  &  &  &  &  &  &  \\
&  &  & \multicolumn{1}{|c}{} & \multicolumn{3}{c}{$k_{\ast}=10$} &  & \multicolumn{3}{c}{$k_{\ast}=50$} &  & \multicolumn{3}{c}{$k_{\ast}=200$} \\
Detector & $\beta$ & & \multicolumn{1}{|c}{} & $h^{\left( 1\right) }$ & $h^{\left( 2\right) }$ & $h^{\left( 3\right) }$ &  & $h^{\left( 1\right) }$ & $h^{\left( 2\right) }$ & $h^{\left( 3\right) }$ &  & $h^{\left( 1\right) }$ & $h^{\left( 2\right) }$ & $h^{\left( 3\right) }$ \\
&  &  & \multicolumn{1}{|c}{} &  &  &  &  &  &  &  &  &  &  &  \\
\hline
&  &  & \multicolumn{1}{|c}{} &  &  &  &  &  &  &  &  &  &  &  \\
${\mathcal{D}}_{m}^{(1)}$ & $0$ &  & \multicolumn{1}{|c}{} & $78$ & $84$ & $70$ &  & $93$ & $100$ & $83$ &  & $150$ & $163$ & $132$ \\
 & $0.5$ &  & \multicolumn{1}{|c}{} & $53$ & $63$ & $46$ &  & $75$ & $87$ & $64$ &  & $145$ & $166$ & $122$ \\
 & $0.9$ &  & \multicolumn{1}{|c}{} & $46$ & $52$ & $35$ &  & $80$ & $86$ & $60$ &  & $176$ & $186$ & $136$ \\
&  &  & \multicolumn{1}{|c}{} &  &  &  &  &  &  &  &  &  &  &  \\
${\mathcal{D}}_{m}^{(2)}$ & $0$ &  & \multicolumn{1}{|c}{} & $75$ & $83$ & $68$ &  & $84$ & $94$ & $77$ &  & $130$ & $146$ & $116$ \\
 & $0.5$ &  & \multicolumn{1}{|c}{} & $50$ & $62$ & $44$ &  & $66$ & $80$ & $58$ &  & $124$ & $147$ & $107$ \\
 & $0.9$ &  & \multicolumn{1}{|c}{} & $41$ & $49$ & $33$ &  & $69$ & $78$ & $54$ &  & $155.5$ & $168$ & $121$ \\
&  &  & \multicolumn{1}{|c}{} &  &  &  &  &  &  &  &  &  &  &  \\
${\mathcal{D}}_{m}^{(3)}$ & $0$ &  & \multicolumn{1}{|c}{} & $62$ & $67$ & $53$ &  & $69$ & $76$ & $60$ &  & $95$ & $102$ & $83$ \\
 & $0.5$ &  & \multicolumn{1}{|c}{} & $46$ & $53$ & $40$ &  & $61$ & $69$ & $53$ &  & $96$ & $103$ & $83$ \\
 & $0.9$ &  & \multicolumn{1}{|c}{} & $40$ & $46$ & $31$ &  & $67$ & $74$ & $51$ &  & $112$ & $117$ & $92$ \\
&  &  & \multicolumn{1}{|c}{} &  &  &  &  &  &  &  &  &  &  &  \\
\hline\hline
&  &  &  &  &  &  &  &  &  &  &  &  &  &  \\
\end{tabular}
}
\end{table}

\begin{table}[hbpt!]
\caption{Median detection delay - moderate changes (randomised alternative $H_{A,i}$)}
\label{tab:DelayWeakRandomAlt}\captionsetup{font=scriptsize} \centering
{\tiny
\begin{tabular}{ccccccccccccccc}
\hline\hline
&  &  &  &  &  &  &  &  &  &  &  &  &  &  \\
&  &  & \multicolumn{1}{|c}{} & \multicolumn{3}{c}{$k_{\ast}=10$} &  & \multicolumn{3}{c}{$k_{\ast}=50$} &  & \multicolumn{3}{c}{$k_{\ast}=200$} \\
Detector & $\beta$ & & \multicolumn{1}{|c}{} & $h^{\left( 1\right) }$ & $h^{\left( 2\right) }$ & $h^{\left( 3\right) }$ &  & $h^{\left( 1\right) }$ & $h^{\left( 2\right) }$ & $h^{\left( 3\right) }$ &  & $h^{\left( 1\right) }$ & $h^{\left( 2\right) }$ & $h^{\left( 3\right) }$ \\
&  &  & \multicolumn{1}{|c}{} &  &  &  &  &  &  &  &  &  &  &  \\
\hline
&  &  & \multicolumn{1}{|c}{} &  &  &  &  &  &  &  &  &  &  &  \\
${\mathcal{D}}_{m}^{(1)}$ & $0$ &  & \multicolumn{1}{|c}{} & $118$ & $116$ & $106$ &  & $141$ & $140$ & $125$ &  & $231$ & $221$ & $193$ \\
 & $0.5$ &  & \multicolumn{1}{|c}{} & $88$ & $92$ & $72$ &  & $118$ & $125$ & $98$ &  & $225$ & $223$ & $180$ \\
 & $0.9$ &  & \multicolumn{1}{|c}{} & $84$ & $80$ & $58$ &  & $134$ & $123$ & $96$ &  & $296$ & $247$ & $204$ \\
&  &  & \multicolumn{1}{|c}{} &  &  &  &  &  &  &  &  &  &  &  \\
${\mathcal{D}}_{m}^{(2)}$ & $0$ &  & \multicolumn{1}{|c}{} & $114$ & $114$ & $103$ &  & $129$ & $130$ & $114$ &  & $200$ & $197$ & $170$ \\
 & $0.5$ &  & \multicolumn{1}{|c}{} & $84$ & $89$ & $69$ &  & $105$ & $113$ & $88$ &  & $197$ & $199$ & $161$ \\
 & $0.9$ &  & \multicolumn{1}{|c}{} & $77$ & $76$ & $54$ &  & $119$ & $110$ & $85$ &  & $266$ & $224$ & $190$ \\
&  &  & \multicolumn{1}{|c}{} &  &  &  &  &  &  &  &  &  &  &  \\
${\mathcal{D}}_{m}^{(3)}$ & $0$ &  & \multicolumn{1}{|c}{} & $92$ & $86$ & $76$ &  & $103$ & $97$ & $84$ &  & $134$ & $120$ & $109$ \\
 & $0.5$ &  & \multicolumn{1}{|c}{} & $76$ & $70$ & $60$ &  & $96$ & $90$ & $76$ &  & $135$ & $122$ & $111$ \\
 & $0.9$ &  & \multicolumn{1}{|c}{} & $73$ & $63$ & $49$ &  & $114$ & $96$ & $77$ &  & $156$ & $136$ & $118$ \\
&  &  & \multicolumn{1}{|c}{} &  &  &  &  &  &  &  &  &  &  &  \\
\hline\hline
&  &  &  &  &  &  &  &  &  &  &  &  &  &  \\
\end{tabular}
}
\end{table}

\noindent To summarize the findings above, the monitoring schemes ${\mathcal{%
D}}_{m}^{(1)}$ and ${\mathcal{D}}_{m}^{(2)}$ typically have high power even for smaller-magnitude signals; however, this occurs with a
possibly large delay. The expanding-baseline detector $\mathcal D_m^{(3)}$ often shortens the delay,  but its restricted window and possible absorption of post-change observations can reduce power for weak signals. The procedures should therefore be viewed as complementary, rather than uniformly better or worse.

\noindent For further illustration, we include a short comparison study with recent detection procedures.  In addition to the three proposed detectors, we consider two versions of the procedure of \cite{gosmann2021new}, applied respectively to the mean (denoted GKD-mean) and to the vectorized covariance matrix (denoted GKD-cov).  We also include an ECDF-based detector\footnote{The full multivariate version of this procedure is based on evaluating ECDFs over a collection of points in $\mathbb R^d$; however, with $d=5$, this becomes computationally burdensome.  We therefore use a ``marginal'' version, in which one-dimensional versions are computed and then aggregated across coordinates. } inspired by \cite{holmes2024multi}, which we denote by mECDF. All procedures are size-adjusted by simulation under the null, using the same monitoring horizon and nominal level $0.05$.

\noindent The experiment uses $m=200$, $M=1000$, and $d=5$.  For the proposed detectors we use the kernel $h^{(3)}$ and $\beta=0.5$; for the GKD-type detectors we use the corresponding weight parameter $\gamma_{GKD}=0.25$\footnote{Note $0<\gamma_{GKD}<1/2$; roughly, the correspondence between $\beta$ and $\gamma_{GKD}$ is $\gamma_{GKD}=\beta/2$.}.  We consider an early change, $k^\ast=10$, and a late change, $k^\ast=500$.  The indicated mean, covariance, and tail changes are the same as the study in Table \ref{tab:PowerDelayWeakByAlt}; we also include a milder mixed alternative combining all three effects, so that no single effect is dominant: after the change the observations have location shift from $\boldsymbol \mu =0$ to $\boldsymbol{%
\mu =}\left( 0.15,...,0.15\right)^\top$, covariance matrix with entries $\Sigma_{ij}=0.25^{|i-j|}$, and standardized $t_5$ coordinates.  Table~\ref{tab:ComparisonStudy} reports post-change rejection probabilities and median detection delays, where the delays are computed conditional on post-change detection. The largest power and shortest delays for each scenario (along each column) are marked in bold.

\begin{table}[tbph!]\small
\caption{Empirical power and median delay, comparison study}
\label{tab:ComparisonStudy}\captionsetup{font=scriptsize} \centering
{\tiny
\begin{tabular}{cccccccccccc}
\hline\hline
&  &  &  &  &  &  &  &  &  &  &  \\
& Alternative & \multicolumn{1}{|c}{} & \multicolumn{4}{c}{Early change, $k^{\ast}=10$} &  & \multicolumn{4}{c}{Late change, $k^{\ast}=500$} \\
&  &  &  &  &  &  &  &  &  &  &  \\
Detector &  & \multicolumn{1}{|c}{} & Mean & Cov. & Tail & Mixed &  & Mean & Cov. & Tail & Mixed \\
&  &  &  &  &  &  &  &  &  &  &  \\
\hline
&  &  &  &  &  &  &  &  &  &  &  \\
${\mathcal{D}}_{m}^{(1)}$ & Power & \multicolumn{1}{|c}{} & $\mathbf{1.000}$ & $\mathbf{1.000}$ & $\mathbf{1.000}$ & $0.990$ &  & $0.612$ & $0.958$ & $0.339$ & $0.294$ \\
 & Med. Delay & \multicolumn{1}{|c}{} & $80$ & $47$ & $125$ & $144$ &  & $311$ & $242$ & $366$ & $342$ \\
&  &  &  &  &  &  &  &  &  &  &  \\
${\mathcal{D}}_{m}^{(2)}$ & Power & \multicolumn{1}{|c}{} & $\mathbf{1.000}$ & $\mathbf{1.000}$ & $\mathbf{1.000}$ & $\mathbf{0.991}$ &  & $0.793$ & $0.957$ & $0.400$ & $0.344$ \\
 & Med. Delay & \multicolumn{1}{|c}{} & $\mathbf{75}$ & $47$ & $120$ & $136$ &  & $319$ & $223$ & $379$ & $347$ \\
&  &  &  &  &  &  &  &  &  &  &  \\
${\mathcal{D}}_{m}^{(3)}$ & Power & \multicolumn{1}{|c}{} & $0.996$ & $\mathbf{1.000}$ & $0.941$ & $0.862$ &  & $0.959$ & $0.956$ & $0.946$ & $\mathbf{0.908}$ \\
 & Med. Delay & \multicolumn{1}{|c}{} & $77$ & $47$ & $119$ & $\mathbf{125}$ &  & $\mathbf{115}$ & $\mathbf{87}$ & $152$ & $\mathbf{159}$ \\
&  &  &  &  &  &  &  &  &  &  &  \\
GKD-mean & Power & \multicolumn{1}{|c}{} & $\mathbf{1.000}$ & $0.179$ & $0.075$ & $0.909$ &  & $0.874$ & $0.027$ & $0.015$ & $0.289$ \\
 & Med. Delay & \multicolumn{1}{|c}{} & $85$ & $217.5$ & $332$ & $195$ &  & $294$ & $178.5$ & $197$ & $332$ \\
&  &  &  &  &  &  &  &  &  &  &  \\
GKD-cov & Power & \multicolumn{1}{|c}{} & $0.046$ & $\mathbf{1.000}$ & $0.707$ & $0.951$ &  & $0.038$ & $\mathbf{0.964}$ & $0.230$ & $0.319$ \\
 & Med. Delay & \multicolumn{1}{|c}{} & $314$ & $\mathbf{27}$ & $133$ & $187$ &  & $213$ & $105$ & $180$ & $316$ \\
&  &  &  &  &  &  &  &  &  &  &  \\
mECDF & Power & \multicolumn{1}{|c}{} & $0.846$ & $0.094$ & $\mathbf{1.000}$ & $0.592$ &  & $\mathbf{0.966}$ & $0.043$ & $\mathbf{0.968}$ & $0.902$ \\
 & Med. Delay & \multicolumn{1}{|c}{} & $210$ & $525$ & $\mathbf{111}$ & $276$ &  & $221$ & $305$ & $\mathbf{133}$ & $277$ \\
&  &  &  &  &  &  &  &  &  &  &  \\
\hline\hline
\end{tabular}
}
\end{table}

The results show the expected behavior relative to targeted approaches: the mean-based GKD detector is effective for mean changes, the covariance-based GKD detector is strongest for pure covariance changes, and the mECDF detector is competitive for tail changes.  The proposed detectors ${\mathcal D}_m^{(i)}$ are nevertheless competitive across all alternatives.  In particular, the expanding-baseline detector ${\mathcal D}_m^{(3)}$ gives the most substantial gains for late changes, where it combines high power with markedly shorter detection delays, although this can come at the cost of reduced power for weaker signals.

\noindent Finally, to illustrate the potential tradeoff for $\cD_m^{(3)}$ more directly, Table \ref{tab:PowerDelaySmallMeanStress} reports a focused version of $H_{A,1}$ with $d=5$ and
$
\boldsymbol{\mu}=(\delta,\ldots,\delta)^\top,
$
where $\delta=0.20$ and $\delta=0.15$ correspond respectively to the weak and very weak settings.  Here ${\mathcal D}_m^{(1)}$ and ${\mathcal D}_m^{(2)}$ retain higher power by accumulating signal over longer post-change stretches, whereas ${\mathcal D}_m^{(3)}$ has lower power for the weakest shift but substantially shorter delays upon detection, with a delay that is (roughly) capped by the inspection window length $c_m$.

\begin{table}[bhtp!]
\caption{Empirical power and median delay - small mean changes ($h^{(2)}$, $\beta=0.5$)}
\label{tab:PowerDelaySmallMeanStress}\captionsetup{font=scriptsize} \centering
{\tiny
\begin{tabular}{ccccc}
\hline\hline\noalign{\vskip 2pt}
Detector & $\beta$ &  & $\delta=0.20$ &  $\delta=0.15$ \\[2pt]
\hline\noalign{\vskip 3pt}
${\mathcal{D}}_{m}^{(1)}$ & $0.5$ & Power & $0.969$ & $0.809$ \\
 &  & Med. Delay & $310$ & $492$ \\[6pt]
${\mathcal{D}}_{m}^{(2)}$ & $0.5$ & Power & $0.976$ & $0.849$ \\
 &  & Med. Delay & $261$ & $432$ \\[6pt]
${\mathcal{D}}_{m}^{(3)}$ & $0.5$ & Power & $0.864$ & $0.488$ \\
 &  & Med. Delay & $145$ & $168$ \\[2pt]
\hline\hline
\end{tabular}
}
\end{table}

\subsection{Empirical illustration\label{empirics}}

We apply our methodology to the MetroPT-3 dataset\footnote{The data were obtained from the UC Irvine Machine Learning Repository (\href{https://doi.org/10.24432/C5VW3R}{doi:10.24432/C5VW3R})} \citep{davari2021predictive,veloso2022metropt} which contains multivariate sensor readings from the Air Production Unit (APU) compressor of a metro train. The raw data consist of measurements recorded at one-second frequency, including pressure, motor-current and temperature measurements, and are accompanied with annotations indicating periods of known failure events. We focus on the reported high-stress air-leak event on July 15, 2020, which occurred between 14:30 and 19:00. Since the raw data are recorded at high sampling frequency, we convert these signals into lower-frequency observations by aggregating them over non-overlapping one-hour windows. We use three features derived from MetroPT-3 sensor signals: the within-window standard deviations of the pressure signals TP3 and DV pressure, and the within-window standard deviation of motor current.  For each retained 1-hour window, the resulting observation in window $i$ is
$$
\bX_i=\left(
\operatorname{sd}(\mathrm{TP3})_i,
\operatorname{sd}(\mathrm{DV\ pressure})_i,
\operatorname{sd}(\mathrm{Motor\ current})_i
\right)^\top ,
$$
where, for example, $\operatorname{sd}(\mathrm{TP3})_i$ denotes the within-window standard deviation of TP3 in window $i$. We retain only valid, nonempty windows after preprocessing. The three window-level features are then standardized coordinatewise using the corresponding historical sample.

\noindent We restrict this empirical
illustration to weekday operating periods. Specifically, each historical sample consists of the previous Monday--Friday period, and monitoring is carried out over the following Monday--Friday period. Serial dependence and
homogeneity diagnostics for the historical samples are reported in the Supplement; for the periods considered
below, these diagnostics do not indicate strong departures from homogeneity or serial independence after preprocessing.

\noindent We consider three monitoring periods. The first is the week containing the dataset-reported July 15 air-leak event. Its historical sample consists of
all $m=85$ valid one-hour windows retained from July 6--10, 2020 (from 00:00 on July 6 to 23:59 on July 10), while monitoring is carried out over July 13--17, 2020. The reported air-leak interval begins at 14:30 on July 15. We also consider two non-event weekday monitoring periods, chosen away from the reported failure events. These monitoring periods are August 10--14, 2020 and August 24--28, 2020, with corresponding historical samples taken from August 3--7 and August 17--21, respectively.

\begin{table}[!hpbt]\small
\caption{Stopping times for the MetroPT-3 dataset using kernel $h^{(2)}$ and $\beta=0.5$.}
\label{tab:metropt3_weekday_energy_beta05}
\centering
\begin{tabular}{l@{\quad}c@{\quad}ccc@{\qquad}ccc}
\hline\hline
\noalign{\vskip 4pt}
Monitoring period & Logged event & $m$ & $M$ & $\mathcal D_m^{(1)}$ & $\mathcal D_m^{(2)}$ & $\mathcal D_m^{(3)}$ \\
\noalign{\vskip 4pt}\hline\noalign{\vskip 4pt}
Jul 13--17 & 15-Jul 14:30--19:00 & 85 & 103 & 15-Jul 06:00 & 15-Jul 07:00 & 15-Jul 06:00 \\
Aug 10--14 & $-$ & 85 & 113 & \texttt{none} & \texttt{none} & \texttt{none} \\
Aug 24--28 & $-$ & 108 & 113 & \texttt{none} & \texttt{none} & \texttt{none} \\
\noalign{\vskip 4pt}\hline\hline
\end{tabular}
\begin{tablenotes}
\scriptsize
\item Results for kernels $h^{(1)}$ and $h^{(3)}$ were qualitatively similar and are omitted for brevity.
\end{tablenotes}
\end{table}

\noindent Table~\ref{tab:metropt3_weekday_energy_beta05} reports the stopping times
using the kernel $h^{(2)}$ and with $\beta=0.5$; $\mathcal D^{(3)}_m$ uses $\gamma=0.51$ and $c_0=1/5$, roughly corresponding to a 1-day inspection window. For the event week, all three
detectors stop in the early morning of July 15, several hours before the
reported air-leak interval begins at 14:30. Thus, all three procedures
identify a potential distributional change well in advance of the labeled high-stress
air-leak event. In contrast, none of the detectors stops during either of
the two non-event monitoring periods.

\noindent For illustration, Figure~\ref{fig:metropt3_detector_paths} displays the
detector paths for the event week, in monitoring time. The shaded region
marks the dataset-reported leakage event interval, while the horizontal dashed
lines indicate the calibrated critical values. The figure shows that all
three detector paths cross their respective thresholds well before the
reported event interval begins, in agreement with the stopping times in
Table~\ref{tab:metropt3_weekday_energy_beta05}.

\begin{figure}[hpbt!]
\centering
\includegraphics[width=0.8\linewidth]{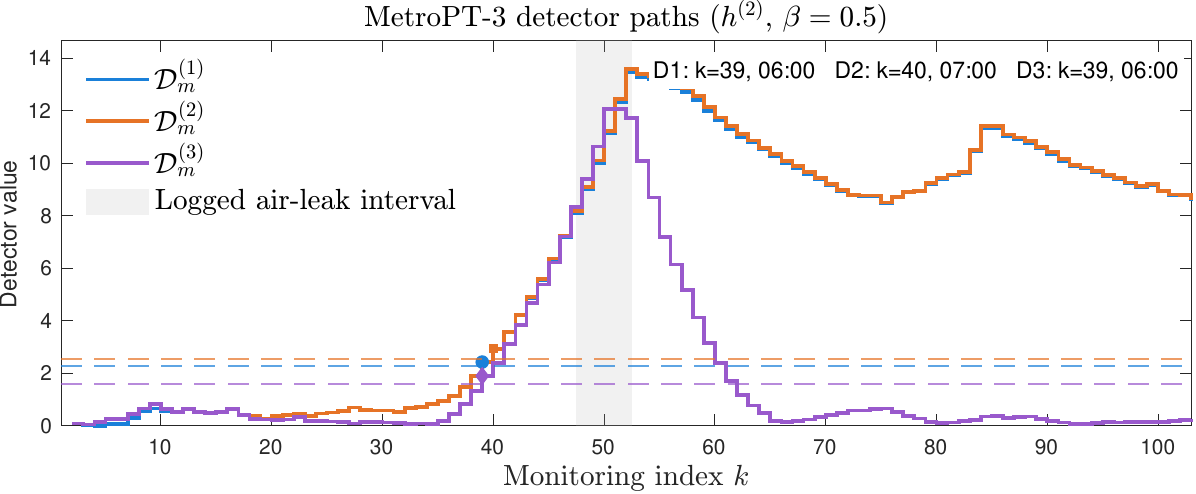}
\caption{Detector paths for the MetroPT-3 event week, using the energy
kernel $h^{(2)}$ and $\beta=0.5$. The shaded region indicates the
dataset-reported air-leak interval, and the dashed horizontal lines are the
corresponding critical values.}
\label{fig:metropt3_detector_paths}
\end{figure}

A second empirical illustration, based on infant ECG data, is reported in the Supplement.

\section{Discussion and conclusions\label{conclusion}}

We developed a flexible framework for sequentially detecting distributional changepoints using two-sample $U$-statistics. The proposed methodology includes ordinary  and Page-type detectors for both open- and closed-ended monitoring, together with an expanding-baseline variant designed to improve sensitivity to late changes. Under $H_0$, we derived the limiting distributions of all three procedures and established a consistent Monte Carlo calibration based on estimated kernel eigenvalues. For the ordinary and Page-type detectors, we further obtained consistency and detection-delay limits under both early and late changes. A key technical feature is that the theory requires only square summability of the eigenvalues of the degenerate kernel operator, rather than absolute summability.  We also propose a test
for the offline, retrospective detection of changepoints, which is useful
when testing for the maintained assumption that no changes have occurred during the historical sample. The simulations show broad sensitivity to a range of multivariate distributional changes. The ordinary and Page-type detectors are better able to accumulate weak signals, whereas the expanding-baseline detector can substantially reduce delays for later changes, at the possible cost of power against weaker alternatives. Comparisons with recent mean-, covariance-, and ECDF-based procedures, together with the empirical applications, illustrate both the flexibility and the limitations of the proposed approach.

\smallskip

\noindent Extending the theory beyond independence is a natural but technically demanding direction for future work. In particular, the kernel expansion used here is an $\mathcal L^2(F\times F)$ expansion, whereas under dependence the joint law of $(\bX_i,\bX_j)^\top$ is generally not $F\times F$. Thus, beyond establishing functional limit theory for the eigenfunction processes and spectral approximation of the operator $A$ under dependence, one must account for the expansion with the (lag-dependent) joint laws arising in the monitoring statistics.  The general metric-space formulation also suggests broader applications to functional-valued and other non-Euclidean observations. As in the Euclidean setting, however, kernel selection can be subtle, and may be especially consequential in richer data spaces because it determines which structural features of the observations drive the resulting discrepancy. These and related directions are currently under investigation.

\clearpage 

\appendix

\begin{center}
{\large\bfseries Supplementary Material}
\end{center}

\renewcommand*{\thesection}{\Alph{section}} \section{Additional simulation results\label{furtherMC}}

Tables \ref{tab:PowerStrongRandomAlt} and \ref{tab:PowerWeakRandomAlt} - complementing Tables \ref{tab:DelayStrongRandomAlt} and \ref{tab:DelayWeakRandomAlt} in the main paper, are reported hereafter. As can be seen, good power is maintained in the ``strong changes'' table throughout, either exactly or nearly equal to 1.  However, in the ``moderate changes'' panel, the detector $D_m^{(3)}$ may lose power at more distant changepoints on account of its limited inspection window.  

\begin{table}[hbpt!]
\caption{Empirical power - strong changes (randomised alternative $H_{A,i}$)}
\label{tab:PowerStrongRandomAlt}\captionsetup{font=scriptsize} \centering
{\tiny
\begin{tabular}{ccccccccccccccc}
\hline\hline
&  &  &  &  &  &  &  &  &  &  &  &  &  &  \\
&  &  & \multicolumn{1}{|c}{} & \multicolumn{3}{c}{$k_{\ast}=10$} &  & \multicolumn{3}{c}{$k_{\ast}=50$} &  & \multicolumn{3}{c}{$k_{\ast}=200$} \\
Detector & $\beta$ & & \multicolumn{1}{|c}{} & $h^{\left( 1\right) }$ & $h^{\left( 2\right) }$ & $h^{\left( 3\right) }$ &  & $h^{\left( 1\right) }$ & $h^{\left( 2\right) }$ & $h^{\left( 3\right) }$ &  & $h^{\left( 1\right) }$ & $h^{\left( 2\right) }$ & $h^{\left( 3\right) }$ \\
&  &  & \multicolumn{1}{|c}{} &  &  &  &  &  &  &  &  &  &  &  \\
\hline
&  &  & \multicolumn{1}{|c}{} &  &  &  &  &  &  &  &  &  &  &  \\
${\mathcal{D}}_{m}^{(1)}$ & $0$ &  & \multicolumn{1}{|c}{} & $1.000$ & $1.000$ & $1.000$ &  & $1.000$ & $1.000$ & $1.000$ &  & $0.992$ & $0.995$ & $0.996$ \\
 & $0.5$ &  & \multicolumn{1}{|c}{} & $1.000$ & $1.000$ & $1.000$ &  & $0.999$ & $1.000$ & $0.999$ &  & $0.978$ & $0.985$ & $0.983$ \\
 & $0.9$ &  & \multicolumn{1}{|c}{} & $0.996$ & $0.994$ & $0.994$ &  & $0.985$ & $0.985$ & $0.982$ &  & $0.972$ & $0.967$ & $0.960$ \\
&  &  & \multicolumn{1}{|c}{} &  &  &  &  &  &  &  &  &  &  &  \\
${\mathcal{D}}_{m}^{(2)}$ & $0$ &  & \multicolumn{1}{|c}{} & $1.000$ & $1.000$ & $1.000$ &  & $1.000$ & $1.000$ & $1.000$ &  & $0.991$ & $0.995$ & $0.996$ \\
 & $0.5$ &  & \multicolumn{1}{|c}{} & $1.000$ & $1.000$ & $1.000$ &  & $0.999$ & $1.000$ & $0.999$ &  & $0.977$ & $0.985$ & $0.983$ \\
 & $0.9$ &  & \multicolumn{1}{|c}{} & $0.996$ & $0.995$ & $0.995$ &  & $0.986$ & $0.985$ & $0.983$ &  & $0.970$ & $0.968$ & $0.963$ \\
&  &  & \multicolumn{1}{|c}{} &  &  &  &  &  &  &  &  &  &  &  \\
${\mathcal{D}}_{m}^{(3)}$ & $0$ &  & \multicolumn{1}{|c}{} & $1.000$ & $1.000$ & $1.000$ &  & $1.000$ & $1.000$ & $1.000$ &  & $0.972$ & $0.978$ & $0.975$ \\
 & $0.5$ &  & \multicolumn{1}{|c}{} & $1.000$ & $1.000$ & $1.000$ &  & $0.998$ & $0.998$ & $0.998$ &  & $0.962$ & $0.964$ & $0.968$ \\
 & $0.9$ &  & \multicolumn{1}{|c}{} & $0.995$ & $0.993$ & $0.994$ &  & $0.982$ & $0.977$ & $0.977$ &  & $0.963$ & $0.954$ & $0.950$ \\
&  &  & \multicolumn{1}{|c}{} &  &  &  &  &  &  &  &  &  &  &  \\
\hline\hline
&  &  &  &  &  &  &  &  &  &  &  &  &  &  \\
\end{tabular}
}
\end{table}

\vspace{3ex}

\begin{table}[hbpt!]
\caption{Empirical power - moderate changes (randomised alternative $H_{A,i}$,)}
\label{tab:PowerWeakRandomAlt}\captionsetup{font=scriptsize} \centering
{\tiny
\begin{tabular}{ccccccccccccccc}
\hline\hline
&  &  &  &  &  &  &  &  &  &  &  &  &  &  \\
&  &  & \multicolumn{1}{|c}{} & \multicolumn{3}{c}{$k_{\ast}=10$} &  & \multicolumn{3}{c}{$k_{\ast}=50$} &  & \multicolumn{3}{c}{$k_{\ast}=200$} \\
Detector & $\beta$ & & \multicolumn{1}{|c}{} & $h^{\left( 1\right) }$ & $h^{\left( 2\right) }$ & $h^{\left( 3\right) }$ &  & $h^{\left( 1\right) }$ & $h^{\left( 2\right) }$ & $h^{\left( 3\right) }$ &  & $h^{\left( 1\right) }$ & $h^{\left( 2\right) }$ & $h^{\left( 3\right) }$ \\
&  &  & \multicolumn{1}{|c}{} &  &  &  &  &  &  &  &  &  &  &  \\
\hline
&  &  & \multicolumn{1}{|c}{} &  &  &  &  &  &  &  &  &  &  &  \\
${\mathcal{D}}_{m}^{(1)}$ & $0$ &  & \multicolumn{1}{|c}{} & $1.000$ & $0.992$ & $1.000$ &  & $1.000$ & $0.985$ & $0.999$ &  & $0.993$ & $0.906$ & $0.982$ \\
 & $0.5$ &  & \multicolumn{1}{|c}{} & $1.000$ & $0.988$ & $1.000$ &  & $0.998$ & $0.967$ & $0.998$ &  & $0.977$ & $0.858$ & $0.963$ \\
 & $0.9$ &  & \multicolumn{1}{|c}{} & $0.996$ & $0.950$ & $0.991$ &  & $0.983$ & $0.910$ & $0.975$ &  & $0.955$ & $0.762$ & $0.918$ \\
&  &  & \multicolumn{1}{|c}{} &  &  &  &  &  &  &  &  &  &  &  \\
${\mathcal{D}}_{m}^{(2)}$ & $0$ &  & \multicolumn{1}{|c}{} & $1.000$ & $0.991$ & $1.000$ &  & $1.000$ & $0.987$ & $1.000$ &  & $0.993$ & $0.934$ & $0.995$ \\
 & $0.5$ &  & \multicolumn{1}{|c}{} & $1.000$ & $0.985$ & $1.000$ &  & $0.998$ & $0.972$ & $0.998$ &  & $0.980$ & $0.876$ & $0.976$ \\
 & $0.9$ &  & \multicolumn{1}{|c}{} & $0.997$ & $0.945$ & $0.994$ &  & $0.985$ & $0.906$ & $0.977$ &  & $0.964$ & $0.768$ & $0.943$ \\
&  &  & \multicolumn{1}{|c}{} &  &  &  &  &  &  &  &  &  &  &  \\
${\mathcal{D}}_{m}^{(3)}$ & $0$ &  & \multicolumn{1}{|c}{} & $0.999$ & $0.874$ & $0.981$ &  & $0.998$ & $0.853$ & $0.977$ &  & $0.973$ & $0.728$ & $0.922$ \\
 & $0.5$ &  & \multicolumn{1}{|c}{} & $0.997$ & $0.864$ & $0.978$ &  & $0.994$ & $0.838$ & $0.972$ &  & $0.964$ & $0.716$ & $0.904$ \\
 & $0.9$ &  & \multicolumn{1}{|c}{} & $0.988$ & $0.792$ & $0.957$ &  & $0.970$ & $0.756$ & $0.927$ &  & $0.913$ & $0.657$ & $0.846$ \\
&  &  & \multicolumn{1}{|c}{} &  &  &  &  &  &  &  &  &  &  &  \\
\hline\hline
&  &  &  &  &  &  &  &  &  &  &  &  &  &  \\
\end{tabular}
}
\end{table}

\clearpage

\section{Additional data illustration}

We also apply our methodology to the detection of changes in the heart
rate (ECG) recording of an infant. We use the BabyECG dataset in %
\citet{nason2000wavelet}: a series of $2,\!048$ observations recorded in beats
per minute, sampled overnight every $16$ seconds from 21:17:59 to 06:27:18,
from a $66$-day-old infant.\footnote{%
The data are available as part of the R package \textsf{wavethresh}, and
they were originally recorded by Prof. Peter Fleming, Dr Andrew Sawczenko
and Jeanine Young of the Institute of Child Health, Royal Hospital for Sick
Children, Bristol.}  The accompanying data also contain annotations of the infant's sleep state.

We focus on transition out of an annotated period of quiet sleep. To exclude the initial transition into this state, we discard its first $30$ observations, corresponding to $8$ minutes, and use the following 15 minutes as the historical sample,  corresponding to $m=56$ observations,  from 00:19:03 to 00:33:43. We use a horizon of $M=225$, which corresponds to 1 hour.  As a pre-whitening phase, we select among $\operatorname{AR}(p)$ models with $p\in\{0,1,2\}$, using BIC as a criterion; an $\operatorname{AR}(1)$ model is selected. The monitoring procedures are applied to the resulting one-step residuals, with the fitted model held fixed during monitoring. Monitoring begins at 00:33:59. The homogeneity and serial-dependence diagnostics reported in Section~\ref{s:datacomplements} provide no significant evidence against the maintained assumptions for the historical residuals.
For $\mathcal D_m^{(3)}$, we take $c_m=38$, corresponding to approximately $10.1$ minutes.

\begin{table}[hptb!]\small
\caption{Stopping times for the BabyECG data. The annotated exit from quiet sleep occurs 10.4 minutes after monitoring begins. Reported values are stopping times in minutes after monitoring begins.}
\label{tab:babyecg_Q04_allkernels_allbeta}
\centering
\begin{tabular}{c@{\quad}c@{\qquad}ccc}
\hline\hline
\noalign{\vskip 4pt}
Kernel & $\beta$ & $\mathcal D_m^{(1)}$ & $\mathcal D_m^{(2)}$ & $\mathcal D_m^{(3)}$ \\
\noalign{\vskip 4pt}\hline\noalign{\vskip 4pt}
$h^{(1)}$ & 0.0 & 15.5 & 14.9 & 10.9 \\
 & 0.5 & 15.2 & 12.8 & 11.2 \\
 & 0.9 & 15.5 & 12.8 & 11.2 \\
\noalign{\vskip 2pt}
$h^{(2)}$ & 0.0 & 15.7 & 15.2 & 11.2 \\
 & 0.5 & 15.5 & 15.2 & 11.2 \\
 & 0.9 & 15.7 & 15.2 & 11.7 \\
\noalign{\vskip 2pt}
$h^{(3)}$ & 0.0 & 15.7 & 14.9 & 10.9 \\
 & 0.5 & 15.5 & 12.8 & 10.9 \\
 & 0.9 & 15.7 & 14.9 & 11.2 \\
\noalign{\vskip 4pt}\hline\hline
\end{tabular}
\end{table}

During this monitoring period, there was an annotated sleep-state change at 10.4 minutes after the onset of monitoring. Table~\ref{tab:babyecg_Q04_allkernels_allbeta} reports stopping times for the three kernels and the considered values of $\beta$. The results are similar across kernels and boundary parameters: $\mathcal D_m^{(3)}$ signals closest to the annotated exit from quiet sleep, followed by $\mathcal D_m^{(2)}$ and then $\mathcal D_m^{(1)}$. Figure~\ref{fig:babyecg_detector_paths} shows the corresponding detector paths for kernel $h^{(2)}$ and $\beta=0.5$, together with the recorded sleep-state annotation.

\begin{figure}[hptb!]
\centering
\includegraphics[width=0.7\linewidth]{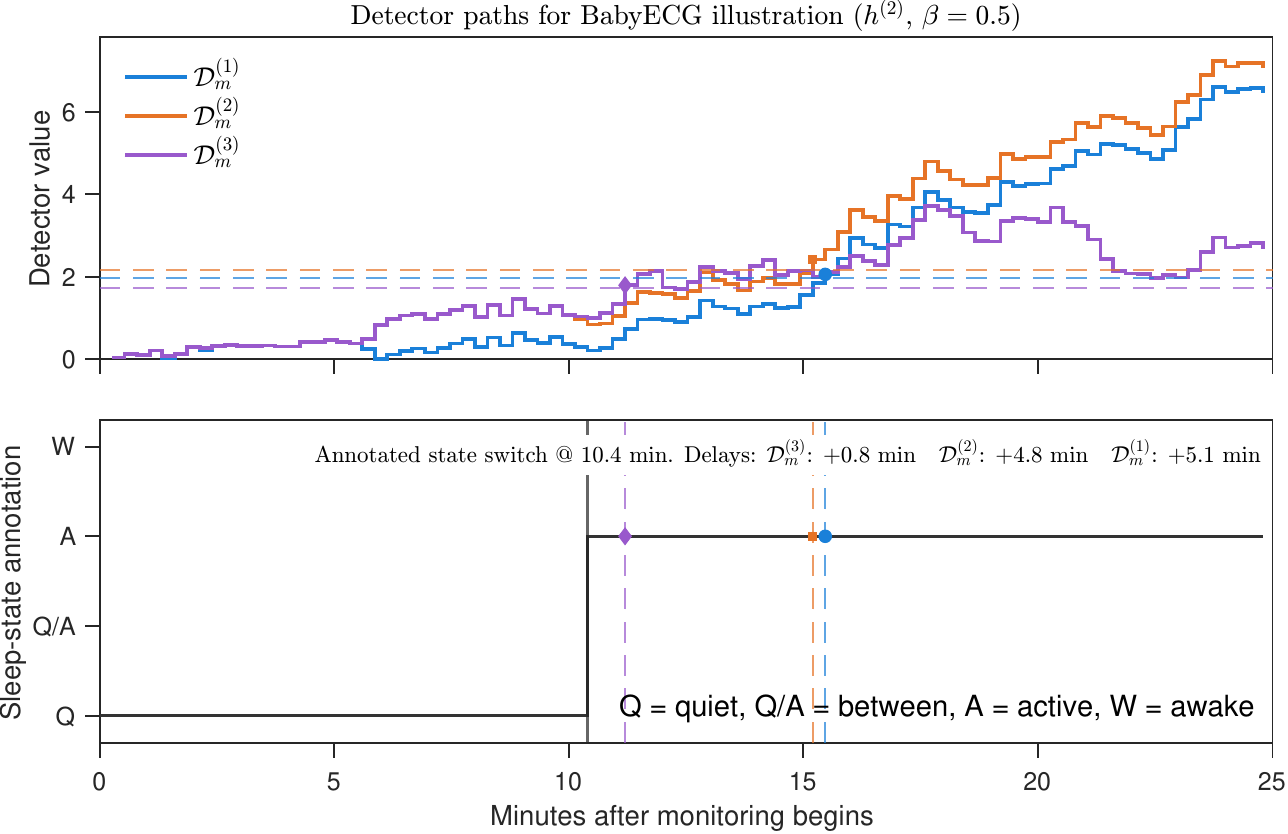}
\caption{Detector paths for the BabyECG data using the kernel $h^{(2)}$, with $\beta=0.5$. The lower panel shows the recorded sleep states: quiet (Q), intermediate (Q/A), active (A), and awake (W); the present segment includes transition from state $Q$ to state A.  Dashed horizontal lines denote the calibrated critical values, and markers indicate the first crossings.}
\label{fig:babyecg_detector_paths}
\end{figure}

\section{Complements to data examples}\label{s:datacomplements}

In this section, we report diagnostic results and additional information for the empirical examples. The diagnostics are not intended to provide a comprehensive validation of the maintained assumptions; rather, they serve as illustrative checks that the selected historical samples do not exhibit severe departures from serial independence or distributional stability.

Homogeneity is assessed using the statistic from Theorem~\ref{thewi}, with critical values obtained by Monte Carlo using the empirical eigenvalues of the kernel matrix $A_m$. Because our monitoring procedures are themselves distance-based, we assess serial dependence using distance-covariance methods, which are closely aligned with the framework of the paper (in particular, when using kernel $h^{(2)}$) and are sensitive to general, rather than only linear, forms of dependence.

Specifically, let  $h\geq 0$, let $(\bX_0',\bX_h')$ be an independent copy of $(\bX_0,\bX_h)$, and let $\bX_h''$ be an additional independent copy of $\bX_h$.  Define (see  \cite[equation~(2.12)]{davis:matsui:mikosch:wan:2018})
\begin{align*}
T^X(h)= \E\left\|\bX_0-\bX_0'\right\|\left\|\bX_h-\bX_h'\right\| &+\E\left\|\bX_0-\bX_0'\right\|
\E\left\|\bX_h-\bX_h'\right\|\\
&\quad -2\E\left[\left\|\bX_0-\bX_0'\right\| \left\|\bX_h-\mathbf X_h''\right\|\right].\end{align*} and
$$
R^X(h)
=\frac{T^X(h)}{T^X(0)}.
$$
Then, $0\leq R^X(h)\leq 1$,  and $R^X(h)=0$ if and only if $\mathbf X_0$ and $\mathbf X_h$ are independent. Let $T_{m}^X(h)$ and $R_{m}^X(h)$ denote the empirical versions
defined in Section~3.2 of \cite{davis:matsui:mikosch:wan:2018}, computed from $\mathbf X_1,\ldots,\mathbf X_m$, and set
\begin{equation}\label{e:MnH}
Q_{m,H}=m(m+2)\sum_{h=1}^{H}\frac{R_{m}^X(h)}{m-h},\quad 
M_{m,H}
=\max_{1\leq h\leq H}R_{m}^X(h).
\end{equation}
The statistic $Q_{m,H}$ is a Box-Ljung-type statistic based on the distance correlation inspired by \cite{fokianos:pitsillou:2017}, and $M_{m,H}$ records the largest dependence measure among the first $H$ lags.  By Theorem~3.1 and Corollary~3.7 of
\cite{davis:matsui:mikosch:wan:2018}, for each fixed $h$,
$$
R_{m}^X(h) \to R^X(h),\quad \text{a.s.}.
$$
Consequently, for fixed $H$, both statistics in \eqref{e:MnH} consistently summarize pairwise dependence over the first $H$ lags. In particular, under stationarity,
$\max_{1\leq h\leq H}R^X(h)=0$
if and only if $\bX_1,\ldots,\bX_{H+1}$ are pairwise independent.

Under the null hypothesis that
$\mathbf X_1,\bX_2,\ldots$ are independent and identically
distributed, the observations are exchangeable, and a permutation test is valid.  We therefore calibrate
$M_{m,H}$ and $Q_{m,H}$ by randomly permuting the historical sample and recomputing all lagged statistics.  For permutations $\pi_1,\ldots,\pi_B$, the permutation-based 
$p$-value is computed as
$$
\widehat p_Q =\frac{
1+\sum_{b=1}^B
\mathbf 1\left\{Q_{m,H}^{\pi_b}\geq Q_{m,H}\right\}}{B+1},\qquad \widehat p_M =\frac{
1+\sum_{b=1}^B\mathbf 1\left\{ M_{m,H}^{\pi_b}\geq M_{m,H}
\right\}}{B+1}.
$$

\bigskip

Finally, in order to check how many moments the data admit, we have used the tests
developed in \citet{trapani2016testing} and %
\citet{degiannakis2023superkurtosis}, which we summarize here for
completeness. The test is applied separately to each coordinate of the training sample. For a given coordinate, the null hypothesis that the moment of order $k$
of a random variable $X$\ does not exist, viz.%
\[
H_{0}:E\left\vert X\right\vert ^{k}=\infty ,
\]%
is implemented by constructing the statistic%
\[
\mu _{k}=\frac{m^{-1}\sum_{i=1}^{m}\left\vert X_{i}\right\vert ^{k}}{\left(
m^{-1}\sum_{i=1}^{m}\left\vert X_{i}\right\vert ^{2}\right) ^{k/2}},
\]%
computed from $\left\{ X_{i},1\leq i\leq m\right\}$, in the training sample and
subsequently%
\[
\psi _{k}=\exp \left( \mu _{k}\right) -1.
\]%
The statistic $\psi _{k}$\ is then randomised according to the following
algorithm:
\begin{description}
\item[Step 1] Generate an artificial sample $\left\{ \xi _{n}^{\left(
k\right) },1\leq n\leq N\right\} $, \textit{i.i.d. }across $n$ and
independently across $k$, with $\xi _{1}^{\left( k\right) }\sim N\left(
0,1\right) $, and define $\left\{ \psi _{k}^{1/2}\times \xi _{n}^{\left(
k\right) }\right\} _{n=1}^{N}$.

\item[Step 2] For $u\in \left\{ \pm \sqrt{2}\right\} $, generate $\zeta
_{n,N}^{\left( k\right) }\left( u\right) =I\left( \psi _{k}^{1/2}\times \xi
_{n}^{\left( k\right) }\leq u\right) $, $1\leq n\leq N$.

\item[Step 3] For each $u$, define 
\[
\vartheta _{m,N}^{\left( k\right) }\left( u\right) =\frac{2}{N^{1/2}}%
\sum_{n=1}^{N}\left[ \zeta _{n,N}^{\left( k\right) }\left( u\right) -\frac{1%
}{2}\right] ,
\]%
and then the test statistic%
\[
\Theta _{m,N}^{\left( k\right) }=\frac{1}{2}\left[ \left( \vartheta
_{m,N}^{\left( k\right) }\left( \sqrt{2}\right) \right) ^{2}+\left(
\vartheta _{m,N}^{\left( k\right) }\left( -\sqrt{2}\right) \right) ^{2}%
\right] .
\]
\end{description}

\citet{trapani2016testing} shows that, as $\min \left( m,N\right)
\rightarrow \infty $ with $N=O\left( m\right) $%
\[
\begin{array}{lll}
\Theta _{m,N}^{\left( k\right) }\overset{D^{* }}{\rightarrow } & \chi
_{1}^{2} & \text{ \ \ under }H_{0}, \\ 
N^{-1}\Theta _{m,N}^{\left( k\right) }\overset{P^{* }}{\rightarrow } & 
c_{0}>0 & \text{ \ \ under }H_{A},%
\end{array}%
\]%
where $P^{* }$ denotes the conditional probability with respect to the
sample, and \textquotedblleft $\overset{D^{* }}{\rightarrow }$%
\textquotedblright\ and \textquotedblleft $\overset{P^{* }}{\rightarrow }$%
\textquotedblright\ denote conditional convergence in distribution and in
probability according to $P^{* }$. In order to wash out dependence on the
randomness, we run the test for $1\leq b\leq B$ iterations,
each time defining a test statistic $_{\left( b\right) }\Theta
_{m,N}^{\left( k\right) }$, and computing the \textit{randomised confidence
function}%
\[
Q_{m,N,B}\left( \alpha \right) =\frac{1}{B}\sum_{b=1}^{B}I\left[ _{\left(
b\right) }\Theta _{m,N}^{\left( k\right) }\leq c_{\alpha }\right] ,
\]%
where $c_{\alpha }$ is defined as $P\{\chi _{1}^{2}\geq c_{\alpha }\}=\alpha 
$, for a given nominal level $\alpha \in \left( 0,1\right) $. Hence, the
decision rule in favour of $H_{0}$ is%
\begin{equation}
Q_{m,N,B}\left( \alpha \right) \geq \left( 1-\alpha \right) -\frac{\sqrt{%
\alpha \left( 1-\alpha \right) }}{f\left( B\right) },  \label{decision}
\end{equation}%
{\normalsize where the function $f\left( B\right) $ is user-defined such
that 
\begin{equation}
\underset{B\rightarrow \infty }{\liminf }\frac{B^{1/2}}{f\left( B\right) }%
\geq c_{\alpha }.  \label{lil2}
\end{equation}%
Following the indications in }\citet{trapani2016testing} and %
\citet{degiannakis2023superkurtosis}, we have used $N=B=m$, and $f\left(
B\right) =B^{1/4}$.

\bigskip

We now provide complementary results to our empirical illustrations.
\subsection{MetroPT-3 data}
For the MetroPT-3 empirical study, we report diagnostics for the historical samples used in the weekday monitoring periods. Table~\ref{tab:metropt3_weekday_serial_adcf_QM} reports serial-dependence diagnostics based on \eqref{e:MnH}, using $H=5$ and $H=10$ lags. The results do not indicate pronounced departures from serial independence. 

\begin{table}[h!]\small
\caption{Serial-dependence diagnostics for the MetroPT-3 historical samples. The statistics $Q_{m,H}$ and $M_{m,H}$ are defined in \eqref{e:MnH}; the reported $p$-values are obtained by permutation.}
\label{tab:metropt3_weekday_serial_adcf_QM}
\centering
\begin{tabular}{c@{\quad}c@{\quad}cc@{\quad}cc}
\hline\hline
\noalign{\vskip 4pt}
Historical sample & $H$ & $Q_{m,H}$ & $M_{m,H}$ & $\widehat p_Q$ & $\widehat p_M$ \\
\noalign{\vskip 4pt}\hline\noalign{\vskip 4pt}
Jul 6--10 & 5 & 27.412 & 0.112 & 0.597 & 0.075 \\
 & 10 & 56.885 & 0.112 & 0.759 & 0.192 \\
\noalign{\vskip 2pt}
Aug 3--7 & 5 & 23.622 & 0.087 & 0.259 & 0.226 \\
 & 10 & 45.471 & 0.087 & 0.475 & 0.454 \\
\noalign{\vskip 2pt}
Aug 17--21 & 5 & 25.287 & 0.085 & 0.496 & 0.094 \\
 & 10 & 46.578 & 0.085 & 0.927 & 0.212 \\
\noalign{\vskip 4pt}\hline\hline
\end{tabular}
\end{table}

Table~\ref{tab:metropt3_weekday_homogeneity_cp} reports the corresponding homogeneity diagnostics. These do not indicate evidence of a distributional change within the selected historical samples.

\begin{table}[hptb!]\small
\caption{Historical-sample homogeneity diagnostics for the MetroPT-3 empirical illustration. Homogeneity is assessed using the test from Theorem~\ref{thewi}, calibrated with empirical eigenvalues estimated from the historical sample. The diagnostic uses the energy kernel $h^{(2)}$, with $\beta=0.5$, matching the analogous detector configurations in Table~\ref{tab:metropt3_weekday_energy_beta05}.}
\label{tab:metropt3_weekday_homogeneity_cp}
\centering
\begin{tabular}{c@{\quad}cccc}
\hline\hline
\noalign{\vskip 4pt}
Historical sample & $m$ & Statistic & Critical value & $p$-value \\
\noalign{\vskip 4pt}\hline\noalign{\vskip 4pt}
Jul 6--10  & 85 & 0.247 & 0.906 & 0.769 \\
Aug 3--7  & 85 & 0.357 & 0.925 & 0.380 \\
Aug 17--21 & 108 & 0.314 & 0.948 & 0.474 \\
\noalign{\vskip 4pt}\hline\hline
\end{tabular}
\begin{tablenotes}
\scriptsize
\item The  critical values are for the nominal level $0.05$. Results for kernels $h^{(1)}$ and $h^{(3)}$ were qualitatively similar and are omitted for brevity.
\end{tablenotes}
\end{table}

Descriptive statistics and tests for moment existence are in Tables \ref{tab:descriptive1}-\ref{tab:descriptive3} below, and refer, by way of illustration, to the historical sample ranging from July 6th till July 10th.\footnote{Similar results are observed for the other two historical samples, and are available upon request.} The test for moment existence rejects the null of moment nonexistence at least up to order 4 in all cases, indicating that enough moments exist such that Assumption \ref{a:assumption_on_h} is satisfied in all specified kernel choices.

\begin{table}[h!]
\caption{Descriptive statistics and moment existence - TP3}
\label{tab:descriptive1}\captionsetup{font=scriptsize} \centering
{\tiny \centering
}
\par
{\tiny 

\begin{tabular}{llllllll}
\hline\hline
&  &  &  &  &  &  &  \\ 
\multicolumn{8}{c}{TP3} \\ 
\multicolumn{1}{c}{} & \multicolumn{1}{c}{} & \multicolumn{1}{c}{} & 
\multicolumn{1}{c}{} & \multicolumn{1}{|c}{} & \multicolumn{1}{c}{} & 
\multicolumn{1}{c}{} & \multicolumn{1}{c}{} \\ 
\multicolumn{3}{c}{Descriptive statistics} & \multicolumn{1}{c}{} & 
\multicolumn{1}{|c}{} & \multicolumn{3}{c}{Tests for moment existence} \\ 
&  &  &  & \multicolumn{1}{|l}{} &  &  &  \\ 
Mean &  & \multicolumn{1}{c}{$0.687$} &  & \multicolumn{1}{|l}{} & 
Degiannakis et al. (2023) &  & \multicolumn{1}{c}{$\underset{\left[ \text{%
reject }H_{0}\right] }{H_{0}:E\left\vert X\right\vert ^{4}=\infty }$} \\ 
St. Dev. &  & \multicolumn{1}{c}{$0.404$} &  & \multicolumn{1}{|l}{} &  &  & 
\multicolumn{1}{c}{} \\ 
Skewness &  & \multicolumn{1}{c}{$3.333$} &  & \multicolumn{1}{|l}{} &  &  & 
\multicolumn{1}{c}{$\underset{\left[ \text{NOT reject }H_{0}\right] }{%
H_{0}:E\left\vert X\right\vert ^{5}=\infty }$} \\ 
Kurtosis &  & \multicolumn{1}{c}{$14.527$} &  & \multicolumn{1}{|l}{} &  & 
& \multicolumn{1}{c}{} \\ 
&  &  &  & \multicolumn{1}{|l}{} &  &  & \multicolumn{1}{c}{$\underset{\left[
\text{NOT reject }H_{0}\right] }{H_{0}:E\left\vert X\right\vert ^{6}=\infty }
$} \\ 
&  &  &  & \multicolumn{1}{|l}{} &  &  & \multicolumn{1}{c}{} \\ 
&  &  &  & \multicolumn{1}{|l}{} & Jarque-Bera &  & \multicolumn{1}{c}{$%
\underset{\left[ \text{p-value=0.000}\right] }{H_{0}:\text{Gaussian data}}$}
\\ 
&  &  &  &  &  &  &  \\ \hline\hline
\end{tabular}

} 
\par
{\tiny 
\begin{tablenotes}
      \tiny
            \item The table contains the outcomes for the test by \citet{degiannakis2023superkurtosis} described above for the null that the moments of the series of the (standard deviations of) of TP3 of order $4$, $5$ and $6$ are non-existent. 
            
\end{tablenotes}
}
\end{table}

\begin{table}[h!]
\caption{Descriptive statistics and moment existence - DV}
\label{tab:descriptive2}\captionsetup{font=scriptsize} \centering
{\tiny \centering
}
\par
{\tiny 

\begin{tabular}{llllllll}
\hline\hline
&  &  &  &  &  &  &  \\ 
\multicolumn{8}{c}{DV} \\ 
\multicolumn{1}{c}{} & \multicolumn{1}{c}{} & \multicolumn{1}{c}{} & 
\multicolumn{1}{c}{} & \multicolumn{1}{|c}{} & \multicolumn{1}{c}{} & 
\multicolumn{1}{c}{} & \multicolumn{1}{c}{} \\ 
\multicolumn{3}{c}{Descriptive statistics} & \multicolumn{1}{c}{} & 
\multicolumn{1}{|c}{} & \multicolumn{3}{c}{Tests for moment existence} \\ 
&  &  &  & \multicolumn{1}{|l}{} &  &  &  \\ 
Mean &  & \multicolumn{1}{c}{$0.121$} &  & \multicolumn{1}{|l}{} & 
Degiannakis et al. (2023) &  & \multicolumn{1}{c}{$\underset{\left[ \text{%
reject }H_{0}\right] }{H_{0}:E\left\vert X\right\vert ^{4}=\infty }$} \\ 
St. Dev. &  & \multicolumn{1}{c}{$0.152$} &  & \multicolumn{1}{|l}{} &  &  & 
\multicolumn{1}{c}{} \\ 
Skewness &  & \multicolumn{1}{c}{$1.029$} &  & \multicolumn{1}{|l}{} &  &  & 
\multicolumn{1}{c}{$\underset{\left[ \text{reject }H_{0}\right] }{%
H_{0}:E\left\vert X\right\vert ^{5}=\infty }$} \\ 
Kurtosis &  & \multicolumn{1}{c}{$2.882$} &  & \multicolumn{1}{|l}{} &  &  & 
\multicolumn{1}{c}{} \\ 
&  &  &  & \multicolumn{1}{|l}{} &  &  & \multicolumn{1}{c}{$\underset{\left[
\text{NOT reject }H_{0}\right] }{H_{0}:E\left\vert X\right\vert ^{6}=\infty }
$} \\ 
&  &  &  & \multicolumn{1}{|l}{} &  &  & \multicolumn{1}{c}{} \\ 
&  &  &  & \multicolumn{1}{|l}{} & Jarque-Bera &  & \multicolumn{1}{c}{$%
\underset{\left[ \text{p-value=0.000}\right] }{H_{0}:\text{Gaussian data}}$}
\\ 
&  &  &  &  &  &  &  \\ \hline\hline
\end{tabular}

} 
\par
{\tiny 
\begin{tablenotes}
      \tiny
            \item The table contains the outcomes for the test by \citet{degiannakis2023superkurtosis} described above for the null that the moments of the series of the (standard deviations of) of DV of order $4$, $5$ and $6$ are non-existent. 
            
\end{tablenotes}
}
\end{table}

\begin{table}[h!]
\caption{Descriptive statistics and moment existence - Motor}
\label{tab:descriptive3}\captionsetup{font=scriptsize} \centering
{\tiny \centering
}
\par
{\tiny 

\begin{tabular}{llllllll}
\hline\hline
&  &  &  &  &  &  &  \\ 
\multicolumn{8}{c}{Motor} \\ 
\multicolumn{1}{c}{} & \multicolumn{1}{c}{} & \multicolumn{1}{c}{} & 
\multicolumn{1}{c}{} & \multicolumn{1}{|c}{} & \multicolumn{1}{c}{} & 
\multicolumn{1}{c}{} & \multicolumn{1}{c}{} \\ 
\multicolumn{3}{c}{Descriptive statistics} & \multicolumn{1}{c}{} & 
\multicolumn{1}{|c}{} & \multicolumn{3}{c}{Tests for moment existence} \\ 
&  &  &  & \multicolumn{1}{|l}{} &  &  &  \\ 
Mean &  & \multicolumn{1}{c}{$2.145$} &  & \multicolumn{1}{|l}{} & 
Degiannakis et al. (2023) &  & \multicolumn{1}{c}{$\underset{\left[ \text{%
reject }H_{0}\right] }{H_{0}:E\left\vert X\right\vert ^{4}=\infty }$} \\ 
St. Dev. &  & \multicolumn{1}{c}{$0.335$} &  & \multicolumn{1}{|l}{} &  &  & 
\multicolumn{1}{c}{} \\ 
Skewness &  & \multicolumn{1}{c}{$-4.615$} &  & \multicolumn{1}{|l}{} &  & 
& \multicolumn{1}{c}{$\underset{\left[ \text{reject }H_{0}\right] }{%
H_{0}:E\left\vert X\right\vert ^{8}=\infty }$} \\ 
Kurtosis &  & \multicolumn{1}{c}{$25.558$} &  & \multicolumn{1}{|l}{} &  & 
& \multicolumn{1}{c}{} \\ 
&  &  &  & \multicolumn{1}{|l}{} &  &  & \multicolumn{1}{c}{$\underset{\left[
\text{reject }H_{0}\right] }{H_{0}:E\left\vert X\right\vert ^{32}=\infty }$}
\\ 
&  &  &  & \multicolumn{1}{|l}{} &  &  & \multicolumn{1}{c}{} \\ 
&  &  &  & \multicolumn{1}{|l}{} & Jarque-Bera &  & \multicolumn{1}{c}{$%
\underset{\left[ \text{p-value=0.000}\right] }{H_{0}:\text{Gaussian data}}$}
\\ 
&  &  &  &  &  &  &  \\ \hline\hline
\end{tabular}

} 
\par
{\tiny 
\begin{tablenotes}
      \tiny
            \item The table contains the outcomes for the test by \citet{degiannakis2023superkurtosis} described above for the null that the moments of the series of the (standard deviations of) of Motor of order $4$, $8$ and $32$ are non-existent. 
            
\end{tablenotes}
}
\end{table}

\clearpage

\subsection{BabyECG data}

In Table \ref{tab:babyecg_Q04_serial}, we report serial dependence diagnostics based on \eqref{e:MnH} for the BabyECG dataset using $H=5$ and $H=10$. However, these diagnostics are here applied to residuals, and permutation calibration would thus incorrectly treat the residuals as exactly exchangeable. We therefore use a bootstrap procedure on the residuals: after selecting the AR order specification by BIC on the historical sample, we hold the selected AR($p$) order fixed $(p=1)$; in each bootstrap replication, the fitted residuals are resampled as innovations, a pseudo-series is generated from the fitted AR model, and an AR(1) model is re-estimated before recomputing the diagnostics. The results do not indicate pronounced departures from serial independence over the first ten lags. 

\begin{table}[hpbt!]\small
\caption{Serial-dependence diagnostics for the BabyECG historical residuals. The statistics $Q_{m,H}$ and $M_{m,H}$ are defined in \eqref{e:MnH}; their $p$-values are obtained from a bootstrap procedure applied to the residuals.}
\label{tab:babyecg_Q04_serial}
\centering
\begin{tabular}{c@{\quad}c@{\quad}cc@{\quad}cc}
\hline\hline
\noalign{\vskip 4pt}
Historical sample & $H$ & $Q_{m,H}$ & $M_{m,H}$ & $p_Q$ & $p_M$ \\
\noalign{\vskip 4pt}\hline\noalign{\vskip 4pt}
00:19:03 -- 00:33:43 & 5 & 16.586 & 0.098 & 0.576 & 0.278 \\
 & 10 & 44.662 & 0.146 & 0.199 & 0.111 \\
\noalign{\vskip 4pt}\hline\hline
\end{tabular}
\end{table}

In Table \ref{tab:babyecg_Q04_homogeneity}, we test for the non-contamination condition of Assumption \ref%
{a:historical_stability} based on the methodology developed in Section \ref%
{sectrain} in the main paper, using the same bootstrap procedure described above.

\begin{table}[hpbt!]\small
\caption{Homogeneity diagnostic for the BabyECG historical residuals. The critical value and $p$-value are obtained from the same recursive residual bootstrap used for the serial-dependence diagnostics.}
\label{tab:babyecg_Q04_homogeneity}
\centering
\begin{tabular}{c@{\quad}ccc}
\hline\hline
\noalign{\vskip 4pt}
Historical sample &  Statistic & Critical value & $p$-value \\
\noalign{\vskip 4pt}\hline\noalign{\vskip 4pt}
00:19:03--00:33:43 & 0.428 & 1.617 & 0.634 \\
\noalign{\vskip 4pt}\hline\hline
\end{tabular}
\end{table}

Descriptive statistics and tests for moment existence are in Table \ref{tab:descriptive} below. The tests reject the null of moment nonexistence at orders 8, 16, and 32, providing support that the data admit enough moments such that Assumption \ref{a:assumption_on_h} is satisfied in all specified kernel choices.

\begin{table}[h!]
\caption{Descriptive statistics and moment existence}
\label{tab:descriptive}\captionsetup{font=scriptsize} \centering
{\tiny \centering
}
\par
{\tiny

\begin{tabular}{llllllll}
\hline\hline
&  &  &  &  &  &  &  \\ 
\multicolumn{3}{c}{\textbf{Descriptive statistics}} &  & \multicolumn{1}{|l}{
} & \multicolumn{3}{c}{\textbf{Tests for moment existence}} \\ 
&  &  &  & \multicolumn{1}{|l}{} &  &  &  \\ 
Mean &  & \multicolumn{1}{c}{$4.793$} &  & \multicolumn{1}{|l}{} & 
Degiannakis et al. (2023) &  & \multicolumn{1}{c}{$\underset{\left[ \text{%
reject }H_{0}\right] }{H_{0}:E\left\vert X\right\vert ^{8}=\infty }$} \\ 
St. Dev. &  & \multicolumn{1}{c}{$0.063$} &  & \multicolumn{1}{|l}{} &  &  & 
\multicolumn{1}{c}{} \\ 
Skewness &  & \multicolumn{1}{c}{$0.098$} &  & \multicolumn{1}{|l}{} &  &  & 
\multicolumn{1}{c}{$\underset{\left[ \text{reject }H_{0}\right] }{%
H_{0}:E\left\vert X\right\vert ^{16}=\infty }$} \\ 
Kurtosis &  & \multicolumn{1}{c}{$3.864$} &  & \multicolumn{1}{|l}{} &  &  & 
\multicolumn{1}{c}{} \\ 
&  &  &  & \multicolumn{1}{|l}{} &  &  & \multicolumn{1}{c}{$\underset{\left[
\text{reject }H_{0}\right] }{H_{0}:E\left\vert X\right\vert ^{32}=\infty }$}
\\ 
&  &  &  & \multicolumn{1}{|l}{} &  &  & \multicolumn{1}{c}{} \\ 
&  &  &  & \multicolumn{1}{|l}{} & Jarque-Bera &  & \multicolumn{1}{c}{$%
\underset{\left[ \text{p-value}=0.041\right] }{H_{0}:\text{Gaussian data}}$}
\\ 
&  &  &  &  &  &  &  \\ \hline\hline
\end{tabular}

} 
\par
{\tiny 
\begin{tablenotes}
      \tiny
            \item The table contains the outcomes for the test by \citet{degiannakis2023superkurtosis} described above for the null that the moments of order $8$, $16$ and $32$ are non-existent.
            
\end{tablenotes}
}
\end{table}

\clearpage

\section{Preliminary lemmas\label{lemmas}}

We begin by collecting a series of lemmas which will be used to prove the
main results under $H_{0}$ (Lemmas \ref{l:term-by-term_maximal_bounds}-\ref%
{l:trunctail_of_limit}), under $H_{A}$ (Lemmas \ref{l:replace_with_q1_q2}-%
\ref{e:q3_statiticterms_only_lemma}), and the ones reported in Section \ref%
{s:refinements} (Lemmas \ref{prwe2}, \ref{l:remainder_neglig_R_L}, and \ref{prwe4}). Throughout this section,
Assumptions \ref{a:historical_stability}, \ref{e:assumption_ind}, and \ref%
{a:assumption_on_h} are in force, and hence we omit them from statements.
Prior to reporting the lemmas, we spell out some notation and several facts
which will be used throughout this section and the next one.

In all proofs, $C>0$ denotes a generic, finite constant independent of $m$
whose value may change line-to-line. For any interval $I\subseteq [0,\infty )$, we write $\mathbf{C}(I)$ to denote the space of continuous
real-valued functions on $I$ with the uniform topology, and $\mathbf{D}(I)$
the space of c\`{a}dl\`{a}g functions endowed with the Skorokhod
topology, and $\mathbf{C}^{r}(I)$ and $\mathbf{D}^{r}(I)$ for their $\mathbb{%
R}^{r}$-valued counterparts, with $r\geq 2$. We use $\Rightarrow$ to denote
weak convergence. When convenient for any $a,b\in \mathbb{R}$ we write $%
a\vee b=\max \{a,b\}$ and $a\wedge b=\min \{a,b\}$. Throughout, $\mathcal{F}%
=(\mathcal{F}_k)_{k\geq 1}$ denotes the natural filtration generated by the
sequence $\{\mathbf{X}_k,~k\geq 1\}$, i.e., $\mathcal{F}_k=\sigma(\mathbf{X}%
_1,\ldots,\mathbf{X}_{k})$.

\medskip

We first note the following important fact: it can be readily checked that for any function $f:\mathcal{X}\times 
\mathcal{X}\rightarrow \mathbb{R}$ of the form 
\begin{equation}\label{e:f_0_sum_form}
f(\mathbf{x},\mathbf{y})=f_{0}(\mathbf{x})+f_{0}(\mathbf{y})+c,
\end{equation}%
with some function $f_{0}:\mathcal{X}\rightarrow \mathbb{R}$, then for $%
U_{m}(~\cdot ~;r,k)$ as in \eqref{e:def_page}, 
\begin{equation}
U_{m}(f;r,k)=0,\quad m,k\geq 2\,\quad 0\leq r<k-1.  \label{e:UT(f;k)=0}
\end{equation}%
Hence, with 
$$
\overline h(\bx,\by)= h(\bx,\by)-\mathbb E h(\bx,\bY)-\mathbb E h(\bX,\by)+\mathbb E h(\bX,\bY),
\qquad \bX,\bY\stackrel{i.i.d.}{\sim}F,
$$
we see $\overline h(\bx,\by)-h(\bx,\by)$ is of the form \eqref{e:f_0_sum_form}, and hence
$$
U_m(h;r,k)=U_m(\overline h;r,k).
$$
Thus, in the proofs we may work with the degenerate kernel $\overline h$. We also note that, under
Assumption \eqref{a:assumption_on_h}, we may write 
\begin{equation}
\overline h(\mathbf{x},\mathbf{y})=\sum_{\ell =1}^{\infty }\lambda _{\ell }\phi _{\ell
}(\mathbf{x})\phi _{\ell }(\mathbf{y}),  \label{e:h(xy)=L2_expansion}
\end{equation}%
where the equality holds in the $\mathcal{L}^{2}(F\times F)$ sense, and for $%
\mathbf{X}\sim F$, 
\begin{equation}
{\mathsf{E}}\hspace{0.1mm}\phi _{\ell }(\mathbf{X})\phi _{\ell ^{\prime }}(%
\mathbf{X})=%
\begin{cases}
1, & \text{if }\ell =\ell ^{\prime }, \\ 
0, & \text{if }\ell \neq \ell ^{\prime }.%
\end{cases}
\label{e:eigvector_orthog}
\end{equation}%
Moreover, since $\overline h$ is degenerate, ${\mathsf{E}}\hspace{0.1mm}\overline h(%
\mathbf{X},\mathbf{y})=0$ $F$-a.e., i.e., the operator $A$ has $\phi (%
\mathbf{x})\equiv 1$ as eigenvector (with corresponding eigenvalue $0$), so
by orthogonality, for all $\ell $ such that $\lambda _{\ell }\neq 0$, we
have 
\begin{equation}
{\mathsf{E}}\hspace{0.1mm}\phi _{\ell }(\mathbf{X})=0.
\label{e:eig_zero_mean}
\end{equation}%
Define, for each integer $m,k\geq 1$, %
\begin{equation}
\begin{gathered} S_{\ell}(m) = \sum_{i=1}^{m}\phi_\ell({\bf X}_i),\qquad
S_\ell(k,m)=\sum_{j=m+1}^{m+k}\phi_\ell({\bf X}_j).\end{gathered}
\label{e:def_Sell(m)}
\end{equation}%
Define 
\begin{equation}
f_{\ell }(\mathbf{x},\mathbf{y})=\phi _{\ell }(\mathbf{x})\phi _{\ell }(%
\mathbf{y}),  \label{e:f_ell}
\end{equation}%
so that
\begin{equation}
U_{m}(\overline h;r,k)=\sum_{\ell =1}^{\infty }\lambda _{\ell }U_{m}(f_{\ell };r,k),
\label{u_m_h,k}
\end{equation}%
and define truncated version 
\begin{equation}
U_{m,L}(\overline h;r,k)=\sum_{\ell =1}^{L}\lambda _{\ell }U_{m}(f_{\ell };r,k).
\label{u_m_h,k_truncated}
\end{equation}%
A straightforward calculation shows that, letting $w=k-r$, we have 
\begin{align}
& m^{-1}w^{2}U_{m}(f_{\ell };k-w,k)  \notag \\
& =-m^{-1}\left( S_{\ell }(k,m)-S_{\ell }(k-w,m)-\frac{w}{m}S_{\ell
}(m)\right) ^{2}+\frac{w(m+w)}{m^{2}}+R_{\ell }(k,w,m),  \label{e:Um(f,k)}
\end{align}%
where 
\begin{align}
& R_{\ell }(k,w,m)  \notag \\
& =-\frac{w^{2}S_{\ell }^{2}(m)}{m^{3}(m-1)}+\frac{w^{2}}{m^{2}(m-1)}%
\sum_{i=1}^{m}\left( \phi _{\ell }^{2}(\mathbf{X}_{i})-1\right) +\frac{w^{2}%
}{m^{2}(m-1)}  \notag \\
& -\frac{[S_{\ell }(k,m)-S_{\ell }(k-w,m)]^{2}}{m(w-1)}+\frac{w}{m(w-1)}%
\sum_{j=m+(k-w)+1}^{m+k}\left( \phi _{\ell }^{2}(\mathbf{X}_{j})-1\right) +%
\frac{w}{m(w-1)}.  \label{e:def_R_ell(k,m)}
\end{align}

Lastly, to simplify some expressions, for any kernel $f(%
\mathbf{x},\mathbf{y})$ we set 
\begin{equation*}
U_{m}(f;r,k)=U_{m}(f;r\wedge (k-2),k\vee 2),\quad \text{if}~k\leq 2~\text{or}%
~r\geq k-1.
\end{equation*}%

We now present a lemma which is used throughout the appendix to bound quantities related to the ``cross term'' in $U_m(h;r,k)$ under both $H_0$ and $H_A$.

\begin{lemma}\label{l:basic_cross_lemma}
Let $\bY_1,\ldots, \bY_N$ be i.i.d. observations with distribution $F_Y$ and independent of $\bX_1,\ldots,\bX_M$. Let $K(\bx,\by)$ be a kernel, not necessarily symmetric, with $\E K^2(\bX_1,\bY_1)<\infty$, and with $\E K(\bX_1,\by)= 0$ $F_Y(d\by)$-a.e. and $\E K(\bx,\bY_1)=0$ $F(d\bx)$ a.e.  Then, with
$$
D(u,v)=\sum_{i=1}^{u}\sum_{j=1}^{v}K(\mathbf X_i,\mathbf Y_j),\qquad
1\leq u\leq M,\quad 1\leq v\leq N .
$$
it holds that
\begin{equation}\label{e:rectangular_cross_bound}
\E\left[\max_{1\leq u\leq M} \max_{1\leq v\leq N}|D(u,v)|^2 \right]
\leq
C MN \E K^2(\mathbf X_1,\bY_1).
\end{equation}

\end{lemma}
\begin{proof}
Since $\E[ K(\bX_i,\bY_j)| \bY_j]=0$ a.s. by assumption, conditioning on $\mathbf Y_1,\ldots,\mathbf Y_N$, the vector
$(D(u,1),\ldots,D(u,N))^\top$, $1\leq u\leq M$, is a martingale in $u$. Hence, by
Doob's inequality applied to the submartingale $\{\max_{1\leq v\leq N}|D(u,v)|^2,u=1,2\ldots\}$, 
$$
\E\left[\max_{1\leq u\leq M} \max_{1\leq v\leq N}|D(u,v)|^2\bigg| \mathbf Y_1,\ldots,\mathbf Y_N\right]
\leq
4\E\left[\max_{1\leq v\leq N}|D(M,v)|^2\bigg|\mathbf Y_1,\ldots,\mathbf Y_N\right].
$$
Taking expectations and applying Doob's inequality again, now to the
martingale $D(M,v)$ in $v$, gives
\begin{equation}\label{e:MfixMaxBound}
\E\left[\max_{1\leq v\leq N}|D(M,v)|^2\right] \leq C \E|D(M,N)|^2.
\end{equation}
and hence
\begin{equation}\label{e:doubleMaxBound}
\E\left[\max_{1\leq u\leq M}\max_{1\leq v\leq N}|D(u,v)|^2 \right] \leq C \E|D(M,N)|^2.
\end{equation}
Finally, since $\E K(\bX,\by)\equiv 0$ and  by independence,  $$\E K(\bX_i,\bY_j)K(\bX_{i'},\bY_{j})=\E[\E K(\bX_i,\bY_j)K(\bX_{i'},\bY_{j})| \bY_j] =0,\qquad i\neq i',$$
 and similarly, since $\E K(\bx,\bY)\equiv 0$ gives $\E K(\bX_i,\bY_j)K(\bX_{i},\bY_{j'})=0$, we find
$$
\E|D(M,N)|^2
=
MN \E K^2(\mathbf X_1,\mathbf Y_1),
$$
which, combining with \eqref{e:doubleMaxBound} proves \eqref{e:rectangular_cross_bound}. %
\end{proof}

\bigskip

\subsection{Lemmas under $H_0$}
We are now in a position to present our lemmas under $H_{0}$. Lemmas \ref{l:term-by-term_maximal_bounds} and \ref{l:remainder_neglig_0}, below, are used to provide uniform control over the difference between the process $U_m(\overline h;r,k)$ and its finite-expansion counterpart $U_{m,L}(\overline h;r,k)$ defined in \eqref{u_m_h,k_truncated}.

\begin{lemma}\label{l:term-by-term_maximal_bounds}
Let $K$ be a symmetric kernel with
$\E K^2(\mathbf X,\mathbf X')<\infty$, where $\mathbf X'$ is an independent
copy of $\mathbf X$, and with $\E K(\bX,\bx)=0$ $F$-a.s.. Then, for all integers $N,M,m\geq1$,
\begin{align}
\E
\max_{1\leq n\leq N}
\left(\sum_{1\leq i<j\leq n}K(\mathbf X_i,\mathbf X_j) \right)^2
&\leq
C N^2 \E K^2(\mathbf X,\mathbf X'),
\label{e:local_max_prefix}\\
\E \max_{1\leq q\leq N}\left(\sum_{i=1}^{m}\sum_{j=m+1}^{m+q}K(\mathbf X_i,\mathbf X_j)\right)^2
&\leq C m N \E K^2(\mathbf X,\mathbf X'),
\label{e:local_max_cross_mFix}\\
\E \max_{1\leq n<q\leq N}\left(\sum_{i=1}^{n}\sum_{j=n+1}^{q}K(\mathbf X_i,\mathbf X_j)\right)^2
&\leq C N^2 \E K^2(\mathbf X,\mathbf X'),
\label{e:local_max_cross}\\
\E \max_{\substack{0\leq a<b\leq N\\ b-a\leq M}}
\left(\sum_{a<i<j\leq b}K(\mathbf X_i,\mathbf X_j)\right)^2 &\leq C NM \E K^2(\mathbf X,\mathbf X').
\label{e:local_max_interval}
\end{align}
\end{lemma}

\begin{proof}
We first bound \eqref{e:local_max_prefix}. For each integer $q\geq 2$, write 
\begin{equation*}
Y_{q}=2\sum_{i=1}^{q-1}K(\bX_i,\bX_q)
\end{equation*}%
so that 
\begin{equation*}
\sum_{1\leq i\neq j\leq n} K(\bX_i,\bX_j) =\sum_{q=1}^{n}Y_{q}.
\end{equation*}%
Since $\E K(\bX,\bX')=0$,  ${\mathsf{E}}\hspace{0.1mm}Y_{q}=0$, and clearly 
$Y_{q}$ is $\mathcal{F}_{q}$-measurable, with ${\mathsf{E}}\hspace{0.1mm}%
(Y_{q}|\mathcal{F}_{q-1})=0$, implying $\sum_{q=1}^{k}Y_{q}$ is an $(%
\mathcal{F}_{k})_{k\geq 1}$-martingale. Moreover, using $\E K(\bX_i,\bX_q)K(\bX_{i'},\bX_q)=\E[\E K(\bX_i,\bX_q)K(\bX_{i'},\bX_q)|\bX_q]]=0$ when $i\neq i'$ and $i\neq q$, we have
\begin{align*}
{\mathsf{E}}\hspace{0.1mm}Y_{q}^{2}& =4\sum_{i,i^{\prime
}=1}^{q-1}\E K(\bX_i,\bX_q)K(\bX_{i'},\bX_q)\\
& =4(q-1)\E K^2(\bX,\bX')
\end{align*}%
Hence, Doob's maximal inequality gives 
\begin{equation*}
{\mathsf{E}}\hspace{0.1mm}\max_{1\leq n\leq N}\left(
\sum_{q=1}^{n}Y_{q}\right) ^{2}\leq 4\sum_{q=1}^{N}{\mathsf{E}}\hspace{0.1mm}%
Y_{q}^{2}=16\sum_{q=1}^{N}(q-1)\E K^2(\bX,\bX')\leq CN^{2}\E K^2(\bX,\bX').
\end{equation*}%
The bound  \eqref{e:local_max_cross_mFix} follows from Lemma \ref{l:basic_cross_lemma}. 
We next prove \eqref{e:local_max_cross}. Set
$$
A_N=
\max_{1\leq n<q\leq N}
\left|
\sum_{i=1}^{n}\sum_{j=n+1}^{q}K(\mathbf X_i,\mathbf X_j)
\right|.
$$
 Suppose first that $N=2^{j}$ for some $j\geq 1$. Let
$$
A_N^{(1)}
=\max_{1\leq a<b\leq N/2}
\left|\sum_{i=1}^{a}\sum_{j=a+1}^{b}K(\mathbf X_i,\mathbf X_j)
\right|
$$
and
$$
A_N^{(2)}
=\max_{ N/2<a<b\leq N}\left|\sum_{i=N/2+1}^{a}\sum_{j=a+1}^{b}K(\mathbf X_i,\mathbf X_j)\right|.
$$
$$
B_N
=
\max_{1\leq u,v\leq N/2}\left|\sum_{i=1}^{u}\sum_{j=N/2+1}^{N/2+v}K(\mathbf X_i,\mathbf X_j)
\right|.
$$
Now, if $b\leq N/2$, clearly
$$
\left|
\sum_{i=1}^{a}\sum_{j=a+1}^{b}K(\mathbf X_i,\mathbf X_j)
\right|
\leq A_N^{(1)}.
$$
Likewise, if $a\geq N/2$, then
\begin{align*}
\left|\sum_{i=1}^{a}\sum_{j=a+1}^{b}K(\mathbf X_i,\mathbf X_j)\right|
&\leq\left|\sum_{i=1}^{N/2}\sum_{j=a+1}^{b}K(\mathbf X_i,\mathbf X_j)\right|
+\left|\sum_{i=N/2+1}^{a}\sum_{j=a+1}^{b}K(\mathbf X_i,\mathbf X_j)\right|\\
&\leq 2B_N+A_N^{(2)}.
\end{align*}
Finally, if $a<N/2<b$, then
\begin{align*}
\left|\sum_{i=1}^{a}\sum_{j=a+1}^{b}K(\mathbf X_i,\mathbf X_j)\right|
&\leq
\left|\sum_{i=1}^{a}\sum_{j=a+1}^{N/2}K(\mathbf X_i,\mathbf X_j)\right| +\left|\sum_{i=1}^{a}\sum_{j=N/2+1}^{b}K(\mathbf X_i,\mathbf X_j).\right|
\end{align*}
The first term in the above sum is bounded by $A_N^{(1)}$, and the second is bounded by
$B_N$. Therefore, we find
$$
A_N
\leq
\max\{A_N^{(1)}, A_N^{(2)}\}+2B_N .
$$
By \eqref{e:rectangular_cross_bound},
$$
\E B_N^2\leq C N^2 \E K^2(\bX,\bX').
$$
Hence,
\begin{align*}
(\E A_N^2)^{1/2}\leq
\left(\E(A_N^{(1)})^2+\E(A_N^{(2)})^2\right)^{1/2}
+C N\left( \E K^2(\bX,\bX').\right)^{1/2}.
\end{align*}
By stationarity, $A_N^{(1)}\stackrel d = A_N^{(2)}\stackrel d = A_{N/2}$.  Hence,
\begin{equation}\label{e:A_N_L2_bound}
(\E A_N^2)^{1/2}
\leq
\sqrt{2}\big(\E A_{N/2}^2\big)^{1/2}
+
C N\left( \E K^2(\bX,\bX')\right)^{1/2}.
\end{equation}
Recalling $N=2^j$, set
$$
D_j
=\frac{\big(\E A_{2^j}^2\big)^{1/2}}
{2^j\left( \E K^2(\bX,\bX') \right)^{1/2}},
$$
The inequality \eqref{e:A_N_L2_bound} can be rewritten as
$$
D_{j+1}\leq 2^{-1/2}D_j+C .
$$
Iterating this inequality yields
\begin{align*}
D_{j+1}
&\leq
2^{-(j+1)/2}D_0 +C\sum_{q=0}^{j}2^{-q/2}
\leq C',
\end{align*}
Thus
$$
(\E A_{2^j}^2)^{1/2} \leq C2^j\left( \E K^2(\bX,\bX')\right)^{1/2},
$$
or equivalently,
$
\E A_{N}^2\leq C N^2 \E K^2(\bX,\bX').
$
For  a non-dyadic $N$, take $j$ such that $N\leq 2^j<2N$ and use
$A_N\leq A_{2^j}$. This proves \eqref{e:local_max_cross}.

It remains to prove \eqref{e:local_max_interval}. Since $b-a\leq M$ and also $b-a\leq b \leq N$, we may assume $M\leq N$. Partition ${1,\ldots,N}$ into consecutive blocks
$$
B_s={p_s+1,\ldots,q_s},\qquad s=1,\ldots,J,
$$
where $p_s=(s-1)M$, $q_s=\min\{sM,N\}$. Then $J\leq C N/M$ and $|B_s|\leq M$. Now, write
\begin{align*}
W_s=
\max_{p_s\leq a<b\leq q_s}
\left|\sum_{a<i<j\leq b}K(\mathbf X_i,\mathbf X_j)\right| & \stackrel d = \max_{0\leq u<v\leq q_s-p_s}
\left| \sum_{u<i<j\leq v}K(\mathbf X_i,\mathbf X_j)\right|\\
&  \leq \max_{0\leq u<v\leq M}
\left|\sum_{u<i<j\leq v}K(\mathbf X_i,\mathbf X_j)
\right| 
\end{align*}
However,  
\begin{align*}
\left|\sum_{u<i<j\leq v}K(\mathbf X_i,\mathbf X_j)\right|\leq  \left|\sum_{1\leq i<j\leq v}K(\mathbf X_i,\mathbf X_j)\right|+
\left|\sum_{1\leq i<j\leq u}K(\mathbf X_i,\mathbf X_j)\right|+
\left|\sum_{i=1}^{u}\sum_{j=u+1}^{v}K(\mathbf X_i,\mathbf X_j)\right|.
\end{align*}
Therefore, by \eqref{e:local_max_prefix} and \eqref{e:local_max_cross},
$$
\E W_s^2
\leq\E  \max_{0\leq u<v\leq M} \left|\sum_{u<i<j\leq v}K(\mathbf X_i,\mathbf X_j)\right|^2 \leq C M^2 \E K^2(\mathbf X,\mathbf X').
$$
Arguing similarly,  with 
$$
V_s = \max_{p_s\leq a<q_s<b\leq q_{s+1}}
\left|
\sum_{i=a+1}^{q_s}\sum_{j=q_s+1}^{b}
K(\mathbf X_i,\mathbf X_j)
\right|,
$$
we find
$$
\E V_s^2 \leq C M^2 \E K^2(\bX,\bX').
$$
Now let ${a+1,\ldots,b}$ be any interval with $b-a\leq M$.  By construction, either $p_s\leq a<b\leq q_s$ for some $1\leq s \leq J$ or $p_s\leq a<q_s<b$ for some $1 \leq s \leq J-1$ , so (setting $V_J=0$),%
\begin{align*}
\left|\sum_{a<i<j\leq b}K(\mathbf X_i,\mathbf X_j)\right|^2&\leq  W^2_s{\bf 1}_{\{p_s\leq a<b\leq q_s\}} +(W_s+W_{s+1}+V_s)^2 {\bf 1}_{\{p_s+1\leq a<q_s<b\}}\\
&  \leq C\sum_{s=1}^J (W^2_s + V^2_s).
\end{align*}
Hence,
\begin{align*}
\E \max_{\substack{0\leq a<b\leq N\\ b-a\leq M}}
\left|\sum_{a<i<j\leq b}K(\mathbf X_i,\mathbf X_j)\right|^2 &\leq C\sum_{s=1}^J\E(W^2_s + V^2_s)\\ &\leq C J M^2\E K^2(\mathbf X,\mathbf X')\leq C NM\E K^2(\mathbf X,\mathbf X').
\end{align*}
This proves \eqref{e:local_max_interval}.
\end{proof}

\begin{lemma}
\label{l:remainder_neglig_0} Let $x>0$. Under $H_0$, for any integer $L\geq
0 $,%
\begin{equation}  \label{e:tailbound_UmL}
\P \left\{\sup_{0\leq r < k <\infty} \frac{(k-r)^2 m^{-1} |U_m(\overline h;r,k)-
U_{m,L}(\overline h;r,k)|}{g_m(k)}>x\right\}\leq C x^{-2}\sum_{\ell=L+1}^\infty
\lambda_\ell^2,
\end{equation}
where $U_{m,L}$ is defined as in \eqref{e:def_UL}. Moreover, 
\begin{equation}  \label{e:uniform_limit_in_delta}
\limsup_{m\to\infty}\P \left\{\max_{0\leq r <k\leq m\delta} \frac{|(k-r)^2
m^{-1}U_m(\overline h;r,k)|}{g_m(k)}>x\right\}=O(\delta^{1-\beta}), \quad \delta \to 0.
\end{equation}
\end{lemma}

\begin{proof}
Note, to begin with, that, with $K_L(\bx,\by)=\sum_{\ell=L+1}^\infty \lambda_\ell\phi_\ell(\bx)\phi_\ell(\by)$,
\begin{align*}
\left\vert U_{m}(\overline h;r,k)-U_{m,L}(\overline h;r,k)\right\vert &\leq \frac{2}{\left( k-r\right) m}\left\vert
\sum_{i=1}^{m}\sum_{j=m+r+1}^{m+k}K_L(\mathbf{X}_{i},\mathbf{X}_{j})\right\vert \\
&+\binom{m}{2}^{-1}\left\vert \sum_{1\leq i<j\leq m}K_L(\mathbf{X}_{i},\mathbf{X}_{j}) \right\vert\\
& +\binom{k-r}{2}^{-1}\left|\sum_{m+r<i<j\leq m+k}K_L(\bX_i,\bX_j)\right| .
\end{align*}%
Let now $0<\delta \leq 1$. Since $g_{m}(k)\geq C(k/m)^{\beta }$ for all $%
1\leq k\leq m\delta $, any integer $L\geq 0$ we have 
\begin{align}
& \P \left\{ \max_{0<r<k\leq m\delta }\frac{(k-r)^{2}}{mg_{m}(k)}\frac{1}{%
(k-r)m}\left\vert \sum_{i=1}^{m}\sum_{j=m+r+1}^{m+k}K_L(\mathbf{X}_{i},\mathbf{X}_{j})\right\vert >x\right\}  \notag \\
& \quad \leq \P \left\{ \max_{0\leq r<k\leq m\delta }\frac{k^{1-\beta }}{%
m^{2-\beta }}\left\vert \sum_{i=1}^{m}\sum_{j=m+r+1}^{m+k}K_L(\mathbf{X}_{i},\mathbf{X}_{j})\right\vert >Cx\right\}  \notag \\
& \quad \leq \P \left\{ \max_{1\leq q\leq \lceil \log_2 (m\delta )\rceil
}\max_{2^{q-1}\leq k<2^{q}}\max_{0\leq r<k}\frac{k^{1-\beta }}{m^{2-\beta }}%
\left\vert \sum_{i=1}^{m}\sum_{j=m+r+1}^{m+k}K_L(\mathbf{X}_{i},\mathbf{X}_{j})\right\vert >Cx\right\}  \notag \\
& \quad \leq \sum_{q=1}^{\lceil \log_2 (m\delta )\rceil }\P \left\{ \ \frac{%
2^{q(1-\beta )}}{m^{2-\beta }}\max_{2^{q-1}\leq k<2^{q}}\max_{0\leq
r<k}\left\vert \sum_{i=1}^{m}\sum_{j=m+r+1}^{m+k}K_L(\mathbf{X}_{i},\mathbf{X}_{j})\right\vert >Cx\right\}  \label{e:firstterm_bound_unif_limits_UmL0}
\end{align}%
Using the bound 
\begin{align}
& \max_{0\leq r<k}\left\vert \sum_{i=1}^{m}\sum_{j=m+r+1}^{m+k}K_L(\mathbf{X}_{i},\mathbf{X}_{j})\right\vert  \notag \\
& \quad \leq \left\vert \sum_{i=1}^{m}\sum_{j=m+1}^{m+k}K_L(\mathbf{X}_{i},\mathbf{X}_{j})\right\vert +\max_{1\leq r<k}\left\vert
\sum_{i=1}^{m}\sum_{j=m+1}^{m+r}K_L(\mathbf{X}_{i},\mathbf{X}_{j})\right\vert ,
\label{e:bound_avoiding_w}
\end{align}%
which holds for each fixed $k$, and that
$$
\E K^2_L(\mathbf{X}_{i},\mathbf{X}_{j}) = \sum_{\ell=L+1}^\infty \lambda_\ell^2,
$$ the expression \eqref{e:firstterm_bound_unif_limits_UmL0}
is bounded by 
\begin{align}
& \sum_{q=1}^{\lceil \log_2(m\delta )\rceil }\P \left\{ \ \frac{2^{q(1-\beta
)}}{m^{2-\beta }}\max_{2^{q-1}\leq k<2^{q}}\left\vert
\sum_{i=1}^{m}\sum_{j=m+1}^{m+k}K_L(\mathbf{X}_{i},\mathbf{X}_{j})\right\vert
>Cx/2\right\}  \notag \\
& \qquad +\sum_{q=1}^{\lceil \log_2(m\delta )\rceil }\P \left\{ \ \frac{%
2^{q(1-\beta )}}{m^{2-\beta }}\max_{1\leq r<2^{q}}\left\vert
\sum_{i=1}^{m}\sum_{j=m+1}^{m+r}K_L(\mathbf{X}_{i},\mathbf{X}_{j})\right\vert
>Cx/2\right\}  \notag \\
& \quad \leq \frac{C}{x^{2}m^{4-2\beta }}\sum_{q=1}^{\lceil \log_2 (m\delta
)\rceil }m2^{2q(1-\beta )}2^{q}\sum_{\ell =L+1}^{\infty }\lambda _{\ell
}^{2}\leq \frac{C\delta ^{3-2\beta }}{x^{2}}\sum_{\ell =L+1}^{\infty
}\lambda _{\ell }^{2}.  \label{e:firstterm_bound_unif_limits_UmL}
\end{align}%
For any $T\geq 1$, we have $g_{m}(k)\geq C(k/m)^{2}$ for all $k\geq Tm$, and
applying \eqref{e:bound_avoiding_w} again we obtain 
\begin{align}
& \P \left\{ \sup_{k\geq mT}\max_{0\leq r<k}\frac{k}{m^{2}g_{m}(k)}%
\left\vert \sum_{i=1}^{m}\sum_{j=m+r+1}^{m+k}K_L(\mathbf{X}_{i},\mathbf{X}_{j})\right\vert >x\right\}  \notag \\
& \quad \leq \P \left\{ \sup_{k\geq mT}\max_{0\leq r<k}\frac{1}{k}\left\vert
\sum_{i=1}^{m}\sum_{j=m+r+1}^{m+k}K_L(\mathbf{X}_{i},\mathbf{X}_{j})\right\vert
>Cx\right\}  \notag \\
& \quad \leq \sum_{q=\lfloor \log_2 (mT)\rfloor }^{\infty }\P \left\{ \
\max_{2^{q-1}\leq k<2^{q}}\frac{1}{2^{q-1}}\left\vert
\sum_{i=1}^{m}\sum_{j=m+1}^{m+k}K_L(\mathbf{X}_{i},\mathbf{X}_{j})\right\vert
>Cx/2\right\}  \notag \\
& \qquad +\sum_{q=\lfloor \log_2 (mT)\rfloor }^{\infty }\P \left\{ \
\max_{1\leq r<2^{q}}\frac{1}{2^{q-1}}\left\vert
\sum_{i=1}^{m}\sum_{j=m+1}^{m+r}K_L(\mathbf{X}_{i},\mathbf{X}_{j})\right\vert
>Cx/2\right\}  \notag \\
& \quad \leq \frac{C}{x^{2}}\sum_{\ell =L+1}^{\infty }\lambda _{\ell
}^{2}\sum_{q=\lfloor \log_2 (mT)\rfloor }^{\infty }m2^{-q}\leq \frac{C}{Tx^{2}}%
\sum_{\ell =L+1}^{\infty }\lambda _{\ell }^{2}.
\label{e:firstterm_bound_unif_limits_UmL_largeint}
\end{align}%
In particular, if we take $\delta =T=1$, we obtain 
\begin{equation*}
\P \left\{ \sup_{2\leq r< k<\infty }\frac{(k-r)^{2}}{mg_{m}(k)}\left\vert 
\frac{1}{(k-r)m}\sum_{i=1}^{m}\sum_{j=m+1}^{m+k}K_L(\mathbf{X}_{i},\mathbf{X}_{j})\right\vert >x\right\} \leq \frac{C}{x^{2}}\sum_{\ell =L+1}^{\infty
}\lambda _{\ell }^{2}.
\end{equation*}%
Analogous arguments leading to \eqref{e:firstterm_bound_unif_limits_UmL} and %
\eqref{e:firstterm_bound_unif_limits_UmL_largeint} give 
\begin{align*}
\limsup_{m\rightarrow \infty }\P \left\{ \max_{2\leq k\leq \delta m}\frac{%
k^{2}}{mg_{m}(k)}\left\vert \frac{1}{m^{2}}\sum_{1\leq i<j\leq m}K_L(\mathbf{X}_{i},\mathbf{X}_{j})\right\vert >x\right\} & =O(\delta ^{2-\beta }), \\
\limsup_{m\rightarrow \infty }\P \left\{ {\max_{2\leq k\leq \delta
m}\max_{0\leq r<k}}\frac{(k-r)^{2}}{mg_{m}(k)}\left\vert \frac{1}{(k-r)^{2}}%
\sum_{m+r<i<j\leq m+k}K_L(\mathbf{X}_{i},\mathbf{X}_{j})\right\vert >x\right\} &
=O(\delta ^{1-\beta }),
\end{align*}%
as $\delta \rightarrow 0$, yielding \eqref{e:uniform_limit_in_delta}.  Similarly, 
\begin{align}\notag
&\P \left\{ \sup_{k\geq mT}\max_{0\leq r\leq k-2}%
\frac{(k-r)^{2}}{mg_{m}(k)}\left\vert \frac{1}{(k-r)^{2}}\sum_{m+r<i<j\leq
m+k}K_L(\mathbf{X}_{i},\mathbf{X}_{j})\right\vert >x\right\}\\
& \leq\frac{C}{Tx^2}\sum_{\ell=L+1}^\infty \lambda_\ell^2,\label{e:tailbound_monitoredterm}
\end{align}
and
\begin{align*}
\P \left\{ \sup_{k\geq mT}\frac{k^{2}}{%
 mg_{m}(k)}\left\vert \frac{1}{m^{2}}\sum_{1\leq i<j\leq m}K_L(\mathbf{X}_{i},\mathbf{X}_{j})\right\vert >x\right\} & \leq \frac{C}{x^2}\sum_{\ell=L+1}^\infty \lambda_\ell^2,
\end{align*}
which put together yields \eqref{e:tailbound_UmL}.
\end{proof}
\begin{lemma}\label{l:tail_variable}
Let
\begin{equation}\label{e:am(rk)}
a_m(r,k)=\frac{((k-r)/m)^2}{g_m(k)}
\end{equation}
and
$$
\mathcal H_{m} =-\frac{2}{m-1} \sum _{1\leq i<j\leq m} \overline h(\bX_i,\bX_j).
$$
then, for every $x>0$
\begin{equation}  \label{e:uniform_limit_in_T}
\lim_{T\to\infty}\limsup_{m\to\infty}
\P\left\{ \sup_{k\geq mT}\max_{0\leq r\leq k-2}\left|\frac{m^{-1}(k-r)^2U_{m}(\overline h;r,k)}{g_m(k)}
- a_m(r,k)\mathcal H_{m} \right|>x\right\}=0.
\end{equation}
\end{lemma}
\begin{proof}
Let $w=k-r$. Define
\begin{equation}\label{e:def_hL}
\mathcal H_{m,L} =-\frac{2}{m-1} \sum _{1\leq i<j\leq m} \overline h_L(\bX_i,\bX_j),\qquad \overline h_L(\bx,\by) = \sum_{\ell=1}^L \lambda_\ell\phi_\ell(\bx)\phi_\ell(\by).
\end{equation}
We claim that, for every fixed $L\geq 1$,
\begin{equation}\label{e:uniform_limit_in_T_finiteL}
\lim_{T\to\infty}\limsup_{m\to\infty}
\P\left\{\sup_{k\geq mT}\max_{0\leq r\leq k-2}\left|\frac{m^{-1}w^2U_{m,L}(\overline h;r,k)}{g_m(k)}-a_m(r,k)\mathcal H_{m,L}\right|>x\right\}=0.
\end{equation}
Indeed, by the same bounds used in Lemma~\ref{l:remainder_neglig_0},
$$
\lim_{T\to\infty}\limsup_{m\to\infty}
\P\left\{\sup_{k\geq mT}\max_{0\leq r\leq k-2}
\frac{m^{-1}w^2}{g_m(k)}
\left|\frac{2}{wm}
\sum_{i=1}^m\sum_{j=m+r+1}^{m+k} \overline h_L(\bX_i,\bX_j)\right|>x
\right\}=0
$$
and
$$
\lim_{T\to\infty}\limsup_{m\to\infty}
\P\left\{\sup_{k\geq mT}\max_{0\leq r\leq k-2}
\frac{m^{-1}w^2}{g_m(k)}
\left|\binom{w}{2}^{-1}\sum_{m+r<i<j\leq m+k} \overline h_L(\bX_i,\bX_j)
\right|>x\right\}=0.
$$
Combining the above bounds proves
\eqref{e:uniform_limit_in_T_finiteL}. Now, by Lemma~\ref{l:remainder_neglig_0},
$$
\lim_{L\to\infty}\sup_{m\geq1}
\P\left\{
\sup_{k\geq2}\max_{0\leq r\leq k-2}
\left|
\frac{m^{-1}(k-r)^2\big(U_m(\overline h;r,k)-U_{m,L}(\overline h;r,k)\big)}{g_m(k)}
\right|>x
\right\}=0.
$$
Moreover,
since
$
\E|\mathcal H_m-\mathcal H_{m,L}|^2
\leq
C\sum_{\ell=L+1}^{\infty}\lambda_\ell^2,
$ we readily find
$$
\lim_{L\to\infty}\sup_{m\geq1}\P\left\{|\mathcal H_m-\mathcal H_{m,L}|>x\right\}=0,
$$
proving
\eqref{e:uniform_limit_in_T}.
\end{proof}

The next lemma shows that the $U_{m,L}(\ell,k)$ can be approximated by a weighted sum of squared CUSUM-type statistics, based on the eigenfunctions of $\overline h$.
\begin{lemma}
\label{l:remainder_neglig_1} Under $H_0$, for any fixed $0<\delta<T<\infty$ and $L\geq1$, 
\begin{equation*}
\max_{\delta m\leq k\leq mT} \max_{2\leq w\leq k}\sum_{\ell=1}^L\left|\frac{ \lambda_\ell
R_\ell(k,w,m)}{g_m(k)}\right|=o_\P (1),
\end{equation*}
with $R_\ell(k,w,m)$ as defined in \eqref{e:def_R_ell(k,m)}.
\end{lemma}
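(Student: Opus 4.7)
The plan is to reduce the bound to a term-by-term argument. Since $L$ is fixed and $\sup_\ell|\lambda_\ell|<\infty$, it suffices to show that for each $\ell\in\{1,\ldots,L\}$ one has $\sup_{2\leq w\leq k\leq mT}|R_\ell(k,w,m)|/g_m(k)=o_P(1)$. The six summands in the definition \eqref{e:def_R_ell(k,m)} split naturally into a ``training-sample'' group of three terms carrying the prefactor $w^2/[m^2(m-1)]$ (involving $S_\ell^2(m)$, $\sum_{i=1}^m\phi_\ell^2(\mathbf{X}_i)-m$, and a constant), and a ``monitoring-window'' group of three terms carrying the prefactor $1/[m(w-1)]$ (involving $\Delta_\ell(w,k,m)^2:=(S_\ell(k,m)-S_\ell(k-w,m))^2$, $\sum_{j\in W}(\phi_\ell^2(\mathbf{X}_{m+j})-1)$ with $W=\{k-w+1,\ldots,k\}$, and a constant).

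For the training-sample group I would apply the CLT to get $S_\ell^2(m)=O_P(m)$ and the i.i.d.\ WLLN to get $\sum_{i=1}^m(\phi_\ell^2(\mathbf{X}_i)-1)=o_P(m)$, so each of these three summands is uniformly bounded by $O_P(w^2/m^2)$ (with the second being $o_P(w^2/m^2)$). Since $w\leq k$ and $g_m(k)\geq C\min\{(k/m)^\beta,(k/m)^2\}$, a direct calculation shows $(w^2/m^2)/g_m(k)=O((k/m)^{2-\beta})$ for $k\leq m$ and $O(1)$ for $k>m$, both of which -- combined with the $O_P(1/m)$ or $o_P(1)$ stochastic prefactor -- give the desired uniform $o_P(1)$ since $\beta<2$.

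The monitoring-window group is more delicate, and my plan is to exploit the algebraic identity, obtained by expanding $\Delta_\ell^2 = \sum_{j\in W}\phi_\ell^2(\mathbf{X}_{m+j})+2\sum_{j<l\in W}\phi_\ell(\mathbf{X}_{m+j})\phi_\ell(\mathbf{X}_{m+l})$,
\begin{align*}
-\frac{\Delta_\ell^2}{m(w-1)}+\frac{w\sum_{j\in W}(\phi_\ell^2(\mathbf{X}_{m+j})-1)}{m(w-1)}+\frac{w}{m(w-1)}&=\frac{1}{m}\sum_{j\in W}\left(\phi_\ell^2(\mathbf{X}_{m+j})-1\right)\\
&\quad-\frac{2}{m(w-1)}\sum_{\substack{j<l\\ j,l\in W}}\phi_\ell(\mathbf{X}_{m+j})\phi_\ell(\mathbf{X}_{m+l}),
\end{align*}
whose right-hand side consists of two centered random variables. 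The first piece is a difference of partial sums $m^{-1}(\tilde S_\ell(k)-\tilde S_\ell(k-w))$, $\tilde S_\ell(n):=\sum_{i=1}^n(\phi_\ell^2(\mathbf{X}_{m+i})-1)$, which is controlled uniformly in $k$ by the maximal WLLN $\max_{n\leq mT}|\tilde S_\ell(n)|=o_P(m)$. The second piece is a centered two-index $U$-statistic; the associated ``complete'' martingale $Q_n:=\sum_{1\leq i<l\leq n}\phi_\ell(\mathbf{X}_{m+i})\phi_\ell(\mathbf{X}_{m+l})$ satisfies $EQ_n^2=n(n-1)/2$, and the window $U$-statistic decomposes as $Q_k-Q_{k-w}-Y_\ell(k-w)(Y_\ell(k)-Y_\ell(k-w))$ with $Y_\ell(j)=\sum_{i=1}^j\phi_\ell(\mathbf{X}_{m+i})$; each piece is then amenable to Doob's maximal inequality, mirroring Lemma \ref{l:maximal_ineq}.

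The main obstacle will be to convert the uniform-in-$k$ bounds above into uniform-in-$(w,k)$ bounds after dividing by $g_m(k)$, since $g_m(k)^{-1}$ is as large as $O(m^\beta)$ at $k=2$. My plan is a dyadic decomposition $k\in[2^q,2^{q+1})$ and $w\in[2^p,2^{p+1})$ with $1\leq p\leq q+1$: on each block I would apply Markov's inequality together with the second-moment bounds on the two centered pieces above, and sum the resulting tail bounds over $(p,q)$. The crucial point is that the cancellation provided by the identity replaces the crude pointwise estimate $E[\Delta_\ell^2/(m(w-1))]=w/[m(w-1)]$ by the sharper centered rates $O(\sqrt{w}/m)$ (for the $\tilde S_\ell$-piece, assuming finite variance of $\phi_\ell^2$, or its $L^1$ counterpart otherwise) and $O(\sqrt{w}/m)$ (for the centered $U$-piece via $EQ_n^2$), so that after dividing by $g_m(k)\geq C(k/m)^\beta$ and summing geometrically in both $p$ and $q$, the bound is $o(1)$ precisely when $\beta<1$, which is exactly the assumption imposed on the boundary function in \eqref{e:boundaryfxn}.
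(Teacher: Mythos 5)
Your decomposition via the algebraic identity recombining the three monitoring-window terms into a centered $\tilde S_\ell$-difference plus a windowed degenerate $U$-statistic is a genuinely different route from the paper's, which handles each of the six summands of $R_\ell$ separately and, for the uncentered $\Delta_\ell^2$ term, pays with a case split $w\lessgtr m^\delta$ and a uniform-integrability argument that your identity sidesteps. The dyadic decomposition in $(p,q)$ is also the right device for absorbing the weight $g_m(k)^{-1}$ uniformly over $2\le k\le mT$ --- a point the paper's own proof in fact passes over for the last three summands of $R_\ell$ (it is harmless there because the downstream use in Lemma~\ref{l:approxunderH0} only needs $k\ge m\delta$, where $g_m$ is bounded below, but it is a genuine step in the lemma as stated).

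However, your plan for the windowed $U$-statistic piece does not close as written. Writing $V(k,w)=\sum_{j<l,\,j,l\in W}\phi_\ell(\mathbf{X}_{m+j})\phi_\ell(\mathbf{X}_{m+l})$, the decomposition $V(k,w)=Q_k-Q_{k-w}-Y_\ell(k-w)\bigl(Y_\ell(k)-Y_\ell(k-w)\bigr)$ is exact, but bounding each piece separately by Doob discards precisely the cancellation you rely on: one has $\E(Q_k-Q_{k-w})^2=w(2k-w-1)/2\asymp kw$ and $\E\bigl[Y_\ell(k-w)^2(Y_\ell(k)-Y_\ell(k-w))^2\bigr]=(k-w)w\asymp kw$, both of order $kw$, whereas $\E V(k,w)^2=\binom{w}{2}\asymp w^2$. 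On a dyadic block $k\in[2^q,2^{q+1})$, $w\in[2^p,2^{p+1})$, even the Doob bound $\max|Q_k-Q_{k-w}|\le 2\max_{n\le 2^{q+1}}|Q_n|=O_\P(2^q)$, divided by $m(w-1)g_m(k)\gtrsim m\,2^p(2^q/m)^\beta$ and summed over $1\le p\le q\lesssim\log m$, produces $O_\P(1)$, not $o_\P(1)$, so the geometric sums cannot save you. The repair, staying entirely within your framework, is to work with $V(k,\cdot)$ directly: for each fixed $k$, $\{V(k,w)\}_{w\ge 2}$ is a martingale in $w$ with respect to $\mathcal H_w=\sigma(\mathbf{X}_{m+k-w+1},\dots,\mathbf{X}_{m+k})$, with increment $\phi_\ell(\mathbf{X}_{m+k-w+1})\sum_{j=k-w+2}^{k}\phi_\ell(\mathbf{X}_{m+j})$ of conditional mean zero and variance $w-1$, so Doob gives $\P\{\max_{w\le 2^{p+1}}|V(k,w)|>y\}\le C\,4^p y^{-2}$; a union bound over the $\lesssim 2^q$ values of $k$ in the block, followed by your dyadic summation, then delivers the required $o(1)$, using $\beta<1$.
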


\begin{proof} 
Fix any $1\leq \ell\leq L$; we proceed to analyze each term in $
R_\ell(k,w,m) $ separately. Since $g_m(k)=g(k/m)$, it is easily seen that
\begin{equation}\label{e:g_lower_delta_T}
\inf_{\delta m\leq k\leq mT}g_m(k)\geq c_{\delta,T}>0,
\end{equation}
and hence
\begin{equation*}
\max_{\delta m\leq k\leq mT}\max_{2\leq w\leq k}\frac{1}{g_m(k)}\frac{w^2}{m^2(m-1)}
=O(m^{-1}),
\end{equation*}
and
\begin{equation*}
\max_{\delta m\leq k\leq mT}\max_{2\leq w\leq k}\frac{1}{g_m(k)}\frac{w}{m(w-1)}
=O(m^{-1}).
\end{equation*}
Similarly, from \eqref{e:eigvector_orthog}, we have
\begin{align*}
&\max_{\delta m\leq k\leq mT}\max_{2\leq w\leq k}
\frac{1}{g_m(k)}\frac{w^2}{m^2(m-1)}
\left|\sum_{i=1}^m\bigl(\phi_\ell^2(\mathbf X_i)-1\bigr)\right|\\
&\qquad\leq
\frac{C}{m-1}
\left|\sum_{i=1}^m\bigl(\phi_\ell^2(\mathbf X_i)-1\bigr)\right|
=o_\P(1),
\end{align*}
and from \eqref{e:eig_zero_mean},
\begin{align*}
&\max_{\delta m\leq k\leq mT}\max_{2\leq w\leq k}
\frac{1}{g_m(k)}
\frac{w^2|S_\ell(m)|^2}{m^3(m-1)}
\leq
\frac{C}{m(m-1)}|S_\ell(m)|^2
=O_\P(m^{-1}).
\end{align*}

In view of \eqref{e:g_lower_delta_T}, it remains to establish the
corresponding  bounds for the remaining terms in \eqref{e:def_R_ell(k,m)} without the weight $g_m(k)$, uniformly over the
larger set $2\leq w\leq k\leq Tm$. We have
\begin{align*}
&\P \left\{\max_{2\leq w\leq k \leq Tm} \left| \frac{w}{m(w-1)}%
\sum_{j=m+(k-w)+1}^{m+k} \left(\phi_\ell^2(\mathbf{X}_j)-1\right)\right|
>x\right\} \\
& \leq \P \left\{\max_{2\leq k \leq Tm} \left| \frac1m\sum_{j=m+1}^{m+k}
\left(\phi_\ell^2(\mathbf{X}_j)-1\right)\right| >x/4\right\} \\
&\qquad\qquad + \P \left\{\max_{2\leq w\leq k \leq Tm} \left| \frac{1}{m}%
\sum_{j=m+1}^{m+(k-w)} \left(\phi_\ell^2(\mathbf{X}_j)-1\right)\right|
>x/4\right\} \\
\ & \leq Cm^{-1}{\mathsf{E}}\hspace{0.1mm} \left|\sum_{j=1}^{\lceil
Tm\rceil} \left(\phi_\ell^2(\mathbf{X}_j)-1\right)\right| =o(1),
\end{align*}
where the last inequality follows from Doob's maximal inequality and the $o(1)$ statement follows from the LLN in $\mathcal L^1$. Finally, we
will show 
\begin{align}  \label{e:|Sk-Sw|^2=o(1)}
&\P \left\{\max_{2\leq w\leq k \leq Tm} \frac{\left|
S_{\ell}(k,m)-S_{\ell}(k-w,m)\right|^2}{m(w-1)} >x\right\} =o(1),
\end{align}
which will complete the statement. Fix $0<\eta<1/3$. Then 
\begin{align*}
& \P \left\{\max_{2\leq w\leq k \leq Tm} \frac{\left|
S_{\ell}(k,m)-S_{\ell}(k-w,m)\right|^2}{m(w-1)} {\mathbf{1}}%
_{\{w>m^\eta\}}>x\right\} \\
& \leq \P \left\{\max_{2\leq w\leq k \leq Tm} \frac{\left|
S_{\ell}(k,m)|^2+|S_{\ell}(k-w,m)\right|^2}{m^{1+\eta}} >x/4\right\} \\
& \leq \P \left\{\max_{2\leq k \leq Tm} \frac{\left| S_{\ell}(k,m)\right|^2}{%
m^{1+\eta}} >x/8\right\} \\
& \leq C \frac{{\mathsf{E}}\hspace{0.1mm} \left| S_{\ell}(\lfloor mT\rfloor ,m)\right|^2}{x
m^{1+\eta}} \\
& \leq C m^{-\eta}.
\end{align*}
Next, we have 
\begin{align}
& \P \left\{\max_{2\leq w\leq k \leq Tm} \frac{\left|
S_{\ell}(k,m)-S_{\ell}(k-w,m)\right|^2}{m(w-1)} {\mathbf{1}}_{\{w\leq
m^\eta\}}>x\right\}  \notag \\
& \leq \P \left\{\max_{2\leq w\leq k \leq m^\eta } \frac{\left|
S_{\ell}(k,m)-S_{\ell}(k-w,m)\right|^2}{m} >x\right\} + \P %
\left\{m^{-1}\max_{m^\eta<k \leq Tm} Y_{k,m} >x\right\},
\label{nextwehave}
\end{align}
with 
\begin{align*}
Y_{k,m}&= \max_{2\leq w \leq m^{\eta} } \left| \frac{1}{\sqrt {w-1}}%
\sum_{j=m+(k-w)+1}^{m+k}\phi_\ell(\mathbf{X}_j) \right |^2 \\
& =\max_{2\leq w \leq m^{\eta} }\frac1{w-1}\left|
S_{\ell}(k,m)-S_{\ell}(k-w,m)\right|^2.
\end{align*}
The first term in \eqref{nextwehave} is negligible.  Indeed, when $2\leq w\leq k \leq m^{\eta}$,
\begin{align*}
\max_{2\leq w\leq k \leq m^\eta }\left|
S_{\ell}(k,m)-S_{\ell}(k-w,m)\right|^2 \leq m^{2\eta}\max_{1\leq j\leq \lceil m^\eta \rceil }\phi^2_\ell(\bX_{m+j}).
\end{align*}
Hence,
$$
\max_{2\leq w\leq k \leq m^\eta } \frac{\left|
S_{\ell}(k,m)-S_{\ell}(k-w,m)\right|^2}{m} \leq m^{3\eta-1} \frac{\max_{1\leq j\leq \lceil m^\eta \rceil }\phi^2_\ell(\bX_{m+j})}{m^\eta} = o_\P(1).
$$
For the second term in \eqref{nextwehave},  set
\begin{equation}\label{e:mathcalMn}
\mathcal M_n =
\max_{1\leq a\leq b\leq n} \left|\sum_{j=a}^b\phi_\ell(\mathbf X_{m+j})\right|,\qquad A_n = \mathcal M_n^2/n.
\end{equation}
A standard truncation argument shows
\begin{equation}\label{e:An_ui}
\lim_{K\to\infty}\sup_{n\geq 1} \E  A_n {\mathbf 1}_{\{A_n>K\}} = 0.
\end{equation}
Also, for each $q\geq 1$, partition $\{1,\ldots, mT\}$ into blocks of length $2^q$:
$$\qquad B_{q,r}= \{1+(r-1)2^q,\ldots,r2^q\},\qquad r=1,\ldots, N_q,\qquad N_q \leq C m/2^q.
$$
Note when $2^q\leq w<2^{q+1}$, the interval $\{k-w+1,\ldots,k\}$ is always contained in a union of three adjacent blocks $B_{q}(r)=B_{q,r}\cup B_{q,r+1}\cup B_{q,r+2}$ for some $r$. 
Hence,
\begin{align}\notag
Y_{k,m}
\leq C\max_{1\leq q\leq\lceil\log_2(m^\eta)\rceil}\max_{1\leq r\leq N_q}
\frac1{2^q}\max_{a,b\in B_q(r)}\left|\sum_{j=a}^b\phi_\ell(\mathbf X_{m+j})
\right|^2.
\end{align}
Thus, with $n_q=3\cdot 2^q$,
\begin{align*}
\P %
\left\{m^{-1}\max_{m^\eta<k \leq Tm} Y_{k,m} >x\right\} & \leq \sum_{q=1}^{\lceil\log_2( m^\eta)\rceil }\sum_{r=1}^{N_q}\P %
\left\{\frac1{2^q}\max_{a,b \in B_q(r)}\left|\sum_{j=a}^{b}\phi_\ell(\mathbf{X}_j)\right|^2 >mx\right\}\\
& \leq \sum_{q=1}^{\lceil\log_2( m^\eta)\rceil}\sum_{r=1}^{N_q}\P %
\left\{\frac1{n_q} \mathcal M_{n_q}^2 >mx/3\right\}\\
& \leq \sum_{q=1}^{\lceil\log_2( m^\eta)\rceil}\frac{1}{mx}\sum_{r=1}^{N_q}\E A_{n_q} {\mathbf 1}_{\{A_{n_q} >mx/3\}}\\
& \leq \frac{C}{xm}\sup_{n\geq1}
\E\left(
A_n\mathbf 1_{\{A_n>mx/3\}}\right)\sum_{q=1}^{\lceil\log_2( m^\eta)\rceil}  N_q \\
&\leq \frac{o(1)}{x} \sum_{q=1}^\infty \frac{1}{2^q} = o(1),
\end{align*}
where the $o(1)$  follows from \eqref{e:An_ui}. 
This shows the second term in  \eqref{nextwehave} vanishes, completing the proof.
\end{proof}

\bigskip

We now report two approximation lemmas that are central to the main proofs.  The first shows the weighted truncated processes $U_{m,L}$ can be approximated by limits driven by a linear combination of squares of Gaussian processes.

\begin{lemma}
\label{l:approxunderH0} Fix $L\geq 1$, and set 
\begin{equation}
U_{m,L}(r,k)=\sum_{\ell =1}^{L}\lambda _{\ell }U_{m}(f_{\ell };r,k),
\label{e:def_UL}
\end{equation}%
where $f_{\ell }$ is given in \eqref{e:f_ell}. Let 
\begin{equation}
\mathbb{U}_{m,L}(s,t)=m^{-1}\big((\lfloor mt\rfloor -\lfloor ms\rfloor )\vee
2\big)^{2}U_{m,L}\big(\lfloor ms\rfloor ,\lfloor mt\rfloor \big),\quad 0\leq
s\leq t.  \label{uml}
\end{equation}%
Also, for every $s,t\geq 0$, set%
\begin{equation}
\mathbb{V}_{L}(s,t)=-\sum_{\ell =1}^{L}\lambda _{\ell }\left[ \left(
W_{2,\ell }(t)-W_{2,\ell }(s)-(t-s)W_{1,\ell }(1)\right) ^{2}-(t-s)(1+t-s)%
\right] ,  \label{vlst}
\end{equation}%
where $\{W_{1,1}(t),t\geq 0\},$ $\{W_{2,1}(t),t\geq 0\},$ $%
\{W_{1,2}(t),t\geq 0\},$ $\{W_{2,2}(t),t\geq 0\},$ $\ldots $ are independent
Wiener processes. Then, we may define a sequence $\left\{ \mathbb{V}%
_{m,L},m\geq 1\right\} $ of processes $\mathbb{V}_{m,L}=\{\mathbb{V}%
_{m,L}(s,t),~s,t\geq 0\}$ such that for each $m$, $\mathbb{V}_{m,L}\overset{{%
\mathcal{D}}}{=}\mathbb{V}_{L}$, and for any $0<\delta <T$, 
\begin{equation}
\sup_{s,t\in I_{\delta ,T}}\left\vert \frac{\mathbb{V}_{m,L}(s,t)}{g(t)}-%
\frac{\mathbb{U}_{m,L}(s,t)}{g_{m}(\lfloor mt\rfloor )}\right\vert =o_{\P %
}(1),  \label{e:VmL(st)}
\end{equation}%
with $$I_{\delta ,T}=\{(s,t):\delta \leq t\leq T,~0\leq s\leq t\}.$$
\end{lemma}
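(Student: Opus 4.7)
The plan is to build the approximation eigenfunction-by-eigenfunction, exploiting the exact algebraic identity \eqref{e:Um(f,k)} together with a Skorokhod-type coupling of the partial sums of $\{\phi_\ell(\mathbf{X}_i)\}$. Because $L$ is fixed and finite and the $\phi_\ell$ are orthonormal under $F$ with $\mathsf{E}\phi_\ell(\mathbf{X})=0$ (by \eqref{e:eigvector_orthog} and \eqref{e:eig_zero_mean}), the $2L$ sequences
\begin{equation*}
\{\phi_\ell(\mathbf{X}_i)\}_{i=1}^m \quad\text{and}\quad \{\phi_\ell(\mathbf{X}_{m+j})\}_{j\geq 1},\qquad \ell=1,\dots,L,
\end{equation*}
are mutually independent i.i.d.\ sequences with mean $0$ and variance $1$. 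A standard Skorokhod embedding (or the KMT strong approximation), applied coordinatewise, yields, on a suitable enlarged probability space, independent Wiener processes $\{W_{1,\ell}\}_{\ell=1}^L$ and $\{W_{2,\ell}\}_{\ell=1}^L$ and a version $\mathbb{V}_{m,L}\overset{\mathcal D}{=}\mathbb{V}_L$ such that, for every fixed $T>0$,
\begin{equation*}
\sup_{0\leq t\leq T}\left|m^{-1/2}S_\ell(\lfloor mt\rfloor, m)-W_{2,\ell}(t)\right|+\left|m^{-1/2}S_\ell(m)-W_{1,\ell}(1)\right|=o_{\P}(1),\quad \ell=1,\dots,L.
\end{equation*}

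Next, I apply the identity \eqref{e:Um(f,k)} with $k=\lfloor mt\rfloor$ and $w=\lfloor mt\rfloor-\lfloor ms\rfloor$ to each $f_\ell$, multiply by $\lambda_\ell$, and sum over $1\leq \ell\leq L$. This expresses $\mathbb{U}_{m,L}(s,t)$ (up to the $\vee 2$ truncation, which is harmless on $I_{\delta,T}$ for large $m$) as
\begin{equation*}
\mathbb{U}_{m,L}(s,t)=\sum_{\ell=1}^{L}\lambda_\ell\left[-\left(\tfrac{S_\ell(\lfloor mt\rfloor,m)-S_\ell(\lfloor ms\rfloor,m)}{\sqrt m}-\tfrac{w}{m}\cdot\tfrac{S_\ell(m)}{\sqrt m}\right)^{\!2}+\tfrac{w(m+w)}{m^2}\right]+\sum_{\ell=1}^L\lambda_\ell R_\ell(\lfloor mt\rfloor,w,m).
\end{equation*}
On the coupling, the first bracketed quantity converges, uniformly on $I_{\delta,T}$, to $-\bigl(W_{2,\ell}(t)-W_{2,\ell}(s)-(t-s)W_{1,\ell}(1)\bigr)^2+(t-s)(1+t-s)$ by the continuous mapping of the map $(f,g,c)\mapsto (f(t)-f(s)-(t-s)c,\,g)$ together with continuity of squaring on compacts. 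The remainder contribution is $o_{\P}(1)$ uniformly on $I_{\delta,T}$ by Lemma \ref{l:remainder_neglig_1}. This identifies the approximating process as $\mathbb{V}_{m,L}(s,t)$.

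Finally, to pass from $\mathbb{U}_{m,L}(s,t)$ and $\mathbb{V}_{m,L}(s,t)$ to the weighted processes appearing in \eqref{e:VmL(st)}, I use that $g$ is continuous and strictly positive on $I_{\delta,T}$, so that $\inf_{I_{\delta,T}}g(t)>0$, while $g_m(\lfloor mt\rfloor)=g(\lfloor mt\rfloor/m)\to g(t)$ uniformly in $t\in[\delta,T]$. Writing
\begin{equation*}
\frac{\mathbb{V}_{m,L}(s,t)}{g(t)}-\frac{\mathbb{U}_{m,L}(s,t)}{g_m(\lfloor mt\rfloor)}=\frac{\mathbb{V}_{m,L}(s,t)-\mathbb{U}_{m,L}(s,t)}{g(t)}+\mathbb{U}_{m,L}(s,t)\left(\frac{1}{g(t)}-\frac{1}{g_m(\lfloor mt\rfloor)}\right),
\end{equation*}
the first term is $o_{\P}(1)$ uniformly on $I_{\delta,T}$ by the previous step, and the second is $o_{\P}(1)$ because $\mathbb{U}_{m,L}$ is $O_{\P}(1)$ uniformly on the same set (by tightness inherited from $\mathbb{V}_{m,L}$) and $g_m(\lfloor mt\rfloor)^{-1}\to g(t)^{-1}$ uniformly.

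The main obstacle is the uniformity of the approximation over the two-parameter set $I_{\delta,T}$: the squared-CUSUM term combines three evaluations of the partial-sum process, and one must rule out that the supremum is driven by rare fluctuations near $s\to t$ (the small-window regime). This is precisely where the $\vee 2$ truncation and the quantitative bounds on $R_\ell$ from Lemma \ref{l:remainder_neglig_1} play a role; the remaining pieces are then handled by the continuous mapping theorem applied to the joint coupling.
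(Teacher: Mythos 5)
Your argument mirrors the paper's proof step for step: the Skorokhod/Dudley--Wichura coupling of the $2L$ independent partial-sum processes, expansion of $\mathbb{U}_{m,L}$ via the identity \eqref{e:Um(f,k)} into the squared-CUSUM term plus $R_\ell$, disposal of the $R_\ell$ term by Lemma \ref{l:remainder_neglig_1}, and finally the transfer to the weighted processes using $\sup_{[\delta,T]}|g_m(\lfloor mt\rfloor)-g(t)|\to 0$ together with $\inf_{[\delta,T]}g>0$. This is the same route the paper takes, so no further comment is needed.
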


\begin{proof}
For $0\leq s \leq t$, write 
\begin{align}
&\mathbb{U}_{m,L}^{\circ}(s,t)  \notag \\
&=-\sum_{\ell=1}^L\lambda_\ell\Bigg(\frac1m\left (S_{\ell}(\lfloor
mt\rfloor,m) - S_\ell(\lfloor ms\rfloor,m) - \frac{\lfloor mt\rfloor-\lfloor
ms\rfloor}mS_{\ell}(m)\right)^2  \notag \\
& \qquad\qquad\qquad\qquad\qquad\qquad\qquad\qquad- \frac{(\lfloor
mt\rfloor-\lfloor ms\rfloor)(\lfloor mt\rfloor-\lfloor ms\rfloor+m)}{m^2}%
\Bigg).  \label{e:U_mL_circ}
\end{align}
The Dudley-Wichura-Skorokhod Theorem (see e.g. \citealp{shorack:wellner:1986}%
, p. 47) entails that, for each $m$, one can construct independent Wiener
processes $\{W_{1,1,m}(t),t\geq0\},\{W_{2,1,m}(t),t\geq0\},\ldots,
\{W_{1,L,m}(t),t\geq0\},\{W_{2,L,m}(t),t\geq0\}$ such that 
\begin{equation}\label{e:skoro_coupling}
|m^{-1/2} S_\ell(m) - W_{1,\ell,m}(1)|+ \sup_{0\leq t \leq T} | m^{-1/2}
S_\ell(\lfloor mt\rfloor,m) - W_{2,\ell,m}(t)| =o_P(1),
\end{equation}
for all $1 \leq \ell
\leq L$. 
Hence, for all $1\leq \ell \leq L$, 
\begin{align*}
&\sup_{0\leq s \leq t \leq T} \left| m^{-1/2} [S_\ell(\lfloor mt\rfloor,m)-
S_\ell(\lfloor ms\rfloor,m)] - [W_{2,\ell,m}(t)-W_{2,\ell,m}(s)]\right|=o_P(1).
\end{align*}
and 
\begin{align*}
&\sup_{0\leq s \leq t \leq T} \left|\frac{\lfloor mt\rfloor-\lfloor ms\rfloor%
}{m} m^{-1/2}S_{\ell}(m)- (t-s)W_{1,\ell,m}(1)\right|=o_P(1).
\end{align*}
Thus, if we define, for all $0\leq s \leq t$, 
\begin{align}
&\mathbb{V }_{m,L}(s,t)  \notag \\
&=-\sum_{\ell=1}^L \lambda_\ell\left[\left(W_{2,\ell,m}(t)-W_{2,%
\ell,m}(s)-(t-s)W_{1,\ell,m}(1)\right)^2 - (t-s)(1+t-s)\right]
\end{align}
then 
\begin{equation}  \label{e:VmL(st)0}
\sup_{0\leq s \leq t \leq T} |\mathbb{V }_{m,L}(s,t)-\mathbb{U}%
_{m,L}^\circ(s,t)|=o_\P (1).
\end{equation}
Since $\sup_{\delta\leq t\leq T}|g_m\left(\lfloor mt\rfloor\right)-g(t)|\to
0 $ and $\inf_{\delta\leq t\leq T}|g(t)|>0$, from \eqref{e:VmL(st)0} we
obtain 
\begin{equation}  \label{e:VmL(st)1}
\sup_{s,t\in I_{\delta,T}}\left| \frac{\mathbb{V}_{m,L}(s,t)}{g(t)}-\frac{%
\mathbb{U}^\circ_{m,L}(s,t)}{g_m(\lfloor mt \rfloor)}\right|=o_\P (1).
\end{equation}
Lastly, Lemma \ref{l:remainder_neglig_1} yields 
\begin{equation*}
\sup_{s,t\in I_{\delta,T}} \frac{|\mathbb{U}_{m,L}(s,t)-\mathbb{U}%
^\circ_{m,L}(s,t)|}{g_m(\lfloor mt \rfloor)}\leq \max_{\delta m < k \leq
mT}\max_{2\leq w \leq k}\left|\displaystyle\sum_{\ell=1}^L\frac{ \lambda_\ell R_\ell(k,w,m)}{%
g_m(k)}\right|=o_\P (1),
\end{equation*}
which combined with \eqref{e:VmL(st)1} gives \eqref{e:VmL(st)}. 
\end{proof}

The next lemma shows the weak limit of the (weighted) $U_{m,L}$ can be itself approximated when $L$ is large.
\begin{lemma}
\label{l:trunctail_of_limit} For each $r,s,t\geq 0$, let 
\begin{equation}
\mathbb{V}(s,t)=-\sum_{\ell =1}^{\infty }\lambda _{\ell }\left[ \left(
W_{2,\ell }(t)-W_{2,\ell }(s)-(t-s)W_{1,\ell }(1)\right) ^{2}-(t-s)(1+t-s)%
\right] ,  \label{vst}
\end{equation}%
where $\{W_{1,1}(t),t\geq 0\},\{W_{2,1}(t),t\geq 0\},\{W_{1,2}(t),t\geq
0\},\{W_{2,2}(t),t\geq 0\},\ldots $ are independent Wiener processes and the
sums in (\ref{vst}) are understood as limits in $\mathcal{L}^{2}(\P )$.
Also, set 
\begin{equation*}
{\mathcal{V}}(s,t)=\frac{{\mathbb{V}}(s\wedge t,t)}{g(t)},\quad {\mathcal{V}}%
_{L}(s,t)=\frac{{\mathbb{V}}_{L}(s\wedge t,t)}{g(t)}\qquad s\geq 0,~t>0,
\end{equation*}%
with $\mathbb{V}_{L}$ as in (\ref{vlst}), and set $%
{\mathcal{V}}\left( s,0\right) ={\mathcal{V}}_{L}(s,0)=0$ for all $s\geq 0$.
Then, $\{{\mathcal{V}}(s,t)~s,t\geq 0\}$ admits a continuous version and ${%
\mathcal{V}}_{L}\Rightarrow {\mathcal{V}}$ in $\mathbf{C}([0,T]\times
\lbrack 0,T ])$ as $L\rightarrow \infty $, for each $T>0$. Moreover, with
$$
\mathcal H
=\sum_{\ell=1}^{\infty}
\lambda_\ell\left(1-W_{1,\ell}^{2}(1)\right),
$$
we have
\begin{equation}\label{e:V_tail_limit}
\sup_{0\leq s\leq t}
\left|
\mathcal V(s,t)-\frac{(t-s)^2}{g(t)}\mathcal H
\right|
\to0
\qquad\text{a.s.},\qquad t\to\infty.
\end{equation}
\end{lemma}

\begin{proof}
Set ${\mathcal{V}}%
_{0}\equiv 0$. For any $t>0$, $0\leq s\leq t$, and $L\geq 0$, 
\begin{align*}
& {\mathsf{E}}\hspace{0.1mm}\left\vert {\mathcal{V}}_{L}(s,t)-{\mathcal{V}}%
(s,t)\right\vert ^{2} \\
& ={\mathsf{E}}\hspace{0.1mm}\left\vert \frac{1}{g(t)}\sum_{\ell ={L+1}%
}^{\infty }\lambda _{\ell }\left[ \left( W_{2,\ell }(t)-W_{2,\ell
}(s)-(t-s)W_{1,\ell }(1)\right) ^{2}-(t-s)(1+t-s)\right] \right\vert ^{2} \\
& =\frac{2}{g^{2}(t)}((t-s)(1+t-s))^{2}\sum_{\ell ={L+1}}^{\infty }\lambda
_{\ell }^{2}.
\end{align*}%
Now, with 
$$
\mathcal H_L
=\sum_{\ell=1}^{L}\lambda_\ell
\left(1-W_{1,\ell}^{2}(1)\right).
$$
we have
$$
\E\left|\mathcal H-\mathcal H_{L}\right|^2
\leq
C\sum_{\ell=L+1}^{\infty}\lambda_\ell^2.
$$
This implies for each $n\geq 1$ and any $s_{1},t_{1},\ldots ,s_{n},t_{n}\geq
0$, $\left( {\mathcal{V}}_{L}(s_{1},t_{1}),\ldots {\mathcal{V}}%
_{L}(s_{n},t_{n}),\mathcal H_L\right) \Rightarrow \left( {\mathcal{V}}(s_{1},t_{1}),%
\ldots {\mathcal{V}}(s_{n},t_{n}),\mathcal H\right) $ as $L\rightarrow \infty $. We now show continuity of $\mathcal V$ and the convergence
\begin{equation}\label{e:VL,HL,joint}
(\mathcal V_L,\mathcal H_L)\implies (\mathcal V,\mathcal H),
\end{equation}
in $\mathbf C([0,T]^2)\times \bR$ for each $T>0$. 
Write 
\begin{align*}
Y_{\ell }(s,t)& =t^{-\beta }\left( W_{2,\ell}(t)-W_{2,\ell}(s\wedge t)-(t-(s\wedge
t))W_{1,\ell}(1)\right) ^{2} \\
& =t^{-\beta }\left( Z_{\ell }(t)-Z_{\ell }(t\wedge s)\right) ^{2},
\end{align*}%
where 
\begin{equation*}
Z_{\ell }(t)=W_{2,\ell }(t)-tW_{1,\ell }(1).
\end{equation*}%
With $m(s,t)=t^{-\beta }(t-(t\wedge s))(1+t-(t\wedge s))$, we have 
\begin{align}
{\mathcal{V}}_{L}(s,t)& =-(1+t)^{\beta -2}\sum_{\ell =1}^{L}\lambda _{\ell }%
\left[ Y_{\ell }(s,t)-m(s,t)\right]  \notag \\
& =:-(1+t)^{\beta -2}\sum_{\ell =1}^{L}\lambda _{\ell }\widetilde{Y}_{\ell}(s,t).  \label{e:Ytilde}
\end{align}%
Further, note for $0\leq s_{i}\leq t_{i}\leq T$ $i=1,2$ and $r\geq 1$,
Rosenthal's inequality yields 
\begin{align}
{\mathsf{E}}\hspace{0.1mm}& |(1+t_{1})^{2-\beta
}\cV_{L}(s_{1},t_{1})-(1+t_{2})^{2-\beta }\mathcal V_{L}(s_{2},t_{2})|^{2r}  \notag \\
\leq & \,C_{r}\!\left[ \sum_{\ell =1}^{L}|\lambda _{\ell }|^{2r}{\mathsf{E}}%
\hspace{0.1mm}\left\vert \widetilde{Y}_{\ell }(s_{1},t_{1})-\widetilde{Y}%
_{\ell }(s_{2},t_{2})\right\vert ^{2r}+\left( \sum_{\ell =1}^{L}\lambda
_{\ell }^{2}{\mathsf{E}}\hspace{0.1mm}|\widetilde{Y}_{\ell }(s_{1},t_{1})-%
\widetilde{Y}_{\ell }(s_{2},t_{2})|^{2}\right) ^{r}\right] .
\label{e:rosenthal_for_ZL}
\end{align}%
Now, 
\begin{align}
|Y_{\ell }(s_{1},t_{1})-Y_{\ell }(s_{2},t_{2})|^{2r}& \leq C\Big(%
|t_{1}^{-\beta }Z_{\ell }^{2}(t_{1})-t_{2}^{-\beta }Z_{\ell
}^{2}(t_{2})|^{2r}+|t_{1}^{-\beta }Z_{\ell }^{2}(s_{1})-t_{2}^{-\beta
}Z_{\ell }^{2}(s_{2})|^{2r}  \notag \\
& \qquad \qquad +|t_{1}^{-\beta }Z_{\ell }(t_{1})Z_{\ell
}(s_{1})-t_{2}^{-\beta }Z_{\ell }(t_{2})Z_{\ell }(s_{2})|^{2r}\Big).
\label{e:Y_preholderbound}
\end{align}%
We proceed to bound the expectation of each term in %
\eqref{e:Y_preholderbound}. Suppose for the moment that for any $T>0$, $%
0\leq s_{i}\leq t_{i}\leq T$, $i=1,2$, 
\begin{equation}
{\mathsf{E}}\hspace{0.1mm}\left( t_{1}^{-\beta /2}Z_{1}(s_{1})-t_{2}^{-\beta
/2}Z_{1}(s_{2})\right) ^{2}\leq C_{T}(|t_{1}-t_{2}|+|s_{1}-s_{2}|)^{a},
\label{eq:suppose}
\end{equation}%
for some $0<a<1-\beta $. Then, for any $r>0$, Gaussianity of $Z_{\ell }$
gives%
\begin{equation*}
{\mathsf{E}}\hspace{0.1mm}\left\vert t_{1}^{-\beta /2}Z_{\ell
}(s_{1})-t_{2}^{-\beta /2}Z_{\ell }(s_{2})\right\vert ^{2r}\leq
C_{r,T}(|t_{1}-t_{2}|+|s_{1}-s_{2}|)^{ar}.
\end{equation*}%
from which we obtain 
\begin{align*}
& {\mathsf{E}}\hspace{0.1mm}|t_{1}^{-\beta }Z_{\ell
}^{2}(s_{1})-t_{2}^{-\beta }Z_{\ell }^{2}(s_{2})|^{2r} \\
& \leq C\left( {\mathsf{E}}\hspace{0.1mm}|t_{1}^{-\beta /2}Z_{\ell
}(s_{1})-t_{2}^{-\beta /2}Z_{\ell }(s_{2})|^{4r}\right) ^{1/2}\left( {%
\mathsf{E}}\hspace{0.1mm}|t_{1}^{-\beta /2}Z_{\ell }(s_{1})|^{4r}+{\mathsf{E}%
}\hspace{0.1mm}|t_{2}^{-\beta /2}Z_{\ell }(s_{2})|^{4r}\right) ^{1/2} \\
& \leq C(|t_{1}-t_{2}|+|s_{1}-s_{2}|)^{2ar}.
\end{align*}%
Similarly, 
\begin{align*}
& {\mathsf{E}}\hspace{0.1mm}|t_{1}^{-\beta }Z_{\ell }(s_{1})Z_{\ell
}(t_{1})-t_{2}^{-\beta }Z_{\ell }(t_{2})Z_{\ell }(s_{2})|^{2r} \\
& \leq C\Big(\left( {\mathsf{E}}\hspace{0.1mm}|t_{1}^{-\beta /2}Z_{\ell
}(s_{1})-t_{2}^{-\beta /2}Z_{\ell }(s_{2})|^{4r}\right) ^{1/2}\left( {%
\mathsf{E}}\hspace{0.1mm}|t_{1}^{-\beta /2}Z_{\ell }(t_{1})|^{4r}\right)
^{1/2} \\
& \qquad +\left( {\mathsf{E}}\hspace{0.1mm}|t_{1}^{-\beta /2}Z_{\ell
}(t_{1})-t_{2}^{-\beta /2}Z_{\ell }(t_{2})|^{4r}\right) ^{1/2}\left( {%
\mathsf{E}}\hspace{0.1mm}|t_{2}^{-\beta /2}Z_{\ell }(s_{2})|^{4r}\right)
^{1/2}\Big) \\
& \leq C(|t_{1}-t_{2}|+|s_{1}-s_{2}|)^{2ar}.
\end{align*}%
Moreover, since $m(s,t)=m(s\wedge t,t)$ it is easily seen $m(s,t)$ is
locally $a$-H\"{o}lder continuous for any $0<a<1-\beta $. Hence, with $%
\widetilde{Y}_{\ell }$ as in \eqref{e:Ytilde}, 
\begin{equation*}
{\mathsf{E}}\hspace{0.1mm}|\widetilde{Y}_{\ell }(s_{1},t_{1})-\widetilde{Y}%
_{\ell }(s_{2},t_{2})|^{2r}\leq C(|t_{1}-t_{2}|+|s_{1}-s_{2}|)^{2ar}.
\end{equation*}%
From \eqref{e:rosenthal_for_ZL}, since $\sum_{\ell \geq 1}\lambda _{\ell
}^{2}<\infty $ we deduce, %
\begin{equation*}
{\mathsf{E}}\hspace{0.1mm}|(1+t_{1})^{2-\beta
}\mathcal V_{L}(s_{1},t_{1})-(1+t_{2})^{2-\beta }\mathcal V_{L}(s_{2},t_{2})|^{2r}\leq
C(|t_{1}-t_{2}|+|s_{1}-s_{2}|)^{2ar},
\end{equation*}%
Taking $r$ sufficiently large and applying Corollary 14.9 in %
\citet{kallenberg:2002} yields a continuous version of $\{(1+t)^{2-\beta }{%
\mathcal{V}}(s,t),~s,t\geq 0\}$ and tightness of the sequence $%
\{(1+t)^{2-\beta }\mathcal V_{L}(s,t),~s,t\geq 0\}$ in $\mathbf{C}([0,T]^2)$ for each $T>0$, and since $(1+t)^{2-\beta}$is continuous on $[0,T]$, we have tightness of $\mathcal V_L$ 
in $\mathbf C([0,T]^2)$; from which tightness of $(\mathcal V_L,\mathcal H_L)$ in $\mathbf C([0,T]^2)\times \bR$ follows;  hence we deduce \eqref{e:VL,HL,joint}.

Using the same Wiener processes $\{W_{1,\ell,m},W_{2,\ell,m},~\ell\geq 1,m\geq1\}$ in  \eqref{e:skoro_coupling} as in the proof of Lemma~\ref{l:approxunderH0}, for every fixed $L$ and $0<\delta<T$,
\begin{equation}\label{e:joint_fixedL_converg}
\sup_{(s,t)\in I_{\delta,T}}
\left|
\frac{\mathbb U_{m,L}(s,t)}{g_m(\lfloor mt\rfloor)}
-\frac{\mathbb V_{m,L}(s,t)}{g(t)}
\right|+\left|\mathcal H_{m,L}-\sum_{\ell=1}^L\lambda_\ell
\left(1-W_{1,\ell,m}^2(1)\right)\right|=o_\P(1),
\end{equation}
with $I_{\delta,T}$ as in \eqref{e:VmL(st)}.  With $\mathbb U_m$ defined the same as $\mathbb U_{m,L}$ in \eqref{uml}, but with $U_m(r,k)$ replacing $U_{m,L}(r,k)$,  Lemma~\ref{l:remainder_neglig_0} together with Chebyshev's inequality give, 
\begin{align*}
\lim_{L\to\infty}\limsup_{m\to\infty}\P\Bigg(
&\sup_{(s,t)\in I_{\delta,T}}
\left|
\frac{\mathbb U_m(s,t)-\mathbb U_{m,L}(s,t)}
{g_m(\lfloor mt\rfloor)}
\right|+
|\mathcal H_m-\mathcal H_{m,L}|>x
\Bigg)=0,
\end{align*}
for every $x>0$, where we used
$
\sup_m\E|\mathcal H_m-\mathcal H_{m,L}|^2
\leq C\sum_{\ell>L}\lambda_\ell^2.$
Combining this with \eqref{e:joint_fixedL_converg} and 
\eqref{e:VL,HL,joint}, we deduce, for every 
$1<T<R<\infty$, with $a_m(r,k)$ as in \eqref{e:am(rk)},
\begin{align*}
&\sup_{mT\leq k\leq mR}\max_{0\leq r\leq k-2}
\left|
\frac{m^{-1}(k-r)^2U_m(\overline h;r,k)}{g_m(k)}
-a_m(r,k)\mathcal H_m
\right|\\
&\qquad\Rightarrow
\sup_{T\leq t\leq R}\sup_{0\leq s\leq t}
\left|
\mathcal V(s,t)
-\frac{(t-s)^2}{g(t)}\mathcal H
\right|.
\end{align*}
Then, for every $x>0$,
\begin{align*}
&\P\left(
\sup_{T\leq t\leq R}\sup_{0\leq s\leq t}
\left|\mathcal V(s,t)-\frac{(t-s)^2}{g(t)}\mathcal H\right|>x
\right)\\
&\qquad\leq
\liminf_{m\to\infty}\P\left(
\sup_{mT\leq k\leq mR}\max_{0\leq r\leq k-2}
\left|\frac{m^{-1}(k-r)^2U_m(\overline h;r,k)}{g_m(k)}
-a_m(r,k)\mathcal H_m\right|>x\right)\\
&\qquad\leq
\liminf_{m\to\infty}
\P\left(
\sup_{k\geq mT}\max_{0\leq r\leq k-2}
\left|
\frac{m^{-1}(k-r)^2U_m(\overline h;r,k)}{g_m(k)}
-a_m(r,k)\mathcal H_m
\right|>x
\right).
\end{align*}
Letting $R\to\infty$, we obtain
\begin{align*}
&\P\left(
\sup_{t\geq T}\sup_{0\leq s\leq t}
\left|\mathcal V(s,t)-\frac{(t-s)^2}{g(t)}\mathcal H\right|>x
\right)\\
&\qquad\leq \liminf_{m\to\infty}
\P\left(\sup_{k\geq mT}\max_{0\leq r\leq k-2}\left|
\frac{m^{-1}(k-r)^2U_m(\overline h;r,k)}{g_m(k)}
-a_m(r,k)\mathcal H_m
\right|>x\right).
\end{align*}
Lemma~\ref{l:tail_variable} now implies that the above tends  to
zero as $T\to\infty$. Since $\sup_{t\geq T}(\cdot)$  is decreasing in $T$,
it follows that
$$
\sup_{t\geq T}\sup_{0\leq s\leq t}
\left|
\mathcal V(s,t)-\frac{(t-s)^2}{g(t)}\mathcal H
\right|
\to 0
\qquad\text{a.s.},\quad T\to\infty.
$$
We now conclude the proof by showing \eqref{eq:suppose}. 
Note for any $0<s_i\leq t_i$, $i=1,2$, 
\begin{align*}
&{\mathsf{E}}\hspace{0.1mm}\left(t_1^{-\beta/2}Z_\ell(s_1)-t_2^{-\beta/2}Z_%
\ell(s_2)\right)^2 \\
&\leq C\left({\mathsf{E}}\hspace{0.1mm}\left(t_1^{-\beta/2}W_{2,%
\ell}(s_1)-t_2^{-\beta/2}W_{2,\ell}(s_2)\right)^2 +
\left(t_1^{-\beta/2}s_1-t_2^{-\beta/2}s_2\right)^2\right).
\end{align*}
Without loss of generality suppose $s_1\geq s_2$. We have 
\begin{align*}
{\mathsf{E}}\hspace{0.1mm}\left(t_1^{-\beta/2}W_{2,\ell}(s_1)-t_2^{-%
\beta/2}W_{2,\ell}(s_2)\right)^2&=
t_1^{-\beta}s_1+t_2^{-\beta}s_2-2(t_1t_2)^{-\beta/2} s_2 \\
& = y^{-\beta/a}x^{1/a} +t_2^{-\beta}s_2-2y^{-\beta/(2a)}t_2^{-\beta/2}s_2 \\
&=f(x,y),
\end{align*}
where $x=s_1^a$, $y=t_1^a$. Note $x\leq y$. Since $0<a<1-\beta$, the mean
value theorem applied to $f(x,y)$ at $x_0=s_2^{a},y_0=t_2^a$ gives an $%
x_*,y_*$ with $y_*\geq x_*\geq s_2^a$ and 
\begin{align}
|f(x,y)|
&\leq C\Big(
y_*^{-\beta/a}x_*^{1/a-1}|x-x_0|  + \left(y_*^{-\beta/a-1}x_*^{1/a} +y_*^{-\beta/(2a)-1}t_2^{-\beta/2}s_2\right)|y-y_0|\Big) \notag\\
&\leq C \Big( y_*^{(1-\beta-a)/a}|x-x_0|+\left( y_*^{(1-\beta-a)/a} +s_2^{1-\beta-a} \right)|y-y_0|\Big) \notag\\
&\leq C\big(|x-x_0|+|y-y_0|\big) \leq C\bigl(|s_1-s_2|^a+|t_1-t_2|^a\bigr).
\label{e:holdercontbound}
\end{align}
Similarly if $s_2=0$, we have ${\mathsf{E}}\hspace{0.1mm}|t_1^{-%
\beta/2}W_{2,\ell}(s_1)-t_2^{-\beta/2}W_{2,\ell}(s_2)|^2=t_1^{-\beta}s_1\leq
s_1^{1-\beta}\leq s_1^a$, and thus \eqref{e:holdercontbound} holds for all $%
0\leq s_i\leq t_i \leq T$. Analogous arguments for \eqref{e:holdercontbound}
show $\left(t_1^{-\beta/2}s_1-t_2^{-\beta/2}s_2\right)^2\leq C(|t_1-t_2|^a +
|s_1-s_2|^a)$, which gives \eqref{eq:suppose}.

\end{proof}

The following lemmas collect additional estimates needed for the expanding-baseline detector $\mathcal D_m^{(3)}$. With $c_0>0$ we recall
$$
c_m=\lfloor {c}_0 m\rfloor,\qquad
b_k=(k-c_m)_+,\qquad
n_k=m+b_k.
$$
For $b_k\leq r\leq k-2$, write, for notational convenience,
$$
U_m^{(3)}(h;r,k)=U_{n_k}(h;r-b_k,k-b_k).
$$
We also recall
$$
g_m^{(3)}(k)
=
g\left(\frac{k}{n_k}\right)\left(1+\frac{b_k}{m}\right)^\gamma,
\qquad \gamma>1/2.
$$

\begin{lemma}\label{l:D3_finite_horizon_bounds}
Let $0<T<\infty$ and $x>0$. Under $H_0$, the following hold.

\begin{enumerate}
\item[(i)] For each fixed $L\geq1$ and $0<\delta<T$,
\begin{equation}\label{e:D3_remainder_bound}
\sup_{\delta m\leq k\leq mT}
\max_{b_k\leq r\leq k-2}\sum_{\ell=1}^{L}
\left|\frac{\lambda_\ell R_{\ell,m}^{(3)}(r,k)}{g_m^{(3)}(k)}\right|=o_P(1),
\end{equation}
where, with $w=k-r$,
$$
R_{\ell,m}^{(3)}(r,k)=\frac{n_k}{m}R_\ell(k-b_k,w,n_k),
$$
and $R_\ell$ is defined in \eqref{e:def_R_ell(k,m)}.

\item[(ii)] For each fixed $0<\delta<T$,
\begin{align}
&\lim_{L\to\infty}\limsup_{m\to\infty}
P\bigg\{\sup_{\delta m\leq k\leq mT}\max_{b_k\leq r\leq k-2}\left|\frac{m^{-1}(k-r)^2\{U_m^{(3)}(\overline h;r,k)-U_{m,L}^{(3)}(\overline h;r,k)\}}{g_m^{(3)}(k)}\right|>x\bigg\}=0.
\label{e:D3_eigentail_finite_horizon}
\end{align}

\item[(iii)] Finally,
\begin{equation}\label{e:D3_small_t_bound}
\lim_{\delta\to0}\limsup_{m\to\infty}
P\left\{\max_{2\leq k\leq m\delta}\max_{b_k\leq r\leq k-2}
\left|\frac{m^{-1}(k-r)^2U_m^{(3)}(\overline h;r,k)}{g_m^{(3)}(k)}\right|>x\right\}=0.
\end{equation}
\end{enumerate}
\end{lemma}

\begin{proof}
Write $w=k-r$. The identity \eqref{e:Um(f,k)} applied with historical sample size $n_k$, gives
\begin{align}
& m^{-1}w^2
U_{n_k}\bigl(f_\ell;r-b_k,k-b_k\bigr)  \notag\\
&\quad =
-m^{-1}
\left[
S_\ell(k,m)-S_\ell(r,m)
-\frac{w}{n_k}S_\ell(n_k)
\right]^2
+
\frac{w(n_k+w)}{m n_k}
+
R_{\ell,m}^{(3)}(r,k),
\label{e:D3_fell_expansion}
\end{align}
where
$$
R_{\ell,m}^{(3)}(r,k)
=
\frac{n_k}{m}R_\ell(k-b_k,w,n_k).
$$
On $\delta m\leq k\leq mT$,
$$
m\leq n_k\leq m(1+T),
\qquad
2\leq w\leq k-b_k\leq k\wedge c_m\leq C_Tm,
$$
and
$$
0<c_{\delta,T}
\leq
\inf_{\delta m\leq k\leq mT}g_m^{(3)}(k)
\leq
\sup_{\delta m\leq k\leq mT}g_m^{(3)}(k)
\leq C_{\delta,T}<\infty.
$$
Because $m\leq n_k\leq (1+T)m$, $2\leq w\leq k-b_k\leq C_Tm$ and
$g_m^{(3)}(k)$ is bounded away from zero uniformly over
$\delta m\leq k\leq mT$, it suffices to show, for each fixed $\ell$,
$$
\max_{m\leq n\leq (1+T)m}
\max_{2\leq w\leq q\leq C_Tm}
\left|R_\ell(q,w,n)\right|
=o_{\mathsf P}(1).$$
This follows by repeating the term-by-term argument in the proof of
Lemma~\ref{l:remainder_neglig_1}. Since
$n_k/m=O(1)$, this proves \eqref{e:D3_remainder_bound}.

We next prove \eqref{e:D3_eigentail_finite_horizon}. Write
$
K_L(\mathbf x,\mathbf y)
=\sum_{\ell=L+1}^{\infty} \lambda_\ell\phi_\ell(\mathbf x)\phi_\ell(\mathbf y).$
Then with $\overline h_L$ as in \eqref{e:def_hL}, we have 
$
\overline h-\overline h_L=K_L$
in $\mathcal L^2(F\times F)$. By the definition of $U_m^{(3)}$,
\begin{align}
&\left|
U_m^{(3)}(\overline h;r,k)-U_{m,L}^{(3)}(\overline h;r,k)
\right| \notag\\
&\quad\leq
\frac{2}{n_kw}
\left|\sum_{i=1}^{n_k}\sum_{j=m+r+1}^{m+k} K_L(\mathbf X_i,\mathbf X_j)
\right| +
\binom{n_k}{2}^{-1}
\left|\sum_{1\leq i<j\leq n_k} K_L(\mathbf X_i,\mathbf X_j)\right|
\notag\\
&\qquad
+
\binom{w}{2}^{-1}
\left|\sum_{m+r<i<j\leq m+k}K_L(\mathbf X_i,\mathbf X_j)
\right|,
\label{e:D3_tail_three_terms}
\end{align}
where $w=k-r$. It suffices to control the three terms in
\eqref{e:D3_tail_three_terms} after multiplication by $m^{-1}w^2$.  Note the following consequences of Lemma~\ref{l:term-by-term_maximal_bounds}: for
each fixed $T<\infty$,
\begin{align}
&\E\left[\sup_{\delta m\leq k\leq mT} \max_{b_k\leq r\leq k-2} \
\left| \sum_{i=1}^{n_k}\sum_{j=m+r+1}^{m+k} K_L(\mathbf X_i,\mathbf X_j) \right|^2\right] 
\leq C_Tm^2\sum_{\ell=L+1}^{\infty}\lambda_\ell^2,
\label{e:D3_cross_aux}
\end{align}
(using that $\sum_{i=1}^{n_k}\sum_{j=m+r+1}^{m+k}(\cdots) = \sum_{i=1}^{n_k}\sum_{j=n_k+1}^{m+k}(\cdots) -\sum_{i=1}^{n_k}\sum_{j=n_k+1}^{m+r}(\cdots)$)
and
\begin{align}
&\E\left[
\max_{1\leq n\leq m(1+T)}
\left|\sum_{1\leq i<j\leq n} K_L(\mathbf X_i,\mathbf X_j)\right|^2\right]
\leq
C_Tm^2\sum_{\ell=L+1}^{\infty}\lambda_\ell^2,
\label{e:D3_hist_aux}
\end{align}
while
\begin{align}
&\E\left[
\max_{0\leq r<k\leq mT}
\left|\sum_{m+r<i<j\leq m+k}K_L(\mathbf X_i,\mathbf X_j)\right|^2\right] \leq C_Tm^2\sum_{\ell=L+1}^{\infty}\lambda_\ell^2.
\label{e:D3_right_aux}
\end{align}
Now consider the first term in \eqref{e:D3_tail_three_terms}. Since
$n_k\geq m$ and $w\leq C_Tm$,
$$
\frac{m^{-1}w^2}{n_kw}
\leq
\frac{C_T}{m}.
$$
Hence, by \eqref{e:D3_cross_aux} and Markov's inequality,
\begin{align}
&\limsup_{m\to\infty}
P\bigg\{
\sup_{\delta m\leq k\leq mT}
\max_{b_k\leq r\leq k-2}
\frac{m^{-1}w^2}{n_kw}
\left|\sum_{i=1}^{n_k}\sum_{j=m+r+1}^{m+k}K_L(\mathbf X_i,\mathbf X_j)
\right|>x\bigg\}\notag\\
&\qquad\leq
C x^{-2}\sum_{\ell=L+1}^{\infty}\lambda_\ell^2.
\label{e:D3_cross_tail_bound}
\end{align}

For the second term in  \eqref{e:D3_tail_three_terms}, since $n_k\geq m$ and $w\leq C_Tm$, note
$$
m^{-1}w^2\binom{n_k}{2}^{-1}\leq\frac{C_T}{m}.
$$
Thus \eqref{e:D3_hist_aux} gives
\begin{align}
&\limsup_{m\to\infty}
P\bigg\{
\sup_{\delta m\leq k\leq mT}
\max_{b_k\leq r\leq k-2}
m^{-1}w^2\binom{n_k}{2}^{-1}
\left|
\sum_{1\leq i<j\leq n_k}
K_L(\mathbf X_i,\mathbf X_j)
\right|>x
\bigg\}
\notag\\
&\qquad\leq
C x^{-2}
\sum_{\ell=L+1}^{\infty}\lambda_\ell^2.
\label{e:D3_hist_tail_bound}
\end{align}
For the last term in  \eqref{e:D3_tail_three_terms}, note  $m^{-1}w^2\binom{w}{2}^{-1} \leq C/m.$
Therefore \eqref{e:D3_right_aux} yields
\begin{align}
&\limsup_{m\to\infty}
P\bigg\{
\sup_{\delta m\leq k\leq mT}
\max_{b_k\leq r\leq k-2}
m^{-1}w^2\binom{w}{2}^{-1}
\left|
\sum_{m+r<i<j\leq m+k}
K_L(\mathbf X_i,\mathbf X_j)
\right|>x
\bigg\}
\notag\\
&\qquad\leq
C x^{-2}
\sum_{\ell=L+1}^{\infty}\lambda_\ell^2.
\label{e:D3_right_tail_bound}
\end{align}
Combining \eqref{e:D3_cross_tail_bound}, \eqref{e:D3_hist_tail_bound}, and
\eqref{e:D3_right_tail_bound} with \eqref{e:D3_tail_three_terms} gives
\begin{align}
&\limsup_{m\to\infty}
P\bigg\{
\sup_{\delta m\leq k\leq mT}
\max_{b_k\leq r\leq k-2}
\left|
\frac{m^{-1}w^2
\{U_m^{(3)}(\overline h;r,k)-U_{m,L}^{(3)}(\overline h;r,k)\}}
{g_m^{(3)}(k)}
\right|>x
\bigg\}
\notag\\
&\qquad\leq
C x^{-2}
\sum_{\ell=L+1}^{\infty}\lambda_\ell^2.
\label{e:D3_eigentail_bound_prelim}
\end{align}
Letting $L\to\infty$ proves \eqref{e:D3_eigentail_finite_horizon}. For (iii), take any $\delta<c_0$. Then for all $2\leq k\leq m\delta$,
we have $b_k=0$, $n_k=m$, and hence
$$
U_m^{(3)}(\overline h;r,k)=U_m(\overline h;r,k),
\qquad
g_m^{(3)}(k)=g_m(k).
$$
Thus \eqref{e:D3_small_t_bound} follows directly from
\eqref{e:uniform_limit_in_delta}.  
\end{proof}

\begin{lemma}\label{l:D3_approxunderH0}
Fix $L\geq1$, $0<\delta<T<\infty$, and define
$$
b(t)=(t-c_0)_+.
$$
For $b(t)\leq s\leq t$, set
\begin{align}
\mathbb V_L^{(3)}(s,t)&=
-\sum_{\ell=1}^{L}\lambda_\ell
\Bigg[
\left\{
W_{2,\ell}(t)-W_{2,\ell}(s)
-\frac{t-s}{1+b(t)}
\left(W_{1,\ell}(1)+W_{2,\ell}(b(t))\right)
\right\}^2
\notag\\
&\qquad\qquad\qquad\qquad
-\left\{(t-s)+\frac{(t-s)^2}{1+b(t)}\right\}
\Bigg],
\label{e:def_VL_D3}
\end{align}
where the Wiener processes are as in Lemma~\ref{l:approxunderH0}. Then we may
define a sequence $\{\mathbb V_{m,L}^{(3)},m\geq1\}$ such that
$\mathbb V_{m,L}^{(3)}\overset{\mathcal D}{=}\mathbb V_L^{(3)}$ and
\begin{align}\notag
&\sup_{\substack{\delta\leq t\leq T\\ b(t)\leq s\leq t}}
\Bigg|
\frac{\mathbb V_{m,L}^{(3)}(s,t)} {g\big(t/(1+b(t))\big)\big(1+b(t)\big)^\gamma}\\
&\qquad\qquad\quad-
\frac{ (\lfloor mt\rfloor-\lfloor ms\rfloor)^2
U_{n_{\lfloor mt\rfloor},L}
\bigl(\overline h;\lfloor ms\rfloor-b_{\lfloor mt\rfloor},
\lfloor mt\rfloor-b_{\lfloor mt\rfloor}\bigr)} {m g_m^{(3)}(\lfloor mt\rfloor)} \Bigg|
=o_P(1).
\label{e:D3_fixedL_approx}
\end{align}
\end{lemma}

\begin{proof}
The proof is nearly the same as the proof of Lemma~\ref{l:approxunderH0}, using
\eqref{e:D3_fell_expansion} in place of \eqref{e:Um(f,k)}; we provide details where there are differences. Note  uniformly
over $\delta\leq t\leq T$ and $b(t)\leq s\leq t$,
$$
\frac{n_{\lfloor mt\rfloor}}{m}\to 1+b(t),
\qquad
\frac{\lfloor mt\rfloor-\lfloor ms\rfloor}{n_{\lfloor mt\rfloor}}
\to
\frac{t-s}{1+b(t)},
$$
and
$$
m^{-1/2}S_\ell(n_{\lfloor mt\rfloor})
=
m^{-1/2}S_\ell(m)
+
m^{-1/2}S_\ell(b_{\lfloor mt\rfloor},m)
\Rightarrow
W_{1,\ell}(1)+W_{2,\ell}(b(t)).
$$
Also,
$$
m^{-1/2}\{S_\ell(\lfloor mt\rfloor,m)-S_\ell(\lfloor ms\rfloor,m)\}
\Rightarrow
W_{2,\ell}(t)-W_{2,\ell}(s).
$$
Thus the leading term in \eqref{e:D3_fell_expansion} is approximated by
\eqref{e:def_VL_D3}. The centering term converges uniformly to
$$
(t-s)+\frac{(t-s)^2}{1+b(t)}.
$$
Finally, Lemma~\ref{l:D3_finite_horizon_bounds}(i) controls the remainder,
and
$$
g_m^{(3)}(\lfloor mt\rfloor)
\to
g\big(t/[1+b(t)]\big)[1+b(t)]^\gamma
$$
uniformly over $\delta\leq t\leq T$. This proves \eqref{e:D3_fixedL_approx}.
\end{proof}

\begin{lemma}\label{l:D3_infinite_horizon_bound}
Under $H_0$, for every $x>0$,
\begin{equation}\label{e:D3_large_T_bound}
\lim_{T\to\infty}\limsup_{m\to\infty}
P\left\{
\sup_{k\geq mT}
\max_{b_k\leq r\leq k-2}
\left|
\frac{m^{-1}(k-r)^2U_m^{(3)}(\overline h;r,k)}
{g_m^{(3)}(k)}
\right|>x
\right\}=0.
\end{equation}
\end{lemma}

\begin{proof}
Put
$$
a_k=\frac{n_k}{m}=1+\frac{b_k}{m},
\qquad
w=k-r.
$$
For
$q=0,1,2,\ldots$, set
$$
R_q=2^qT/2,\qquad 
\mathcal J_{m,q}(T)
=
\left\{
(r,k):
R_q\leq a_k<R_{q+1},\quad b_k\leq r\leq k-2
\right\}.
$$
The intervals $[R_q,R_{q+1})$, $q\geq0$ partition 
$[T/2,\infty)$. Since $c_m/m\to c_0$, for $T$  large enough and $m$
sufficiently large, $k\geq mT$ implies $k>c_m$ and
$$
a_k
=
1+\frac{k-c_m}{m}
\geq
1+T-\frac{c_m}{m}
\geq
\frac{T}{2}.
$$
Thus every pair $(r,k)$ with $k\geq mT$ and $b_k\leq r\leq k-2$ belongs to
exactly one of the sets $\mathcal J_{m,q}(T)$.  On
$\mathcal J_{m,q}(T)$,
$$
n_k\asymp R_qm,
\qquad
2\leq w\leq k-b_k\leq c_m\leq Cm,\qquad a_k^\gamma\geq R_q^\gamma.
$$
Moreover, since $k/n_k\to 1$ uniformly over $a_k\geq T/2$ as
$T\to\infty$, there is a constant $c>0$ such that, for all large $T$,
$$
\inf_{q\geq0}\inf_{(r,k)\in\mathcal J_{m,q}(T)}
g\left(\frac{k}{n_k}\right)\geq c.
$$

We claim that, for every $q\geq0$,
\begin{equation}\label{e:D3_shell_bound}
\limsup_{m\to\infty}P\left\{\max_{(r,k)\in\mathcal J_{m,q}(T)}
\left| \frac{m^{-1}w^2U_m^{(3)}(\overline h;r,k)}
{g_m^{(3)}(k)}\right|>x\right\}
\leq
\frac{C}{x^2}R_q^{1-2\gamma}.
\end{equation}
Once this is proved, the result follows immediately, because
\begin{align*}
&\limsup_{m\to\infty}
P\left\{
\sup_{k\geq mT}
\max_{b_k\leq r\leq k-2}
\left|
\frac{m^{-1}w^2U_m^{(3)}(\overline h;r,k)}
{g_m^{(3)}(k)}
\right|>x
\right\} \leq
\frac{C}{x^2}\sum_{q=0}^{\infty}R_q^{1-2\gamma}\leq \frac{C}{x^2}T^{1-2\gamma}.
\end{align*}
Since $\gamma>1/2$, the last expression tends to zero as $T\to\infty$. It remains to prove \eqref{e:D3_shell_bound}. Write
$$
U_m^{(3)}(\overline h;r,k) = A_{m,1}(r,k)-A_{m,2}(k)-A_{m,3}(r,k),
$$
where
\begin{align*}
A_{m,1}(r,k)
&=\frac{2}{n_kw} \sum_{i=1}^{n_k}\sum_{j=m+r+1}^{m+k} \overline h(\mathbf X_i,\mathbf X_j),\\
A_{m,2}(k)
&=\binom{n_k}{2}^{-1}\sum_{1\leq i<j\leq n_k} \overline h(\mathbf X_i,\mathbf X_j),\\
A_{m,3}(r,k)
&=\binom{w}{2}^{-1}\sum_{m+r<i<j\leq m+k}\overline h(\mathbf X_i,\mathbf X_j).
\end{align*}
Using Lemma \ref{l:term-by-term_maximal_bounds}, 
$$
\E\left[ \max_{1\leq n<s\leq C R_qm} \left| \sum_{i=1}^{n}\sum_{j=n+1}^{s}
\overline h(\mathbf X_i,\mathbf X_j)\right|^2 \right] \leq C R_q^2m^2.
$$
Since
$ {m^{-1}w^2 }/{n_kw} \leq C/R_qm,$ we find
\begin{equation}\label{e:D3_shell_cross_moment}
\E\left[
\sup_{(r,k)\in\mathcal J_{m,q}(T)}
|m^{-1}w^2A_{m,1}(r,k)|^2
\right]
\leq C.
\end{equation}
Similarly,
\begin{equation}\label{e:D3_shell_hist_moment}
\E\left[
\sup_{(r,k)\in\mathcal J_{m,q}(T)}
|m^{-1}w^2A_{m,2}(k)|^2
\right]
\leq C.
\end{equation}
For $A_{m,3}(k,r)$, using again Lemma \ref{l:term-by-term_maximal_bounds}, we  have
$$
\E\left[ \sup_{(r,k)\in\mathcal J_{m,q}(T)}
\left| \sum_{m+r<i<j\leq m+k} \overline h(\mathbf X_i,\mathbf X_j) \right|^2 \right]
\leq
C R_qm^2.
$$
Hence
\begin{equation}\label{e:D3_shell_right_moment}
\E\left[ \sup_{(r,k)\in\mathcal J_{m,q}(T)} |m^{-1}w^2A_{m,3}(r,k)|^2\right]
\leq C R_q.
\end{equation}
Combining \eqref{e:D3_shell_cross_moment},
\eqref{e:D3_shell_hist_moment}, and \eqref{e:D3_shell_right_moment}, and
using the lower bound on $g(k/n_k)$, gives
\begin{align*}
&\limsup_{m\to\infty}
P\left\{ \sup_{(r,k)\in\mathcal J_{m,q}(T)}
\left| \frac{m^{-1}w^2U_m^{(3)}(\overline h;r,k)}
{g_m^{(3)}(k)} \right|>x
\right\} 
\leq
\frac{C}{x^2}R_q^{1-2\gamma}.
\end{align*}
This proves \eqref{e:D3_shell_bound}, and hence
\eqref{e:D3_large_T_bound}.
\end{proof}

\bigskip
\subsection{Lemmas under $H_A$}
\noindent The next few lemmas are used under $H_A$. We first set up some notation. Let%
\begin{equation*}
\mu _{1}=\iint h(\mathbf{x},\mathbf{y})dF(\mathbf{x})dF(\mathbf{y}),\quad
\mu _{2}=\iint h(\mathbf{x},\mathbf{y})dF_{* }(\mathbf{x})dF_{* }(%
\mathbf{y}),
\end{equation*}%
\begin{equation*}
\mu _{12}=\iint h(\mathbf{x},\mathbf{y})dF(\mathbf{x})dF_{* }(\mathbf{y}).
\end{equation*}%
\begin{equation*}
h_{1}(\mathbf{x})=\int h(\mathbf{x},\mathbf{y})dF(\mathbf{y}),\quad h_{2}(%
\mathbf{x})=\int h(\mathbf{x},\mathbf{y})dF_{* }(\mathbf{y})
\end{equation*}%
Also, with $\nu _{1},\nu _{2}$ as in \eqref{e:def_nu}, we note 
\begin{equation*}
\nu _{1}=\theta ^{-1}(\mu _{1}-\mu _{12}),\quad \nu _{2}=\theta ^{-1}(\mu
_{12}-\mu _{2}),\quad \nu _{1}-\nu _{2}=\theta ^{-1}\left( \mu _{1}+\mu
_{2}-2\mu _{12}\right) .
\end{equation*}%
Whenever convenient we write $\mathbf{X}_{i}^{* }$ in place of $\mathbf{X}%
_{i}$ for $i>m+k_{* }$. We also set 
\begin{equation}
z_{i}=v(\mathbf{X}_{i})-\nu _{1},\qquad z_{i}^{* }=v(\mathbf{X}_{i}^{*
})-\nu _{2}.  \label{e:def_zi}
\end{equation}

Below, we set any sum $\sum_{j=a}^b(\ldots)=0$ whenever $b<a$. We proceed to
decompose the summations appearing in \eqref{e:def_page} for $k\geq k_*+1$
into drift, degenerate, and nondegenerate terms. For any $k\geq k_*+1$, $%
0\leq r < k_*$, write 
\begin{align*}
&\sum_{i=1}^m\sum_{j=m+r+1}^{m+k}h(\mathbf{X}_i,\mathbf{X}_j)-m (k_*-r)\mu_1
- m(k-k_*)\mu_{12} \\
& \quad=R_{m,1}(r,k)+(k_*-r)\sum_{i=1}^m\left[h_1(\mathbf{X}_i)-\mu_1\right]%
+m\sum_{i=m+r+1}^{m+k_*}\left[h_1(\mathbf{X}_i)-\mu_1\right] \\
&\qquad\qquad +(k-k_*)\sum_{i=1}^m\left[h_2(\mathbf{X}_i) -\mu_{12}\right] +
m\sum_{j=m+k_*+1}^{m+k}\left[h_1(\mathbf{X}_j^*)-\mu_{12}\right] \\
& \quad=R_{m,1}(r,k)+(k-r)\sum_{i=1}^m\left[h_1(\mathbf{X}_i)-\mu_1\right]%
+m\sum_{i=m+r+1}^{m+k_*}\left[h_1(\mathbf{X}_i)-\mu_1\right] \\
&\qquad\qquad -\theta (k-k_*)\sum_{i=1}^mz_i + m\sum_{j=m+k_*+1}^{m+k}\left[%
h_1(\mathbf{X}_j^*)-\mu_{12}\right], \\
&= R_{m,1}(r,k)+T_{m,1}(r,k),
\end{align*}
with 
\begin{align}
R_{m,1}(r,k) &= \sum_{i=1}^m\sum_{j=m+r+1}^{m+k_*}\left[h(\mathbf{X}_i,%
\mathbf{X}_j)-h_1(\mathbf{X}_i)-h_1(\mathbf{X}_j)+\mu_1\right]  \notag \\
& \quad +\sum_{i=1}^m\sum_{j=m+k_* +1}^{m+k}\left[h(\mathbf{X}_i,\mathbf{X}%
_j^*)-h_2(\mathbf{X}_i)-h_1(\mathbf{X}^*_j)+\mu_{12}\right]  \notag \\
&=R_{m,1,1}(r) + R_{m,1,2}(k_*,k).  \label{e:R_m11}
\end{align}
When $k_*<r< k$, 
\begin{align*}
&\sum_{i=1}^m\sum_{j=m+r+1}^{m+k}h(\mathbf{X}_i,\mathbf{X}_j) -
m(k-r)\mu_{12} \\
& \quad=R_{m,1}(r,k)+(k-r)\sum_{i=1}^m\left(\left[h_1(\mathbf{X}_i)
-\mu_{1}\right]-\theta z_i \right)+ m\sum_{j=m+r+1}^{m+k}\left[h_1(\mathbf{X}%
_j^*)-\mu_{12}\right] \\
& \quad= R_{m,1}(r,k) + T_{m,1}(r,k)
\end{align*}
with 
\begin{align}
R_{m,1}(r,k) & =\sum_{i=1}^m\sum_{j=m+r+1}^{m+k}\left[h(\mathbf{X}_i,\mathbf{%
X}_j^*)-h_2(\mathbf{X}_i)-h_1(\mathbf{X}^*_j)+\mu_{12}\right]  \notag \\
&= R_{m,1,2}(r,k),  \label{e:R_m12}
\end{align}
Similarly,%
\begin{align*}
\sum_{1\leq i<j \leq m}h(\mathbf{X}_i,\mathbf{X}_j) - {\binom{m}{2}}\mu_1 &
= R_{m,2} + (m-1) \sum_{i=1}^m \left[h_1(\mathbf{X}_i) -\mu_{1}\right], \\
& = R_{m,2} + T_{m,2},
\end{align*}
with 
\begin{equation}
R_{m,2}= \sum_{1\leq i<j \leq m}\left[h(\mathbf{X}_i,\mathbf{X}_j) - h_1(%
\mathbf{X}_i)- h_1(\mathbf{X}_j) + \mu_1\right].  \label{e:R_m2}
\end{equation}
For the third summation in \eqref{e:def_page}, when $0\leq r \leq k_*$, 
\begin{align*}
&\sum_{m+r< i< j \leq m+k}h(\mathbf{X}_i,\mathbf{X}_j)- \left[{\binom{k_*-r}{%
2}}\mu_1 + {\binom{k-k_*}{2}}\mu_{2} + (k_*-r)(k-k_*)\mu_{12}\right] \\
& \quad= \sum_{m+r< i<j \leq m+ k_*}\left[h(\mathbf{X}_i,\mathbf{X}_j) - \mu_1%
\right]+ \sum_{m+k_* < i<j \leq m+k}\left[h(\mathbf{X}^*_i,\mathbf{X}^*_j) -
\mu_2\right] \\
& \quad \qquad\qquad + \sum_{i= m+r+1}^{m+k_*}\sum_{j=m+k_*+1}^{m+k}\left[h(%
\mathbf{X}_i,\mathbf{X}^*_j) - \mu_{12}\right] \\
& ~~= R_{m,3}(r,k)+(k-r-1) \sum_{i=m+r+1}^{m+ k_*}\left[h_1(\mathbf{X}%
_i)-\mu_1\right]-\theta (k-k_*-1)\sum_{j=m+k_*+1}^{m+k}z_j^* \\
& \quad \qquad\qquad \qquad - \theta(k-k_*)\sum_{i= m+r+1}^{m+k_*}z_i +
(k-r-1) \sum_{j=m+k_*+1}^{m+k}\left[ h_1(\mathbf{X}^*_j)-\mu_{12}\right] \\
&= R_{m,3}(r,k) + T_{m,3}(r,k),
\end{align*}
with 
\begin{align}
R_{m,3}(r,k) &= \sum_{m+r < i<j \leq m+ k_*}\left[h(\mathbf{X}_i,\mathbf{X}%
_j) - h_1(\mathbf{X}_i)-h_1(\mathbf{X}_j)+ \mu_1\right]  \notag \\
& \quad +\sum_{m+k_* < i<j \leq m+k}\left[h(\mathbf{X}^*_i,\mathbf{X}^*_j) -
h_2(\mathbf{X}^*_i)-h_2(\mathbf{X}^*_j)+\mu_2\right]  \notag \\
& \quad + \sum_{i= m+r+1}^{m+k_*}\sum_{j=m+k_*+1}^{m+k}\left[h(\mathbf{X}_i,%
\mathbf{X}^*_j) - h_2(\mathbf{X}_i)-h_1(\mathbf{X}_j^*)+ \mu_{12}\right] 
\notag \\
&=:R_{m,3,1}(r)+ R_{m,3,2}(k_*,k) + R_{m,3,3}(r,k),  \label{e:R_m31}
\end{align}
and when $k_*< r <k$, 
\begin{align*}
\sum_{m+r< i< j \leq m+k}h(\mathbf{X}_i,\mathbf{X}_j)- {\binom{k-r}{2}}%
\mu_{2}&= R_{m,3}(r,k)+(k-r-1)\sum_{j=m+r+1}^{m+k}\left[h_2(\mathbf{X}^*_j) -
\mu_2\right] \\
& = R_{m,3}(r,k) + T_{m,3}(r,k),
\end{align*}
with 
\begin{align}
R_{m,3}(r,k) &= \sum_{m+r< i<j \leq m+k}\left[h(\mathbf{X}^*_i,\mathbf{X}%
^*_j) - h_2(\mathbf{X}^*_i)-h_2(\mathbf{X}^*_j)+\mu_2\right]  \notag \\
&= R_{m,3,2}(r,k).  \notag
\end{align}

This gives, for $k\geq k_*+1$, 
\begin{align}  \label{e:U_as_q}
(k-r)^2U_m(\overline h;r,k) & = q_{1}(r,k) + q_{2}(r,k) + q_{3}(r,k),
\end{align}
with 
\begin{align}
q_1(r,k) &= p_1(r,k)\mu_1 + p_{12}(r,k)\mu_{12} + p_2(r,k)\mu_2,
\label{e:def_q1_orig}
\end{align}
where 
\begin{align*}
p_1(r,k)&=%
\begin{cases}
\left(2(k-r)(k_*-r)-(k-r)^2 -\frac{(k-r)(k_*-r)(k_*-r-1)}{k-r-1}\right) & 
0\leq r \leq k_* \\ 
-(k-r)^2 & k_*<r < k%
\end{cases}
\\
p_{12}(r,k)&=%
\begin{cases}
2\left((k-r)(k-k_*) -\frac{ (k-r) (k_*-r)(k-k_*)}{k-r-1}\right) & 0\leq r
\leq k_* \\ 
2(k-r)^2 & k_*<r <k%
\end{cases}
\\
p_{2}(r,k)&=%
\begin{cases}
\frac{(k-r)(k-k_*)(k-k_*-1)}{k-r-1} & 0\leq r \leq k_* \\ 
-(k-r)^2 & k_*<r <k%
\end{cases}%
,
\end{align*}
and after some cancellation, %
\begin{align}
q_{2}(r,k)&= (k-r)^2\left[\frac{2T_{m,1}(r,k)}{(k-r)m} - \frac{T_{m,2}}{{%
\binom{m}{2}}} - \frac{T_{m,3}(r,k)}{{\binom{k-r }{2 }}} \right]  \notag \\
&=%
\begin{cases}
\label{e:def_q2_orig} \begin{aligned}[b] 2\theta(k-k_*)
&\bigg[-\frac{k-r}m\sum_{i=1}^m z_i+\frac{k-r}{k-r-1}\sum_{i=m+r+1}^{m+k_*}
z_i\\ & \qquad\qquad\quad+
\frac{k-r}{k-r-1}\left(\frac{k-k_*-1}{k-k_*}\right)%
\sum_{i=m+k_*+1}^{m+k}z_i^*\bigg]\end{aligned}, & 0\leq r\leq k_* \\ 
\displaystyle 2\theta(k-r)\left[-\frac{k-r}{m}\sum_{i=1}^m z_i +
\sum_{i=m+r+1}^{m+k} z_i^* \right] & k_*< r < k.%
\end{cases}%
\ 
\end{align}
Lastly, 
\begin{equation}  \label{e:def_q3_orig}
q_{3}(r,k) =(k-r)^2\left[\frac{2R_{m,1}(r,k)}{(k-r)m} - \frac{R_{m,2}}{{%
\binom{m}{2}}} - \frac{R_{m,3}(r,k)}{{\binom{k-r }{2 }}} \right].
\end{equation}

The next lemma provides an approximation of the drift term $q_1$ and nondegenerate term $q_2$ by asymptotically equivalent but simpler terms.
\begin{lemma}
\label{l:replace_with_q1_q2} Let $y_m> k_*$ be any sequence with $%
y_m\to\infty$, and for $0\leq r < k$, set

\begin{equation}  \label{e:cq1_cq2}
\begin{gathered} \mathcal q_1(r,k) = -(k-(k_*\vee r))^2\theta(\nu_1-\nu_2),
\\ \mathcal q_2(r,k) = 2\theta (k-(k_*\vee r)) \left[-\frac{k-
r}m\sum_{i=1}^m z_i + {\bf 1}_{\{r<k_*\}}\sum_{i=m+r+1}^{m+k_*} z_i +
\sum_{i=m+(k_*\vee r)+1}^{m+k}z_i^* \right]. \end{gathered}
\end{equation}

Then, for $q_1(r,k)$ and $q_2(r,k)$ as in \eqref{e:U_as_q},

\begin{align}
&\max_{k_*<k\leq y_m}\max_{0\leq r < k} \frac{|q_1(r,k)-\mathcal{q}_1(r,k)|}{%
mg_m(k)} \leq C\theta |\nu_1-\nu_2| \left(\frac{y_m-k_*}{k_*}\wedge 1\right)%
\big((k_*/m)^{1-\beta}\wedge (k_*/m)^{-1}\big),  \label{e:q1_approx_bound}
\end{align}
and for any $\delta>0$ 
\begin{align}
&\max_{k_*<k\leq y_m}\max_{0\leq r < k} \frac{|q_2(r,k)-\mathcal{q}_2(r,k)|}{%
mg_m(k)}  \notag \\
&\quad\leq C \theta k_*^{-1}\left((k_*/m)^{1-\beta}\wedge
(k_*/m)^{-1}\right)\left((1-k_*/y_m)O_\P (\sigma k_*^{1/2})+O_\P %
(\sigma_*(y_m-k_*)^{1/2+\delta})\right)  \label{e:q2_approx_bound}
\end{align}
\end{lemma}
\begin{proof}
The bounds are immediate when $k_*\leq r<k$, so we only consider $0\leq r
<k_*$. Note with $p_i(r,k)$ as in \eqref{e:def_q1_orig}, 
\begin{align*}
p_1(r,k) &= 2(k-r)(k_*-r)-(k-r)^2 -(k_*-r)^2 + \varepsilon_{1}(r,k) \\
& = -(k-k_*)^2+ \varepsilon_{1}(r,k),
\end{align*}
with 
\begin{equation*}
\varepsilon_1(r,k) = -\frac{(k_*-r)(k_*-k)}{k-r-1}.
\end{equation*}
Similarly, 
\begin{align*}
p_{12}(r,k) & =2(k-k_*)^2-2\varepsilon_1(r,k), \\
p_{2}(r,k) & = -(k-k_*)^2+ \varepsilon_{1}(r,k),
\end{align*}
Hence, %
\begin{align*}
\max_{k_*<k\leq y_m} \frac{\max_{0\leq r <k_*}|\varepsilon_1(r,k)|}{mg_m(k)}
&\leq C \frac{1}{m g_m(k_*)}\max_{k_*<k\leq y_m} \varepsilon_1(0,k) \\
& \leq C\left(\frac{y_m-k_*}{k_*}\wedge 1\right)\big((k_*/m)^{1-\beta}\wedge
(k_*/m)^{-1}\Big),
\end{align*}
which gives \eqref{e:q1_approx_bound}. Likewise, 
\begin{align*}
q_2(r,k) &= \mathcal{q}_2(r,k) +2\theta(k-k_*)\left[\frac{1}{k-r-1}%
\sum_{i=m+r+1}^{m+k_*} z_i - \frac{k_*-r}{(k-r-1)(k-k_*)}%
\sum_{i=m+k_*+1}^{m+k}z_i^*\right] \\
&= \mathcal{q}_2(r,k) + 2\theta \varepsilon_{2}(r,k),
\end{align*}
and 
\begin{align*}
\max_{k_*<k\leq y_m} \frac{\max_{0\leq r<k}|\varepsilon_2(r,k)|}{mg_m(k)}
&\leq \max_{k_*<k\leq y_m} \frac{C}{m g_m(k)}\left(\frac{k-k_*}{k}
\left|\sum_{i=m+1}^{m+k_*} z_i\right|+\frac{k_*}{k}\left|%
\sum_{i=m+k_*+1}^{m+k}z_i^*\right|\right) \\
& \leq \frac{C }{m g_m(k_*)} \left( (1-k_*/y_m)O_\P (\sigma k_*^{1/2}) +
O_\P \big(\sigma_* (y_m-k_*)^{1/2+\delta}\big)\right),
\end{align*}
which gives \eqref{e:q2_approx_bound}. 
\end{proof}

The next few lemmas concern bounds and approximations for $q_3$, under $H_A$.
\begin{lemma}
\label{l:size_of_q3} With $q_3(r,k)$ as in \eqref{e:def_q3_orig}, for any
sequence $y_m\geq k_*$ with $y_m\to\infty$, %
\begin{equation*}
\max_{k_*<k\leq y_m}\max_{0\leq r <k} \frac{|q_3(r,k)|}{mg_m(k)}= O_\P \left(%
\big((y_m/m)^{(1-\beta)}\big)\wedge 1 \right),
\end{equation*}
and 
\begin{equation*}
\max_{k \geq k_*}\max_{0\leq r <k} \frac{|q_3(r,k)|}{mg_m(k)}=O_\P (1).
\end{equation*}
\end{lemma}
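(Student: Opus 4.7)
The plan is to decompose $q_3(r,k)$ into its six constituent pieces using the decompositions in \eqref{e:R_m11}, \eqref{e:R_m2}, and \eqref{e:R_m31}: the cross terms $R_{m,1,1}(r)$ and $R_{m,1,2}(k_\ast,k)$, the training-sample term $R_{m,2}$, and the monitoring-sample terms $R_{m,3,1}(r)$, $R_{m,3,2}(k_\ast,k)$, and $R_{m,3,3}(r,k)$. Each is a two-sample (or one-sample) U-statistic-type sum whose kernel is doubly centered with respect to the relevant product distribution ($F\times F$, $F\times F_\ast$, or $F_\ast\times F_\ast$), so that by the same orthogonality exploited in Lemma \ref{l:maximal_ineq}, its variance factorizes across the index ranges and equals the product of the index-range lengths times a finite constant controlled by $\sum_\ell \lambda_\ell^2$.

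First I would bound each piece separately by expanding the degenerate kernel in terms of the eigenfunctions $\phi_\ell$ (as in \eqref{e:h(xy)=L2_expansion}, and via the analogous spectral decomposition of $\tilde h_{F,F_\ast}$ with respect to the $F\times F_\ast$ product measure), and then apply Doob's maximal inequality to the resulting martingale-type partial sums in both the $r$ and $k$ coordinates. This yields, e.g., $\max_{k_\ast<k\leq y}|R_{m,1,2}(k_\ast,k)|^2 = O_\P (m(y-k_\ast))$, $\max_{0\leq r<k_\ast}|R_{m,1,1}(r)|^2 = O_\P (m k_\ast)$, $|R_{m,2}|^2 = O_\P (m^2)$, and analogous $O_\P$-bounds for the $R_{m,3,\cdot}$ terms. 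I would then combine these bounds with the normalizations $(k-r)^2/(mg_m(k)\cdot N)$, where $N\in\{m^2,\binom{m}{2},\binom{k-r}{2}\}$, and track the worst-case behavior separately in the regimes $k\leq m$ (where $mg_m(k)\sim m^{1-\beta}k^\beta$) and $k>m$ (where $mg_m(k)\sim k^2/m$), summing the contributions of all six pieces to extract the stated rate $\big((y/m)^{2(1-\beta)}\log^4 y\big)\wedge 1$.

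Following the dyadic chaining template used in the proof of Lemma \ref{l:remainder_neglig_0} (decomposing the supremum over $k\in[e^{q-1},e^q)$ and analogously over $r$) produces the polylogarithmic factor $\log^4 y$: up to four powers of $\log y$ arise from applying Doob's inequality across dyadic blocks of the two independent index variables appearing in the cross-term partial sums. The second (unconditional) statement follows by noting that $mg_m(k)\gtrsim k^2/m$ on $k\geq m$, so the factor $(k-r)^2/(mg_m(k))$ is uniformly bounded, and combining this with the $O_\P$ bounds on the $R$-terms (using \eqref{e:maximal2} from Lemma \ref{l:maximal_ineq} with the truncation argument of Lemma \ref{l:remainder_neglig_0} applied at $T\to\infty$) gives $O_\P (1)$.

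The main obstacle will be managing the uniformity in $r$ across the pieces that depend on it (namely $R_{m,1,1}$, $R_{m,3,1}$, $R_{m,3,3}$), since each behaves as a ``backward'' partial sum in $r$ with its own variance structure: $R_{m,1,1}$ and $R_{m,3,1}$ have variances scaling like $m(k_\ast-r)$ and $(k_\ast-r)^2$ respectively, whereas $R_{m,3,3}$ has variance $(k_\ast-r)(k-k_\ast)$ and couples both variables. Careful splitting of the joint supremum into the four quadrants determined by $r\lessgtr k_\ast/2$ and $k\lessgtr m$, with separate Doob bounds in each quadrant, should provide the needed uniformity. A secondary complication is that under $H_A$ the degenerate kernels with respect to $F\times F_\ast$ have second moments depending on $\theta$ and the spectral structure of $F_\ast$; boundedness of these must be verified from $F_\ast=\theta G+(1-\theta)F$ and Assumption \ref{a:assumption_on_h}, so that the implied constants in the $O_\P$ bounds remain uniform in $m$.
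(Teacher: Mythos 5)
Your overall plan—decompose $q_3$ into the six $R$-pieces, control each with a maximal inequality, and chain dyadically over $k$—is the right skeleton and matches the paper's structure at the highest level. However, there is a genuine gap in where the $\log^4 y$ factor comes from, and it is not a cosmetic one.

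You attribute the four powers of $\log y$ to ``applying Doob's inequality across dyadic blocks of the two independent index variables.'' That is not how the paper gets them, and it would not work: dyadic chaining of the form used in Lemma~\ref{l:remainder_neglig_0} gives you a \emph{geometric} series over blocks $e^{q-1}\le k<e^q$, which sums to a constant, not a polylogarithm. The $\log^4$ arises entirely \emph{inside} a single block, from the maximal inequality~\eqref{e:quadterm_maximal3} in Lemma~\ref{e:uncorrealted_maximal}. The key structural observation there is that the double sum $\sum_{m+r<i<j\le m+k}\overline h_{22}(\mathbf X_i^*,\mathbf X_j^*)$ is \emph{not} a martingale jointly in $(r,k)$: conditioning in one coordinate wrecks the adaptedness in the other. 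What it is, thanks to the double-centering of $\overline h_{22}$, is a two-dimensional array of pairwise uncorrelated increments (${\mathsf E}\,\xi_{i,j}\xi_{i',j'}=0$ for $(i,j)\ne(i',j')$). The paper therefore invokes a Rademacher--Menshov / Doob (1990, p.~156)-type rectangle maximal inequality for 2D orthogonal arrays, which costs $\log^2$ per dimension; two applications (the $\mathsf E\,\mathcal S\le C(\mathcal r+2)^2(y-k_*)^2$ bound plus the Cauchy--Schwarz step over binary expansions) yield $\log^4$. Your claimed bounds of the form $\max_{k_*<k\le y}|R_{m,3,2}(k_*,k)|^2=O_\P((y-k_*)^2)$ without the log factor are therefore too optimistic; the correct bound already carries $\log^4(y-k_*)$ at the level of the single maximal inequality, and the dyadic sum then just absorbs it.

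A second, smaller issue: your proposal to spectrally decompose the degenerate kernel with respect to $F\times F_*$ is unnecessary and would add needless technical overhead. The orthogonality that powers the maximal inequalities under $H_A$ comes directly from the double-centering $\overline h_{11},\overline h_{12},\overline h_{22}$ (defined in \eqref{e:h11h12h22}), not from an eigenbasis. All you need is $\mathsf E\,\overline h_{22}^2(\mathbf X^*,\mathbf Y^*)<\infty$, etc., which you are right to flag as needing a check from $F_*=\theta G+(1-\theta)F$ and Assumption~\ref{a:assumption_on_h}, but you do not need a new Mercer-type expansion to exploit it. Your argument for the second ($O_\P(1)$) statement is fine.
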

\begin{proof}
Write 
\begin{align*}
\frac{q_3(r,k)}{mg_m(k)} &=\frac{(k-r)^2}{mg_m(k)}\left[\frac{2R_{m,1}(r,k)}{%
(k-r)m} - \binom{m}{2}^{-1} R_{m,2} - \binom{k-r }{2}^{-1} R_{m,3}(r,k) %
\right] \\
& = \widetilde A_{m,1}(r,k)-\widetilde A_{m,2}(r,k)-\widetilde A_{m,3}(r,k).
\end{align*}
It suffices to establish 
\begin{align}
\max_{k_*<k\leq y_m}\max_{0\leq r <k} |\widetilde A_{m,i}(r,k)|&= O_\P \left(\big(%
(y_m/m)^{(1-\beta)}\big)\wedge 1 \right),  \label{e:Ri_asymp_1} \\
\max_{k\geq k_*}\max_{0\leq r <k} |\widetilde A_{m,i}(r,k)|&= O_\P (1),
\label{e:Ri_asymp_2}
\end{align}
for $i=1,2,3.$ For brevity we consider only $i=3$ since $i=1,2$ are
essentially the same but simpler. Write 
\begin{equation}  \label{e:h11h12h22}
\begin{aligned} \overline h_{11}({\bf x},{\bf y}) &= h({\bf x},{\bf y}) -
h_1({\bf x}) - h_1({\bf y}) +\mu_1, \\ \overline h_{22}({\bf x},{\bf y}) &=
h({\bf x},{\bf y}) - h_2({\bf x}) - h_2({\bf y}) +\mu_2, \\ \overline
h_{12}({\bf x},{\bf y}) &= h({\bf x},{\bf y}) - h_2({\bf x}) - h_1({\bf y})
+\mu_{12}, \end{aligned}
\end{equation}
So that%
\begin{equation*}
R_{m,3}(r,k)= R_{m,3,1}(r)\mathbf{1}_{\{r \leq k_*\}} + R_{m,3,2}(r\vee
k_*,k)+ R_{m,3,3}(r,k)\mathbf{1}_{\{r \leq k_*\}},
\end{equation*}
with 
\begin{gather*}
R_{m,3,1}(r) = \sum_{m+r < i<j \leq m+ k_*}\overline h_{11}(\mathbf{X}_i,%
\mathbf{X}_j),\quad R_{m,3,2}(r,k) = \sum_{m+r < i<j \leq m+k}\overline
h_{22}(\mathbf{X}^*_i,\mathbf{X}^*_j), \\
R_{m,3,3}(r,k) = \sum_{i= m+r+1}^{m+k_*}\sum_{j=m+k_*+1}^{m+k}\overline
h_{12}(\mathbf{X}_i,\mathbf{X}^*_j).
\end{gather*}
For $R_{m,3,1}(r)$, Lemma \ref{l:term-by-term_maximal_bounds} gives

\begin{align*}
{\mathsf{E}}\hspace{0.1mm} \max_{ 0\leq r \leq k_*}\left(
R_{m,3,1}(r)\right)^2 \leq 4 {\mathsf{E}}\hspace{0.1mm} \left(R_{m,3,1}(0)%
\right)^2 \leq k_*^2 {\mathsf{E}}\hspace{0.1mm}\overline h_{11}^2(\mathbf{X},%
\mathbf{Y}).
\end{align*}

Now, since $g_m(k)\geq C(k/m)^\beta$, we have%
\begin{align}
\P \left\{\max_{k_*<k\leq y_m}\max_{0\leq r <k_*}\frac{1}{mg_m(k)}
|R_{m,3,1}(r)|>x\right\} &\leq \P \left\{\max_{0\leq r <k_*} \frac{k_*^{-\beta}%
}{m^{1-\beta}} |R_{m,3,1}(r)|>C x\right\}  \notag \\
& \leq C x^{-2}\frac{k_*^{2-2\beta}}{m^{2(1-\beta)}} = O((y_m/m)^{2(1-\beta)}).
\end{align}
On the other hand, since $g_m(k)\geq C((k/m)^\beta\mathbf{1}_{\{k\leq m\}} +
(k/m)^2\mathbf{1}_{\{k>m\}} )$, it follows that %
\begin{align}
&\P \left\{\sup_{k\geq k_*}\max_{0\leq r <k_*}\frac{1}{mg_m(k)}
|R_{m,3,1}(r)|>x\right\} \\
&\quad\leq \P \left\{\max_{0\leq r <k_*}\left( \frac{m}{k_*^2}\mathbf{1}%
_{\{k_*>m\}}+ \mathbf{1}_{\{k_*\leq m\}}\frac{m^{\beta-1}}{k_*^\beta}%
\right)|R_{m,3,1}(r)|>C x\right\}  \notag \\
&\quad \leq C x^{-2}.
\end{align}
Now, for $R_{m,3,2}(r,k)$, suppose first $k_*\leq y_m\leq C m$. Using Lemma %
\ref{l:uncorrelated_maximal}, we have%
\begin{align}
&\P \left\{\max_{k_*<k\leq y_m}\max_{0\leq r <k}\frac{1}{mg_m(k)}
|R_{m,3,2}(r\vee k_*,k)|>x\right\} \\
&\quad\leq \P \left\{\max_{k_*<k\leq y_m} \frac{k^{-\beta}}{m^{1-\beta}}
\max_{k_*\leq r <k}|R_{m,3,2}(r,k)|>C x\right\}  \notag \\
&\quad\leq \P \left\{\max_{\lfloor \log_2(k_*)\rfloor < q \leq \lceil \log_2
y_m\rceil} \max_{2^{q-1}\leq k< 2^q}\max_{k_*\leq r <k}\frac{2^{-\beta(q-1)}}{%
m^{1-\beta}} |R_{m,3,2}(r,k)|>C x\right\}  \notag \\
&\quad\leq Cx^{-2}\sum_{q=\lfloor \log_2 k_* \rfloor + 1}^{\lceil \log_2 y_m\rceil}%
\frac{2^{-2\beta q}}{m^{2(1-\beta)}} (2^{q}-k_*)^2  \notag \\
&\quad \leq C x^{-2}\frac{y_m^{2(1-\beta)}}{m^{2(1-\beta)}}
\label{eR_3_asymp_1}
\end{align}
On the other hand, if $y_m>Cm$, since $g_m(k)\geq C(k/m)^2$ for $k\geq m$, we
have 
\begin{align}
&\P \left\{\max_{m\vee k_*\leq k\leq y_m}\frac{1}{mg_m(k)} \max_{k_*\leq r
<k}|R_{m,3,2}(r,k)|>x\right\}  \notag \\
&\quad\leq \P \left\{\max_{m\vee k_*\leq k\leq y_m} m k^{-2}\max_{k_*\leq r
<k}|R_{m,3,2}(r,k)|>C x\right\}  \notag \\
&\quad\leq \P \left\{\max_{\lfloor \log_2(m\vee k_*)\rfloor < q \leq \lceil \log
y_m\rceil} \max_{2^{q-1}\leq k< 2^q} m2^{-2(q-1)}\max_{k_*\leq r <k}
|R_{m,3,2}(r,k)|>C x\right\}  \notag \\
&\quad\leq Cx^{-2}\sum_{q=\lfloor \log_2m \rfloor + 1}^{\lceil \log_2 y_m\rceil}
m^2 2^{-2q}  \notag \\
& \quad\leq C x^{-2},  \label{eR_3_asymp_2}
\end{align}
which, combined with \eqref{eR_3_asymp_1}, gives \eqref{e:Ri_asymp_1}. %
Likewise, analogous steps leading to \eqref{eR_3_asymp_2} show 
\begin{equation*}
\max_{k\geq k_*}\max_{0\leq r <k}\frac{1}{mg_m(k)} |R_{m,3,2}(r\vee
k_*,k)|=O_\P (1).
\end{equation*}
Repeating the above arguments \textit{mutatis mutandis} for $R_{m,3,3}(r,k)$
then gives the claim.
\end{proof}

\begin{lemma}
\label{l:uncorrelated_maximal} Let $\overline h_{12}$ and $\overline h_{22}$
be as in \eqref{e:h11h12h22}. Then for any $y\geq k_*+2$, 
\begin{align}
\E  \max_{k_*<k\leq y} \max_{k_*\leq r<k} \left|\sum_{i=
1}^{m}\sum_{j=m+r+1}^{m+k} \overline h_{12}(\mathbf{X}_i,\mathbf{X}%
_j^*)\right|^2 &\leq Cm(y-k_*),  \label{e:crossterm_maximal2}
\\
{\mathsf{E}}\hspace{0.1mm} \max_{k_*<k\leq y} \max_{k_*\leq r < k}
\left|\sum_{m+r< i<j \leq m+k} \overline h_{22}(\mathbf{X}_i^*,\mathbf{X}%
_j^*)\right|^2&\leq C(y-k_*)^2.  \label{e:quadterm_maximal3} \\
{\mathsf{E}}\hspace{0.1mm} \max_{0\leq r< k_*} \max_{k_*<k\leq y}
\left|\sum_{i= m+r+1}^{m+k_*}\sum_{j=m+k_*+1}^{m+k}\overline h_{12}(\mathbf{X%
}_i,\mathbf{X}_j^*)\right|^2&\leq Ck_*(y-k_*)
\label{e:crossterm_maximal}
\end{align}
\end{lemma}

\begin{proof}
The bounds \eqref{e:crossterm_maximal2} and  \eqref{e:crossterm_maximal} follow from Lemma \ref{l:basic_cross_lemma},  and after relabeling indices, \eqref{e:quadterm_maximal3} follows from Lemma \ref{l:term-by-term_maximal_bounds}.
\end{proof}

\begin{lemma}
\label{e:q3_statiticterms_only_lemma} Suppose $k_* = c_*m$. With $R_{m,1,i}$,%
$R_{m,2}$ and $R_{m,3,i}$ as in \eqref{e:R_m11}, \eqref{e:R_m2}, and %
\eqref{e:R_m31}, respectively, let 
\begin{align}  \label{e:cq3}
\mathcal{q}_3(r,k) =(k-r)^2 \left(\frac{2\mathbf{1}_{\{r \leq
k_*\}}R_{m,1,1}(r)}{(k-r)m} - \frac{2R_{m,2}}{m(m-1)} - \frac{2 \mathbf{1}%
_{\{r \leq k_*\}}R_{m,3,1}(r) }{(k-r)(k-r-1)}\right).
\end{align}
Then, for any $T>0$, and $0<\delta<1$, 
\begin{align*}
& \max_{k_*< k \leq k_* + Tm^{1-\delta}}\max_{0\leq r<k}\left|\frac{q_3(r,k)%
}{mg_m(k)} - \frac{\mathcal{q}_3(r,k)}{mg_m(k)} \right| \\
&=2\max_{k_*< k \leq k_* + Tm^{1-\delta}}\max_{0\leq r<k}\frac{(k-r)^2}{m
g_m(k)}\left|\frac{R_{m,1,2}(r\vee k_*,k)}{(k-r)m}- \frac{R_{m,3,2}(r \vee
k_*,k) + \mathbf{1}_{\{r \leq k_*\}}R_{m,3,3}(r,k)}{(k-r)(k-r-1)}\right| \\
& = o_\P (1).
\end{align*}
\end{lemma}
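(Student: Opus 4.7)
The plan is to note that $q_3(r,k)-\mathcal{q}_3(r,k)$ contains \emph{exactly} the three remainder pieces not retained in $\mathcal{q}_3$: namely $R_{m,1,2}(r\vee k_*,k)$, $R_{m,3,2}(r\vee k_*,k)$, and $\mathbf{1}_{\{r\leq k_*\}}R_{m,3,3}(r,k)$ (the $R_{m,2}$ term cancels identically). Each of these is a sum involving only post-change observations $\mathbf{X}_{m+k_*+1}^*,\dots,\mathbf{X}_{m+k}^*$, and the key constraint $k\leq k_*+Tm^{1-\delta}$ forces the number of such observations to be $O(m^{1-\delta})$, much smaller than $m$. I would bound each of the three contributions separately using the maximal inequalities already assembled in Lemma~\ref{e:uncorrealted_maximal}, combined with the facts that $k_*=c_*m$ implies $g_m(k)\to g(c_*)>0$ uniformly on the $k$-range, and that $k-r\leq k\leq (c_*+o(1))m$.

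First, for the $R_{m,1,2}(r\vee k_*,k)$ contribution, the prefactor is $(k-r)^2/((k-r)m\cdot mg_m(k))=(k-r)/(m^2g_m(k))=O(m^{-1})$. Applying \eqref{e:crossterm_maximal2} with $y=k_*+Tm^{1-\delta}$ gives
\begin{equation*}
\max_{k_*<k\leq k_*+Tm^{1-\delta}}\max_{k_*\leq r<k}|R_{m,1,2}(r\vee k_*,k)|=O_{P}(m^{1-\delta/2}),
\end{equation*}
so this contribution is $O_P(m^{-\delta/2})=o_P(1)$. The case $r<k_*$ reduces to $r=k_*$ since $R_{m,1,2}(r\vee k_*,k)=R_{m,1,2}(k_*,k)$ there.

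Second, for the $R_{m,3,2}(r\vee k_*,k)$ contribution, note that the sum is empty (hence zero) when $k-r\leq 1$, while for $k-r\geq 2$ one has $(k-r)^2/\bigl((k-r)(k-r-1)\bigr)\leq 2$. Thus the normalized expression is bounded by $2|R_{m,3,2}|/(mg_m(k))\leq C|R_{m,3,2}|/m$. By \eqref{e:quadterm_maximal3},
\begin{equation*}
\max_{k_*<k\leq k_*+Tm^{1-\delta}}\max_{k_*\leq r<k}|R_{m,3,2}(r,k)|=O_{P}(m^{1-\delta}\log^2 m),
\end{equation*}
yielding $O_P(m^{-\delta}\log^2 m)=o_P(1)$.

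Third, for $\mathbf{1}_{\{r\leq k_*\}}R_{m,3,3}(r,k)$, when $r\leq k_*$ and $k>k_*$ one always has $k-r\geq 2$, so again $(k-r)^2/\bigl((k-r)(k-r-1)\bigr)\leq 2$, and the contribution is dominated by $C|R_{m,3,3}|/m$. By \eqref{e:crossterm_maximal} with $k_*=c_*m$ and $y-k_*=Tm^{1-\delta}$,
\begin{equation*}
\max_{0\leq r<k_*}\max_{k_*<k\leq k_*+Tm^{1-\delta}}|R_{m,3,3}(r,k)|=O_{P}(m^{1-\delta/2}\log^2 m),
\end{equation*}
so this contribution is $O_P(m^{-\delta/2}\log^2 m)=o_P(1)$, and combining the three bounds gives the statement. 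The main obstacle is not the size of any one term but the \emph{uniformity} in $(r,k)$ after the normalizations—but this is precisely what Lemma~\ref{e:uncorrealted_maximal} delivers (having already absorbed the dyadic-chaining and martingale-maximal arguments), so what remains here is routine bookkeeping of rates using $k_*\sim c_*m$ and $k-k_*=O(m^{1-\delta})$.
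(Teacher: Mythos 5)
Your proof is correct and takes essentially the same route as the paper's: you identify that $q_3-\mathcal{q}_3$ consists exactly of the three post-change pieces $R_{m,1,2}$, $R_{m,3,2}$, $R_{m,3,3}$ (the $R_{m,2}$ term cancelling), and you bound each using the maximal inequalities of Lemma~\ref{e:uncorrealted_maximal} together with the observation that $mg_m(k)\gtrsim m$ uniformly over $k_*<k\leq k_*+Tm^{1-\delta}$ when $k_*=c_*m$. The only cosmetic difference is that you express the bounds in $O_\P$ form while the paper works directly with $L^2$ moment bounds; the resulting rates ($m^{-\delta/2}$, $m^{-\delta}\log^2 m$, $m^{-\delta/2}\log^2 m$) agree.
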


\begin{proof}
We treat each of the terms $R_{m,1,2}(r,k)$, $R_{m,3,2}(r,k)$, and $%
R_{m,3,3}(r,k)$ separately. Since $g_m(k)\geq C(k/m)^{2}$ for all $k> k_*$,
using Lemma \ref{l:uncorrelated_maximal} we get%
\begin{align*}
&{\mathsf{E}}\hspace{0.1mm} \max_{k_*< k \leq k_* +
Tm^{1-\delta}}\max_{0\leq r<k} \left|\frac{(k-r)}{m^2g_m(k)}%
R_{m,1,2}(k_*\vee r,k) \right|^2 \\
&\quad\leq C k_*^{-2}{\mathsf{E}}\hspace{0.1mm}\max_{k_*< k \leq k_* +
Tm^{1-\delta}} \max_{k_*\leq r<k}|R_{m,1,2}(r,k)|^2 \\
&\quad = C m^{-\delta}.
\end{align*}
Similarly, again using Lemma \ref{l:uncorrelated_maximal}, 
\begin{align*}
&{\mathsf{E}}\hspace{0.1mm} \max_{k_*<k \leq k_* + Tm^{1-\delta}}
\max_{0\leq r<k}\left|\frac{1}{m g_m(k)}R_{m,3,2}(r \vee k_*,k) \right|^2 \\
&\quad\leq C m^{2}k_*^{-4}{\mathsf{E}}\hspace{0.1mm} \max_{k_*\leq k \leq
k_* + Tm^{1-\delta}} \max_{k_*\leq r < k} \left|R_{m,3,2}(r,k) \right|^2 \\
&\quad \leq C m^{-2\delta}%
\end{align*}
Again applying Lemma \ref{l:uncorrelated_maximal} we obtain %
\begin{align*}
{\mathsf{E}}\hspace{0.1mm} \max_{k_*< k \leq k_* + Tm^{1-\delta}}\max_{0\leq
r<k}\left|\frac{1}{m g_m(k)}R_{m,3,3}(r,k) \right|^2 \leq C
m^{-\delta}%
\end{align*}
\end{proof}

\bigskip

\subsection{Lemmas for Section \ref{s:refinements}}
We conclude this section with a set of lemmas which will be used for the
proofs of the results in Section \ref{s:refinements}. Throughout, we use the notation
$$
K_L(\mathbf x,\mathbf y)
=\sum_{\ell=L+1}^{\infty} \lambda_\ell\phi_\ell(\mathbf x)\phi_\ell(\mathbf y).$$
Let 
\begin{align*}
{{\mathfrak{R}}}^{(>L)}(k)=\frac{2}{k(m-k)}\sum_{i=1}^k\sum_{j=k+1}^m&K_L(\bX_i,\bX_j)-%
\frac{1}{k(k-1)}\sum_{1\leq i\neq j\leq
k} K_L(\bX_i,\bX_j) \\
&-\frac{1}{(m-k)(m-k-1)}\sum_{k+1\leq i\neq j\leq
m}K_L(\bX_i,\bX_j).
\end{align*}

\begin{lemma}
\label{prwe2} If Assumptions \ref{a:historical_stability}--\ref%
{a:assumption_on_h} hold, then we have 
\begin{equation*}
P\left\{ \max_{2\leq k\leq m-2}\frac{1}{{\mathfrak{q}}(k/m)}\frac{%
k^{2}(m-k)^{2}}{m^{3}}\left\vert {{\mathfrak{R}}}^{(>L)}(k)\right\vert >x\right\}
\leq \frac{c}{x^{2}}\sum_{\ell =L+1}^{\infty }\lambda _{\ell }^{2},
\end{equation*}%
for all $x>0$ and $L\geq 0$.
\end{lemma}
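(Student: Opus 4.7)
The plan is to control $\mathfrak{R}_K(k)$ term by term, using a dyadic decomposition of $\{2,\ldots,m-2\}$ combined with the second-moment maximal bounds supplied by Lemma \ref{prwe1}. Writing $\mathfrak{R}_K(k) = T_{1,K}(k) - T_{2,K}(k) - T_{3,K}(k)$ for the three summands, and noting the symmetry $k \leftrightarrow m-k$, it will suffice to bound the weighted maximum over $2\leq k\leq \lfloor m/2 \rfloor$, the other half being entirely analogous. Using $\mathfrak{q}(k/m)=(k(m-k)/m^2)^\zeta$ and the fact that $m-k \asymp m$ on the lower half, the weighted prefactor $k^2(m-k)^2/(m^3\mathfrak{q}(k/m))$ simplifies nicely in each term.

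Specifically, denoting the innermost double sums by $S_1(k), S_2(k), S_3(k)$ (matching the three summands in $\mathfrak{R}_K$ stripped of their normalizing constants), one checks:
\begin{align*}
\frac{k^2(m-k)^2}{m^3\mathfrak{q}(k/m)}|T_{1,K}(k)| &\leq C\,\frac{k^{1-\zeta}}{m^{2-\zeta}}|S_1(k)|,\\
\frac{k^2(m-k)^2}{m^3\mathfrak{q}(k/m)}|T_{2,K}(k)| &\leq C\,\frac{k^{-\zeta}}{m^{1-\zeta}}|S_2(k)|,\\
\frac{k^2(m-k)^2}{m^3\mathfrak{q}(k/m)}|T_{3,K}(k)| &\leq C\,\frac{k^{2-\zeta}}{m^{3-\zeta}}|S_3(k)|,
\end{align*}
uniformly for $2\leq k\leq m/2$. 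Partition this range into dyadic blocks $[2^q,2^{q+1})$, $q=1,\ldots,Q$ with $2^{Q+1}\leq m/2$. On each block the deterministic prefactor is controlled by its value at $k=2^{q+1}$, and an application of Chebyshev's inequality together with the three maximal bounds of Lemma \ref{prwe1} (taken with $n=2^{q+1}$, giving $\mathsf E\max_{k\leq n}S_1^2\leq c\,nm\,\Sigma_K$, $\mathsf E\max_{k\leq n}S_2^2\leq c\,n^2\Sigma_K$, $\mathsf E\max_{k\leq n}S_3^2\leq c\,m^2\Sigma_K$ with $\Sigma_K=\sum_{\ell\geq K}\lambda_\ell^2$) yields, for $j=1,2,3$, probabilistic bounds of the form $c\,2^{q\alpha_j}(x^2 m^{\alpha_j})^{-1}\Sigma_K$ with exponents $\alpha_1 = 3-2\zeta$, $\alpha_2 = 2(1-\zeta)$, $\alpha_3 = 2(2-\zeta)$.

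The key point where the hypothesis on the weight enters is that $\zeta<1$ forces all three exponents $\alpha_j$ to be strictly positive (the binding one being $\alpha_2=2(1-\zeta)$), so each geometric sum over $q\leq \lfloor \log_2(m/2)\rfloor$ is dominated by its final term and evaluates to $c/x^2\cdot\Sigma_K$. Assembling the three contributions and combining with the symmetric treatment of $k>m/2$ yields the claimed bound. The main (minor) obstacle is keeping the exponents straight across the three terms and verifying that the $(m-k)^{-\zeta}$ factors behave harmlessly when $\zeta$ is negative; otherwise the argument is a bookkeeping exercise layered on top of the maximal inequalities of Lemma \ref{prwe1}.
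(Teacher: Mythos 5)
Your proposal is correct and follows essentially the same route as the paper's proof: decompose $\mathfrak{R}_K(k)$ into the three constituent $U$-type sums, apply a dyadic-block maximal decomposition together with Chebyshev and the second-moment bounds of Lemma~\ref{prwe1} on each block, and sum the resulting geometric series. The only differences are cosmetic bookkeeping choices --- you reduce to $k\leq m/2$ at the outset and use base-$2$ blocks, whereas the paper drops the $(1-k/m)^{-\zeta}$ factor via $(m-k)/m\leq 1$, uses base-$e$ blocks, and recovers the full weight by an appeal to symmetry at the end.
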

\begin{proof}
We note 
\begin{align*}
\frac{k^2(m-k)^2}{m^3}\left|{\mathfrak{R}}^{(>L)}(k)\right|&\leq \frac{2k(m-k)}{m^3%
}\left|\sum_{i=1}^k\sum_{j=k+1}^mK_L(\bX_i,\bX_j)\right| \\
&+ \frac{2(m-k)^2}{m^3}\left|\sum_{1\leq i\neq j\leq
k}K_L(\bX_i,\bX_j)\right| + \frac{2k^2}{m^3}\left|\sum_{k+1\leq i\neq j\leq
m}K_L(\bX_i,\bX_j)\right|.
\end{align*}
When $2\leq k \leq m/2$, $\mathfrak{q}(k/m)\geq C (k/m)^\zeta$, and
$$
\frac{1}{{\mathfrak{q}}(k/m)}\frac{%
k(m-k)}{m^{3}}\leq C\left(\frac{m}{k}\right)^\zeta \frac{k(m-k)%
}{m^3} \leq C\frac{k^{1-\zeta}}{m^{2-\zeta}} \leq C m^{-1}.
$$
Lemma \ref{l:term-by-term_maximal_bounds} yields via Markov's inequality that 
\begin{align*}
P&\left\{ \max_{2\leq k \leq m/2} \left(\frac{m}{k}\right)^\zeta \frac{k(m-k)%
}{m^3}\left|
\sum_{i=1}^k\sum_{j=k+1}^mK_L(\mathbf{X}_{i},\mathbf{X}_{j}) \right|>x\right\} \\
&\leq P\left\{ \max_{2\leq k \leq m/2} \left|
\sum_{i=1}^k\sum_{j=k+1}^mK_L(\mathbf{X}_{i},\mathbf{X}_{j}) \right|>xm\right\} \\
&\leq \frac{c}{x^2}\sum_{\ell=L+1}^\infty\lambda_\ell^2.
\end{align*}
Similarly, since $\left(m/k\right)^\zeta k^2 %
m^{-3} \leq m^{-1} (k/m)^{2-\zeta}\leq m^{-1}$ on $2\leq k \leq m/2$, 
\begin{align*}
P&\left\{ \max_{2\leq k \leq m/2} \left(\frac{m}{k}\right)^\zeta \frac{k^2}{%
m^3}\left| \sum_{k+1\leq i\neq j\leq
m}K_L(\mathbf{X}_{i},\mathbf{X}_{j})\right|>x\right\} \\
&\leq \frac{c}{x^2}\sum_{\ell=L+1}^\infty\lambda_\ell^2.
\end{align*}
For the remaining term, using $\left(m/k\right)^\zeta (m-k)^2 %
m^{-3} \leq  k^{-\zeta}m^{\zeta-1}$ on $2\leq k \leq m/2,$
\begin{align*}
P&\left\{ \max_{2\leq k \leq m/2} \left(\frac{m}{k}\right)^\zeta \frac{%
(m-k)^2}{m^3}\left| \sum_{1\leq i\neq j\leq
k}K_L(\mathbf{X}_{i},\mathbf{X}_{j})\right|>x\right\} \\
&\leq P\left\{ \max_{2\leq k \leq m/2}  k^{-\zeta}\left| \sum_{1\leq i\neq
j\leq k }K_L(\mathbf{X}_{i},\mathbf{X}_{j}) \right|>xm^{1-\zeta}\right\} \\
&\leq \sum_{z=1}^{\lceil \log_2 (m/2)\rceil} P\left\{ \max_{2^{z-1}\leq k \leq 2^z} \left|
\sum_{1\leq i\neq j\leq k}K_L(\mathbf{X}_{i},\mathbf{X}_{j}) \right|>xm^{1-\zeta}2^{(z-1)\zeta}\right\} \\
&\leq \frac{c }{x^2}m^{2\zeta-2}\sum_{z=1}^{\lceil \log_2 (m/2)\rceil }2^{2z(1-\zeta)}\sum_{\ell=L+1}^\infty\lambda_\ell^2 \\
&\leq \frac{c}{x^2}\sum_{\ell=L+1}^\infty\lambda_\ell^2.
\end{align*}
By symmetry, the same argument works for the range $m/2<k\leq m-2$, which completes the proof.
\end{proof}

\bigskip
By Lemma \ref{prwe2}, it remains to study $\mathfrak R_L(k)=\mathfrak R(k)-\mathfrak R^{(>L)}(k)$.  With $\overline h_L$ as in \eqref{e:def_hL}, we have
$$
\mathfrak R_L(k)=U_k(\overline h_L;0,m-k),
$$
and, using the notation \eqref{e:def_Sell(m)}, by \eqref{e:Um(f,k)}, we find
\begin{align}
&\frac{(m-k)^2}{k}U_k(f_\ell;0,m-k)\\
&=
-\frac{1}{k}
\left(
S_\ell(m)-S_\ell(k)-\frac{m-k}{k}S_\ell(k)
\right)^2
+\frac{m(m-k)}{k^2}
+R_\ell(m-k,m-k,k) \notag \\
&=
-\frac{m^2}{k^3}
\left(
S_\ell(k)-\frac{k}{m}S_\ell(m)
\right)^2
+\frac{m(m-k)}{k^2}
+R_\ell(m-k,m-k,k),
\end{align}
with $R_\ell(\cdot,\cdot,\cdot)$ is defined in \eqref{e:def_R_ell(k,m)}.
Hence,
\begin{align}
\frac{k^2(m-k)^2}{m^3} \mathfrak R_L(k)
&=-\frac1m
\sum_{\ell=1}^L\lambda_\ell
\left[
\left(
S_\ell(k)-\frac{k}{m}S_\ell(m)
\right)^2
-\frac{k(m-k)}{m}
\right] \notag \\
&\quad
+\frac{k^3}{m^3}\sum_{\ell=1}^L\lambda_\ell R_\ell(m-k,m-k,k).
\end{align}

\begin{lemma}
\label{l:remainder_neglig_R_L}
Under $H_0$, for every fixed $L\geq 1$,
$$
\max_{2\leq k\leq m-2}
\frac{1}{\mathfrak q(k/m)}
\left(\frac{k}{m}\right)^3 \sum_{\ell=1}^L\left| \lambda_\ell R_\ell(m-k,m-k,k)\right| =o_\P(1), $$
where $R_\ell(k,w,m)$ is defined in \eqref{e:def_R_ell(k,m)}.
\end{lemma}
\begin{proof}
It is enough to prove the claim for a fixed $\ell$. Write
$w=m-k$. From \eqref{e:def_R_ell(k,m)}, we have
\begin{align}
&\left(\frac{k}{m}\right)^3 |R_\ell(w,w,k)|\label{e:R(k)_remainder_bound}\\
\notag&\quad\leq
\frac{w^2}{m^3(k-1)}S_\ell^2(k)
+\frac{k w^2}{m^3(k-1)}
\left|
\sum_{i=1}^k(\phi_\ell^2(\bX_i)-1)\right| +\frac{k w^2}{m^3(k-1)}\\
\notag&\quad+
\frac{k^2}{m^3(w-1)}\left(S_\ell(m)-S_\ell(k)\right)^2+ \frac{k^2w}{m^3(w-1)}\left|\sum_{i=k+1}^m(\phi_\ell^2(\bX_i)-1)\right|+\frac{k^2w}{m^3(w-1)} .
\end{align}
By symmetry in $k$ and $m-k$, we need only to establish the first three terms on the right-hand side of \eqref{e:R(k)_remainder_bound} are negligible uniformly in $2\leq k \leq m-2$ after
multiplication by $\mathfrak q(k/m)^{-1}$. For any fixed integer $a\geq 2$,
$$
\max_{2\leq k\leq a}
\frac{1}{\mathfrak q(k/m)}
\frac{k w^2}{m^3(k-1)}\left|\sum_{i=1}^k(\phi_\ell^2(\bX_i)-1)\right|
=O_\P(m^{\zeta-1})
=o_\P(1).
$$
On the other hand, for $a\leq k\leq m-2$,
$$
\frac{1}{\mathfrak q(k/m)}
\frac{k w^2}{m^3(k-1)}\left|\sum_{i=1}^k(\phi_\ell^2(\bX_i)-1)\right|
\leq
C\frac1k \left|\sum_{i=1}^k(\phi_\ell^2(\bX_i)-1)\right|.
$$
Hence, by the law of large numbers,
$$
\lim_{a\to\infty}\limsup_{m\to\infty}
\P\left\{\max_{a\leq k\leq m-2}
\frac{1}{\mathfrak q(k/m)}
\frac{k w^2}{m^3(k-1)}\left|\sum_{i=1}^k(\phi_\ell^2(\bX_i)-1)\right|>  x \right\}=0.
 $$
Hence the second term on the right-hand side of \eqref{e:R(k)_remainder_bound} is negligible.  We now turn to the first term. Write
$$
S_\ell^2(k) =k + \sum_{i=1}^k(\phi_\ell^2(\bX_i)-1) +\sum_{1\leq i\neq j\leq k}
\phi_\ell(\bX_i)\phi_\ell(\bX_j).
$$
After multiplying $\mathfrak q(k/m)^{-1}{w^2}m^{-3}(k-1)^{-1}$, the second term above is negligible by the preceding argument. For the third term, using Lemma \ref{l:term-by-term_maximal_bounds},
$$
\E \max_{1\leq k\leq n}\left|\sum_{1\leq i\neq j\leq k}
\phi_\ell(\bX_i)\phi_\ell(\bX_j)\right|^2\leq Cn^2.
$$
Therefore, for $k\leq m/2$, a dyadic argument gives, for every $x>0$,
\begin{align*}
\P\left\{\max_{2\leq k\leq m/2}
\frac{1}{\mathfrak q(k/m)}
\frac{w^2}{m^3(k-1)}\left|\sum_{1\leq i\neq j\leq k}
\phi_\ell(\bX_i)\phi_\ell(\bX_j)\right|>x
\right\}
&\leq
C \frac{m^{-2+2\zeta}}{x^2}
\sum_{r=1}^{\lceil \log_2(m)\rceil }2^{-2r\zeta}
=o(1).
\end{align*}
For $k>m/2$,
\begin{align*}
&\max_{m/2<k\leq m-2}
\frac{1}{\mathfrak q(k/m)}
\frac{w^2}{m^3(k-1)}\left|\sum_{1\leq i\neq j\leq k}
\phi_\ell(\bX_i)\phi_\ell(\bX_j)\right|\\
&\qquad\leq
\frac{C}{m^2}\max_{1\leq k\leq m}\left|\sum_{1\leq i\neq j\leq k}
\phi_\ell(\bX_i)\phi_\ell(\bX_j)\right|
=o_\P(1).
\end{align*}
Finally, %
$$
\max_{2\leq k\leq m-2}
\frac{1}{\mathfrak q(k/m)}
\frac{k w^2}{m^3(k-1)}
\leq
Cm^{\zeta-1}
=o(1).
$$
Combining the preceding bounds completes the proof.
\end{proof}

\begin{lemma}
\label{prwe4} If Assumptions \ref{a:historical_stability}--\ref%
{a:assumption_on_h} hold, then %
\begin{align*}
& \left\{ \frac{1}{{\mathfrak{q}}^{1/2}(t)}\frac{1}{m^{1/2}}\left( S_{\ell
}(\lfloor mt\rfloor )-\frac{\lfloor mt \rfloor }{m}S_{\ell }(m)\right),\quad 0\leq t\leq 1,~1\leq \ell \leq L\right\}
\\
& \hspace{2cm}\Rightarrow \left\{ \frac{1}{{\mathfrak{q}}^{1/2}(t)}B_{\ell
}(t),\quad 0\leq t\leq 1,~1\leq \ell \leq L\right\} ,\quad in\quad \mathbf{D}%
^{L}[0,1]
\end{align*}%
where $\{B_{1}(t),0\leq t\leq 1\},\ldots ,\{B_{L}(t),0\leq t\leq 1\}$ are
independent Brownian bridges.
\end{lemma}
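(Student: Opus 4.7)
The plan is to first establish the unweighted multivariate invariance principle and then upgrade it to the weighted setting via endpoint control.

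First I would observe that by the orthonormality relation $\mathsf{E}\phi_\ell(\mathbf{X})\phi_{\ell'}(\mathbf{X}) = \delta_{\ell\ell'}$ in \eqref{e:eigvector_orthog}, together with $\mathsf{E}\phi_\ell(\mathbf{X}) = 0$ for all $\ell$ with $\lambda_\ell \neq 0$ (see \eqref{e:eig_zero_mean}), the $K$-dimensional vectors $\bigl(\phi_1(\mathbf{X}_i),\ldots,\phi_K(\mathbf{X}_i)\bigr)^\top$ are i.i.d., mean zero, with identity covariance, and finite second moments under Assumption \ref{a:assumption_on_h}. The multivariate Donsker theorem then gives
\[
\Bigl\{m^{-1/2}S_\ell(\lfloor mt\rfloor),~0\leq t\leq 1,~1\leq \ell \leq K\Bigr\} \Rightarrow \{W_\ell(t),~0\leq t\leq 1,~1\leq \ell \leq K\} \quad\text{in } \mathbf{D}^K[0,1],
\]
with $W_1,\ldots,W_K$ independent standard Brownian motions. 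Subtracting $(\lfloor mt\rfloor/m)\,m^{-1/2}S_\ell(m)$, which is the coordinate-wise action of a continuous map $\mathbf{D}^K[0,1]\to\mathbf{D}^K[0,1]$, the continuous mapping theorem gives the unweighted convergence
\[
\Bigl\{m^{-1/2}\bigl(S_\ell(\lfloor mt\rfloor) - (\lfloor mt\rfloor/m) S_\ell(m)\bigr)\Bigr\}_{\ell=1}^K \Rightarrow \{B_\ell(t)\}_{\ell=1}^K,
\]
where $B_\ell(t) = W_\ell(t) - t W_\ell(1)$ are independent Brownian bridges.

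Second, I would upgrade this to weak convergence after dividing by $\mathfrak{q}^{1/2}(t) = (t(1-t))^{\zeta/2}$. For any fixed $\delta \in (0,1/2)$, the weight $1/\mathfrak{q}^{1/2}$ is continuous and bounded on $[\delta, 1-\delta]$, so a second application of the continuous mapping theorem yields the weighted convergence restricted to this interval. The substantive content is to control the weighted supremum near the endpoints. I would apply a Hájek-Rényi-type maximal inequality to the bridge-type partial sum $Y_\ell(k) := S_\ell(k) - (k/m) S_\ell(m)$, which has $\var Y_\ell(k) = k(m-k)/m$. Specifically, an argument analogous to those used in Lemmas \ref{l:maximal_ineq}--\ref{l:remainder_neglig_0}, combined with dyadic decomposition of $[0,\delta]$, gives
\[
\P\Bigl\{\sup_{0<t\leq \delta}\frac{|Y_\ell(\lfloor mt\rfloor)|}{\mathfrak{q}^{1/2}(t)\,m^{1/2}} > \eps\Bigr\} \leq C\eps^{-2}\int_0^\delta \frac{ds}{(s(1-s))^{\zeta}} \leq C_\eps \,\delta^{1-\zeta},
\]
which is $o(1)$ as $\delta \to 0$ because $\zeta < 1$; the symmetric bound holds near $t=1$, and the same tail estimate is satisfied by $B_\ell(t)/\mathfrak{q}^{1/2}(t)$ by standard computations on Brownian bridges using $\zeta<1$.

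Finally, combining the finite-dimensional-plus-compact-interval convergence from the first step with the uniform endpoint tightness from the second step via a standard Slutsky/truncation argument (as in \citealp{chgreg}) yields the stated joint weak convergence in $\mathbf{D}^K[0,1]$ equipped with the weighted sup topology.

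The main obstacle is the endpoint control: establishing the weighted maximal inequality for $Y_\ell$ uniformly in $m$, in a form sharp enough to vanish as $\delta\to 0$. The condition $\zeta < 1$ is precisely what makes $\int_0^\delta s^{-\zeta}ds$ finite and $O(\delta^{1-\zeta})$, which is the key quantitative input; any weaker weight decay at the endpoints would break the argument, which is why \eqref{weight} imposes this restriction.
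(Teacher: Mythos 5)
Your argument is correct and is, up to presentation, the standard proof of this result. The paper gives no proof of Lemma \ref{prwe4} at all — it simply cites Chapter 1 of Horv\'ath and Rice \citep{chgreg}, where exactly this weighted CUSUM invariance principle is established by the route you sketch (Donsker plus continuous mapping on $[\delta,1-\delta]$, plus endpoint control, plus a matching tail bound for the limiting bridges). Your dyadic/H\'ajek--R\'enyi argument for the endpoint tail bound is right: using $\var Y_\ell(k)=k(m-k)/m\le k$, the contribution of the dyadic block $\{k: e^{q-1}\le k<e^q\}$ is $O(\eps^{-2}e^{-q(1-\zeta)})$, and summing over $q\ge \log(1/\delta)$ gives $O_\eps(\delta^{1-\zeta})\to 0$ precisely because $\zeta<1$; the same restriction makes $|B_\ell(t)|/\mathfrak{q}^{1/2}(t)\to 0$ a.s.\ at the endpoints by the LIL for the bridge, so the limiting process is continuous on $[0,1]$ and the standard gluing argument applies. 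One point worth making explicit: the identity covariance and $\mathsf{E}\phi_\ell(\mathbf{X})=0$ used in your Donsker step hold only for $\ell$ with $\lambda_\ell\neq 0$ (see \eqref{e:eig_zero_mean}); for $\phi_\ell\equiv 1$ the CUSUM process is identically zero, which is degenerate but harmless since such indices carry zero eigenvalue and never enter the downstream computations in Lemmas \ref{prwe2}--\ref{prwe3}. Since the paper's ``proof'' is a bare citation, your self-contained sketch is strictly more informative; it fills in exactly the argument being referenced.
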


\begin{proof}
The result is taken from Chapter 1 of \cite%
{chgreg}.
\end{proof}

\clearpage

\section{Main proofs\label{proofs}}

\begin{proof}[Proof of Theorem \protect\ref{t:theorem_under_H0}]
Recall ${\mathcal{D}}_{m}^{(1)}(k)=m^{-1}k^{2}|U_{m}(\overline h;k)|$, (\ref{u_m_h,k}%
), and (\ref{u_m_h,k_truncated}). From \eqref{e:Um(f,k)}, we have 
\begin{align}
& m^{-1}k^{2}U_{m,L}(\overline h;k)  \notag \\
\ & =\sum_{\ell =1}^{L}m^{-1}k^{2}\lambda _{\ell }U_{m}(f_{\ell };0,k) 
\notag \\
& =-\sum_{\ell =1}^{L}\lambda _{\ell }\left( \frac{1}{m}\left( S_{\ell
}(k,m)-\frac{k}{m}S_{\ell }(m)\right) ^{2}-\frac{k(k+m)}{m^{2}}\right)
+\sum_{\ell =1}^{L}\lambda _{\ell }R_{\ell }(k,k,m).
\label{e:UmL_w_remainder}
\end{align}%
For each real number $t\geq 2/m$, let 
\begin{equation}
\mathcal{U}_{m}(t)=\frac{m^{-1}\lfloor mt\rfloor ^{2}U_{m}(\overline h;\lfloor
mt\rfloor )}{g_{m}\left( \lfloor mt\rfloor \right) },~~\mathcal{U}_{m,L}(t)=%
\frac{m^{-1}\lfloor mt\rfloor ^{2}U_{m,L}(\overline h;\lfloor mt\rfloor )}{g_{m}\left(
\lfloor mt\rfloor \right) }=\frac{\mathbb{U}_{m,L}(0,t)}{g_{m}\left( \lfloor
mt\rfloor \right) },  \label{e:Um(t)}
\end{equation}%
where $\mathbb{U}_{m,L}$ is given in \eqref{uml}, and set $\mathcal{U}%
_{m}(t)=\mathcal{U}_{m}(2/m),$ $\mathcal{U}_{m,L}(t)=\mathcal{U}_{m,L}(2/m)$
for $0\leq t<2/m$. We have 
\begin{equation*}
\sup_{t\geq 0}|\mathcal{U}_{m}(t)|=\sup_{k\geq 2}\frac{{\mathcal{D}}_{m}(k)}{%
g_{m}(k)}. 
\end{equation*}%
With ${\mathcal{V}}_{m,L}(t)=\mathbb{V}_{m,L}(0,t)/g(t)$, where $\mathbb{V}%
_{m,L}$ is defined in Lemma \ref{l:approxunderH0}, applying Lemma \ref%
{l:approxunderH0} we have, for any $0<\delta <T$,

\begin{equation}
\sup_{\delta \leq t\leq T}|\mathcal{U}_{m,L}(t)-{\mathcal{V}}%
_{m,L}(t)|=\sup_{\delta \leq t\leq T}\left\vert \frac{\mathbb{V}_{m,L}(0,t)}{%
g(t)}-\frac{\mathbb{U}_{m,L}(0,t)}{g_{m}(\lfloor mt\rfloor )}\right\vert =o_{%
\P }(1).  \label{e:sko_int}
\end{equation}

On the other hand, setting 
\begin{equation}
{\mathcal{V}}(t)=-\frac{1}{g(t)}\sum_{\ell =1}^{\infty }\lambda _{\ell }%
\left[ \left( W_{2,\ell }(t)-tW_{1,\ell }(1)\right) ^{2}-t(1+t)\right]
,\quad t>0,  \label{e:def_V}
\end{equation}%
and ${\mathcal{V}}(0)=0$, Lemma \ref{l:trunctail_of_limit} implies that ${\mathcal{V}}$ is well-defined, and for any fixed $m$, as $L\rightarrow
\infty $, 
\begin{equation}
{\mathcal{V}}_{m,L}\Rightarrow {\mathcal{V}}\quad \text{in}\quad \mathbf{D}%
[\delta ,T].  \label{e:VL_to_V}
\end{equation}%

Additionally, from Lemma \ref{l:remainder_neglig_0}, we have %
\begin{equation}
\lim_{L\rightarrow \infty }\sup_{m\geq 1}\P \left\{ \sup_{t\geq 0}|\mathcal{U%
}_{m}(t)-\mathcal{U}_{m,L}(t)|>x\right\} =0,  \label{e:remainder_neglig_0}
\end{equation}%
which combined with \eqref{e:sko_int} and \eqref{e:VL_to_V} implies (see
e.g. Theorem 3.2 in \citealp{billingsley:1968}) 
\begin{equation}
\mathcal{U}_{m}\Rightarrow {\mathcal{V}}\quad \text{in}\quad \mathbf{D}%
[\delta ,T].  \label{e:cU_m_to_cV}
\end{equation}%
On the other hand, Lemma \ref{l:trunctail_of_limit} implies 
\begin{equation}
\sup_{0\leq t\leq \delta }|{\mathcal{V}}(t)|\rightarrow 0,\quad \text{a.s.}%
\quad \delta \rightarrow 0.  \label{e:V_a_to_V}
\end{equation}%
Further, by Lemma \ref{l:remainder_neglig_0}, expression %
\eqref{e:uniform_limit_in_delta}, %
\begin{equation}
\lim_{\delta \rightarrow 0}\limsup_{m\rightarrow \infty }\P \left\{
\sup_{0\leq t\leq \delta }|\mathcal{U}_{m}(t)|>x\right\} =0.
\label{e:uniform_limit_in_a}
\end{equation}%
Combining \eqref{e:V_a_to_V}, and \eqref{e:uniform_limit_in_a} gives 
\begin{equation}
\mathcal U_{m}\Rightarrow {\mathcal{V}}\quad \text{in}\quad D[0,T],
\label{e:weak_to_cV_D0T}
\end{equation}%
for any $T>0$. 

With $a_m$ and $\mathcal H_{m}$ as in Lemma~\ref{l:tail_variable},  first note that, 
$$
\sup_{k\geq mT}|a_m(0,k)-1| 
\to 0,\qquad T\to\infty.
$$
 Hence Lemma~\ref{l:tail_variable} yields, for every
$x>0$,
\begin{equation}\label{e:Um_to_Hm}
\lim_{T\to\infty}\limsup_{m\to\infty}\P\left\{
\sup_{t\geq T} \left|\mathcal U_{m}(t)-\mathcal H_{m} \right|>x \right\}=0.
\end{equation}
Moreover, from Lemma \ref{l:trunctail_of_limit}, 
\begin{equation}\label{e:def_H_variable}
\mathcal V(t)\to \mathcal H=\sum_{\ell=1}^{\infty}\lambda_\ell\left(1-W_{1\ell}^2(1)\right),
\qquad\text{a.s.,}\quad t\to\infty.
\end{equation}
Thus
\begin{equation}\label{e:V_to_H}
\sup_{t\geq T}|\mathcal V(t)-\mathcal H|\to0 \qquad\text{a.s.},\qquad T\to\infty.
\end{equation}
So, with 
$$
\qquad
\mathcal H_L
=
\sum_{\ell=1}^{L}\lambda_\ell
\left(1-W_{1,\ell}^2(1)\right),
$$
we have $\mathcal H_{m,L}\Rightarrow \mathcal H_L$ for each fixed $L$. Moreover,
$$
\E|\mathcal H_m-\mathcal H_{m,L}|^2 \leq C\sum_{\ell=L+1}^{\infty}\lambda_\ell^2,\qquad 
\E|\mathcal H-\mathcal H_L|^2 \leq C\sum_{\ell=L+1}^{\infty}\lambda_\ell^2.
$$
Hence $\mathcal H_m\Rightarrow \mathcal H$ as $m\to\infty$. Combining this with \eqref{e:weak_to_cV_D0T}, and repeating the same
$L$-truncation argument, we have, for every fixed $T>0$,
$$
(\mathcal U_m,\mathcal H_m)\Rightarrow(\mathcal V,\mathcal H)\quad\text{in }D[0,T]\times\mathbb R .
$$
By the Dudley-Wichura-Skorokhod theorem, we may assume
that, for a sequence $(\mathcal V^{(m)},\mathcal H^{(m)})\stackrel d = (\mathcal V,\mathcal H)$,  
$$
\sup_{0\leq t\leq T}|\mathcal U_m(t)-\mathcal V^{(m)}(t)|=o_\P(1),
\qquad
|\mathcal H_m-\mathcal H^{(m)}|=o_\P(1), 
$$
after possibly extending the probability space.

Moreover, since $\mathcal H=\lim_{t\to\infty}\mathcal V(t)$ a.s., we may take
$\mathcal H^{(m)}=\lim_{t\to\infty}\mathcal V^{(m)}(t)$. Hence, from the decomposition %
\begin{align*}
\left|\sup_{t\geq0}|\mathcal U_m(t)|-\sup_{t\geq0}|\mathcal V^{(m)}(t)|
\right|
&\leq\sup_{0\leq t\leq T}|\mathcal U_m(t)-\mathcal V^{(m)}(t)| \\
&\quad+\sup_{t\geq T}|\mathcal U_m(t)-\mathcal H_m| \\
&\quad+|\mathcal H_m-\mathcal H^{(m)}| \\
&\quad+\sup_{t\geq T}|\mathcal V^{(m)}(t)-\mathcal H^{(m)}|,
\end{align*}
we see that first and third terms converge to zero in probability for fixed $T$, and the second and fourth terms can be made arbitrarily small in probability by taking $T$ large enough.  Hence
\begin{equation}
\sup_{t\geq 0}|\mathcal{U}_{m}(t)|\Rightarrow \sup_{t\geq 0}|{\mathcal{V}}%
(t)|.  \label{e:weak_to_cV}
\end{equation}%
Now, checking covariance functions, one can easily verify that 
\begin{equation}
\left\{ \frac{W_{2,\ell }(t)-tW_{1,\ell }(1)}{1+t},~t\geq 0,~\ell \geq
1\right\} \overset{{\mathcal{D}}}{=}\left\{ W_{\ell }\left( \frac{t}{1+t}%
\right) ~t\geq 0,~\ell \geq 1\right\} ,  \label{e:checking_covariance}
\end{equation}%
where $\{W_{1}(t),t\geq 0\},\{W_{2}(t),t\geq 0\},\ldots $ are independent
Wiener processes. Thus, recalling \eqref{e:boundaryfxn}, we have 
\begin{align}
\sup_{t\geq 0}|{\mathcal{V}}(t)|& \overset{{\mathcal{D}}}{=}\sup_{t\geq
0}\left( \frac{t}{1+t}\right) ^{-\beta }\left\vert \sum_{\ell =1}^{\infty
}\lambda _{\ell }\left[ W_{\ell }^{2}\left( \frac{t}{1+t}\right) -\frac{t}{%
1+t}\right] \right\vert  \notag \\
& \overset{{\mathcal{D}}}{=}\sup_{0<u\leq 1}u^{-\beta }\left\vert \sum_{\ell
=1}^{\infty }\lambda _{\ell }\left[ W_{\ell }^{2}\left( u\right) -u\right]
\right\vert ,  \label{e:change_of_vari_to_Gamma(t)}
\end{align}%
yielding part \textit{(i)} of the theorem. Turning to part \textit{(ii)},
for simplicity write $M_{m}=M$. Since $M/m\rightarrow a_{0}$, and%
\begin{equation}
\sup_{2\leq k\leq M}\frac{{\mathcal{D}}_{m}(k)}{g_{m}(k)}=\sup_{0\leq t\leq
M/m}|\mathcal{U}_{m}(t)|,
\end{equation}%
the same arguments above yield $\sup_{0\leq t\leq M/m}|\mathcal{U}%
_{m}(t)|\Rightarrow \sup_{0\leq t\leq a_{0}}|{\mathcal{V}}(t)|$, and the
result follows from the change of variables in %
\eqref{e:change_of_vari_to_Gamma(t)}.

Turning now to part \textit{(iii)} of the theorem, for any $t\geq 2/M$,
define 
\begin{equation*}
\widetilde{\mathcal{U}}_{m}(t)=\frac{\lfloor Mt\rfloor ^{2}U_{m}(\overline h;\lfloor
Mt\rfloor )}{M(\lfloor Mt\rfloor /M)^{\beta }},\quad \widetilde{\mathcal{U}}%
_{m,L}(t)=\frac{\lfloor Mt\rfloor ^{2}U_{m,L}(\overline h;\lfloor Mt\rfloor )}{M(\lfloor
Mt\rfloor /M)^{\beta }}, 
\end{equation*}%
so that 
\begin{equation*}
\max_{2\leq k\leq M}\frac{{\mathcal{D}}_{m}^{(1)}(k)}{g_{m}(k)}%
=\sup_{2/M\leq t\leq 1}|\widetilde{\mathcal{U}}_{m}(t)|. 
\end{equation*}%
Also, for each $t>0$ let 
\begin{align*}
& \widetilde{\mathcal{U}}_{m,L}^{\circ }(t) \\
& =-\left( \frac{\lfloor Mt\rfloor }{M}\right) ^{-\beta }\sum_{\ell
=1}^{L}\lambda _{\ell }\left( \frac{1}{M}\left( S_{\ell }(\lfloor Mt\rfloor
,m)-\frac{\lfloor Mt\rfloor }{m}S_{\ell }(m)\right) ^{2}-\frac{\lfloor
Mt\rfloor (\lfloor Mt\rfloor +m)}{mM}\right) ,
\end{align*}%
and 
\begin{equation*}
\widetilde{{\mathcal{V}}}_{L}(t)=-t^{-\beta }\sum_{\ell =1}^{L}\lambda
_{\ell }\left[ W_{\ell }^{2}\left( t\right) -t\right] ,\quad \widetilde{{%
\mathcal{V}}}(t)=-t^{-\beta }\sum_{\ell =1}^{\infty}\lambda _{\ell }\left[
W_{\ell }^{2}\left( t\right) -t\right] . 
\end{equation*}%
Arguing as in the case of part \textit{(i)}, we need only establish the weak
convergence of 
\begin{equation}
\widetilde{\mathcal{U}}_{m,L}^{\circ }\Rightarrow \widetilde{{\mathcal{V}}}%
_{L},\quad \text{in}\quad \mathbf{D}[\delta ,1],  \label{e:needtoshow_iii}
\end{equation}%
for every fixed $L\geq 1$ and $0<\delta <1$. 
Observe that
\begin{align*}
&\widetilde{\mathcal U}_{m,L}^{\circ}(k/M)
+\left( \frac{k}{M}\right)^{-\beta}\sum_{\ell=1}^{L}\lambda_\ell\left(\frac{1}{M}S_\ell^2(k,m)-\frac{k}{M}\right) \\
&\quad = -\left( \frac{k}{M}\right)^{-\beta}\sum_{\ell=1}^{L}\lambda_\ell
\left[\frac{1}{M}\left(\left(S_{\ell }(k,m)-\frac{k}{m}S_{\ell }(m)\right)^2-S_{\ell }^2(k,m)\right)-\frac{k^2}{mM}\right].
\end{align*}

However, since $m^{-1}S_{\ell}(m)=O_{\P}(m^{-1/2})$, for every fixed
$0<\delta<1$ we have
\begin{align*}
& \max_{\delta M\leq k\leq M}
\left( \frac{k}{M}\right) ^{-\beta }
\left|
\sum_{\ell =1}^{L}\lambda _{\ell }
\left[
\frac{1}{M}
\left\{
\left(S_{\ell }(k,m)-\frac{k}{m}S_{\ell }(m)\right)^2
-
S_{\ell }^2(k,m)
\right\}
-\frac{k^2}{mM}
\right]
\right| \\
&\qquad =o_{\P}(1).
\end{align*}
Finally, we have
\begin{align*}
M^{-1/2}\left( S_{1}(\lfloor Mt\rfloor ,m),\ldots ,S_{L}(\lfloor Mt\rfloor
,m)\right)
&\overset{{\mathcal D}}{=}
M^{-1/2}\left( S_{1}(\lfloor Mt\rfloor ,0),\ldots ,S_{L}(\lfloor Mt\rfloor ,0)\right) \\
&\Rightarrow \left( W_{1}(t),\ldots ,W_{L}(t)\right)
\quad \text{in}\quad \mathbf{D}[0,1],
\end{align*}
and the continuous mapping theorem yields
$$
-\left( \frac{\lfloor Mt\rfloor }{M}\right) ^{-\beta }
\sum_{\ell=1}^{L}\lambda _{\ell }
\left(
\frac{1}{M}S_{\ell }^2(\lfloor Mt\rfloor,m)
-
\frac{\lfloor Mt\rfloor}{M}
\right)
\Rightarrow
\widetilde{{\mathcal V}}_{L}(t)
\quad \text{in}\quad \mathbf{D}[\delta ,1],
$$
giving \eqref{e:needtoshow_iii}. The remainder of the proof is the same as
in case (i).
\end{proof}

\begin{proof}[Proof of Theorem \protect\ref{t:page_under_H0}]
The proof is largely the same as Theorem \ref{t:theorem_under_H0}, though we
provide details where there are important differences. %
Let %
\begin{equation*}
\mathbb{U}_{m}(s,t)=m^{-1}\big((\lfloor mt\rfloor -\lfloor ms\rfloor )\vee 2%
\big)^{2}U_{m}\big(\overline h;\lfloor ms\rfloor ,\lfloor mt\rfloor \big),\quad 0\leq
s\leq t, 
\end{equation*}%
and let $\mathbb{U}_{m,L}(s,t)$ be as in \eqref{uml}. For any $0\leq s\leq t$%
, let 
\begin{equation}
\mathcal{U}_{m}(s,t)=\frac{\mathbb{U}_{m}(s,t)}{g_{m}(\lfloor mt\rfloor \vee
2)},\quad \mathcal{U}_{m,L}(s,t)=\frac{\mathbb{U}_{m,L}(s,t)}{g_{m}(\lfloor
mt\rfloor \vee 2)}, \notag
\end{equation}%
and for any real-valued function $\{u(s,t),s,t\geq 0\}$, write 
\begin{equation}
\Psi u(t)=\sup_{0\leq s\leq t}|u(s,t)|,\quad t>0.  \label{e:def_Psi}
\end{equation}%
With $\mathbb{V}_{m,L}$ as defined in Lemma \ref{l:approxunderH0}, set ${%
\mathcal{V}}_{m,L}(s,t)=\mathbb{V}_{m,L}(s,t)/g(t)$. Lemma \ref%
{l:approxunderH0} gives, for any $0<\delta <T$, 
\begin{equation*}
\sup_{\delta \leq t\leq T}|\Psi \mathcal{U}_{m,L}(t)-\Psi \mathcal{V}%
_{m,L}(t)|\leq \sup_{s,t\in I_{\delta ,T}}|\mathcal{U}_{m,L}(s,t)-{\mathcal{V%
}}_{m,L}(s,t)|=o_{\P }(1). 
\end{equation*}%
We again have from Lemma \ref{l:remainder_neglig_0} 
\begin{align}
& \limsup_{L\rightarrow \infty }\sup_{m\geq 1}\P \left\{ \sup_{\delta \leq
t<\infty }|\Psi \mathcal{U}_{m}(t)-\Psi \mathcal{U}_{m,L}(t)|>x\right\}
\notag\\
& \qquad \leq \limsup_{L\rightarrow \infty }\sup_{m\geq 1}\P \left\{
\sup_{0\leq s\leq t<\infty }|\mathcal{U}_{m}(s,t)-\mathcal{U}%
_{m,L}(s,t)|>x\right\} .
\end{align}%
With ${\mathcal{V}}(s,t)=\mathbb{V}(s,t)/g(t)$, Lemma \ref%
{l:trunctail_of_limit} shows ${\mathcal{V}}$ admits a version ${\mathcal{V}}%
\in \mathbf{C}([0,T]^2)$; 
hence for any fixed $m$, and any $T>0$, 
\begin{equation*}
\{\Psi {\mathcal{V}}_{m,L}(t),t\geq 0\}\Rightarrow \{\Psi {\mathcal{V}}%
(t),t\geq 0\}\qquad \text{in}\quad \mathbf{C}[0,T],\quad L\rightarrow \infty
, 
\end{equation*}%
which combined with \eqref{e:sko_int} and \eqref{e:VL_to_V} implies%
\begin{equation}
\{\Psi \mathcal{U}_{m}(t),~t\geq 0\}\Rightarrow \{\Psi {\mathcal{V}}%
(t),~t\geq 0\}\quad \text{in}\quad \mathbf{D}[\delta ,T].
\label{e:cU_m_to_cV_page}
\end{equation}%
On the other hand, Lemma \ref{l:trunctail_of_limit} implies 
\begin{equation}
\sup_{0\leq t\leq \delta }\Psi {\mathcal{V}}(t)\rightarrow 0,\quad \text{a.s.
}\quad \delta \rightarrow 0.  \label{e:V_a_to_V_page}
\end{equation}%
Further, by Lemma \ref{l:remainder_neglig_0}, expression %
\eqref{e:uniform_limit_in_delta}, %
\begin{equation}
\lim_{\delta \rightarrow 0}\limsup_{m\rightarrow \infty }\P \left\{
\sup_{0\leq t\leq \delta }\Psi \mathcal{U}_{m}(t)>x\right\} =0.
\label{e:uniform_limit_in_a_page}
\end{equation}%
Combining \eqref{e:cU_m_to_cV_page}, \eqref{e:V_a_to_V_page}, and %
\eqref{e:uniform_limit_in_a_page} gives, for every $T>0$ 
\begin{equation}
\{\Psi \mathcal{U}_{m}(t),~t\geq 0\}\Rightarrow \{\Psi {\mathcal{V}}%
(t),~t\geq 0\}\quad \text{in}\quad \mathbf{D}[0,T].  \label{e:PsiUm_to_PsiV}
\end{equation}%

Now, recall $\mathcal H_m$ from Lemma~\ref{l:tail_variable} and
$\mathcal H$ from the proof of Theorem~\ref{t:theorem_under_H0}. As before,
$\mathcal H_m\Rightarrow \mathcal H$, and the convergence in
\eqref{e:PsiUm_to_PsiV} holds jointly with $\mathcal H_m\Rightarrow\mathcal H$.

Moreover, Lemma~\ref{l:tail_variable} gives, for every $x>0$,
$$
\lim_{T\to\infty}\limsup_{m\to\infty}
\P\left\{\sup_{t\geq T}\left|\Psi\mathcal U_m(t)-|\mathcal H_m|\right|>x\right\}=0.
$$
Indeed, for $t=k/m$,
$$
\Psi\mathcal U_m(t)
=\max_{0\leq r\leq k-2}\left|\mathcal U_m(r/m,k/m)\right|,
$$
and
$$
\max_{0\leq r\leq k-2}a_m(r,k)=a_m(0,k),\qquad\sup_{k\geq mT}|a_m(0,k)-1|\to0,
\quad T\to\infty.
$$
Also, since $\Psi\mathcal V(t)\to |\mathcal H|$ a.s. as $t\to\infty$,
$$
\sup_{t\geq T}\left|\Psi\mathcal V(t)-|\mathcal H|\right|
\to0
\qquad\text{a.s.},\qquad T\to\infty.
$$
Then, appealing again to the Dudley-Wichura-Skorokhod theorem, for each fixed $T>0$ we may assume
that, for a sequence $(\Psi \mathcal V^{(m)},\mathcal H^{(m)})\stackrel d = (\Psi\mathcal V,\mathcal H)$, 
$$
\sup_{0\leq t\leq T}
|\Psi\mathcal U_m(t)-\Psi\mathcal V^{(m)}(t)|=o_\P(1),
\qquad
|\mathcal H_m-\mathcal H^{(m)}|=o_\P(1).$$
Therefore,

\begin{align*}
\left|
\sup_{t\geq0}\Psi\mathcal U_m(t)-\sup_{t\geq0}\Psi\mathcal V^{(m)}(t)
\right|&\leq
\sup_{0\leq t\leq T}
|\Psi\mathcal U_m(t)-\Psi\mathcal V^{(m)}(t)| \\
&\quad+
\sup_{t\geq T}
\left|\Psi\mathcal U_m(t)-|\mathcal H_m|\right| \\
&\quad+|\mathcal H_m-\mathcal H^{(m)}| \\
&\quad+
\sup_{t\geq T}
\left|\Psi\mathcal V^{(m)}(t)-|\mathcal H^{(m)}|
\right|.
\end{align*}
The first and third terms tend to zero in probability for any fixed $T$, while the second and
fourth terms are made arbitrarily small for $T$ large enough.  Hence
$$
\sup_{t\geq0}\Psi\mathcal U_m(t)\Rightarrow\sup_{t\geq0}\Psi\mathcal V(t).
$$

From \eqref{e:checking_covariance}, writing 
\begin{equation*}
H(s,t)=\sum_{\ell =1}^{\infty }\lambda _{\ell }\bigg[\left( W_{\ell }\left( 
\frac{t}{1+t}\right) -\frac{1+s}{1+t}W_{\ell }\left( \frac{s}{1+s}\right)
\right) ^{2}-\frac{(t-s)(1+t-s)}{(1+t)^{2}}\bigg], 
\end{equation*}%
we have 
\begin{align}
\sup_{t\geq 0}|\Psi {\mathcal{V}}(t)|& \overset{{\mathcal{D}}}{=}\sup_{0\leq
s\leq t<\infty }\left( \frac{t}{1+t}\right) ^{-\beta }\left\vert
H(s,t)\right\vert  \notag \\
& =\sup_{0<u\leq v<1}v^{-\beta }\left\vert H\left( \frac{u}{1-u},\frac{v}{1-v%
}\right) \right\vert  \notag \\
& =\sup_{0<u\leq v<1}v^{-\beta }\bigg|\sum_{\ell =1}^{\infty }\lambda _{\ell
}\bigg[\left( W_{\ell }\left( v\right) -\frac{1-v}{1-u}W_{\ell }\left(
u\right) \right) ^{2}  \notag \\
& \qquad \qquad \qquad \qquad \qquad \qquad -\left( v-u\left( \frac{1-v}{1-u}%
\right) \right) \left( 1-u\left( \frac{1-v}{1-u}\right) \right) \bigg].
\label{e:change_of_vari_to_Gamma(t)_page}
\end{align}%
The proof of parts \textit{(ii)} and \textit{(iii)} are similar to the
proofs of Theorem \ref{t:theorem_under_H0}\textit{(ii)}-\textit{(iii)} and
thus omitted.
\end{proof}

\begin{proof}[Proof of Theorem \protect\ref{alt:consistency}]
Set
$$d_m=|\mathfrak D_h(F,G)|.$$
By the definition of $\nu_1$ and $\nu_2$, $|\nu_1-\nu_2|=\theta d_m$, and  Assumptions \ref{a:basic_HA} and \ref{a:2ndHa_cond}  give
$$
m\theta^2 d_m\to\infty,\qquad 
\sigma+\sigma_*=O(d_m^{1/2}).
$$
Now, first suppose $k_*=O(m)$, and write
$$
y=y_m=2k_*\vee m.
$$
Then $y=O(m)$, $y>k_*$, and $y-k_*\asymp m$. By Lemma
\ref{l:replace_with_q1_q2}, applied with $r=0$,
\begin{equation}
\left|
\frac{y^{2}U_m(\overline h;0,y)}{m g_m(y)}-\frac{\mathcal q_1(0,y)+\mathcal q_2(0,y)+q_{3}(0,y)}{m g_m(y)}
\right|
=O_{\P}(1).
\label{e:application_of_replaceq1q2}
\end{equation}
Also, by the definition of $\mathcal q_1$,
\begin{equation}
\frac{|\mathcal q_1(0,y)|}{m g_m(y)}
\geq
C m\theta|\nu_1-\nu_2|=C m\theta^2d_m .
\label{q1_easybound}
\end{equation}
Next we have
\begin{align}
\frac{|\mathcal q_2(0,y)|}{m g_m(y)}
&\leq C\theta \left(
\left|\sum_{i=1}^{m}z_i\right|+\left|\sum_{i=m+1}^{m+k_*}z_i\right|+\left|\sum_{i=m+k_*+1}^{m+y}z_i^*\right|
\right) \notag\\
\notag&=O_{\P}\left(\theta m^{1/2}(\sigma+\sigma_*)\right)\\
&=O_{\P}(\theta m^{1/2}d_m^{1/2})
=o_{\P}(m\theta^2d_m),
\label{q2_easybound}
\end{align}
where the last equality follows from $m\theta^2d_m\to\infty$. Finally, Lemma
\ref{l:size_of_q3} gives
\begin{equation}
\frac{|q_{3}(0,y)|}{m g_m(y)}=O_{\P}(1)=o_{\P}(m\theta^2d_m).
\label{q3_easybound}
\end{equation}
Combining \eqref{e:application_of_replaceq1q2}--\eqref{q3_easybound}, we obtain
$$
\frac{y^2|U_m(\overline h;0,y)|}{m g_m(y)}
\geq
C m\theta^2d_m(1+o_{\P}(1))
\overset{\P}{\longrightarrow}\infty .
$$
Therefore
$$
\frac{\mathcal D_m^{(2)}(y)}{g_m(y)}\geq \frac{\mathcal D_m^{(1)}(y)}{g_m(y)}=\frac{y^2|U_m(\overline h;0,y)|}{m g_m(y)}
\overset{\P}{\longrightarrow}\infty,
$$
Thus $\P(\tau_m<\infty)\to1$ for both monitoring schemes when $k_*=O(m)$. Now suppose that $m=o(k_*)$, and set
$
y=2k_*.$
Then Lemma \ref{l:replace_with_q1_q2}, applied with $r=0$, again gives
\eqref{e:application_of_replaceq1q2}. Since $C(k_*/m)^2\leq g_m(2k_*)\leq C'(k_*/m)^2$, the
same calculation as above gives \eqref{q1_easybound}. Moreover, we again find
\begin{align}
\frac{|\mathcal q_2(0,2k_*)|}{m g_m(2k_*)}
&\leq
C\theta\left(
\left|\sum_{i=1}^{m}z_i\right|
+
\frac{m}{k_*}\left|\sum_{i=m+1}^{m+k_*}z_i\right|
+
\frac{m}{k_*}\left|\sum_{i=m+k_*+1}^{m+2k_*}z_i^*\right|
\right) \notag\\
&=
O_{\P}\left(\theta m^{1/2}(\sigma+\sigma_*)\right)=o_{\P}(m\theta^2d_m),
\label{q2_easybound_late}
\end{align} and Lemma \ref{l:size_of_q3} again gives \eqref{q3_easybound}. 
Consequently,
$\mathcal D_m^{(2)}(2k_*)/g_m(2k_*)\geq \mathcal D_m^{(1)}(2k_*)/g_m(2k_*)
\overset{\P}{\longrightarrow}\infty,$
Hence $\P(\tau_m<\infty)\to1$ also when $m=o(k_*)$.
\end{proof}

\begin{proof}[Proof of Theorem \protect\ref{alt:nonpage}]
We begin with part \textit{(i)}. We first proceed to find a sequence $%
y_{m}\rightarrow \infty $ for which $\P (\kappa _{m}\leq y_{m})$ has a
nontrivial limit. %
Set 
\begin{equation}
\mathcal{q}_{2,1}(r,k)=2\theta (k-(k_{* }\vee r))\sum_{i=m+k_{*
}+1}^{m+k}z_{i}^{* },  \label{e:cq_21}
\end{equation}%
If we choose $y_{m}$ in such a way that 
\begin{equation}
y_{m}\rightarrow \infty ,\qquad m^{-1}y_{m}\rightarrow 0
\label{e:ym_conditions}
\end{equation}%
then for $\mathcal{q}_{1}(r,k)$ as in \eqref{e:cq1_cq2}, %
\begin{equation}
a_{m}=\frac{|\mathcal{q}_{1}(0,y_{m})|}{mg_{m}(y_{m})}=\theta |\nu _{1}-\nu
_{2}|m\left( \frac{y_{m}/m}{1+y_{m}/m}\right) ^{2-\beta }\left(1-\frac{k_*}{y_m}\right)^2\sim \theta |\nu
_{1}-\nu _{2}|m^{\beta -1}y_{m}^{2-\beta }.  \label{a_mq}
\end{equation}
Under \eqref{e:ym_conditions}, we also have 
\begin{align}
\frac{\mathcal{q}_{2,1}(0,y_m)}{mg_{m}(y_m)}& =\mathcal{q}_{2,1}(0,y_m)m^{\beta
-1}y_{m}^{-\beta }(1+y_{m}/m)^{\beta -2}  \notag \\
& =2\theta m^{\beta -1}y_{m}^{1-\beta }\left( \sum_{i=m+k_{*
}+1}^{m+y_{m}}z_{i}^{* }\right) \frac{(1-k_{* }/y_{m})}{%
(1+y_{m}/m)^{2-\beta }}  \notag \\
& =b_{m}\left( \frac{1}{\sigma _{* }y_{m}^{1/2}}\sum_{i=m+k_{*
}+1}^{m+y_{m}}z_{i}^{* }\right) \frac{(1-k_{* }/y_{m})}{%
(1+y_{m}/m)^{2-\beta }},  \label{e:form_of_q2}
\end{align}%
with 
\begin{equation}
b_{m}=b_{m}(y_{m})=2\sigma _{* }\theta m^{\beta -1}y_{m}^{{3/2}-\beta }.
\label{e:bm_case1}
\end{equation}%
With $\rho =(1-\beta )/(2-\beta )$, we may pick $y_{m}$ satisfying %
\eqref{e:ym_conditions} as a solution to 
\begin{equation*}
y_{m}=k_{* }+w_{1}m^{\rho }(1+w_{2}b_{m}(y_{m})), 
\end{equation*}%
where $w_{1}^{2-\beta }=\mathcal{c}(\theta |\nu _{1}-\nu _{2}|)^{-1}$, and $%
w_{2}$ is a constant to be later specified such that 
\begin{equation}
a_{m}\rightarrow \mathcal{c},\quad b_{m}^{-1}(\mathcal{c}-a_{m})\rightarrow
-x.  \label{e:ym_conditions2}
\end{equation}%
Indeed, since $\theta |\nu _{1}-\nu _{2}|m\rightarrow \infty $ under Assumption \ref{a:basic_HA} and $(\mathcal{c}\theta |\nu _{1}-\nu
_{2}|)^{1/2}\sim C\theta \sigma _{* }$ under Assumption \ref{a:2ndHa_cond}, 
\begin{align*}
b_{m}(2w_{1}m^{\rho })& =C(\theta |\nu _{1}-\nu _{2}|)^{-(\rho
+1)/2}m^{-\rho /2}\sigma _{* }\theta \\
& =C(m\theta |\nu _{1}-\nu _{2}|)^{-\rho /2}\sigma _{* }\theta /(\theta
|\nu _{1}-\nu _{2}|)^{1/2}=o(1).
\end{align*}%
Thus, the function $\varphi (y)=y-k_{* }-w_{1}m^{\rho }(1+w_{2}b_{m}(y))$
is easily seen to have a root in the region $(k_*,k_*+2w_{1}m^{\rho })$ for all
large $m$ and any fixed $w_{2}$, which satisfies $y_{m}\sim w_{1}m^{\rho }$
and in particular satisfies \eqref{e:ym_conditions}. 
From \eqref{e:ym_conditions2} we obtain 
\begin{align}
\P \left\{ \kappa _{m}<y_{m}\right\} & =\P \left\{ \max_{k_{* }<k\leq
y_{m}}\frac{k^{2}|U_{m}(h;0,k)|}{mg_{m}(k)}>\mathcal{c}\right\}  \notag \\
& =\P \left\{ b_{m}^{-1}\left( \max_{k_{* }<k\leq y_{m}}\frac{%
k^{2}|U_{m}(h;0,k)|}{mg_{m}(k)}-a_{m}\right) >-x+o(1)\right\} .
\label{e:normal_prelimit}
\end{align}%
Recall the decomposition \eqref{e:U_as_q}. Applying Lemma \ref%
{l:replace_with_q1_q2}, we have 
\begin{align}
b_{m}^{-1}\max_{k_{* }<k\leq y_{m}}\frac{\max_{0\leq b<k}|q_{1}(0,k)-%
\mathcal{q}_{1}(0,k)|}{mg_{m}(k)}& \leq Cb_{m}^{-1}\theta |\nu _{1}-\nu
_{2}|(k_{* }/m)^{1-\beta }  \notag \\
& =C\sigma _{* }^{-1}|\nu _{1}-\nu _{2}|(k_{* }/y_{m})^{(1-\beta
)}y_{m}^{-1/2}  \notag \\
& =o(1),  \label{e:q1_swap}
\end{align}%
and for all small $\delta >0$, 
\begin{align}
b_{m}^{-1}\max_{k_{* }<k\leq y_{m}}\frac{\max_{0\leq b<k}|q_{2}(0,k)-%
\mathcal{q}_{2}(0,k)|}{mg_{m}(k)}& \leq Cb_{m}^{-1}\theta k_{*
}^{-1}(k_{* }/m)^{1-\beta }O_{\P }(y_{m}^{1/2+\delta })  \notag \\
& =O_{\P }\big(\theta k_{* }^{-1}(k_{* }/y_{m})^{1-\beta
}y_{m}^{\delta }\big)=o_{\P }(1).  \label{e:q2_swap}
\end{align}%
Also, 
\begin{align}
& b_{m}^{-1}\max_{k_{* }<k\leq y_{m}}\frac{\max_{0\leq b<k}|\mathcal{q}%
_{2,1}(0,k)-\mathcal{q}_{2}(0,k)|}{mg_{m}(k)} \\
& \qquad \leq Cb_{m}^{-1}\max_{k_{* }<k\leq y_{m}}\theta (k/m)^{1-\beta
}\left\vert -\frac{k}{m}\sum_{i=1}^{m}z_{i}+\sum_{i=m+1}^{m+k_{*
}}z_{i}\right\vert  \notag \\
& \leq C\theta b_{m}^{-1}y_{m}^{1-\beta }m^{\beta -1}\sigma \left( O_{\P %
}(y_{m}m^{-1/2})+O_{\P }(m^{1/2})\right)  \notag \\
& \leq Cy_{m}^{-1/2}\left( O_{\P }(y_{m}m^{-1/2})+O_{\P }(1)\right) =o_{\P %
}(1),  \notag
\end{align}%
and from Lemma \ref{l:size_of_q3}, 
\begin{equation}
b_{m}^{-1}\max_{k_{* }<k\leq y_{m}}\frac{|q_{3}(0,k)|}{mg_{m}(k)}\leq
Cb_{m}^{-1}(y_{m}/m)^{(1-\beta )}=O_{\P }\left( \frac{1}{%
\sigma _*\theta y^{1/2}}\right) =o_{\P }(1),  \label{e:q21_swap}
\end{equation}
where we used Assumption \ref{a:2ndHa_cond} to conclude $\sigma
_{* }\theta y^{1/2}\sim C (m \theta |\nu_1-\nu_2|)^{\rho/2}\rightarrow \infty $.
From the bounds \eqref{e:q1_swap}--\eqref{e:q21_swap}, in view of %
\eqref{e:normal_prelimit}, it suffices to show 
\begin{equation}
b_{m}^{-1}\left( \max_{k_{* }<k\leq y_{m}}\frac{|\mathcal{q}_{1}(0,k)+%
\mathcal{q}_{2,1}(0,k)|}{mg_{m}(k)}-a_{m}\right) \Rightarrow \mathcal{N}%
(0,1).  \label{e:q1+q21_to_normal}
\end{equation}%
So, note (c.f. \eqref{a_mq}) ${|\mathcal{q}_{1}(0,k)|}/{mg_{m}(k)}$ is
increasing in $k$. Hence, for any $0\leq \delta <1$, 
\begin{equation}
\max_{ k_*\leq k\leq y_m(1-\delta) }\frac{|\mathcal{q}_1(0,k)|}{mg_m(k)}%
=\theta|\nu_1-\nu_2|m^{\beta-1}\left[y_m(1-\delta)\right]^{2-\beta}%
\left(1+o(1)\right),  \label{a73-1}
\end{equation}

\begin{equation}
\min_{ (1-\delta)y_m\leq k\leq y_m }\frac{|\mathcal{q}_1(0,k)|}{mg_m(k)}%
=\theta|\nu_1-\nu_2|m^{\beta-1}\left[y_m(1-\delta)\right]^{2-\beta}%
\left(1+o(1)\right).  \label{a73-2}
\end{equation}
Also, from \eqref{e:form_of_q2}, for all $k_{* }/y_{m}<s<1$, 
\begin{equation}
b_{m}^{-1}\frac{\mathcal{q}_{2,1}(0,\lfloor y_{m}s\rfloor )}{mg_{m}(\lfloor
y_{m}s\rfloor )}=\left( \frac{\lfloor y_{m}s\rfloor }{y_{m}}\right)
^{1-\beta }\left( \frac{1}{y_{m}^{1/2}\sigma_*}\sum_{i=m+k_{* }+1}^{m+\lfloor
y_{m}s\rfloor }z_{i}^{* }\right) \frac{(1-k_{* }/\lfloor y_{m}s\rfloor
)}{(1+\lfloor y_{m}s\rfloor /m)^{2-\beta }},  \label{a74}
\end{equation}
the functional central limit theorem gives 
\begin{equation*}
b_{m}^{-1}\frac{\mathcal{q}_{2,1}\big(0,\lfloor y_{m}s\rfloor \vee (k_{*
}+1)\big)}{mg_{m}\big(\lfloor y_{m}s\rfloor \vee (k_{* }+1)\big)}%
\Rightarrow s^{1-\beta }W(s)\quad \text{in}\quad \mathbf{D}[0,1], 
\end{equation*}%
where $\{W(s),s\geq 0\}$ is a Wiener process. %
Now, from \eqref{a73-1} and \eqref{a73-2}, 
\begin{align}
& b_{m}^{-1}\left( \max_{k_{* }<k\leq y_{m}(1-\delta )}\frac{|\mathcal{q}%
_{1}(0,k)+\mathcal{q}_{2,1}(0,k)|}{mg_{m}(k)}-a_{m}\right)  \notag \\
& \leq O_{\P }(1)+(2\sigma _{* })^{-1}\left( [1-\delta ]^{2-\beta
}-1\right) |\nu _{1}-\nu _{2}|y_{m}^{1/2}(1+o(1))\overset{\P }{\rightarrow }%
-\infty .  \label{e:alt_to_minusinf}
\end{align}%
On the other hand, if we let 
\begin{equation*}
A_{m}=\left\{ \omega :b_{m}^{-1}\max_{(1-\delta )y\leq k\leq y}\frac{|%
\mathcal{q}_{2,1}(0,k)|}{mg_{m}(k)}<b_{m}^{-1}\min_{(1-\delta )y\leq k\leq y}%
\frac{|\mathcal{q}_{1}(0,k)|}{mg_{m}(k)}\right\} , 
\end{equation*}%
then \eqref{a73-1}, \eqref{a73-2} and \eqref{a74} give $\P %
(A_{m})\rightarrow 1$, and for each $\omega \in A_{m}$, 
\begin{equation}
\frac{|\mathcal{q}_{1}(0,k)+\mathcal{q}_{2,1}(0,k)|}{mg_{m}(k)}=\frac{|%
\mathcal{q}_{1}(0,k)|}{mg_{m}(k)}+\text{sgn}(\mathcal{q}_{1}(0,k))\frac{%
\mathcal{q}_{2,1}(0,k)}{mg_{m}(k)},\quad (1-\delta )y_{m}\leq k\leq y_{m}.
\label{e:on_the_set_Am}
\end{equation}%
Note $\text{sgn}(\mathcal{q}_{1}(r,k))=-\text{sgn}(\nu _{1}-\nu _{2})$. Thus,
if $(\nu _{1}-\nu _{2})<0$,%
\begin{align*}
& \P \left( A_{m}\cap \left\{ b_{m}^{-1}\left( \max_{y_{m}(1-\delta )\leq
k\leq y_{m}}\frac{|\mathcal{q}_{1}(0,k)+\mathcal{q}_{2,1}(k)|}{mg_{m}(k)}-%
\frac{|\mathcal{q}_{1}(y_{m})+\mathcal{q}_{2,1}(0,y_{m})|}{mg_{m}(y_{m})}%
\right) >x\right\} \right) \\
& =\P \left( A_{m}\cap \left\{ b_{m}^{-1}\left( \max_{y_{m}(1-\delta )\leq
k\leq y_{m}}\frac{\mathcal{q}_{1}(0,k)+\mathcal{q}_{2,1}(0,k)}{mg_{m}(k)}-%
\frac{\mathcal{q}_{1}(y_{m})+\mathcal{q}_{2,1}(0,y_{m})}{mg_{m}(y_{m})}%
\right) >x\right\} \right) \\
& \leq \P \left( b_{m}^{-1}\left( \max_{y_{m}(1-\delta )\leq k\leq y_{m}}%
\left[ \frac{\mathcal{q}_{1}(0,k)}{mg_{m}(k)}-\frac{\mathcal{q}_{1}(0,y_{m})%
}{mg_{m}(y_{m})}\right] +\max_{y_{m}(1-\delta )\leq k\leq y_{m}}\left[ \frac{%
\mathcal{q}_{2,1}(0,k)}{mg_{m}(k)}-\frac{\mathcal{q}_{2,1}(0,y_{m})}{%
mg_{m}(y_{m})}\right] \right) >x\right) \\
& =\P \left\{ b_{m}^{-1}\max_{y_{m}(1-\delta )\leq k\leq y_{m}}\left\vert 
\frac{\mathcal{q}_{2,1}(0,k)}{mg_{m}(k)}-\frac{\mathcal{q}_{2,1}(0,y_{m})}{%
mg_{m}(y_{m})}\right\vert >x/2\right\} \\
& \rightarrow \P \left\{ \sup_{(1-\delta )\leq s\leq 1}\left\vert s^{1-\beta
}W(s)-W(1)\right\vert >x/2\right\} ,
\end{align*}%
where on the third line we used increasingness of ${\mathcal{q}_{1}(0,k)}/{%
mg_{m}(k)} $ and on the last line we used \eqref{a74}. Analogous reasoning
holds in the case $\nu _{1}-\nu _{2}>0$. Thus, by continuity of $W$, 
\begin{equation}
\lim_{\delta \rightarrow 0}\limsup_{m\rightarrow \infty }\P \left\{
b_{m}^{-1}\left( \max_{y_{m}(1-\delta )\leq k\leq y_{m}}\frac{|\mathcal{q}%
_{1}(0,k)+\mathcal{q}_{2,1}(0,k)|}{mg_{m}(k)}-\frac{|\mathcal{q}_{1}(0,y)+%
\mathcal{q}_{2,1}(0,y)|}{mg_{m}(y)}\right) >x\right\} =0.
\label{e:bycontinuityofW}
\end{equation}%
Now, from \eqref{e:on_the_set_Am}, 
\begin{equation}
b_{m}^{-1}\left( \frac{|\mathcal{q}_{1}(0,y_{m})+\mathcal{q}_{2,1}(0,y_{m})|%
}{mg_{m}(y_{m})}-a_{m}\right) =-b_{m}^{-1}\text{sgn}(\nu _{1}-\nu _{2})\frac{%
\mathcal{q}_{2,1}(0,y_{m})}{mg_{m}(y_{m})}+o_{\P }(1)\Rightarrow W(1),
\label{e:q21_to_W(1)}
\end{equation}%
which, together with \eqref{e:alt_to_minusinf} and \eqref{e:bycontinuityofW}
yields the limit \eqref{e:q1+q21_to_normal}. %
From \eqref{e:normal_prelimit}, we then obtain 
\begin{equation}
\P \{\kappa _{m}<y_{m}\}\rightarrow 1-\Phi (-x)=\Phi (x),\quad m\rightarrow
\infty .  \label{e:tau_to_normal}
\end{equation}%
Now we choose $w_{2}$ so that \eqref{e:ym_conditions2} holds. Note $%
w_{1}^{2-\beta }=\mathcal{c}(\theta |\nu _{1}-\nu _{2}|)^{-1}$ clearly gives 
$a_{m}\rightarrow \mathcal{c}$. Moreover, 
\begin{align*}
\mathcal{c}-a_{m}& =\mathcal{c}-\theta |\nu _{1}-\nu _{2}|m^{\beta
-1}y_{m}^{2-\beta }(1+y_{m}/m)^{\beta -2}(1-k_{* }/y_{m})^{2} \\
& =\mathcal{c}-\mathcal{c}(1+w_{2}b_{m})^{2-\beta }(1+y_{m}/m)^{\beta
-2}(1-k_{* }/y_{m})^{\beta} \\
& =-\mathcal{c}(2-\beta )w_{2}b_{m}+o(b_{m}).
\end{align*}%
where we used that $b_{m}\gg y_{m}m^{-1}$ since (noting that $\theta \sigma
_{* }\sim C(\theta |\nu _{1}-\nu _{2}|)^{1/2}\sim Cw_{1}^{-(2-\beta )/2}$
under Assumption \ref{a:2ndHa_cond}), 
\begin{align*}
b_{m}\gg y_{m}m^{-1}& \iff \sigma _{* }\theta m^{-\beta }\left(
w_{1}m^{\rho }\right) ^{1/2-\beta }\rightarrow \infty \\
& \iff m^{\rho /2+\beta (1-\rho )}w_{1}^{-(1+\beta )/2}\rightarrow \infty ,
\\
& \iff \left( m\theta |\nu _{1}-\nu _{2}|\right) ^{(1+\beta )/(4-2\beta
)}\rightarrow \infty ,
\end{align*}%
which holds under Assumption \ref{a:basic_HA}, and also we used that $%
b_{m}\gg k_{* }y_{m}^{-1}$, which holds since 
\begin{align*}
b_{m}\gg k_{* }y_{m}^{-1}& \iff \sigma _{* }\theta m^{-(2-\beta )\rho
}\left( w_{1}m^{\rho }\right) ^{5/2-\beta }\rightarrow \infty \\
& \iff m^{\rho /2}w_{1}^{(3-\beta )/2}\rightarrow \infty ,
\end{align*}%
which always holds. So, choosing $w_{2}=(\mathcal{c}(2-\beta ))^{-1}x$, we
obtain the second statement in \eqref{e:ym_conditions2}, implying %
\eqref{e:tau_to_normal} holds for the sequence $y_{m}$. %
Now, since $y_{m}\sim w_{1}m^{\rho }$, we have 
\begin{align*}
y_{m}-k_{* }-w_{1}m^{\rho }& =w_{1}w_{2}m^{\rho }b_{m} \\
& \sim w_{1}w_{2}m^{\rho }m^{-\rho (2-\beta )}(w_{1}m^{\rho })^{{3/2}-\beta }
\\
& =(2\sigma _{* }\theta )w_{2}w_{1}^{2-\beta }(w_{1}m^{\rho })^{1/2} \\
& =\frac{2\sigma _{* }x}{(2-\beta )|\nu _{1}-\nu _{2}|}(w_{1}m^{\rho
})^{1/2},
\end{align*}%
from which we obtain 
\begin{equation*}
\P \{\kappa _{m}<y_{m}\}\sim \P \left\{ \frac{(2-\beta )|\nu _{1}-\nu _{2}|}{%
2\sigma _{* }}\frac{\kappa _{m}-k_*-w_{1}m^{\rho }}{(w_{1}m^{\rho })^{1/2}}%
<x\right\} \rightarrow \Phi (x). 
\end{equation*}%
We now turn to part \textit{(ii)}. First we set up some notation used in the
proof and define the limit variable appearing \eqref{e:limit_cH(c)}. Let $%
\{W_{1}(t),t\geq 0\},$ $\{W_{2}(t),t\geq 0\}$,\ldots be independent Wiener
processes, and let $\{V_{1}(t),t\geq 0\},$ $\{V_{2}(t),t\geq 0\},$ each be
Wiener processes with 
\begin{equation}
\begin{gathered} {\E}\hspace{0.1mm} V_1(t)V_2(t)=0, \quad {\mathsf
E}\hspace{0.1mm} V_1(t)W_{1,\ell}(t)=\eta_\ell t, \quad {\mathsf
E}\hspace{0.1mm} V_1(t)W_{2,\ell}(t)=0,\\ {\E}\hspace{0.1mm}
V_2(t)W_{1,\ell}(t)=0,\quad {\E}\hspace{0.1mm}
V_2(t)W_{2,\ell}(t)=\eta_\ell t, \end{gathered}
\label{e:define_limitprocess_underHA}
\end{equation}%
where, with $v(\mathbf{x})$ as in \eqref{e:def_v(x)}, and $\phi _{\ell }(%
\mathbf{x})$ as in \eqref{e:h(xy)=L2_expansion}, 
\begin{equation*}
\sigma ^{-1}{\mathsf{E}}\hspace{0.1mm}v(\mathbf{X}_{1})\phi _{\ell }(\mathbf{%
X}_{1})=\eta _{\ell }. 
\end{equation*}%
Also, let %
\begin{equation*}
Y(t,c_{* })=\frac{ t^{2}+2\zeta t\left( V_{2}(c_{* })-c_{*
}V_{1}(1)\right) +\mathbb{V}(0,c_{* })}{g(c_{* })},  
\end{equation*}%
with $\mathbb{V}(s,t)$ as in \eqref{vst}. Finally, we define 
\begin{equation}
\mathcal{H}_{c_{* }}(u)=\inf \left\{ x\geq 0:\sup_{0\leq t\leq
x}|Y(t,c_{* })|\geq u\right\} ,  \label{e:def_cH(u)}
\end{equation}%
i.e,. $\mathcal{H}_{c_{* }}(u)$ is the left-continuous inverse of $%
x\mapsto \sup_{0\leq t\leq x}|Y(t,c_{* })|$. We are now ready to proceed
with the proof.

For simplicity write $\Delta=\theta|\nu_1-\nu_2|=\theta^2|\mathfrak{D}%
_h(F,G)|$. We first show, for any $T>0$, (c.f. \eqref{e:U_as_q}) 
\begin{align}
&\max_{k_*<k\leq k_*+ (m/\Delta)^{1/2}T}\left | \frac{k^2U_m(\overline h;k)}{mg_m(k)}-%
\frac{\mathcal{q}_1(0,k) + {\mathcal{q}}_{2,2}(0,k)+ \mathcal{q}_3(0,k_*)}{%
mg_m(k)}\right| =o_\P (1),  \label{e:nonpage_alt_wefirstshow_longchange}
\end{align}
where $\mathcal{q}_1(r,k)$ is given in \eqref{e:cq1_cq2}, and 
\begin{align}
{\mathcal{q}}_{2,2}(r,k) &= 2\theta (k-(k_*\vee r)) \left[-\frac{k-r}%
m\sum_{i=1}^m z_i + \mathbf{1}_{\{r<k_*\}}\sum_{i=m+r+1}^{m+k_*} z_i\right] 
\notag \\
& =: 2\theta \sigma (k-(k_*\vee r)) \left[-\frac{k-r}m V_{1,m} + V_{2,m}(r) %
\right] ,  \label{e:def_q23} \\
\mathcal{q}_3(r,k) &=(k-r)^2 \left(\frac{2\mathbf{1}_{\{r \leq
k_*\}}R_{m,1,1}(r)}{(k-r)m} - \frac{2R_{m,2}}{m(m-1)} - \frac{2 \mathbf{1}%
_{\{r \leq k_*\}}R_{m,3,1}(r) }{(k-r)(k-r-1)}\right),\notag
\end{align}
with $R_{m,1,1}(r),R_{m,2},$ and $R_{m,3,1}$ as in \eqref{e:R_m11}, %
\eqref{e:R_m2} and \eqref{e:R_m31}.

Lemma \ref{l:replace_with_q1_q2} immediately gives 
\begin{equation}
\max_{k_{* }<k\leq k_{* }+(m/\Delta )^{1/2}T}\max_{0\leq r<k}\left( 
\frac{|\mathcal{q}_{1}(r,k)-q_{1}(r,k)|+|\mathcal{q}_{2}(r,k)-q_{2}(r,k)|}{%
mg_{m}(k)}\right) =o_{\P }(1).  \label{e:approx_q1}
\end{equation}%
With $\mathcal{q}_{2}(r,k)$ in \eqref{e:cq1_cq2}, we have 
\begin{equation}
\max_{k_{* }<k\leq k_{* }+(m/\Delta )^{1/2}T}\max_{0\leq r<k}\frac{|%
\mathcal{q}_{2}(r,k)-{\mathcal{q}}_{2,2}(r,k)|}{mg_{m}(k)}=o_{\P }(1).
\label{e:approx_q23}
\end{equation}%
Indeed, for any $T>0$, the law of the iterated logarithm gives 
\begin{align*}
& \max_{k_{* }<k\leq k_{* }+(m/\Delta )^{1/2}T}\max_{0\leq r<k}\frac{|%
\mathcal{q}_{2}(r,k)-{\mathcal{q}}_{2,2}(r,k)|}{mg_{m}(k)} \\
& \qquad \leq \max_{k_{* }\leq k\leq k_{* }+T(m/\Delta )^{1/2}}\frac{%
\theta (k-k_{* })}{mg_{m}(k)}\max_{k_{* }\leq r<k}\left\vert
\sum_{i=m+r+1}^{m+k}z_{i}^{* }\right\vert \\
& \quad \leq C\theta m^{-1/2}\max_{k_{* }\leq k\leq k_{* }+(m/\Delta
)^{1/2}T}\left\vert \sum_{i=m+k_{* }+1}^{m+k}z_{i}^{* }\right\vert \\
& \quad =O_{\P }\left( \theta \sigma _{* }m^{-1/2}\big((m/\Delta
)^{1/2}\log \log (m/\Delta )\big)^{1/2}\right) \\
& \quad =O_{\P }\left( \big((m/\Delta )^{-1/2}\log \log (m/\Delta )\big)%
^{1/2}\right) ,
\end{align*}%
where we used that $\sigma _{* }^{2}\sim C|\mathfrak{D}_{h}(F,G)|\sim
C|\Delta |/\theta ^{2}$ due to Assumption \ref{a:2ndHa_cond}, giving %
\eqref{e:approx_q23}. Applying Lemma \ref{e:q3_statiticterms_only_lemma}, 
\begin{equation}
\max_{k_{* }<k\leq k_{* }+(m/\Delta )^{1/2}T}\max_{0\leq r<k}\frac{%
|q_{3}(r,k)-{\mathcal{q}}_{3}(r,k)|}{mg_{m}(k)}=o_{\P }(1).
\label{e:approx_q3}
\end{equation}%
Next, we claim that, for any $\delta >0$ 
\begin{equation}
\max_{k_{* }<k\leq k_{* }+(m/\Delta )^{1/2}T}\max_{0\leq r\leq k_{*
}-2}\frac{|\mathcal{q}_{3}(r,k)-(k_{* }-r)^{2}U_{m}(\overline{h};r,k_{*
})|}{mg_{m}(k)}=o_{\P }(1).  \label{a88}
\end{equation}
First note \eqref{e:UT(f;k)=0} implies%
\begin{equation*}
\frac{\mathcal{q}_{3}(r,k_*)}{(k_{* }-r)^{2}}=\frac{2R_{m,1,1}(r)}{%
(k_{* }-r)m}-\frac{2R_{m,2}}{m(m-1)}-\frac{2R_{m,3,1}(r)}{(k_{*
}-r)(k_{* }-r-1)}=U_{m}(\overline{h};r,k_{* }). 
\end{equation*}%
With $R_{m,1,1}(r)$ as in \eqref{e:R_m11}, we have 
\begin{align*}
& \max_{k_{* }<k\leq k_{* }+(m/\Delta )^{1/2}T}\max_{0\leq r\leq
k_{* }-2}\left\vert \frac{(k_{* }-r)^{2}}{m}\frac{R_{m,1,1}(r)}{(k_{*
}-r)m}-\frac{(k-r)^{2}}{m}\frac{R_{m,1,1}(r)}{(k-r)m}\right\vert \\
& \quad =\max_{k_{* }<k\leq k_{* }+(m/\Delta )^{1/2}T}\frac{k-k_{* }%
}{m}\max_{0\leq r\leq k_{* }}\left\vert \frac{R_{m,1,1}(r)}{m}\right\vert
\\
& \quad =O_{\P }\left( (\Delta m)^{-1/2}\right) ,
\end{align*}%
where we used that $\max_{0\leq r\leq k_{* }}|R_{m,1,1}(r)|=O_{\P }(m)$
due to Lemma \ref{l:uncorrelated_maximal}. Similarly, with $R_{m,3,1}(r)$ as
in \eqref{e:R_m31}, using the mean value theorem applied to $%
f(k)=(k-r)/(k-r-1)$, 
\begin{align*}
& \max_{k_{* }<k\leq k_{* }+(m/\Delta )^{1/2}T}\max_{0\leq r\leq
k_{* }-2}\left\vert \frac{(k-r)^{2}}{m}\frac{R_{m,3,1}(r)}{(k-r)(k-r-1)}-\frac{(k_{* }-r)^{2}}{m}\frac{R_{m,3,1}(r)}{(k_{*
}-r)(k_{* }-r-1)}\right\vert \\
& \quad \leq 2\max_{k_{* }<k\leq k_{* }+(m/\Delta )^{1/2}T}\frac{%
C(k-k_{* })}{m}\max_{0\leq r\leq k_{* }-2}\frac{|R_{m,3,1}(r)|}{(k_{*
}-r-1)^{2}} \\
& \quad =O_{\P }((\Delta m)^{-1/2}),
\end{align*}%
since $\max_{0\leq r\leq k_{* }-1}|R_{m,3,1}(r)|/(k_{* }-r)^{2}=O_{\P %
}(1)$ again due to Lemma \ref{l:uncorrelated_maximal}. Lastly, 
\begin{align}
& \max_{k_{* }<k\leq k_{* }+(m/\Delta )^{1/2}T}\max_{0\leq r\leq
k_{* }}\frac{|(k-r)^{2}-(k_{* }-r)^{2}|}{m}\frac{|R_{m,2}|}{m(m-1)} 
\notag \\
& \leq C\max_{k_{* }<k\leq k_{* }+(m/\Delta )^{1/2}T}\frac{k-k_{* }%
}{m}\frac{k_{* }|R_{m,2}|}{m(m-1)}=O_{\P }((\Delta m)^{-1/2}).
\end{align}%
Since $g_{m}(k_{* })\geq C>0$, we therefore have \eqref{a88}, which
combined with \eqref{e:approx_q1}, \eqref{e:approx_q23}, and %
\eqref{e:approx_q3} gives %
\eqref{e:nonpage_alt_wefirstshow_longchange}. Noting that $\mathcal{q}%
_{3}(0,k_{* })=k_{* }^{2}U_{m}(\overline{h};k_{* }),$ we now show 
\begin{equation*}
\max_{k_{* }<k\leq k_{* }+(m/\Delta )^{1/2}T}\frac{|\mathcal{q}%
_{1}(0,k)+{\mathcal{q}}_{2,2}(0,k)+k_{* }^{2}U_{m}(\overline{h};k_{*
})|}{mg_{m}(k)}\Rightarrow \sup_{0\leq t\leq T}|Y(t,c_{* })|. 
\end{equation*}%
For each $0\leq t\leq T$, let 
\begin{equation}
Y_{m}(t)=\frac{\mathcal{q}_{1}(0,k_{* }+\lfloor (m/\Delta )^{1/2}t\rfloor
)+\mathcal{q}_{2,2}(0,k_{* }+\lfloor (m/\Delta )^{1/2}t\rfloor )+k_{*
}^{2}U_{m}(\overline{h};k_{* })}{m}  \label{e:def_Zm(t)}
\end{equation}%
and 
\begin{equation*}
Y_{m,L}(t)=\frac{\mathcal{q}_{1}(0,k_{* }+\lfloor (m/\Delta
)^{1/2}t\rfloor )+\mathcal{q}_{2,2}(0,k_{* }+\lfloor (m/\Delta
)^{1/2}t\rfloor )+k_{* }^{2}U_{m,L}(\overline{h};k_{* })}{m}, 
\end{equation*}%
where $U_{m,L}$ is given by \eqref{e:def_UL}. Clearly, 
\begin{equation*}
\sigma ^{-1}{\mathsf{E}}\hspace{0.1mm}z_{i}\phi _{\ell }(\mathbf{X}_{i})=\sigma ^{-1}{%
\mathsf{E}}\hspace{0.1mm}v(\mathbf{X}_{i})\phi _{\ell }(\mathbf{X}_{i})=\eta
_{\ell }. 
\end{equation*}%
Hence, we deduce the joint weak convergence 
\begin{align}
& m^{-1/2}\big(S_{1}(m),\ldots ,S_{L}(m),S_{1}(\lfloor mt\rfloor ,m),\ldots
,S_{L}(\lfloor mt\rfloor ,m),V_{1,m},V_{2,m}\big)  \notag
\label{e:jointweak} \\
& \qquad \Rightarrow \big(W_{1,1}(1),\ldots ,W_{1,L}(1),W_{2,1}(t),\ldots
,W_{2,L}(t),V_{1}(1),V_{2}(c_{* })\big),\quad \text{in}\quad \mathbf{D}%
[0,T].
\end{align}%
Lemma \ref{l:remainder_neglig_1} implies (c.f. \eqref{e:UmL_w_remainder}) 
\begin{equation*}
\left\vert \frac{k_{* }^{2}U_{m,L}(\overline{h};k_{* })}{m}-\sum_{\ell
=1}^{L}\lambda _{\ell }\left( \frac{1}{m}\left( S_{\ell }(\lfloor mc_*\rfloor
,m)-\frac{\lfloor mc_*\rfloor }{m}S_{\ell }(m)\right) ^{2}-\frac{\lfloor
mc_*\rfloor (\lfloor mc_*\rfloor +m)}{m^{2}}\right) \right\vert =o_{\P }(1). 
\end{equation*}%
Hence, we deduce that as $m\rightarrow \infty $, 
\begin{align}
& Y_{m,L}(t)  \notag \\
& =-\frac{\lfloor (m/\Delta )^{1/2}t\rfloor ^{2}\theta (\nu _{1}-\nu _{2})}{m}%
+\frac{2\theta \sigma \lfloor (m/\Delta )^{1/2}t\rfloor }{m^{1/2}}%
\left[ \frac{V_{2,m}(0)}{m^{1/2}}-\frac{c_{* }m+\lfloor (m/\Delta
)^{1/2}t\rfloor }{m}\frac{V_{1,m}}{m^{1/2}}\right]  \notag \\
& \qquad \qquad \qquad +\frac{k_{* }^{2}U_{m,L}(\overline{h},k_{* })}{m%
}  \notag \\
& \Rightarrow Y_{L}(t),\quad \text{in}~\mathbf{D}[0,T],  \label{e:ZmL_to_ZL}
\end{align}%
where (recalling $\sigma\theta /\Delta ^{1/2}=\sigma/%
\mathfrak{D}_{h}(F,G)^{1/2}$ and $\zeta$ in %
Assumption \ref{a:2ndHa_cond}),%
\begin{equation*}
Y_{L}(t)=t^{2}+2\zeta t\left( V_{2}(c_{* })-c_{*
}V_{1}(1)\right) -\sum_{\ell =1}^{L}\lambda _{\ell }\left[ \left( W_{2,\ell
}(c_{* })-c_{* }W_{1,\ell }(1)\right) ^{2}-c_{* }(1+c_{* })%
\right] . 
\end{equation*}%
Moreover, since $\sum_{\ell \geq 1}\lambda _{\ell }^{2}<\infty $, an
application of Cauchy-Schwarz gives%

\begin{equation}
\lim_{L\rightarrow \infty }\limsup_{m\rightarrow \infty }\P \left\{
\sup_{0\leq t\leq T}|Y_{m}(t)-Y_{m,L}(t)|>x\right\} =0.
\label{e:ZmL_closetoZm}
\end{equation}%
So, if we now let 
\begin{align}
Y(t)& =t^{2}+2\zeta t  \left( V_{2}(c_{* })-c_{*
}V_{1}(1)\right) -\sum_{\ell =1}^{\infty }\lambda _{\ell }\left[ \left(
W_{2,\ell }(c_{* })-c_{* }W_{1,\ell }(1)\right) ^{2}-c_{*
}(1+c_{* })\right]  \notag \\
& = t^{2}+2\zeta t  \left( V_{2}(c_{* })-c_{*
}V_{1}(1)\right) +\mathbb{V}(0,c_{* }),  \label{e:def_Z(t)_whengamma1}
\end{align}
it is easily seen that $\sup_{0\leq t\leq T}|Y_{L}(t)-Y(t)|=o_{\P }(1)$,
implying $Y_{L}\Rightarrow Y$ in $\mathbf{D}[0,T]$, which together with %
\eqref{e:ZmL_to_ZL} and \eqref{e:ZmL_closetoZm} gives 
\begin{equation*}
Y_{m}\Rightarrow Y~\text{in }\mathbf{D}[0,T]. 
\end{equation*}%
Then, the continuous mapping theorem gives 
\begin{equation}
\max_{k_{* }<k\leq k_{* }+(m/\Delta )^{1/2}T}\frac{{\mathcal{D}}_{m}(k)%
}{g_{m}(k)}=\sup_{0\leq t\leq T}\frac{|Y_{m}(t)|}{g(c_{* }+\lfloor
(m/\Delta )^{1/2}t\rfloor /m)}+o_{\P }(1)\Rightarrow \sup_{0\leq t\leq T}%
\frac{|Y(t)|}{g(c_{* })}.  \label{e:alt_gamma>1}
\end{equation}%
In other words, %
\begin{equation*}
\max_{k_{* }<k\leq k_{* }+(m/\Delta )^{1/2}T}\frac{{\mathcal{D}}_{m}(k)%
}{g_{m}(k)}\Rightarrow \sup_{0\leq t\leq T}|Y(t,c_{* })|. 
\end{equation*}%
Thus, %
\begin{align*}
\P \left\{ \frac{\kappa _{m}-k_{* }}{(m/\Delta )^{1/2}}<x\right\} & =\P %
\{\kappa _{m}<k_{* }+x(m/\Delta )^{1/2}\} \\
& =\P \left\{ \max_{k_{* }\leq k\leq k_{* }+x(m/\Delta )^{1/2}}\frac{%
\mathcal{D}_{m}^{(1)}(k)}{g_{m}(k)}>\mathcal{c}\right\} \\
& \rightarrow \P \left\{ \sup_{0\leq t\leq x}|Y(t,c_{* })|>\mathcal{c}%
\right\} \\
& =\P \left\{ \mathcal{H}_{c_{* }}(\mathcal{c})<x\right\} ,
\end{align*}%
as was to be shown.
\end{proof}

\begin{proof}[Proof of Theorem \protect\ref{alt:page}]
For any $y>k_*$, 
\begin{align}
\P \left\{\kappa_m< y\right\} & = \P \left\{\max_{k_*<k\leq y}\frac{{%
\mathcal{D}}_m^{(2)}(k)}{g_m(k)} > \mathcal{c}\right\} \\
& = \P \left\{\max_{k_*<k\leq y}\frac{\max_{0\leq r <k }w^2 | U_m(h;r,k)|}{%
mg_m(k)} > \mathcal{c}\right\}.
\end{align}
The argument for part \textit{(i)} is essentially the same as in the proof
of Theorem \ref{alt:nonpage}\textit{(i)}, so we highlight only the main
differences. With $y=y_m> k_*$ as in \eqref{e:ym_conditions}, from the
bounds \eqref{e:q1_swap}--\eqref{e:q21_swap}, it suffices to show 
\begin{equation}  \label{e:q1+q21_to_normal_2}
b^{-1}_m\left(\max_{k_*<k\leq y_m}\frac{\max_{0\leq r<k}|\mathcal{q}_1(r ,k)
+ \mathcal{q}_{2,1}(r,k)|}{mg_m(k)}-a_m \right)\Rightarrow \mathcal{N}(0,1),
\end{equation}
where $\mathcal{q}_1(r,k)$ is given in \eqref{e:cq1_cq2} and $\mathcal{q}%
_{2,1}(r,k)$ is in \eqref{e:cq_21}. Now, since $\max_{0\leq r<k}|\mathcal{q}%
_1(r,k)|= |\mathcal{q}_1(0,k)|$, from \eqref{e:q1+q21_to_normal}, we have 
\begin{align}
&b_m^{-1}\left(\max_{ k_*<k\leq y_m(1-\delta)}\frac{\max_{0\leq r<k}|%
\mathcal{q}_1(r,k) + \mathcal{q}_{2,1}(r,k)|}{mg_m(k)}-a_m\right) \overset{%
\P }{\to}-\infty. \notag
\end{align}

On the other hand, uniformly for $y_m(1-\delta)\leq k\leq y_m$, an
elementary maximization yields, with probability tending to one, 
$$
\max_{0\leq r<k}
\left|
\mathcal{q}_1(r,k)+\mathcal{q}_{2,1}(r,k)
\right|=\left|
\mathcal{q}_1(0,k)+\mathcal{q}_{2,1}(0,k)
\right|.$$
Hence,
\begin{align*}
\lim_{\delta\to 0}\limsup_{m\to\infty}\P \bigg\{b_m^{-1}\bigg(\max_{
y_m(1-\delta)\leq k \leq y_m}&\frac{\max_{0\leq r<k}|\mathcal{q}_1(r,k) + 
\mathcal{q}_{2,1}(r,k)|}{mg_m(k)} \\
\ & \qquad\qquad\qquad - \frac{|\mathcal{q}_1(0,y_m)+\mathcal{q}_{2,1}(0,y_m)|}{mg_m(y_m)}\bigg) >x\bigg\}=0. 
\end{align*}
From \eqref{e:q21_to_W(1)}, we obtain 
\begin{equation*}
\P \{\kappa_m<y_m\}\to 1-\Phi( -x)=\Phi(x),\quad m\to\infty,
\end{equation*}
and the rest of the proof is identical to that of Theorem \ref{alt:nonpage}%
\textit{(i)}.

Now we turn to part \textit{(ii)}. Recall $k_{* }=c_{* }m$. Write 
\begin{equation*}
\overline{\mathcal{q}}(r,k)=%
\begin{cases}
\mathcal{q}_{1}(r,k)+{\mathcal{q}}_{2,2}(r,k)+(k_{* }-r)^{2}U_{m}(%
\overline{h};r,k_{* }) & 0\leq r\leq k_{* }, \\ 
\mathcal{q}_{1}(r,k)+{\mathcal{q}}_{2,2}(r,k)+\mathcal{q}_{3}(r,k) & 
r>k_{* }.%
\end{cases}%
\end{equation*}%
Using the bounds above, and Lemma \ref{l:replace_with_q1_q2}, 
\begin{equation}
\max_{k_{* }<k\leq k_{* }+(m/\Delta )^{1/2}T}\max_{0\leq r\leq
k-2}\left\vert \frac{(k-r)^2U_{m}(h;r,k)}{mg_{m}(k)}-\frac{\overline{\mathcal{q}}(r,k)%
}{mg_{m}(k)}\right\vert =o_{\P }(1).
\label{e:page_alt_wefirstshow_longchange}
\end{equation}%
Define%
\begin{align*}
Y_{m,1}(s,t)& =m^{-1}\overline{\mathcal{q}}\left( \lfloor ms\rfloor ,k_{*
}+\lfloor (m/\Delta )^{1/2}t\rfloor \right) , & 0 \leq s\leq c_{*
}~0\leq t\leq T, \\
Y_{m,2}(s,t)& =m^{-1}\overline{\mathcal{q}}(k_{* }+\lfloor (m/\Delta
)^{1/2}(s\wedge t)\rfloor ),k_{* }+\lfloor (m/\Delta )^{1/2}t\rfloor ), & 
0 \leq s,t\leq T,
\end{align*}%
so that 
\begin{equation}
\begin{aligned} \sup_{0\leq s \leq c_*}|Y_{m,1}(s,t)| &= m^{-1}\max_{0 \leq r\leq
k_*}\left | \overline{\cq}\big(r,k_* + \lfloor (m/\Delta)^{1/2}
t\rfloor\big)\right|,\\ \sup_{0\leq s \leq t}|Y_{m,2}(s,t)| &= m^{-1}\max_{k_*<r<
k_*+\lfloor (m/\Delta)^{1/2} t\rfloor}\left | \overline{\cq}\big(r,k_* +
\lfloor (m/\Delta)^{1/2} t\rfloor\big)\right|. \end{aligned}
\label{e:max_split}
\end{equation}%
With $V_{1,m}$ and $V_{2,m}(r)$ as in \eqref{e:def_q23}, we have 
\begin{align}
Y_{m,1}(s,t)& =\frac{\lfloor (m/\Delta )^{1/2}t\rfloor ^{2}\theta (\nu
_{1}-\nu _{2})}{m}  \notag \\
& \qquad +\frac{2\theta \sigma\lfloor (m/\Delta )^{1/2}t\rfloor }{%
m^{1/2}}\left[ \frac{V_{2,m}(\lfloor ms\rfloor )}{m^{1/2}}-\frac{\lfloor
(m/\Delta )^{1/2}t\rfloor +k_{* }-\lfloor ms\rfloor }{m}\frac{V_{1,m}}{%
m^{1/2}}\right]  \notag \\
& \qquad +\frac{(k_{* }-\lfloor ms\rfloor )^{2}U_{m}(\overline{h},\lfloor
ms\rfloor ,k_{* })}{m},  \label{e:Ym1}
\end{align}%
and 
\begin{align}
Y_{m,2}(s,t)& =\frac{(\lfloor (m/\Delta )^{1/2}t\rfloor -\lfloor (m/\Delta
)^{1/2}s\rfloor )^{2}\theta (\nu _{1}-\nu _{2})}{m}  \notag \\
& \qquad -\frac{2\theta \sigma(\lfloor (m/\Delta )^{1/2}t\rfloor
-\lfloor (m/\Delta )^{1/2}s\rfloor )}{m^{1/2}}\left[ \frac{\lfloor (m/\Delta
)^{1/2}t\rfloor -\lfloor (m/\Delta )^{1/2}s\rfloor }{m}\frac{V_{1,m}}{m^{1/2}%
}\right]  \notag \\
& \qquad -\frac{(\lfloor (m/\Delta )^{1/2}t\rfloor -\lfloor (m/\Delta
)^{1/2}s\rfloor )^{2}}m\frac{2R_{m,2}}{m(m-1)}  \label{e:Ym2}
\end{align}%
Note the last term in \eqref{e:Ym2} is negligible, since $R_{m,2}=O_\P(m)$, and
\begin{equation*}
-\frac{(\lfloor (m/\Delta )^{1/2}t\rfloor -\lfloor (m/\Delta
)^{1/2}s\rfloor )^{2}}m\frac{2R_{m,2}}{m(m-1)}= O_\P(1/(m\Delta))=o_\P(1).
\end{equation*}%
Arguing
as in \eqref{e:jointweak}, we deduce the joint weak convergence 
\begin{equation*}
\left( m^{-1/2}V_{1,m},m^{-1/2}\left(V_{2,m}(0)-V_{2,m}(\lfloor ms\rfloor)\right)
,\frac{(k_{* }-\lfloor ms\rfloor
)^{2}}mU_{m}(\overline{h},\lfloor ms\rfloor ,k_{* })
\right) \Rightarrow \left( V_{1}(1),V_{2}(s),\mathbb{V}(s,c_{* })
\right) , 
\end{equation*}%
in $\mathbf{D}^{3}[0,c_{* }].$ Then, the Dudley-Wichura-Skorokhod Theorem
gives for each $m\geq 1$, Wiener processes $V_{1}^{(m)}$, $V_{2}^{(m)}$, a
process $\{\mathbb{V}^{(m)}(s,c_{* }),0\leq s\leq c_{* }\}\overset{{%
\mathcal{D}}}{=}\{\mathbb{V}(s,c_{* }),0\leq s\leq c_{* }\}$ such that $(V_{1}^{(m)}(1),V_{2}^{(m)}(s),\mathbb{V}%
^{(m)}(s,c_{* }))^{\top }\overset{{\mathcal{D}}}{=}%
(V_{1}(1),V_{2}(s),\mathbb{V}(s,c_{* }))^{\top }$ in $\mathbf{C}%
^{3}[0,c_{* }]$ satisfying 
\begin{equation*}
\begin{gathered} \sup_{0\leq s \leq c_*}\left|\mathbb
V^{(m)}(s,c_*)-\frac{(k_*-\lfloor m s\rfloor)^2}mU_{m}(\overline h, \lfloor
ms\rfloor, k_*) \right|=o_P(1),\quad \left|m^{-1/2}V_{m,1}
-V_1^{(m)}(1)\right|=o_P(1),\\\sup_{0\leq s \leq
c_*}\left|V_2^{(m)}(s)-\frac{V_{2,m}(0)-V_{2,m}(\lfloor
ms\rfloor)}{m^{1/2}}\right|=o_P(1). \end{gathered}
\end{equation*}%
This gives %
\begin{align*}
& \sup_{0\leq t\leq T}\sup_{0\leq s\leq c_* }\Big|Y_{m,1}(s,t) \\
& \qquad - t^{2}-2\zeta t  \left( V_{2}^{(m)}(c_{*
})-V_{2}^{(m)}(s)-(c_{* }-s)V_{1}^{(m)}(1)\right) -\mathbb{V}%
^{(m)}(s,c_{* })\Big|=o_{P}(1),
\end{align*}%
and 
\begin{equation*}
\sup_{0\leq s,t\leq T}|Y_{m,2}(s,t)- (t-s)^{2}|=o_{P}(1). 
\end{equation*}%
In particular, in view of \eqref{e:max_split}, and the convergence $%
\max_{k_{* }<k\leq k_{* }+(m/\Delta )^{1/2}T}|g_{m}(k)-
g(c_{* })|\to 0$, we obtain 
\begin{align*}
\max_{k_{* }<k\leq k_{* }+(m/\Delta )^{1/2}T}\max_{0\leq r\leq
k}\left\vert \frac{\overline{\mathcal{q}}(r,k)}{mg_{m}(k)}\right\vert &
\Rightarrow \frac{1}{g(c_{* })}\sup_{0\leq t\leq T}\max \left\{
\sup_{0\leq s\leq c_{* }}|Y_{1}(s,t)|,\sup_{0\leq s\leq
t}|Y_{2}(s,t)|\right\} \\
& =\sup_{0\leq t\leq T}Y(t,c_{* }),
\end{align*}%
where
\begin{align*}
Y_{1}(s,t)& = t^{2}+2\zeta t  \left( V_{2}(c_{*
})-V_{2}(s)-(c_{* }-s)V_{1}(1)\right) +\mathbb{V}(s,c_{* }), \\
Y_{2}(s,t)& = (t-s)^{2},\\
Y(t,c_*)& = \frac{1}{g(c_*)}\max \left\{
\sup_{0\leq s\leq c_{* }}|Y_{1}(s,t)|,\sup_{0\leq s\leq
t}|Y_{2}(s,t)|\right\}.
\end{align*}%
Then, with 
$$
Y_*(x,c_*)=\sup_{0\leq t \leq x}Y(t,c_*)
$$
 we may define 
\begin{equation}
\widetilde{\mathcal{H}}_{c_{* }}(u)=\inf \left\{ x\geq 0: Y_*(x,c_*)\geq u\right\} .  \label{e:def_cH(u)2}
\end{equation}%
Recalling \eqref{e:page_alt_wefirstshow_longchange}, we finally have 
\begin{align*}
\P \left\{ \frac{\kappa _{m}-k_{* }}{(m/\Delta )^{1/2}}<x\right\} & =\P %
\{\kappa _{m}<k_{* }+x(m/\Delta )^{1/2}\} \\
& =\P \left\{ \max_{k_{* }\leq k\leq k_{* }+x(m/\Delta )^{1/2}}\frac{%
\mathcal{D}_{m}^{(2)}(k)}{g_{m}(k)}>\mathcal{c}\right\} \\
& \rightarrow \P \left\{ \sup_{0\leq t\leq x}Y(t,c_{* })>\mathcal{c}%
\right\} \\
& =\P \left\{ \widetilde{\mathcal{H}}_{c_{* }}(\mathcal{c})<x\right\} ,
\end{align*}%
as was to be shown.

\end{proof}

\begin{proof}[Proof of Theorem \protect\ref{t:theorem_D3_under_H0}]
The proof is along the same lines as ~Theorem \ref{t:theorem_under_H0}.  Recall
$
b(t)=(t-c_0)_+.$
Define
$$
I_{\delta,T}^{(3)}
=
\{(s,t):\delta\leq t\leq T,\ b(t)\leq s\leq t\}.
$$
We first prove the convergence on $I_{\delta,T}^{(3)}$, where
$0<\delta<T<\infty$. Combining Lemma~\ref{l:D3_approxunderH0},
Lemma~\ref{l:D3_finite_horizon_bounds}(i), and the continuous mapping theorem,
we obtain, for every fixed $L\geq1$,
\begin{align}
\max_{\delta m\leq k\leq mT}
\max_{b_k\leq r\leq k-2}
\left| \frac{m^{-1}(k-r)^2U_{m,L}^{(3)}(\overline h;r,k)} {g_m^{(3)}(k)} \right| \Rightarrow
\sup_{(s,t)\in I_{\delta,T}^{(3)}}
\left|\frac{\mathbb V_L^{(3)}(s,t)} {g\big(t/(1+b(t))\big)(1+b(t))^{\gamma}}\right|,
\label{e:D3_fixed_L_compact_conv}
\end{align}
where $\mathbb V_L^{(3)}$ is defined in \eqref{e:def_VL_D3}.
Here the map is continuous because $\mathbb V_L^{(3)}$ has continuous sample
paths, the set $I_{\delta,T}^{(3)}$ is compact, and the denominator is
bounded away from zero on $I_{\delta,T}^{(3)}$.

Next, Lemma~\ref{l:D3_finite_horizon_bounds}(ii) gives, for every $x>0$,
\begin{align}
&\lim_{L\to\infty}\limsup_{m\to\infty}
P\bigg\{
\bigg|
\max_{\delta m\leq k\leq mT}
\max_{b_k\leq r\leq k-2}
\left|
\frac{m^{-1}(k-r)^2U_m^{(3)}(\overline h;r,k)}
{g_m^{(3)}(k)}
\right|
\notag\\
&\qquad\qquad\qquad\qquad
-
\max_{\delta m\leq k\leq mT}
\max_{b_k\leq r\leq k-2}
\left|
\frac{m^{-1}(k-r)^2U_{m,L}^{(3)}(\overline h;r,k)}
{g_m^{(3)}(k)}
\right|
\bigg|>x
\bigg\}=0.
\label{e:D3_sample_trunc_compact}
\end{align}

The process $\mathbb V^{(3)}(s,t)$, defined pointwise as the
$\mathcal L^2$ limit of $\mathbb V_L^{(3)}(s,t)$ as $L\to\infty$, is
well-defined. Moreover, the same increment bounds used in the proof of
Lemma~\ref{l:trunctail_of_limit} show that
${\mathbb V_L^{(3)}:L\geq1}$ is tight in
$\mathbf C(I_{\delta,T}^{(3)})$ and that $\mathbb V^{(3)}$ admits a
continuous version. Since the $\mathcal L^2$ convergence also implies
convergence of the finite-dimensional distributions, we obtain
\begin{equation}
\mathbb V_L^{(3)}\Rightarrow\mathbb V^{(3)}
\qquad\text{in }\mathbf C(I_{\delta,T}^{(3)}).
\label{e:D3_unweighted_limit_trunc_compact}
\end{equation}
Finally, since
$(s,t)\mapsto g\left(\frac{t}{1+b(t)}\right)(1+b(t))^\gamma$
is continuous and bounded away from zero on $I_{\delta,T}^{(3)}$, the
continuous mapping theorem yields
\begin{equation}
\frac{\mathbb V_L^{(3)}(s,t)}
{g\big(t/(1+b(t))\big)(1+b(t))^\gamma}
\Rightarrow
\frac{\mathbb V^{(3)}(s,t)}
{g\big(t/(1+b(t))\big)(1+b(t))^\gamma}
\qquad\text{in }\mathbf C(I_{\delta,T}^{(3)}).
\label{e:D3_limit_trunc_compact}
\end{equation}
By \eqref{e:D3_fixed_L_compact_conv}, \eqref{e:D3_sample_trunc_compact}, and
\eqref{e:D3_limit_trunc_compact}, we conclude
\begin{align}
&\max_{\delta m\leq k\leq mT}
\max_{b_k\leq r\leq k-2}
\left|
\frac{m^{-1}(k-r)^2U_m^{(3)}(\overline h;r,k)}
{g_m^{(3)}(k)}
\right| \Rightarrow
\sup_{(s,t)\in I_{\delta,T}^{(3)}}
\left|
\frac{\mathbb V^{(3)}(s,t)}
{g\big(t/(1+b(t))\big)(1+b(t))^{\gamma}}
\right|.
\label{e:D3_compact_conv}
\end{align}

We now consider the range $0<t<\delta$. Lemma~\ref{l:D3_finite_horizon_bounds}(iii)
gives, for every $x>0$,
\begin{align}
\lim_{\delta\downarrow0}\limsup_{m\to\infty}
P\left\{
\max_{2\leq k\leq m\delta}
\max_{b_k\leq r\leq k-2}
\left|
\frac{m^{-1}(k-r)^2U_m^{(3)}(\overline h;r,k)}
{g_m^{(3)}(k)}
\right|>x
\right\}=0.
\label{e:D3_lower_endpoint_sample}
\end{align}
For the limiting process, if $\delta<c_0$, then $b(t)=0$ on
$0\leq t\leq\delta$, and $\mathbb V^{(3)}(s,t)$ coincides with $\mathbb V(s,t)$, and we have
\begin{align}
\lim_{\delta\downarrow0}
P\left\{ \sup_{0<t\leq\delta}\sup_{0\leq s\leq t}
\left|\frac{\mathbb V^{(3)}(s,t)}{g(t)}\right|>x
\right\}=0.
\label{e:D3_lower_endpoint_limit}
\end{align}
We now turn to the range $t>T$.  Lemma~\ref{l:D3_infinite_horizon_bound}
gives, for every $x>0$,
\begin{align}
\lim_{T\to\infty}\limsup_{m\to\infty}
P\left\{
\sup_{k\geq mT}
\max_{b_k\leq r\leq k-2}
\left|
\frac{m^{-1}(k-r)^2U_m^{(3)}(\overline h;r,k)}
{g_m^{(3)}(k)}
\right|>x
\right\}=0.
\label{e:D3_upper_endpoint_sample}
\end{align}
We also need the corresponding statement for $\mathbb V^{(3)}$. Fix $T<S<\infty$.
By \eqref{e:D3_compact_conv},
\begin{align*}
&\max_{mT\leq k\leq mS}\max_{b_k\leq r\leq k-2}
\left| \frac{m^{-1}(k-r)^2U_m^{(3)}(\overline h;r,k)}
{g_m^{(3)}(k)}
\right| \\
&\qquad\Rightarrow
\sup_{T\leq t\leq S} \sup_{b(t)\leq s\leq t} \left| \frac{\mathbb V^{(3)}(s,t)}
{g\big(t/(1+b(t))\big)(1+b(t))^{\gamma}}\right|.
\end{align*}
Now,
\begin{align*}
&P\left\{
\sup_{T\leq t\leq S} \sup_{b(t)\leq s\leq t}
\left|\frac{\mathbb V^{(3)}(s,t)}
{g\big(t/(1+b(t))\big)(1+b(t))^{\gamma}}\right|>x
\right\} \\
&\qquad\leq
\liminf_{m\to\infty}
P\left\{\max_{mT\leq k\leq mS}\max_{b_k\leq r\leq k-2}\left|
\frac{m^{-1}(k-r)^2U_m^{(3)}(\overline h;r,k)}
{g_m^{(3)}(k)}\right|>x\right\} \\
&\qquad\leq
\limsup_{m\to\infty}
P\left\{
\sup_{k\geq mT}\max_{b_k\leq r\leq k-2}\left|\frac{m^{-1}(k-r)^2U_m^{(3)}(\overline h;r,k)}
{g_m^{(3)}(k)}\right|>x\right\}.
\end{align*}
Letting $S\to\infty$ and then $T\to\infty$, and using
\eqref{e:D3_upper_endpoint_sample}, gives
\begin{align}
\lim_{T\to\infty}
P\left\{\sup_{t\geq T}\sup_{b(t)\leq s\leq t}\left|\frac{\mathbb V^{(3)}(s,t)}
{g\big(t/(1+b(t))\big)(1+b(t))^{\gamma}}
\right|>x
\right\}=0.
\label{e:D3_upper_endpoint_limit}
\end{align}
Putting together \eqref{e:D3_compact_conv}--\eqref{e:D3_upper_endpoint_limit} gives
\begin{equation}\label{e:D3_penultimate}
\sup_{k\geq2}
\max_{b_k\leq r\leq k-2}
\left| \frac{m^{-1}(k-r)^2U_m^{(3)}(\overline h;r,k)}
{g_m^{(3)}(k)} \right| \Rightarrow
\sup_{t>0}\sup_{b(t)\leq s\leq t}
\left| \frac{\mathbb V^{(3)}(s,t)}
{g\big(t/(1+b(t))\big)(1+b(t))^{\gamma}}\right|
\end{equation}
It remains only to show the right-hand side in \eqref{e:D3_penultimate} is equal in law to the limit. Let
$$
u=\frac{t}{1+t},\qquad v=\frac{s}{1+s},\qquad
y=\frac{b(t)}{1+b(t)} 
$$
Moreover, since $b(t)\leq s\leq t$,  we have $y(u)\leq v\leq u$. Now, for each $\ell$, the Gaussian process 
$
\widetilde Z_\ell(t)=W_{2,\ell}(t)-tW_{1,\ell}(1).
$ has the same
distribution as
${(1+t)W_\ell(t/(1+t)),t\geq0}$. Hence \begin{align*}
&W_{2,\ell}(t)-W_{2,\ell}(s)
-\frac{t-s}{1+b(t)}
\big(W_{1,\ell}(1)+W_{2,\ell}(b(t))\big) \\
&\qquad =
\widetilde Z_\ell(t)-\widetilde Z_\ell(s)
-\frac{t-s}{1+b(t)}\widetilde Z_\ell(b(t)) \\
&\qquad\overset{\mathcal D}{=}
(1+t)
\left[W_\ell(u)-W_\ell(y) - \frac{1-u}{1-v}\big(W_\ell(v)-W_\ell(y)\big)
\right].
\end{align*}
Also,
$$
\frac{(t-s)+(t-s)^2/(1+b(t))}{
(1+t)^2}=
(u-v)+\left(\frac{u-v}{1-v}\right)^2(v-y).
$$
Therefore, %
$$
(1+t)^{-2}|\mathbb V^{(3)}(s,t)|
\overset{\mathcal D}{=}
\left|G_{y(u)}(u,v)\right|.
$$
Consequently, the right-hand side of \eqref{e:D3_penultimate} is equal in
law to
$$
\sup_{0<u<1}
\sup_{y(u)\leq v<u}
\frac{|G_{y(u)}(u,v)|}
{(1-u)^2
g\left(\dfrac{t}{1+b(t)}\right)
\big(1+b(t)\big)^\gamma}.
$$
Using the identities
$$
1+b(t)=\frac{1}{1-y(u)}, \quad 
\frac{t}{1+b(t)} =\frac{u\big(1-y(u)\big)}{1-u},
$$
we obtain, 
\begin{align*}
&(1-u)^2
g\left(\frac{t}{1+b(t)}\right)
\big(1+b(t)\big)^\gamma  \\
&\qquad =
(1-u)^2
\left(\frac{u\big(1-y(u)\big)}{1-u}\right)^{\beta}
\left(1+\frac{u\big(1-y(u)\big)}{1-u}\right)^{2-\beta}
\big(1-y(u)\big)^{-\gamma} \\
&\qquad =
u^\beta
\big(1-y(u)\big)^{\beta-\gamma}
\big(1-uy(u)\big)^{2-\beta} \\
&\qquad =
d^{(3)}(u).
\end{align*}
Thus,
$$
\sup_{t>0}\sup_{b(t)\leq s\leq t}
\left|
\frac{\mathbb V^{(3)}(s,t)}
{g\left(t/(1+b(t))\right)\big(1+b(t)\big)^\gamma}
\right|
\overset{\mathcal D}{=}
\sup_{0<u<1}
\frac{\overline\Gamma^{(3)}(u)}{d^{(3)}(u)},
$$
giving (i). The closed-ended result (ii) follows by the same argument, with the supremum restricted to the finite monitoring interval, and its proof is therefore omitted.
\end{proof}

\begin{proof}[Proof of Theorem \protect\ref{thewi}]
With $t_k=k/m$,
$$
\sup_{0<t<1}\frac{|\mathfrak r_m(t)|}{\mathfrak q(t)}
=
\max_{2\leq k\leq m-2}
\frac{1}{\mathfrak q(t_k)}
\frac{k^2(m-k)^2}{m^3}|\mathfrak R(k)|
+o_\P(1).
$$
Fix $L\geq1$. By Lemma \ref{prwe2}, for every $x>0$,
$$
\lim_{L\to\infty}\limsup_{m\to\infty}
\P\left\{\max_{2\leq k\leq m-2} \frac{1}{\mathfrak q(t_k)} \frac{k^2(m-k)^2}{m^3}|\mathfrak R(k)-\mathfrak R_L(k)|>x
\right\}=0.
$$
Lemma
\ref{l:remainder_neglig_R_L} gives
$$
\max_{2\leq k\leq m-2}
\frac{1}{\mathfrak q(t_k)}
\left|\frac{k^2(m-k)^2}{m^3}\mathfrak R_L(k)
+\frac1m\sum_{\ell=1}^L\lambda_\ell
\left[\left(S_\ell(k)-t_kS_\ell(m)\right)^2-\frac{k(m-k)}{m}\right]\right|=o_\P(1).
$$
By Lemma \ref{prwe4}, 
$$
\max_{2\leq k\leq m-2}
\frac{1}{\mathfrak q(t_k)}
\frac{k^2(m-k)^2}{m^3}
|\mathfrak R_L(k)|
\Rightarrow
\sup_{0<t<1}
\frac{1}{\mathfrak q(t)}
\left|
\sum_{\ell=1}^L\lambda_\ell
\left(B_\ell^2(t)-t(1-t)\right)
\right|.
$$
The  same tail bound as in Lemma \ref{prwe2} gives, for every $x>0$,
$$
\P\left\{
\sup_{0<t<1}
\frac{1}{\mathfrak q(t)}
\left|
\sum_{\ell=L+1}^\infty
\lambda_\ell\{B_\ell^2(t)-t(1-t)\}
\right|>x
\right\}
\leq
\frac{C}{x^2}\sum_{\ell=L+1}^\infty\lambda_\ell^2
\to0.
$$
as $L\to\infty$.  Combining the above statements yields the result.
\end{proof}

\begin{proof}[Proof of Theorem \protect\ref{metric-neg-type}]  We show that $\mathfrak D_{\delta^{1/2}}(\nu_1,\nu_2)=0$ if and only if $\nu_1=\nu_2$.  
Recall that, by the Moore-Aronszajn theorem, the positive (semi)definite
kernel $K$ yields a unique RKHS $\mathcal{H}_{K}$ of
real-valued functions on $\mathcal{X}$ with reproducing kernel $K$. Consider the map $x\mapsto \varphi(x):=K\left( \cdot
,x\right)$. By assumption, $\varphi$ is injective, and by the reproducing property, 
\begin{equation*}
K\left( x,y\right) =\left\langle \varphi \left( x\right) ,\varphi \left(
y\right) \right\rangle _{\mathcal{H}_{K}},
\end{equation*}%
and therefore%
\begin{equation*}
\delta ^{1/2}\left( x,y\right) =\left[ K\left( x,x\right) +K\left(
y,y\right) -2K\left( x,y\right) \right] ^{1/2}=\left\Vert \varphi \left(
x\right) -\varphi \left( y\right) \right\Vert _{\mathcal{H}_{K}}.
\end{equation*}%
Moreover, since $K$ is continuous, so is $%
\varphi \left( x\right) $ by Lemma 4.29 in \citet{christmann2008support}, and separability of $\mathcal X$ then gives that $\mathcal{H}_{K}$ is separable (Lemma 4.33 in %
\citealp{christmann2008support}).
Hence, by Theorem 3.16 in \citet{lyons:2013}, the space $\left( \mathcal{H}%
_{K},\rho \right) $ is of strong negative type, having defined $\rho \left(
x,y\right) =\left\Vert x-y\right\Vert _{\mathcal{H}_{K}}$. In other words,
if $\mathbb{P}_{1}$ and $\mathbb{P}_{2}$ are two Borel measures defined on $%
\mathcal{H}_{K}$, given $Z$, $Z^{\prime }\overset{i.i.d.}{\sim }\mathbb{P}%
_{1}$ and $W$, $W^{\prime }\overset{i.i.d.}{\sim }\mathbb{P}_{2}$, the
quantity%
\begin{equation*}
\mathfrak{D}_{\rho }\left( \mathbb{P}_{1},\mathbb{P}_{2}\right) =2{\mathsf{E}}%
\left\Vert W-Z\right\Vert _{\mathcal{H}_{K}}-{\mathsf{E}}\left\Vert
W-W^{\prime }\right\Vert _{\mathcal{H}_{K}}-{\mathsf{E}}\left\Vert
Z-Z^{\prime }\right\Vert _{\mathcal{H}_{K}},
\end{equation*}%
is zero if and only if $\mathbb{P}_{1}=\mathbb{P}_{2}$. Consider now any two
Borel probability measures $\nu _{1}$ and $\nu _{2}$ on $\mathcal{X}$, and
let $\mathbb{P}_{i}=\nu _{i}\circ \varphi ^{-1}$. Then, if $X$, $X^{\prime }%
\overset{i.i.d.}{\sim }\nu _{1}$ and $Y$, $Y^{\prime }\overset{i.i.d.}{\sim }%
\nu _{2}$, it holds that $\varphi \left( X\right) \sim \mathbb{P}_{1}$ and $%
\varphi \left( Y\right) \sim \mathbb{P}_{2}$, and 
\begin{eqnarray*}
\mathfrak{D}_{\delta^{1/2} }\left( \nu _{1},\nu _{2}\right) &=&2{\mathsf{E}}%
\left\Vert \varphi \left( X\right) -\varphi \left( Y\right) \right\Vert _{%
\mathcal{H}_{K}}-{\mathsf{E}}\left\Vert \varphi \left( X\right) -\varphi
\left( X^{\prime }\right) \right\Vert _{\mathcal{H}_{K}}-{\mathsf{E}}%
\left\Vert \varphi \left( Y\right) -\varphi \left( Y^{\prime }\right)
\right\Vert _{\mathcal{H}_{K}} \\
&=&\mathfrak{D}_{\rho }\left( \mathbb{P}_{1},\mathbb{P}_{2}\right) .
\end{eqnarray*}%
Hence, if $\mathfrak D_{\delta^{1/2}}(\nu_1,\nu_2)=0$, then $\mathbb P_1=\mathbb P_2$, i.e., for any Borel set $B\subseteq \mathcal{H}%
_{K}$, $\mathbb{P}_{1}\left( B\right) =\mathbb{P}_{2}\left( B\right) $.
Consider now a compact set $A\subseteq \mathcal X$; then, $\varphi
\left( A\right) $ also is compact - and therefore it is a Borel set in $%
\mathcal{H}_{K}$ - and therefore, by injectivity of $\varphi$, $\mathbb P_i(\varphi(A))= \nu_i(\varphi^{-1}(\varphi(A)))=\nu_i(A)$, so
\begin{equation*}
\nu _{1}\left( A\right) =\mathbb{P}_{1}\left( \varphi \left( A\right)
\right) =\mathbb{P}_{2}\left( \varphi \left( A\right) \right) =\nu
_{2}\left( A\right) .
\end{equation*}
Given that $\mathcal{X}$ is a complete and separable metric space, every
Borel measure is Radon (Theorem 7.1.7, \citealp{bogachev2007measure});
hence, for any Borel set $C\subseteq \mathcal{X}$ 
\begin{equation*}
\nu _{1}\left( C\right) =\sup \left\{ \nu _{1}\left( D\right) :D\subseteq C,D%
\text{ compact}\right\} =\sup \left\{ \nu _{2}\left( D\right) :D\subseteq C,D%
\text{ compact}\right\} =\nu _{2}\left( C\right),
\end{equation*}
i.e., $\mathfrak{D}_{\delta^{1/2} }\left( \nu _{1},\nu _{2}\right) =0$ implies $\nu_1=\nu_2$.  On the other hand, since $\nu_1=\nu_2$ immediately gives $\mathfrak{D}_{\delta^{1/2} }\left( \nu _{1},\nu _{2}\right) =0$, the proof is complete.
\end{proof}

\begin{proof}[Proof of Theorem \protect\ref{p:eig_approx}] For $\ell>m$, set $\widehat \lambda_{\ell,m}=0$.  
Fix a collection of independent Wiener processes $\{\{W_{i,\ell}(t),t\geq0%
\},\ell \geq 1,i=1,2\}$ independent of $\mathcal{F}=\sigma(X_1,X_2,\ldots)$.
Defining $\widetilde Y(s,t)$ as in \eqref{e:Ytilde} based on $%
\{W_{i,\ell},\ell\geq 1,i=1,2\},$ let 
\begin{align*}
\widehat{\mathcal{V}}_m(s,t)&= - (1+t)^{\beta-2}\sum_{\ell=1}^\infty 
\widehat{\lambda}_{\ell,m}\widetilde Y_\ell(s,t)\overset{d}{=}%
-(1+t)^{\beta-2}\sum_{\ell=1}^\infty \widehat{\lambda}_{\pi(\ell),m}\widetilde Y_\ell(s,t)=:\widehat{\mathcal{V}}_m(s,t;\pi)
\end{align*}
where $\pi:\{1,2,\ldots,\}\to\{1,2,\ldots,\}$ is any permutation. Similarly,
we may construct ${\mathcal{V}}(s,t)$ as in Lemma %
\ref{l:trunctail_of_limit} based on this same sequence of Wiener
processes, so that 
\begin{align*}
{\mathcal{V}}(s,t)-\widehat{\mathcal{V}}_m(s,t;\pi)&= -
(1+t)^{\beta-2}\sum_{\ell=1}^\infty (\lambda_\ell-\widehat{\lambda}%
_{\pi(\ell),m})\widetilde Y_\ell(s,t).
\end{align*} 
Pick a sequence of permutations $\pi_m$ such that, for each $m$,
$$\sum_{\ell=1}^\infty (\lambda_\ell-\widehat{\lambda}%
_{\pi_m(\ell),m})^2\leq \inf_\pi\sum_{\ell=1}^\infty (\lambda_\ell-\widehat{\lambda}%
_{\pi(\ell),m})^2+1/m.$$
Then,  
\begin{align*}
{\mathsf{E}}\hspace{0.1mm}[({\mathcal{V}}(s,t)-\widehat{\mathcal{V}}%
_m(s,t;\pi_m))^2|\mathcal{F}]& = (1+t)^{2(\beta-2)}{\mathsf{E}}\hspace{0.1mm}%
[\widetilde Y_1^2(s,t)|\mathcal{F}]\sum_{\ell=1}^\infty (\lambda_\ell-%
\widehat{\lambda}_{\pi_m(\ell),m})^2 \\
& = C(s,t) \sum_{\ell=1}^\infty (\lambda_\ell-\widehat{\lambda}%
_{\pi_m(\ell),m})^2\to 0,\quad \text{a.s.},\ 
\end{align*}
where we used that $\inf_\pi\sum_{\ell=1}^\infty (\lambda_\ell-\widehat{\lambda}%
_{\pi(\ell),m})^2\to 0$ a.s. as $m\to\infty$ as a consequence of \cite[Theorem 3.1%
]{koltchinskii:gine:2000}. In particular, this implies
for each $n\geq 1$ and any $s_1,t_1,\ldots,s_n,t_n\geq0$, 
\begin{equation*}
\left(\widehat {\mathcal{V}}_m(s_1,t_1),\ldots \widehat {\mathcal{V}}_m(s_n,t_n)\right)%
\Rightarrow_{\mathcal{F}} \left({\mathcal{V}} (s_1,t_1),\ldots {\mathcal{V}}%
(s_n,t_n)\right).
\end{equation*}
For tightness, recall from the proof of Lemma~\ref{l:trunctail_of_limit} that, for each $r>1$,
$$
\E\left|
\widetilde Y_\ell(s_1,t_1)-\widetilde Y_\ell(s_2,t_2)
\right|^{2r}
\leq
C\bigl(|t_1-t_2|+|s_1-s_2|\bigr)^{2ar}
$$
for some $0<a<1-\beta$, where $C$ may depend on $r$. Hence, Rosenthal's inequality gives
\begin{align}
&\E\Big[
\big|
(1+t_1)^{2-\beta}\widehat{\mathcal V}_m(s_1,t_1)-(1+t_2)^{2-\beta}\widehat{\mathcal V}_m(s_2,t_2)\big|^{2r}\Big|\mathcal F \Big] \notag\\
&\quad\leq C_r\Bigg[
\sum_{\ell=1}^{\infty}|\widehat\lambda_{\ell,m}|^{2r}
\E\left|\widetilde Y_\ell(s_1,t_1)-\widetilde Y_\ell(s_2,t_2)\right|^{2r}\notag\\
&\qquad\qquad+\notag\
\left(\sum_{\ell=1}^{\infty}\widehat\lambda_{\ell,m}^{2}
\E\left|\widetilde Y_\ell(s_1,t_1)-\widetilde Y_\ell(s_2,t_2)
\right|^{2}\right)^r\Bigg]\\
&\quad\leq
C\left(\sum_{\ell=1}^{\infty}\widehat\lambda_{\ell,m}^{2}\right)^r
\bigl(|t_1-t_2|+|s_1-s_2|\bigr)^{2ar}.
\label{e:eig_approx_tightness}
\end{align}
Moreover, $\inf_\pi\sum_{\ell=1}^\infty (\lambda_\ell-\widehat{\lambda}%
_{\pi(\ell),m})^2\to 0$ a.s. and square summability of $\{\lambda_\ell,\ell \geq 1\}$ gives  
$
\sup_{m\geq1}\sum_{\ell=1}^{\infty}\widehat\lambda_{\ell,m}^{2}
<\infty
$ \text{a.s.}. Hence, almost surely,
$$
\E\Big[\big|(1+t_1)^{2-\beta}\widehat{\mathcal V}_m(s_1,t_1)-(1+t_2)^{2-\beta}\widehat{\mathcal V}_m(s_2,t_2)\big|^{2r}\Big| \mathcal F\Big]
\leq
C(\omega)
\bigl(|t_1-t_2|+|s_1-s_2|\bigr)^{2ar}.
$$
By taking $r$ large enough, we obtain that for a.s. $%
\omega$, the law of $\{(1+t)^{2-\beta}\widehat{\mathcal{V}}_m(s,t),~~0\leq s,t\leq T\}$ in $C([0,T]^2)$ is tight under $\P (\cdot|\mathcal{F%
})(\omega)$, for any $T>0$ and hence the same is true of $\{\widehat{\mathcal{V}}%
_m(s,t),~~0\leq s,t\leq T\}$. We obtain
\begin{equation}\label{e:V-hat_to_V_in_C0T}
\{\widehat{\mathcal{V}}_m(s,t),~~0\leq s,t\leq T\} \Rightarrow_{\mathcal F} \{{\mathcal{V}}(s,t),~~0\leq s,t\leq T\} 
\end{equation}
in $\mathbf C([0,T]^2)$, for each $T>0$. Turning to the range $t\in[T,\infty)$, define
$$
\widehat{\mathcal H}_m(\pi_m)
=\sum_{\ell=1}^{\infty}\widehat\lambda_{\pi_m(\ell),m}
\left(1-W_{1,\ell}^2(1)\right).
$$
Conditioning on $\mathcal F$, and repeating the argument leading to
\eqref{e:V_tail_limit}, with $\lambda_\ell$ replaced by
$\widehat\lambda_{\pi_m(\ell),m}$, 
gives, for every $x>0$,
\begin{align}
\lim_{T\to\infty}\limsup_{m\to\infty}
\P\Bigg(&\sup_{t\geq T}\sup_{0\leq s\leq t}
\left|\widehat{\mathcal V}_m(s,t;\pi_m)-\frac{(t-s)^2}{g(t)}\widehat{\mathcal H}_m(\pi_m)\right|>x\Bigg
\rvert\,\mathcal F\Bigg)=0
\label{e:eig_empirical_tail}
\end{align}
almost surely. Moreover,
\begin{align*}
\E\left[\left|\widehat{\mathcal H}_m(\pi_m)-\mathcal H\right|^2\Big\rvert\mathcal F\right] &=2\sum_{\ell=1}^{\infty}\left(\widehat\lambda_{\pi_m(\ell),m}-\lambda_\ell\right)^2 \to0
\end{align*}
almost surely. Since
$$
0\leq\frac{(t-s)^2}{g(t)}\leq\frac{t^2}{g(t)}
=\left(\frac{t}{1+t}\right)^{2-\beta}
\leq1,
$$
we have
\begin{align*}
\sup_{t\geq T}\sup_{0\leq s\leq t}
\left|\widehat{\mathcal V}_m(s,t;\pi_m)-\mathcal V(s,t)
\right|&\leq
\sup_{t\geq T}\sup_{0\leq s\leq t}\left|\widehat{\mathcal V}_m(s,t;\pi_m)
-\frac{(t-s)^2}{g(t)}\widehat{\mathcal H}_m(\pi_m)\right|\\
&\qquad+
\left|\widehat{\mathcal H}_m(\pi_m)-\mathcal H\right|+\sup_{t\geq T}\sup_{0\leq s\leq t}\left|
\mathcal V(s,t)-\frac{(t-s)^2}{g(t)}\mathcal H
\right|.
\end{align*}
Hence, by \eqref{e:eig_empirical_tail} and
\eqref{e:V_tail_limit}, 
\begin{align*}
\lim_{T\to\infty}\limsup_{m\to\infty}
\P\left(
\sup_{t\geq T}\sup_{0\leq s\leq t}
\left|
\widehat{\mathcal V}_m(s,t;\pi_m)-\mathcal V(s,t)
\right|>x
\bigg\rvert\,\mathcal F
\right)
=0
\end{align*}
almost surely. Putting this together with \eqref{e:V-hat_to_V_in_C0T}, 
we conclude, for any $a_0\in (0,\infty]$,
$$
\sup_{0\leq t <a_0} |\widehat{\mathcal V}_m(0,t;\pi_m)| \Rightarrow_{\mathcal F} \sup_{0\leq t <a_0}|{\mathcal V}(0,t)|,
$$
$$\sup_{0\leq t <a_0}\sup_{0\leq s\leq t} |\widehat{\mathcal V}_m(s,t;\pi_m)| \Rightarrow_{\mathcal F}  \sup_{0\leq t <a_0}\sup_{0\leq s\leq t} |{\mathcal V}(s,t)|.
$$
 Using the change of variables $u=t/(1+t)$, with $u_0=a_0/(1+a_0)$, we then obtain
$$
\sup_{0\leq u \leq u_0}u^{-\beta}|\widehat\Gamma_m(u)| \Rightarrow_{\mathcal F} \sup_{0\leq u \leq u_0}u^{-\beta}|\Gamma(u)|,
\qquad \sup_{0\leq u \leq u_0}u^{-\beta}\widehat{\overline\Gamma}_m(u)\Rightarrow_{\mathcal F}\sup_{0\leq u \leq u_0}u^{-\beta}\overline\Gamma(u).$$
The third assertion follows from the same argument applied to
$\mathbb V^{(3)}$ and its empirical-eigenvalue analog ($\widehat{\mathbb V}^{(3)}_m$, say) except that the
normalized processes converge to zero as $t\to\infty$, so that no analog
of $\mathcal H$ is needed. Indeed, repeating conditionally on $\mathcal F$
the argument used to obtain \eqref{e:D3_upper_endpoint_limit}, with
$\lambda_\ell$ replaced by
$\widehat\lambda_{\pi_m(\ell),m}$ and using
$
\sup_{m\geq1}\sum_{\ell=1}^{\infty}
\widehat\lambda_{\ell,m}^{2}<\infty$ a.s. 
gives, for every $x>0$,
\begin{align*}
\lim_{T\to\infty}\limsup_{m\to\infty}
\P\Bigg(&\sup_{t\geq T}\sup_{b(t)\leq s\leq t}
\left|\frac{\widehat{\mathbb V}^{(3)}_m(s,t)}{g\big(t/(1+b(t))\big)\big(1+b(t))^\gamma}\right|>x\bigg\rvert\,\mathcal F\Bigg)
=0
\end{align*}
almost surely. Together with the corresponding compact convergence and
\eqref{e:D3_upper_endpoint_limit}, the change of variables used in the proof
of Theorem~\ref{t:theorem_D3_under_H0} yields the third assertion of
\eqref{e:conditional_weak_convergence}.

\end{proof}

\begin{adjustwidth}{-0pt}{-0pt}

{\footnotesize {\ 
\bibliographystyle{chicago}
\bibliography{BHTbiblio}
} }

\end{adjustwidth}

\end{document}